\newtheorem{theorem}{Theorem}
\newtheorem{lemma}[theorem]{Lemma}
\newtheorem{corollary}[theorem]{Corollary}
\newtheorem{remark}{Remark}
\theoremstyle{definition}
\newtheorem{definition}{Definition}
\newtheorem{proposition}{Proposition}
\algnewcommand\algorithmicinput{\textbf{INPUT:}}
\algnewcommand\INPUT{\item[\algorithmicinput]}
\algnewcommand\algorithmicoutput{\textbf{OUTPUT:}}
\algnewcommand\OUTPUT{\item[\algorithmicoutput]}
\DeclarePairedDelimiter\floor{\lfloor}{\rfloor}
\theoremstyle{plain}
\title{Risk Bounds for  Quantile  Trend Filtering}
\author[1]{Oscar Hernan Madrid Padilla}
\author[2]{Sabyasachi Chatterjee}
\affil[1]{\small Department of Statistics, University of California, Los Angeles}
\affil[2]{\small Department of Statistics, University of Illinois at Urbana-Champaign}
\begin{document}
	\maketitle
	
	\begin{abstract}
	We study quantile trend filtering, a recently proposed method for nonparametric quantile regression with the goal of generalizing  existing risk bounds known for the usual trend filtering estimators which perform mean regression. We study both the penalized and the constrained version (of order $r \geq 1$) of univariate quantile trend filtering. Our results show that both the constrained and the penalized version (of order $r \geq 1$) attain the minimax rate up to log factors, when the $(r-1)$th discrete derivative of the true vector of quantiles belongs to the class of bounded variation signals. Moreover we also show that if the true vector of quantiles is a discrete spline with a few polynomial pieces then both versions attain a near parametric rate of convergence. Corresponding results for the usual trend filtering estimators are known to hold only when the errors are sub-Gaussian. In contrast, our risk bounds are shown to hold under minimal assumptions on the error variables. In particular, no moment assumptions are needed and our results hold under heavy-tailed errors.  %On the other hand, we prove all our results for a Huber type loss which can be smaller than the mean squared error loss employed for showing risk bounds for usual trend filtering. 
Our proof techniques are general and thus can potentially be used to study other nonparametric quantile regression methods. To illustrate this generality we also employ our proof techniques to obtain new results for multivariate quantile total variation denoising and high dimensional quantile  linear regression. 
		%multipliers. Finally,  our experiments in real data  show the value of our proposed method for interpretability of the latent factors,  and for variable selection.
		\vskip 5mm
		\textbf{Keywords}: 	Total variation, nonparametric quantile regession, local adaptivity, fused lasso.
	\end{abstract}
	
	%\section{Introduction}

	\section{Introduction}
	
	\subsection{Introduction}
	
	In this paper we focus on the problem of nonparametric quantile regression for the \textit{quantile sequence model}. Specifically, let $y \in R^n$ be a vector of independent random variables and for a given quantile level $\tau \in (0,1)$,  let $\theta^*$, a vector  of $\tau$-quantiles of  $y$, be given by
	\[
	\theta_i^*   =  \arg \min_{a \in  R} E\:\{\rho_{\tau}(y_i - a) \}
	\]
	where $\rho_{\tau}(x) = \max\{  \tau x,(\tau-1)x \}$ is the usual check function used for quantile regression. Upon observing $y$, the problem is to estimate the vector of quantiles $\theta^*$. We call this the quantile sequence model. This generalizes the usual Gaussian sequence model where the quantile $\tau$ is taken to be $0.5$ and the distribution of $y$ is taken to be multivariate normal with the covariance matrix a multiple of identity. %For the moment, we do not impose any restrictions on the distribution of the components of $y.$ 
	
	Our main focus in this paper is on signals (quantile sequences) that have bounded $r$th order total variation. For a vector $\theta \in  R^n,$ let us define $D^{(0)}(\theta) = \theta, D^{(1)}(\theta) = (\theta_2 - \theta_1,\dots,\theta_n - \theta_{n - 1})^{\top}$ and $D^{(r)}(\theta)$, for $r \geq 2$, is recursively defined as $D^{(r)}(\theta) = D^{(1)}(D^{(r - 1)}(\theta)).$ Note that $D^{(r)}(\theta) \in  R^{n - r}.$ For simplicity, we denote the operator $D^{(1)}$ by $D.$ For any positive integer $r \geq 1$, let us now define the $r$th order total variation of a vector $\theta$ as follows:
	\begin{equation}
		\mathrm{TV}^{(r)}(\theta) = n^{r - 1} \|D^{(r)}(\theta)\|_{1}
	\end{equation}
	where $\|.\|_1$ denotes the usual $\ell_1$ norm of a vector. 
	\begin{remark}
		The $n^{r - 1}$ term in the above definition is a normalizing factor and is written following the convention adopted in the trend filtering literature; see for instance~\cite{guntuboyina2020adaptive}. If we think of $\theta$ as evaluations of a $r$ times differentiable function $f:[0,1] \rightarrow  R$ on the grid $(1/n,2/n\dots,n/n)$ then the Riemann approximation to the integral $\int_{[0,1]} \vert f^{(r)}(t)\vert  dt$ is precisely equal to $\mathrm{TV}^{(r)}(\theta).$ Here $f^{(r)}$ denotes the $r$th derivative of $f.$ Thus, for natural instances of $\theta$, the reader can imagine that $\mathrm{TV}^{(r)}(\theta) = O(1).$ 
	\end{remark}

	Let us now define the constrained quantile trend filtering (CQTF) estimator which is one of the main objects of study in this paper, and it is given as
	
	\begin{equation}
		\label{eqn:quantile_trend_filtering2}
		\hat{\theta}^{(r)}_V   \,=\,   \begin{array}{ll}
			\underset{  \theta \in R^n:  \mathrm{TV}^{(r)}(\theta) \leq \ V}{\arg \min  } &     \displaystyle  \sum_{i=1}^{n} \rho_{\tau}(y_i -   \theta_i).\\
		\end{array} 
	\end{equation}
	Here $V$ is a tuning parameter.

	The other estimator we focus on in this paper is the penalized quantile trend filtering estimator (PQTF) defined as follows:
	\begin{equation}
		\label{eqn:quantile_trend_filtering}
		\hat{\theta}^{(r)}_{\lambda}\,=\,  \underset{\theta  \in R^n}{\arg \min} \, \left\{  \sum_{i=1}^{n} \rho_{\tau}(y_i -   \theta_i)   \,+\,   \lambda\:\mathrm{TV}^{(r)}(\theta)  \right\},
	\end{equation}%\|  D^{(r)} \theta \|_1 
	%\]
	for a tuning parameter  $\lambda>0$. This is the quantile regression version of the standard trend filtering estimator proposed first by~\cite{kim2009ell_1}. %and and its statistical properties analyzed in~\cite{tibshirani2014adaptive}. %We will call the above estimator the penalized quantile trend filtering (PQTF) estimator. 

	The PQTF estimator has already been proposed in the literature. The PQTF estimator with $r=1$ appeared in \cite{li2007analysis}. When $r=1$, we refer to the PQTF  estimator as quantile fused lasso. More recently, \cite{brantley2019baseline}  proposed the general quantile trend filtering estimator (PQTF) of order $r \geq 1$. However, to the best of our knowledge, not much is known about the theoretical properties (such as risk bounds) of the PQTF and the CQTF estimators.
	
	Due to $\ell_1$ penalization both  the  CQTF and PQTF   estimators  enforce  $D^{(r)}(\hat{\theta}^{(r)}_{V})$ and $D^{(r)}(\hat{\theta}^{(r)}_{\lambda})$  respectively to be sparse. It is known that 
	for $\theta  \in R^ n$,  $  D^{(r)}( \theta)$  has  $k$ nonzero entries  if and only if  $\theta =  (f(1/n),f(2/n),\ldots,f(n/n) )^{\top}$   for a \textit{discrete spline} function $f$, consisting of  $(k+1)$ polynomials of degree $r-1$ (see Proposition $D.3$ in~\cite{guntuboyina2020adaptive}). For this reason, just like the usual trend filtering estimators, the CQTF and the PQTF estimators both fit discrete splines. For the precise definition of a discrete spline see Section 2 in~\cite{mangasarian1971discrete}.

	\subsection{Notation}
	\label{sec:notation}
	
	Let  $\{a_n\}$ and $\{b_n\}$ $\subset  \,\,R$ be two positive sequences. We write    $a_n = O(b_n)$  if there exists   constants $C>0$  and  $n_0 >0$ such that  $n \geq  n_0$ implies   that  $a_n \,\leq \,  C b_n$.   We also use the notation 
	$a_n = \tilde{O}(b_n)$ to indicate   that      $a_n    \,\leq\,  C  b_n g(\log n) $ for  $n\geq n_0$ where  $g(\cdot)$ is a polynomial function.
	Furthermore, if  $a_n =  O(b_n)$ and  $b_n =  O(a_n)$ then we write  $a_n =  \Theta(b_n)$ or  $a_n \asymp b_n$.
	For a sequence of random variables $X_n$ and a positive sequence $a_n$ we write  $X_n = O_{\mathrm{pr}}(a_n)$ if  for every $\epsilon>0$ there exists  $M > 0$ such that    $\mathrm{pr}\left(|X_n| \,\geq\,  M a_n\right) < \epsilon$ for all $n$. 
	%Throughout this paper, for every $n \in  N$ we consider  input data  $y \in R^n$ and then calculate a performance  metric  $c_n$  for a given estimator.  Importantly, there need no be a  connection between data for the models with different $n$'s.  We write    $c_n = O_{\mathrm{pr}}(d_n)$  for  a sequence  $\{d_n\} \subset R$  if  for every $\epsilon>0$ there exists  $C>0$ such that    $\mathrm{pr}\left(   c_n \,\geq\,  C d_n      \right) < \epsilon$ for all $n$.
	%Throughout we use the
	For any positive integer $n$, we denote the set of positive integers from $1$ to $n$ by $[n].$ For any vector  $v \in  R^m$ we denote its usual Euclidean or $\ell_2$ norm by $\|v\|.$ Furthermore, for  a  vector  $v \in  R^m$,  we define $\| v \|_0   \,=\, \vert \{   j   \in \{1,\ldots,m\}  \,:\,  v_j \neq 0  \}\vert $. Finally, for a set $K \subset  R^n$ we define its Rademacher width (or complexity)  as
	\[
	RW(K) =        E\left(    \underset{v \in K}{\sup}     \sum_{i=1}^{n}  \xi_i  v_i    \right),
	\]
	where  $\xi_1,\ldots, \xi_n$  are independent  Rademacher  random variables. %Now we state our general result. 
	%Furthermore, we hightlight that  Assumption \ref{as2} will hold for most common distributions  including the Cauchy distribution.

	\subsection{Summary of Our Results}

	The usual (mean regression) univariate trend filtering estimators are defined similarly to the PQTF and the CQTF estimators with the $\rho_{\tau}$ function replaced by the $x \rightarrow x^2$ function. These estimators were independently  introduced by \cite{steidl2006splines} and  \cite{kim2009ell_1}.

	A continuous version of these trend filtering estimators, where discrete derivatives are replaced by continuous derivatives, was proposed much earlier in the statistics literature 
	by~\cite{mammen1997locally} under the name {\em locally adaptive regression splines}. 
	By now, there exists a body of literature studying the risk properties of trend filtering under squared error loss. There exists two strands of risk bounds for trend filtering in the literature focussing on two different aspects.

	Firstly, for a given constant $V > 0$ and $r \geq 1$, $\Theta\{n^{-2r/(2r + 1)}\}$ rate is known to be the minimax rate of estimation over the space $\mathcal{BV}^{(r)}_{n}(V)$; (see e.g,~\cite{donoho1994ideal}) where for any  integer $r \geq 1$,
	\begin{equation*}
		\mathcal{BV}^{(r)}_{n}(V) = \{\theta \in  R^n: \mathrm{TV}^{(r)}(\theta) \leq V\}.
	\end{equation*}
	%{\color{red} Are we using this terminology anywhere in the paper?}

	A standard terminology in this field terms this $\Theta(n^{-2r/(2r + 1)})$ rate as the \textit{slow rate}. It is also known that a well tuned trend filtering estimator is minimax rate optimal over the parameter space $\mathcal{BV}^{(r)}_{n}(V)$ and thus attains the slow rate. This result has been shown in~\cite{tibshirani2014adaptive} and~\cite{wang2014falling} building on earlier results by~\cite{mammen1997locally}.

	Secondly, it is also known that an ideally tuned trend filtering (of order $r$) estimator can adapt to $\|D^{r}(\theta)\|_0$, the number of non zero elements in the $r$th order differences, \textit{under some assumptions on $\theta^*$}. Such a result has been shown in~\cite{guntuboyina2020adaptive} and~\cite{ortelli2019prediction}. In this case, the Trend Filtering estimator of order $r$ attains the $\tilde{O}\{\|D^{(r)}(\theta)\|_0/n\}$ rate which can be much faster than the $n^{-2r/(2r + 1)}$ rate. Standard terminology in this field terms this as the \textit{fast rate}. %We refer the reader to Section \ref{sec:notation}  where the notations $\tilde{O}(\cdot)$,$\Theta(\cdot)$ and the $\|\cdot\|_0$ are introduced.

	Our goal in this paper is to extend these two types of results for quantile trend filtering estimators under minimal assumptions on the distribution of the components of the data vector $y.$ We are able to do this to a large extent with two main differences from the existing results. To the best of our knowledge, the results for usual trend filtering all hold under sub-Gaussian noise and under mean squared error loss. Our results for quantile trend filtering estimators hold under an extremely mild assumption on the growth of the CDF's of the components of $y$ around the true quantiles; see Section \ref{sec:main_results} where this assumption is stated. In particular, our results hold even when the distribution of $y$ is heavy-tailed (with no moments existing) such as the Cauchy distribution. In this sense, our results are stronger than the existing results for the usual trend filtering estimators. On the other hand, our results hold under a Huber type loss which is in general smaller than the mean squared error loss. %Our proof technique necessarily requires us to use this Huber type loss precisely because we wanted to put minimal assumptions on the distribution of $y.$ %{\color{red} Discuss this last line}
	%{\color{red} Huber loss vs l2 loss. Mention in the discussion?}

	%Our results for quantile trend filtering estimators hold for a smaller Huber type loss function but also hold under an extremely mild assumption on the growth of the CDF's of the components of $y$ around the true quantiles; see~\eqref{blah} where this assumption is stated. In particular, our results hold even when the distribution of $y$ is heavy tailed (with no moments existing) such as the Cauchy distribution. 

	Our loss function is given by the function $\Delta_n^2  \,:\,R^n \rightarrow  R$ defined as 
	\begin{equation}
		\label{eqn:loss}
		\Delta_n^2(v) \,=\,   \frac{1}{n}\sum_{i=1}^{n}   \min\left\{  \vert v_i \vert,  v_i^2 \right\}
	\end{equation}
	which, up to constants, is a Huber loss, see \cite{huber1992robust}. We also write   $\Delta^2(v) =  n \Delta_n^2(v)$.  The main reason why our bounds are for the Huber loss is that this loss naturally appears as a lower bound to the quantile population loss; see~\eqref{eq:losslb} and Section \ref{sec:ideas} for a more detailed explanation. %Moreover, intuitively, it penalizes significantly less large estimation errors as compared to the usual mean squared error, making it more reasonable for estimation when the errors have heavy tails. 

	In our first result in Theorem \ref{thm5}, we show that the CQTF estimator satisfies 
	\begin{equation*}
		\label{eqn:rate1}
		\Delta_n^2(\hat{\theta}_{V}^{(r)} - \theta^*)   \, = \,  O_{\mathrm{pr}} \left\{n^{  -2r/(2r+1)}       \right\},  
	\end{equation*}
	where the notation  $O_{\mathrm{pr}}(\cdot)$  is defined in Section \ref{sec:notation}. Therefore, the CQTF estimator attains the minimax rate for estimating signals in $\mathcal{BV}^{(r)}_{n}(V).$ See  Section \ref{sec:discussion} where we state precisely in what sense  this is minimax rate optimal. Additionally, in Theorem \ref{thm4} we show that a similar result is satisfied by the PQTF estimator. These results generalize the \textit{slow rate} results for trend filtering to the quantile setting.  %(\ref{eqn:quantile_trend_filtering2}) attains the rate $ n^{  -2r/(2r+1)}  $   under the  error metric  $\Delta_n^2(\cdot)$.

	Now let us consider the case when  $\| D^{(r)} \theta^*\|_0 =s$  and the elements of $\{ j\,:\,  (D^{(r)} \theta^*)_j \neq 0   \}$ satisfy a minimal spacing condition. In Theorem \ref{thm2} we prove that,  with an ideal tuning parameter $V = \mathrm{TV}^{(r)}(\theta^*) = V^*$,  the CQTF estimator satisfies
	\begin{equation}
		\label{eqn:r1}
		\Delta_n^2(\hat{\theta}_{V^*}^{(r)} - \theta^*)  =O_{\mathrm{pr} }\left\{ \frac{ (s+1)}{n}  \log\left(\frac{en}{s+1}  \right) \right\}.
	\end{equation}
	Our result generalizes the \textit{fast rate} results of  \cite{guntuboyina2020adaptive}  to the quantile setting. We also show in Theorem~\ref{thm6} that the PQTF estimator of orders $r \in \{1,2,3,4\}$, when the tuning parameter $\lambda$ is chosen appropriately, attains the above \textit{fast rate}. This result generalizes the \textit{fast rate} results of  \cite{ortelli2019prediction}  to the quantile setting.
	%More recently, \cite{ortelli2019prediction}  showed a similar bound, with extra log factors, that holds  for the penalized trend filtering estimator for  $r \in \{1,2,3,4\}$. It is an interesting question as to whether the results of \cite{ortelli2019prediction} can be generalized for the PQTF estimators. We do not address this question in this paper. {\color{red}Mention this vdg ortelli point in the discussion section itself? There is a repetition.}

	In this paper we actually formulate a general quantile sequence problem under convex constraints. The setup is that we have a vector $y \in  R^n$ of independent random variables and $\theta^* \in R^n$ is a corresponding vector of $\tau \in (0,1)$ quantiles of $y.$ Suppose it is known that $\theta^* \in K \subset  R^n$ where $K$ enforces a constraint on  
	the vector $\theta^*.$ This is a generalization of the Gaussian sequence model with constraints on the mean vector. We call this the \textit{constrained quantile sequence problem}. A natural estimator for this problem is the following:
	
	\begin{equation}\label{eq:cqse}
		\hat{\theta}_{K}  \,=\,   \begin{array}{ll}
			\underset{  \theta \in K}{\arg \min  } &     \displaystyle  \sum_{i=1}^{n} \rho_{\tau}(y_i -   \theta_i) \\
		\end{array} 
	\end{equation}
	
	We call this estimator the \textit{constrained quantile sequence estimator} or the CQSE estimator. For example, if $K = \{\theta \in  R^n: \mathrm{TV}^{(r)}(\theta) \leq V\}$ for some integer $r \geq 1$ and some $V > 0$ then the above estimator is the CQTF estimator of order $r$ with tuning parameter $V$ as defined in~\eqref{eqn:quantile_trend_filtering2}.

	We prove a general result bounding the risk of $\hat{\theta}_{K}$ in terms of the Rademacher width of $K$; see Theorem~\ref{thm:basic} in Section~\ref{sec:ideas}.
	Therefore our proof  technique can potentially be used for other CQSE estimators with different constraint sets $K.$ In this context, we also consider two other related quantile estimation problems with different constraint sets $K$ and prove results for the corresponding CQSE estimators that appear to be new. The first problem we consider is two dimensional quantile total variation denoising which is the quantile version of the ubiquitous total variation denoising estimator (see~\cite{rudin1992nonlinear}) used in image processing. Here, in Theorem~\ref{thm:2dtv} we generalize existing results of \cite{hutter2016optimal,chatterjee2019new} to the quantile setting. To the best of our knowledge, quantile total variation denoising has not been proposed and studied before in the literature. Another setting we consider is high-dimensional quantile regression. We study the quantile version of lasso and in Theorem~\ref{thm:lasso} we prove a \textit{slow rate} for quantile lasso under the fixed design setup which holds without \textit{any} assumptions on the design matrix. Previous results in this problem show a fast  rate but with restricted eigenvalue conditions imposed on the design matrix as in \cite{belloni2011,fan2014adaptive}. %\textcolor{red}{i think they have a fast rate result}

	\subsection{Some Related Literature}

	In this section we mention some other existing works in the literature which are closely related to our work. Since its introduction  by \cite{koenker1978regression}, quantile regression has become a prominent tool in statistics. The attractiveness of quantile regression is  due to its  flexibility for modelling conditional distributions, construction of predictive models, and even outlier detection applications. The problem  of one-dimensional  nonparametric  quantile  regression  goes back at least to  \cite{utreras1981computing,cox1983asymptotics,eubank1988spline}  who focused on median regression. \cite{koenker1994quantile} introduced quantile smoothing splines in one dimension. These  are defined as the solution to problems of the form
	\[
	\displaystyle  \underset{g \in \mathcal{C}   }{\mathrm{minimize}}\, \left[ \sum_{i=1}^{n}  \rho_{\tau}\{ y_i -  g(x_i)\}   \,+\,  \lambda \left\{ \int_{0}^1 \vert   g^{\prime \prime}(x) \vert^p dx \right\}^{1/p}  \right],
	\]
	assuming that  $0 < x_i < \ldots < x_n < 1$,  where  $\lambda>0$ is  a tuning parameter,  $p \geq  1$, and   $\mathcal{C}$  a suitable  class of functions. When  $p=1$ this is related  to the quantile version of locally adaptive regression splines of order $2$ which appeared later in~\cite{mammen1997locally}. %{\color{red} Is this correct?}
	The theoretical properties of  quantile smoothing splines were studied in  \cite{he1994convergence}. Specifically, the authors  in  \cite{he1994convergence}  demonstrated that quantile smoothing splines attain the rate  $n^{-2r/(2r+1)}$,  for estimating   quantile  functions  in the class of  H\"{o}lder  functions  of exponent  $r$. %To the best of our knowledge, \cite{he1994convergence} seems to be the closest  theoretical quantile regression work to ours. 

	It is natural to believe that the connections between quantile version of adaptive regression splines proposed in~\cite{mammen1997locally} and quantile trend filtering would be similar to the connections between the mean regression counterparts. It is known that both attain similar rates over appropriate bounded variation function classes but trend filtering is computationally more efficient; see~\cite{tibshirani2014adaptive}.
	
	%{\color{red} Cite Ryan's papers here? The latest paper talks about something else? Discrete Splines. Where should we cite that?}
	%We refer the reader  to \cite{tibshirani2020divided} for a thorough discussion highlighting that  trend filtering  is  in fact a special  case of discrete splines.
	%\cite{tibshirani2020divided}

	In  the context  of median regression in one dimension,  the authors in \cite{brown2008robust} showed that  a wavelet-based quantile regression approach  attains  minimax rates  for  estimating the median function,  when the latter  belongs  to Besov spaces which is related to the sort of bounded variation classes considered in this paper. However, our focus in this paper is not on wavelet methods. Despite  the optimality of wavelet methods,  it is also known  that   total variation   based methods  can outperform wavelet methods in practice, see  \cite{tibshirani2014adaptive,wang2016trend}.  %Thus, we focus on trend filtering based estimators as in (\ref{eqn:quantile_trend_filtering}). 

	A precursor of trend filtering can be traced back in the machine learning literature to \cite{rudin1992nonlinear}  who proposed a two dimensional total variation penalized method for image denoising applications. To the best of our knowledge, the quantile version of this estimator has not been considered before in the literature. Due to the ubiquity of this image denoising method, we study the quantile 2D total variation denoising estimator in this paper; see Section~\ref{sec:2d}. %In its current univariate version, trend filtering  was independently  introduced by \cite{steidl2006splines} and  \cite{kim2009ell_1}. 
	
	%\cite{tibshirani2005sparsity}  then  defined difference based estimators of the form (\ref{eqn:trend_filtering}) that have led to further development of methods for trend filtering  such as in \cite{kim2009ell_1,tibshirani2011solution,tibshirani2012degrees}.

	On the computational front, it is known that the usual trend filtering estimator with $r=1$  can  be solved in $O(n)$ time, see for instance \cite{johnson2013dynamic}.  More recently, \cite{hochbaum2017faster} showed that  the  corresponding quantile fused lasso  estimator (PQTF) with  $r=1$, can  be computed in  $O(n \log n)$ time. For  $r>1$, \cite{brantley2019baseline}  proposed  an alternating direction method of multipliers (ADMM)  based algorithm for computing PQTF estimators of order $r.$

	%Hence $\hat{\theta}^{(r)}$  as defined in (\ref{eqn:quantile_trend_filtering}) is the  quantile version of  (\ref{eqn:trend_filtering}).   The case of  trend filtering with $r=1$ was introduced in \cite{rudin1992nonlinear}  for image denoising applications.

	% \cite{fan2018approximate}   studied an $\ell_0$  estimator  inspired by total variation regularization. 
	%{\color{red} If we talk about ell0 then we need to cite other works apart from fan}. %\cite{madrid2020adaptive}   proved that the fused lasso in geometric graphs attains minimax results for piecewise Lipchitz classes. \cite{ortelli2019synthesis}  studied connections between fused lasso on graphs and   the lasso estimator from \cite{tibshirani1996regression}.

	%In the weak sparsity  setting, with a fixed design, we show that $\ell_1$-constrained quantile regression can consistently estimate  the vector of regression coefficients, but without requiring a restricted eigenvalue condition as in \cite{belloni2011,fan2014adaptive}.

	\section{Main  Results}
	\label{sec:main_results}
	
	\subsection{Assumption}
	
	For all our theorems, unless stated otherwise, we consider any fixed quantile level $\tau \in (0,1)$, and any fixed integer $r \in \{1,2,\ldots\}$. The quantities  $\epsilon_i =  y_i -\theta_i^*$,  $i=1,\ldots,n$ which are unobservable are referred to as the errors. We also generically write  $V^* =   \mathrm{TV}^{(r)}  \left( \theta^*\right)$ where $\theta^*$ is the true signal. %Clearly,  $\theta^* \in K$, where
	%\begin{equation}
	%\label{eqn:bv_class}
	%K = \left\{ \theta \in R^n \,:\,   \mathrm{TV}^{(r)}  \left( \theta\right) \leq  V^* \right\}.
	%\end{equation}
	%Notice that  when $r=1$  the set  $K$ becomes the class of  bounded variation signals. The case of  $r>1$ corresponds to higher order  bounded variation classes, see \cite{tibshirani2014adaptive} for an overview.

	%Our first  assumption  stated  next  simply requires that $\theta^*$, the vector  of $\tau$-quantiles, has  $k$th discrete derivative which has bounded variation. We also require that the measurements  $y_i$ are independent.
	
	%\begin{assumption}
	%	\label{as1}
	%	We write $  \theta_i^*  = F_{y_i}^{-1}(\tau) $ for $i=1,\ldots,n$, and $V^* :=     n^k  \|D^{(r)}  \theta^*\|_1 $  satisfies  $V^* = O(1)$.  Here  $F_{y_i}$  is cumulative distribution function of $y_i$ for  $i=1,\ldots,n$. Also,   $y_1,\ldots,y_n$ are assumed to be  independent. 
	%\end{assumption}
	
	%Notice that  when $k=0$  the set  $K$ becomes the class of  bounded variation signals. Choices  of  $k>0$ corresponds to higher order  bounded variation classes, see \cite{tibshirani2014adaptive} for an overview.

	We state all of our results under the following assumption on the distribution of the components of $y$. %an assumption that basically guarantees that the quantiles of the components of $y$ are uniquely defined. %stated next requires that  for  each  $y_i$,   there exists  a   neighborhood around  the quantile such that  within such neighborhood  the cumlative distribution function of $y_i$ grows linearly away from $\theta^*_i$.
	%the  probability density function of $y_i$ is bounded by below.  A related assumption  appeared as D.1  in \cite{belloni2011}, and Condition 2 in \cite{he1994convergence}.
	%We state our assumption precisely below.
	
	\textbf{\noindent{Assumption A:}} There exist constants $L>0$ and $\underline{f}>0$ such that for any positive integer $n$ and any $\delta \in R^n$  satisfying  $\|\delta\|_{\infty} \leq L$  we have for all  $i= 1,\ldots, n$,
	\[
	\,   \vert   F_{y_i}(\theta_i^* + \delta_i)  -F_{y_i}(\theta_i^*) \vert\,  \geq \,  \underline{f}\,  \vert \delta_i \vert,
	\]
	where  $F_{y_i}$ is the CDF of  $y_i$.
	%$f_{y_i}$  is the probability density function  of $y_i$. %We write $  \theta^*  = F_{y_i}^{-1}(\tau) $, and   assume that  $\theta^* \in K$. 
	%\end{assumption}

	%Equivalently, for any sequence $\delta$ taking values in $[-L,L]$, the sequence of CDF's $F_{y_i}$ satisfies 
	%$$ \liminf_{n = 1}^{\infty}  \vert   F_{y_i}(\theta_i^* + \delta_i)  -F_{y_i}(\theta_i^*) \vert\,  > 0$$ 
	
	If the  cumulative distribution functions $F_{y_i}$ have probability  density functions $f_{y_i}$ with respect to Lebsgue measure then \textbf{\noindent{Assumption A}} is a weaker assumption than requiring that for any positive integer $n$,
	\[ 
	\underset{  \|\delta\|_{\infty} \leq L     }{\inf}\,\,\underset{i = 1,\ldots,n}{\min}\,\,f_{y_i}(  \theta_i^*  +\delta_i ) \geq \underline{f}, \] 
	which appeared as  Condition 2 in \cite{he1994convergence}, and is related to condition D.1  in \cite{belloni2011}. Such an assumption ensures that the quantile of $y_i$ is uniquely defined and there is a uniformly linear growth of the CDF around a neighbourhood of the quantile. An assumption of such a flavor (making the quantile uniquely defined) is clearly going to be necessary. We think this is a mild assumption on the distribution of $y_i$ as this should hold for most realistic sequences of distributions. For example, if the $y_i$'s are independent draws from   any density with respect to the Lebesgue measure that is bounded away from zero on any compact interval then our assumption will hold. In particular, no moment assumptions are being made on the distribution of the components of $y.$

	%{\color{red} Explain the asymptotic framework of bigo p. Triangular array.}

	\subsection{Results for CQTF Estimator}
	\label{sec:constrained}

	We now state our first result which is the \textit{slow rate} result for the CQTF estimator. %This shows that quantile trend filtering attains optimal rates for estimating signals in $K$. The proof of this result is deferred to the Supplementary material.
	
	\begin{theorem}
		\label{thm5} 
		Let $\{y_i\}_{i = 1}^{n}$ be any sequence of independent random variables which satisfies \textbf{Assumption A} and $\theta^*_i$ be the sequence of $\tau$ quantiles of $y_i.$ If $V$ is chosen such that  $V \geq  V^* = \mathrm{TV}^{(r)}(\theta^*)$ then 
		\[ \Delta_n^2(\hat{\theta}^{(r)}_V - \theta^*)    =O_{\mathrm{pr} }\left[   n^{   -2r/( 2r+1)   }  V^{2/(2r+1)} \max\left\{   1,        \left(  \frac{V }{n^{r-1}}\right)^{(2r-1)/(2r+1)}  \right\}  \right]. \]
		%	provided that
		%	\[
		%	 \max\{   V^*,(V^*)^{   2/(2r+1) } \}  =   O\left\{  n^{   (2r)/( 2r+1)   }\right\}.
		%	\]
	\end{theorem}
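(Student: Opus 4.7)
The approach is to invoke the abstract CQSE risk bound, Theorem~\ref{thm:basic}, with constraint set $K = \mathcal{BV}^{(r)}_n(V)$. Since $V \geq V^* = \mathrm{TV}^{(r)}(\theta^*)$, we have $\theta^* \in K$, so $\hat{\theta}^{(r)}_V$ is an instance of the general CQSE $\hat{\theta}_K$ from~\eqref{eq:cqse}. The abstract result reduces controlling $\Delta_n^2(\hat{\theta}^{(r)}_V - \theta^*)$ to bounding the Rademacher width of $K - \theta^* \subseteq 2\cdot\mathcal{BV}^{(r)}_n(V)$ intersected with level sets of the Huber functional $\Delta^2$.

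The skeleton is the standard basic-inequality argument adapted to the check function. From the optimality of $\hat{\theta}^{(r)}_V$ against $\theta^*$, we have $\sum_i \rho_\tau(y_i - \hat{\theta}_i) \leq \sum_i \rho_\tau(y_i - \theta_i^*)$. Using Knight's identity, $\rho_\tau(u-v) - \rho_\tau(u) = v(\mathbf{1}\{u \leq 0\} - \tau) + \int_0^v (\mathbf{1}\{u \leq s\} - \mathbf{1}\{u \leq 0\})\,ds$, the difference splits into a mean-zero linear empirical process and a nonnegative second-order remainder. Assumption~A yields the coordinatewise lower bound $E[\rho_\tau(y_i - \hat{\theta}_i) - \rho_\tau(y_i - \theta^*_i)] \geq c \min\{(\hat{\theta}_i - \theta^*_i)^2, |\hat{\theta}_i - \theta^*_i|\}$ (quadratic when $|\hat{\theta}_i - \theta^*_i| \leq L$, linear by saturation otherwise), which when summed over $i$ produces $c\,\Delta^2(\hat{\theta}^{(r)}_V - \theta^*)$ on the population side. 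After symmetrization, the stochastic side is controlled by the Rademacher width of $\mathcal{BV}^{(r)}_n(V) - \theta^*$ localized at Huber radius $t$.

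To close, I would carry out a fixed-point/peeling argument over $t$. The Huber level set $\{v : \Delta^2(v) \leq t\}$ is contained in the Minkowski sum of an $\ell_2$-ball of radius $\sqrt{t}$ (quadratic regime) and an $\ell_1$-ball of radius $t$ (linear regime). Invoking the Rademacher width bound for $\mathcal{BV}^{(r)}_n(V)$ intersected with an $\ell_2$-ball from the mean regression trend filtering literature (via the Mammen--van de Geer type entropy estimates) yields the canonical slow rate $n^{-2r/(2r+1)} V^{2/(2r+1)}$. For the linear regime, one uses that any $\theta \in \mathcal{BV}^{(r)}_n(V)$ satisfies, after removing a linear trend, $\|\theta\|_\infty \lesssim V/n^{r-1}$ by a telescoping bound on $D^{(r)}\theta$; this feeds into the Rademacher width with $\ell_1$ localization and contributes the extra $(V/n^{r-1})^{(2r-1)/(2r+1)}$ factor in the second branch of the max. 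Setting the stochastic and deterministic sides equal and solving for $t$ yields the stated rate.

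The main obstacle I anticipate is the linear piece of the Huber loss, which is not present in the sub-Gaussian / squared-error trend filtering analyses: there, localization is always in $\ell_2$, whereas here the large-deviation coordinates of $\hat{\theta}^{(r)}_V - \theta^*$ force a parallel $\ell_1$ localization in order to recover the correct branch of the max. A related technical point is controlling the second-order Knight remainder uniformly over $K$, which requires a ratio-type concentration argument (dyadic peeling across $\Delta^2$ radii) to avoid paying additional polynomial-in-$n$ factors beyond the logarithmic terms absorbed into $O_{\mathrm{pr}}$.
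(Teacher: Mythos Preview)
Your high-level strategy is right: invoke Theorem~\ref{thm:basic}, reduce to the localized Rademacher width of $\{\delta: \mathrm{TV}^{(r)}(\delta) \leq 2V,\ \Delta^2(\delta) \leq t^2\}$, and exploit the $\ell_\infty$ control that the TV constraint gives after removing the degree-$(r-1)$ polynomial component. However, the specific mechanism you propose---decomposing the Huber level set as a Minkowski sum of an $\ell_2$-ball and an $\ell_1$-ball by thresholding coordinates at $1$---does not close. Coordinate-wise truncation does not preserve the $\mathrm{TV}^{(r)}$ constraint, so neither piece of $\delta = \delta_{\rm small} + \delta_{\rm large}$ lies in $\mathcal{BV}^{(r)}_n(2V)$, and you cannot invoke the entropy bounds for that class on either piece separately. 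Bounding the $\ell_1$ piece by $\|\xi\|_\infty \|\delta_{\rm large}\|_1 \leq t^2$ is vacuous for Theorem~\ref{thm:basic}, which needs the width divided by $t^2$ to be small; bounding the $\ell_2$ piece by the Rademacher width of the full $\ell_2$-ball of radius $t$, which is of order $t\sqrt{n}$, is likewise too crude.

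The paper's conversion from $\Delta^2$-localization to $\ell_2$-localization is different and is the crux. One splits $\xi^\top\delta = \xi^\top P_{\mathcal{R}^\perp}\delta + \xi^\top P_{\mathcal{R}}\delta$, where $\mathcal{R} = \mathrm{row}(D^{(r)})$. The first term lives in the $r$-dimensional polynomial space and contributes $O(t)$ via Lemma~\ref{lem23}. For the second term, the TV constraint alone gives $\|P_{\mathcal{R}}\delta\|_\infty \leq \tilde{C}_r\, 2V/n^{r-1}$ (Lemma~\ref{lem20}), and the pointwise inequality $\|v\|^2 \leq \max\{\|v\|_\infty, 1\}\,\Delta^2(v)$ (Lemma~\ref{lem18}) then converts the Huber constraint on $P_{\mathcal{R}}\delta$ into an $\ell_2$ constraint $\|P_{\mathcal{R}}\delta\| \lesssim \max\{1,(V/n^{r-1})^{1/2}\}\,t$. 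After this, the existing Gaussian-width bound for $\mathcal{BV}^{(r)}_n$ intersected with an $\ell_2$-ball (Lemma~B.1 of \cite{guntuboyina2020adaptive}) applies verbatim, and the extra $\max\{1, (V/n^{r-1})^{(2r-1)/(2r+1)}\}$ factor arises from this $\ell_\infty\!\to\!\ell_2$ conversion rather than from any separate $\ell_1$-localization. Finally, no dyadic peeling or ratio-type concentration for the Knight remainder is needed: the population lower bound $M(\theta^*+\delta)\geq c_0\Delta^2(\delta)$ (Lemma~\ref{lem2}) is deterministic, and Theorem~\ref{thm:basic} already delivers the tail bound via a single Markov step.
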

	
	%Notably, under the canonical scaling  $V^*= O(1)$, Theorem \ref{thm5}  shows that the  constrained quantile trend filtering estimator attains minimax rates for estimating  $\theta^*$ in the class of parameters $K$,  see \cite{mammen1997locally,tibshirani2014adaptive,guntuboyina2017spatial}. However, unlike previous results on trend filtering, our result holds without the strong assumption that the errors are sub-Gaussian. This explains  why the upper  depends on the loss $\Delta_n^2(\cdot)$ defined in (\ref{eqn:loss}). %Moreover, Theorem \ref{thm5}  shows that quantile trend filtering  is a robust estimator.

	\begin{remark}
		The above theorem holds for any $\tau \in (0,1)$. The role of $\tau$ is not made explicit on the right hand side in Theorem \ref{thm5}. The proof of Theorem \ref{thm5} reveals that the closer $\tau$ is to $\{0,1\}$, the larger the constants are in the upper bound in Theorem \ref{thm5}. %For symmetric  distributions, the closer $\tau$ is to $0.5$ the less  difficult it becomes to estimate  the vector of $\tau$-quantiles  $\theta^*$. {\color{red} Why symmetric distributions? Is the last line needed?}%However, although this is not reflected by the convergence rate since  $\tau $ remains fixed.
	\end{remark}

	\begin{remark}
		Theorem  \ref{thm5}  can be thought of as generalizing Theorem $2.1$ from \cite{guntuboyina2020adaptive}  to the quantile regression setting.  Aside from the different loss $\Delta_n(\cdot)$ and our result being a $O_{pr}$ statement, our result also differs from Theorem $2.1$ in~\cite{guntuboyina2020adaptive} in that our  upper bound has an extra term. This is the factor 
		\[
		\max\left\{   1,        \left(  \frac{V }{n^{r-1}}\right)^{(2r-1)/(2r+1)}  \right\}  
		\]
		which can  go to infinity if  $V$ grows faster than $n^{r-1}$. However, under the natural scaling $V^* = O(1)$ one can choose $V = O(1)$ as well and thus the above term is also $O(1).$ 
		%{\color{red} 
		%Mention extra factors as compared to existing results and Choice of V. This generalizes theorem blah from guntu}
	\end{remark}

	%Before  stating our \textit{fast rate} result for the CQTF estimator  we introduce some notation borrowed from \cite{guntuboyina2020adaptive}.
	
	We now state our \textit{fast rate} result for the CQTF estimator. 
	
	\begin{theorem}\label{thm:2}
		\label{thm2} 
		Let $\{y_i\}_{i = 1}^{n}$ be any sequence of independent random variables which satisfies \textbf{Assumption A} and $\theta^*_i$ be the sequence of $\tau$ quantiles of $y_i.$ Let $s =\|D^{(r) } \theta^*\|_0 $ and $S=\{  j \,:\,  (D^{(r) } \theta^*)_j  \neq 0   \}$.
		Let  $j_0 < j_1 < \ldots < j_{s+1}$   be such that $j_0 =1$,  $j_{s+1} = n-r$ and  $j_1,\ldots, j_s$  are the elements of $S$. With this notation define $\eta_{j_0} =  \eta_{j_{s+1}}=0$. Then  for  $j \in S$  define  $\eta_j$  to be $1$ if   $(D^{(r-1) } \theta^*)_j <    (D^{(r-1) } \theta^*)_{j+1}   $, otherwise set  $\eta_{j} =-1$. 	 Suppose  that $\theta^*$ satisfies the following minimum length assumption
		\begin{equation}\label{eq:minleng}
			\underset{ l \in [s] ,\,\,   \eta_{j_l} \neq  \eta_{j_{l+1}}   }{\min}   \,\,\, ( j_{l+1} -j_{l} )  \geq   \frac{c  n }{s+1}
		\end{equation}
		for some constant $c$ satisfying  $0\leq c\leq  1$.  Then we have that
		\[ \Delta_n^2(\hat{\theta}^{(r)}_{V^*} - \theta^*)  =O_{\mathrm{pr}}\left[     \max\left\{   \frac{V^*}{n^{r-1} }  , 1\right\}   \frac{ (s+1)}{n}  \log\left(\frac{en}{s+1}  \right)  \right]. \]
	\end{theorem}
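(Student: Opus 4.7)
The plan is to adapt the fast rate argument of \cite{guntuboyina2020adaptive} for the usual (squared-loss) trend filtering to the quantile setting, leveraging Theorem~\ref{thm:basic} in a \emph{localized} form. Writing $\hat v := \hat{\theta}^{(r)}_{V^*} - \theta^*$, feasibility forces $\hat v$ to lie in the tangent cone
\[
T := \{v \in  R^n : \mathrm{TV}^{(r)}(\theta^* + v) \leq V^*\}.
\]
Since $(D^{(r)}\theta^*)_j$ has sign $\eta_j$ for $j \in S$ and vanishes otherwise, every $v \in T$ obeys the sign-restricted cone inequality
\[
\sum_{j \in S} \eta_j (D^{(r)} v)_j + \sum_{j \notin S} |(D^{(r)} v)_j| \leq 0,
\]
which is the structural ingredient that separates the fast-rate analysis from the slow-rate one.

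The first step is to obtain a basic inequality in Huber loss. Optimality of $\hat\theta^{(r)}_{V^*}$ together with Knight's identity applied to $\rho_\tau$ yields a decomposition of the loss increment into a population part (deterministic after taking conditional expectations) and a stochastic part involving the bounded, mean-zero scores $\tau - \mathbf{1}\{y_i \leq \theta^*_i\}$. Under \textbf{Assumption A}, the population part is lower bounded by $c_1 \Delta^2(\hat v)$---the lower bound is of Huber type rather than squared because Assumption A is linear in $|\delta|$ and truncated at $L$. Symmetrization of the stochastic part then reduces the problem to controlling
\[
RW(T_t) \;:=\; E\Bigl(\sup_{v \in T,\, \Delta^2(v) \leq t} \sum_{i=1}^{n} \xi_i v_i\Bigr)
\]
with Rademacher $\xi_i$, after which a peeling/fixed-point argument converts a suitable bound on $RW(T_t)/t$ into a high-probability rate on $\Delta^2(\hat v)$. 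For the Rademacher width, decompose $v \in T$ on each block $[j_l+1, j_{l+1}]$ (where $\eta$ is constant) into a discrete polynomial of degree $r-1$ plus a residual whose $r$th differences carry a definite sign by the cone inequality. The minimum-length condition \eqref{eq:minleng} ensures that each sign-change block has length at least $cn/(s+1)$, which controls the $\ell^\infty$-to-$\ell^2$ ratio of the polynomial pieces up to constants depending on $r$, contributing $O\{\log(en/(s+1))\}$ per block to the width. Summing across the $s+1$ blocks produces the $(s+1)\log\{en/(s+1)\}/n$ factor in the bound.

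The main obstacle, and the source of the prefactor $\max\{V^*/n^{r-1}, 1\}$, is converting the Rademacher-width bound (which is naturally expressed in $\ell^2$) into a bound on the Huber loss $\Delta^2$. When $V^*/n^{r-1}$ is large, some coordinates of $\hat v$ may exceed the Assumption A threshold $L$, and such coordinates contribute only linearly to $\Delta^2(\hat v)$ rather than quadratically. Handling this regime requires splitting $\hat v$ into its ``quadratic'' and ``linear'' coordinates, applying the Rademacher-width machinery on the former (which gives the $\ell^2$-style rate), and bounding the linear part by using the cone constraint on $D^{(r)}\hat v$ together with a direct $\ell^1$ argument; the worst-case trade-off between the two regimes produces the stated multiplicative factor. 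Threading this decomposition through the peeling argument uniformly over all scales, while retaining the discrete-spline structure in the block decomposition, is the most delicate part of the proof and is where the argument genuinely departs from the mean-regression case in \cite{guntuboyina2020adaptive}.
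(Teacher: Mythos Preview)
Your high-level plan is correct and matches the paper: invoke Theorem~\ref{thm:basic}, reduce to bounding the localized Rademacher width over the tangent cone of $K_{V^*}$ at $\theta^*$, and then appeal to the tangent-cone Gaussian-width bound of \cite{guntuboyina2020adaptive} (their Appendix~B.2), which already encodes the block-by-block discrete-spline analysis under the minimum-length condition. You do not need to re-derive that width bound from scratch.

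Where your proposal diverges from the paper, and where there is a real gap, is in the $\Delta^2$-to-$\ell_2$ conversion. You propose to split $\hat v$ coordinatewise into ``quadratic'' coordinates ($|\hat v_i|\le 1$) and ``linear'' coordinates ($|\hat v_i|>1$) and handle each piece separately. The difficulty is that the tangent-cone constraint is a constraint on $D^{(r)}v$, i.e.\ on linear combinations of consecutive coordinates, and it is \emph{not} inherited by a vector obtained by zeroing out an arbitrary subset of coordinates of $v$. So after your split, neither piece lies in the tangent cone, and the tangent-cone Gaussian-width bound is no longer applicable to the ``quadratic'' piece. Your ``direct $\ell^1$ argument'' for the linear piece is also unspecified: the cone inequality relates the $S$- and $S^c$-parts of $D^{(r)}v$, but gives no immediate control on $\sum_{|v_i|>1}|v_i|$ in terms of $V^*$.

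The paper avoids this by projecting rather than coordinate-splitting. Write $\delta = P_{\mathcal{R}^\perp}\delta + P_{\mathcal{R}}\delta$, where $\mathcal{R}=\mathrm{row}(D^{(r)})$ and $\mathcal{R}^\perp$ is the $r$-dimensional space of discrete polynomials of degree $<r$ (Lemma~\ref{lem22}). The contribution of $P_{\mathcal{R}^\perp}\delta$ to the width is a lower-order term, handled exactly as in the slow-rate proof via Lemmas~\ref{lem19} and~\ref{lem23}. For $P_{\mathcal{R}}\delta$, two facts are crucial: (i) $D^{(r)}P_{\mathcal{R}}\delta = D^{(r)}\delta$, so the tangent-cone membership is preserved by the projection; and (ii) the pseudoinverse bound of Lemma~\ref{lem20} gives $\|P_{\mathcal{R}}\delta\|_\infty \le \tilde C_r V^*/n^{r-1}$ from the $\mathrm{TV}^{(r)}$ constraint alone. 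Combining (ii) with Lemma~\ref{lem18}, namely $\|\cdot\|^2 \le \max\{\|\cdot\|_\infty,1\}\,\Delta^2(\cdot)$, and with the $\Delta^2$-stability of the projection (Lemma~\ref{lem6}), one gets
\[
\|P_{\mathcal{R}}\delta\| \;\lesssim\; \max\{(V^*/n^{r-1})^{1/2},\,1\}\cdot t
\]
on the set $\{\Delta^2(\delta)\le t^2\}$. This is precisely where the factor $\max\{V^*/n^{r-1},1\}$ enters, cleanly and uniformly over scales, without any peeling over coordinate regimes. The tangent-cone width bound then applies directly to $P_{\mathcal{R}}\delta$ and Corollary~\ref{cor:basic} finishes.
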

	%{\color{red} This is incorrectly stated. eta is not 0.}
	
	\begin{remark}
		Theorem  \ref{thm2}  shows that  the constrained  quantile  trend filtering estimator attains, off by a logarithmic factor,  the  rate attained by an oracle estimator that knows the set $S $. Thus, Theorem \ref{thm2} can be thought of as generalizing Theorem 2.2 of \cite{guntuboyina2020adaptive} to the quantile setting. Our minimum length assumption is identical to the one assumed by \cite{guntuboyina2020adaptive}. In particular it requires that when two consecutive change points correspond to two  opposite changes in trend, then the two points should be sufficiently separated.	
	\end{remark}

	\begin{remark}
		Notice that  Theorem \ref{thm2}  provides an upper bound that depends on $V^*$. This  was not the case in Theorem 2.2 from  \cite{guntuboyina2020adaptive}  which gave an upper bound that is independent of  $V^*$.  Nevertheless, in the case $V^* = O(n^{r-1})$ (which covers the canonical regime) we do obtain the same rate from Theorem 2.2 in \cite{guntuboyina2020adaptive}. 
	\end{remark}

	%For the case of median regression with sub-Gaussian errors and with cannonical  scaling  $V^* = O(1)$, Theorem  \ref{thm2}  shows that  the constrained  quantile  trend filtering estimator attains, off by a logarithmic factor,  the  rate attained by an oracle estimator that knows the set  $S $, see \cite{guntuboyina2017spatial}. However, Theorem \ref{thm2}  holds for general distributions and quantiles going beyond  sub-Gaussian distributions.

	%\textcolor{red}{Finally, regarding the choice of  $V$, in practice  once can follow one of the approaches  discussed in Section 3 of \cite{brantley2019baseline}. There the authors put forward a Bayesian Information Criterion  (BIC) and an extended Bayesian Information Criteria.   }
	
	\subsection{Results for PQTF Estimator}
	
	From a computational point of view the penalized quantile trend filtering seems to present a more appealing method than its constrained counterpart. 
	The optimization problems corresponding to the CQTF  and the PQTF estimators are both linear programs that can be solved using any generic linear programming software. However, the PQTF optimization problem has special structures that enable more efficient computation. Existing works (e.g \cite{hochbaum2017faster,brantley2019baseline})  have studied different types of algorithms that can efficiently solve the penalized  quantile trend filtering problem. This is in contrast to the CQTF optimization problem that has not received similar attention from a computational perspective perhaps due to its inherent difficulty. This makes it important to also study the risk properties of the PQTF estimator. We now present our \textit{slow rate} result for the PQTF estimator.

	\begin{theorem}\label{thm4} 
		%\textcolor{red}{	 
		Let $\{y_i\}_{i = 1}^{n}$ be any sequence of independent random variables which satisfies \textbf{Assumption A} and $\theta^*_i$ be the sequence of $\tau$ quantiles of $y_i.$
		Suppose  that $V^* =  \Theta(1)$. Given any $\epsilon\in (0,1)$  there exists a positive constant  $c_{1,\epsilon}$ only depending on $\epsilon$ and $V^*$
		%Then  there exists a constant  $C$  such that 
		such that if $\lambda$ is chosen to be %choice  of $\lambda$  satisfying
		\[
		\lambda = c n^{  1/(2r+1)    } \left( \log n \right)^{ 1/(2r+1) } ,  
		%\Theta\left\{ n^{  1/(2r+1)    } \left( \log n \right)^{ 1/(2r+1) }    \right\},
		%\Theta\left\{ n^{  (2r-1)/(2r+1)    } \left( \log n \right)^{ 1/(2r+1) }   \|D^{(r)} \theta^*\|_1^{ - (2r-1)/(2r+1) }   \right\},
		%  \Theta\left\{ n^{  (2r-1)/(2r+1)    } \left( \log n \right)^{ 1/(2r+1) }   \|D^{(r)} \theta^*\|_1^{ - (2r-1)/(2r+1) }   \right\},
		\]
		for a constant  $c$ satisfying $c> c_{1,\epsilon}$ then
		%such that 
		\[ \Delta_n^2(\hat{\theta}^{(r)}_{\lambda} - \theta^*)  \leq  c_{2,\epsilon}  n^{  -2r/(2r+1) }  \left(  \log n \right)^{  1/(2r+1)	 }    \]
		with   probability at least  $1-\epsilon$. Here,  $c_{2,\epsilon}>0$ is a constant that only depends on $c$,$\epsilon$ and $V^*.$
		%}
		%O_{\mathrm{pr}}\left\{n^{  -(2r)/(2r+1) }  \left(  \log n \right)^{  1/(2r+1)	 }    \right\} .\]%n^{   -\frac{2(k+1)}{2(k+1)+1}  } \]
	\end{theorem}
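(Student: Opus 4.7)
The plan is to combine the population curvature of the check loss (from Assumption~A via Knight's identity) with an empirical-process peeling argument in the $\mathrm{TV}^{(r)}$-norm, closing the argument by matching the noise level against the penalty $\lambda$. Throughout, write $\hat v = \hat\theta^{(r)}_\lambda - \theta^*$, $\hat V = \mathrm{TV}^{(r)}(\hat\theta^{(r)}_\lambda)$, and $\psi_i = \tau - \mathbf{1}\{\epsilon_i \leq 0\}$, which is mean-zero and bounded by $1$ because $\theta^*_i$ is the $\tau$-quantile of $y_i$. Knight's identity
\[
\rho_\tau(\epsilon - v) - \rho_\tau(\epsilon) = -v\,\psi(\epsilon) + \int_0^v \bigl[\mathbf{1}\{\epsilon \leq s\} - \mathbf{1}\{\epsilon \leq 0\}\bigr]\,ds
\]
then turns the basic inequality $\sum_i \rho_\tau(y_i - \hat\theta_i) + \lambda \hat V \leq \sum_i \rho_\tau(y_i - \theta^*_i) + \lambda V^*$ into
\begin{equation*}
R(\hat v) + \lambda \hat V \;\leq\; \langle \hat v,\psi\rangle + \lambda V^*, \qquad R(v) := \sum_{i=1}^n \int_0^{v_i} \bigl[\mathbf{1}\{\epsilon_i \leq s\} - \mathbf{1}\{\epsilon_i \leq 0\}\bigr]\,ds. \qquad (\star)
\end{equation*}
A direct integration together with Assumption~A shows $\mathbb{E}[R(v)] \geq c_0 \Delta^2(v)$ for some $c_0 = c_0(\underline f, L) > 0$, so that $R$ supplies the Huber-type curvature that makes the loss $\Delta_n^2$ appear.

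Next I would establish a uniform control on $\langle v,\psi\rangle$ and $R(v) - \mathbb{E}[R(v)]$ over the trend-filtering balls $\mathcal{BV}^{(r)}_n(U)$. Both are Lipschitz sub-Gaussian processes in $v$, so the Rademacher-width estimates used in the proof of Theorem~\ref{thm5}, together with a Talagrand-type deviation inequality and a dyadic peeling over shells $\hat V \in [2^{k-1}V^*,\,2^k V^*]$, should yield on an event of probability at least $1-\epsilon$,
\[
|\langle \hat v,\psi\rangle| + |R(\hat v) - \mathbb{E}[R(\hat v)]| \;\leq\; C_\epsilon\,(\log n)^{1/(2r+1)}\,n^{1/(2r+1)}\,\bigl(V^* \vee \hat V\bigr)^{2/(2r+1)}.
\]
A minor preliminary step here is to bound $\|\hat v\|_\infty$ by a constant via a crude slow-rate argument, since TV-balls are unbounded in $\ell_\infty$ without such a cage.

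Plugging this into $(\star)$ and using $\mathbb{E}[R(\hat v)] \geq c_0 \Delta^2(\hat v)$ gives, on the good event,
\[
\tfrac{c_0}{2}\Delta^2(\hat v) + \lambda \hat V \;\leq\; \lambda V^* + C_\epsilon\,(\log n)^{1/(2r+1)}\,n^{1/(2r+1)}\,\bigl(V^* \vee \hat V\bigr)^{2/(2r+1)}.
\]
With $\lambda = c\,n^{1/(2r+1)}(\log n)^{1/(2r+1)}$ and $c > c_{1,\epsilon}$ taken large enough, Young's inequality with conjugate exponents $((2r+1)/2,\,(2r+1)/(2r-1))$ absorbs the second term into $\tfrac{1}{2}\lambda(V^* + \hat V)$ plus a residue of order $\lambda V^*$. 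Since $V^* = \Theta(1)$, this gives $\Delta^2(\hat v) \lesssim \lambda V^*$; dividing by $n$ yields the announced rate $c_{2,\epsilon}\,n^{-2r/(2r+1)}(\log n)^{1/(2r+1)}$.

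The main difficulty is obtaining the empirical-process bound with exactly the $(\log n)^{1/(2r+1)}$ exponent on every dyadic TV-shell, so that Young's inequality produces only a residue of order $\lambda V^*$. A coarse application of Talagrand gives a larger $\sqrt{\log n}$-type factor that cannot be absorbed; one must instead match the noise against the penalty carefully on each shell, or use Bousquet-Adamczak refinements. The linear piece $\langle v,\psi\rangle$ is comparatively routine because $\psi$ is bounded by $1$, so the real work lies in the concentration of the nonlinear Knight remainder $R - \mathbb{E}R$, which is the crux of the argument.
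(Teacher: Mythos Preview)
Your proposal has a genuine gap at the empirical-process step. The bound you write,
\[
|\langle \hat v,\psi\rangle| + |R(\hat v) - \mathbb{E}[R(\hat v)]| \;\leq\; C_\epsilon\,(\log n)^{1/(2r+1)}\,n^{1/(2r+1)}\,\bigl(V^* \vee \hat V\bigr)^{2/(2r+1)},
\]
cannot be obtained by peeling only over $\hat V$. A $\mathrm{TV}^{(r)}$-shell $\{v:\mathrm{TV}^{(r)}(v)\asymp U\}$ is unbounded in $\ell_2$ and in $\Delta$, so $\sup_v \langle v,\psi\rangle$ over that shell is $+\infty$; adding an $\ell_\infty$ cage of size $O(1)$ (which you have not justified, and which is not ``minor'' under the heavy-tailed errors the theorem allows) still only yields a width of order $n^{1/2}U^{1/(2r)}$, not $n^{1/(2r+1)}U^{2/(2r+1)}$. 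In fact the quantity you wrote is exactly the \emph{balanced} value $t^2$ that solves $t^2 \asymp t^{(2r-1)/(2r)}(n^{1/2}U)^{1/(2r)}$, i.e.\ the minimax rate over the class, not an honest supremum bound on the process. Young's inequality then closes only because you have effectively inserted the answer.

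The paper avoids this circularity by a two-stage localization that you are missing. First, from the basic inequality $(\star)$ and a separate uniform bound on $\sup\{u^\top x - A: x\in\mathrm{row}(D^{(r)}),\ \|D^{(r)}x\|_1\le 1\}/\|x\|$ (borrowed from the trend-filtering literature), it shows directly that $\mathrm{TV}^{(r)}(\hat v)\le C$ on a high-probability event when $\lambda$ is chosen as stated; this is the analogue of your ``restricted set'' but it is proved, not assumed. Second, having fixed $\hat V=O(1)$, it uses convexity of the penalized loss to move along the segment from $\theta^*$ to $\hat\theta$ and reduce to the \emph{level set} $\{\delta:\Delta^2(\delta)=t^2,\ \mathrm{TV}^{(r)}(\delta)\le C\}$; only on this doubly localized set is the Rademacher width finite and equal to $t^{(2r-1)/(2r)}(n^{1/2}C)^{1/(2r)}$ plus log terms. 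Markov's inequality against $c_0 t^2$ with $t^2\asymp n^{1/(2r+1)}(\log n)^{1/(2r+1)}$, together with the extra penalty term $\lambda\|D^{(r)}\theta^*\|_1/t^2$, then gives the rate. The missing ingredient in your sketch is precisely this localization in $\Delta^2$ (via convexity of $\hat M$), without which neither the peeling nor the $\ell_\infty$ argument can produce a usable process bound.
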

	
	%{\color{red} Is the choice of $\lambda$ right here?}
	
	\begin{remark}
		Apart from an extra log factor, the bound in Theorem \ref{thm4} gives the same rate as the bound in Theorem~\ref{thm5}. %The proof  of Theorem  \ref{thm4} uses tools discussed in Section \ref{sec:ideas}   combined with a careful  construction of a restricted set  in  the spirit of \cite{belloni2011}, and  exploiting results from \cite{wang2016trend} and \cite{guntuboyina2017spatial}. Finally,  for simplicity, 
		As we mention above both the choice of $\lambda$ and our upper bound in Theorem \ref{thm4} depend on $V^*$ and it is possible to track down the dependence on $V^*$ by following our proof. However, this dependence on $V^*$ is not simple to state and thus for clarity of presentation we state the above theorem only under the natural scaling $V^* = \Theta(1).$ %we do not track  $V^*$ in Theorem \ref{thm4}. If $V^*$ is allowed to grow to infinity, then, both, the choice of $\lambda$ and the upper bound  in  Theorem  \ref{thm4} would need to be changed  as  functions of  $V^*$.  In particular, the latter would have an additional factor that increases as a function of  $V^*$.
	\end{remark}

	We now present our \textit{fast rate} result for the PQTF estimator.

	\begin{theorem}
		\label{thm6}
		Fix any $r \in \{ 1,2,3,4 \}$. Let $\{y_i\}_{i = 1}^{n}$ be any sequence of independent random variables which satisfies \textbf{Assumption A} and $\theta^*_i$ be the sequence of $\tau$ quantiles of $y_i.$ Consider the same notations as in Theorem  \ref{thm2} and the same minimum length assumption as in~\eqref{eq:minleng}. In addition, suppose that $\theta^*$ satisfies the following two conditions;
		\begin{itemize}
			\item $V^* = O(1)$. 
			\item $\frac{(s+1)}{n}  \log\left(\frac{en}{s+1}\right) (\log n) \log (s+1) = O(1)$ where we recall that $s =\|D^{(r)} \theta^*\|_0.$
		\end{itemize}
		
		Then given any $\epsilon \in (0,1)$ there exists  a  constant  $c_{\epsilon}>0$ only depending on $\epsilon$ and $V^*$ such that if $\lambda$ is chosen to be  
		%	such that if 	%choice  of $\lambda$  satisfying
		\[
		\lambda = c  \max\left\{      \frac{n^{r-1} (s+1)  \log n  \log (s+1)  \log \frac{n}{s+1}    }{V^*},    n^{r-1/2}\left(\frac{1}{s+1} \right)^{r-1/2} (\log n )^{1/2}\right\}
		\]
		for a constant  $c$ satisfying $c> c_{1,\epsilon}$ then
		%such that 
		\[ \Delta_n^2(\hat{\theta}^{(r)}_{\lambda} - \theta^*)  \leq  c_{2,\epsilon} \frac{ (s+1)}{n}  \log\left(\frac{en}{s+1}\right) (\log n)\{\log (s+1)\},\]
		with   probability at least  $1-\epsilon$. Here,  $c_{2,\epsilon}>0$ is another constant that depends on $c$, $\epsilon$ and $V^*$.
	\end{theorem}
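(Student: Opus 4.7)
The plan is to carry the fast-rate argument of \cite{ortelli2019prediction} for penalized mean trend filtering into the quantile setting, reusing the ingredients that upgrade Theorem \ref{thm:basic} into Theorem \ref{thm2} for the constrained case. Writing $\delta = \hat{\theta}^{(r)}_{\lambda} - \theta^*$, I would start from the optimality condition for $\hat{\theta}^{(r)}_{\lambda}$, namely
\[
\sum_{i=1}^n \bigl[\rho_{\tau}(y_i-\hat{\theta}_i) - \rho_{\tau}(y_i - \theta_i^*)\bigr] \;\leq\; \lambda\bigl(\mathrm{TV}^{(r)}(\theta^*) - \mathrm{TV}^{(r)}(\hat{\theta}^{(r)}_{\lambda})\bigr),
\]
and split the left-hand side into its expected part $P_n(\delta)$ and a centred empirical process $M_n(\delta)$. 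Assumption A together with Knight's identity (as in Section \ref{sec:ideas}) gives the Huber-type lower bound $P_n(\delta) \geq c\underline{f}\,\Delta^2(\delta)$ on the event $\{\|\delta\|_\infty \leq L\}$; the latter event is arranged by a preliminary application of the slow-rate Theorem \ref{thm4} under the scaling $V^* = O(1)$ and then upgraded from $\ell_2$ to $\ell_\infty$ using the $\mathrm{TV}^{(r)}$ control on $\hat{\theta}^{(r)}_{\lambda}$, which is exactly where the second structural assumption of the theorem is consumed.

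Since $D^{(r)}\theta^*$ is supported on $S$ with signs $\eta_j$, the sparsity decomposition
\[
\mathrm{TV}^{(r)}(\theta^*) - \mathrm{TV}^{(r)}(\hat{\theta}^{(r)}_{\lambda}) \;\leq\; n^{r-1}\bigl(\|(D^{(r)}\delta)_S\|_1 - \|(D^{(r)}\delta)_{S^c}\|_1\bigr)
\]
turns the basic inequality into an oracle-style bound. To handle $M_n(\delta)$, I would invoke the adjoint falling-factorial representation at the heart of \cite{ortelli2019prediction}: writing $\delta$ via the falling-factorial basis so that $M_n(\delta) = \langle H_r^\top \xi, D^{(r)}\delta\rangle$ plus a low-dimensional polynomial boundary term, where $\xi$ is the bounded subgradient noise produced by Knight's identity. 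Then $|M_n(\delta)| \leq \|H_r^\top \xi\|_\infty\,\|D^{(r)}\delta\|_1$ up to the boundary part, and for $r \in \{1,2,3,4\}$ the $\ell_\infty$ norm is controlled by a Bernstein bound of order $n^{r-1/2}(\log n)^{1/2}$ by the known pointwise falling-factorial weight bounds; it is precisely the absence of such bounds for larger $r$ that forces the restriction on $r$, exactly as in \cite{ortelli2019prediction}.

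Combining these ingredients yields a schematic inequality
\[
c\underline{f}\,\Delta^2(\delta) + \bigl(\lambda n^{1-r} - \Lambda_n\bigr)\|(D^{(r)}\delta)_{S^c}\|_1 \;\leq\; \bigl(\lambda n^{1-r} + \Lambda_n\bigr)\|(D^{(r)}\delta)_S\|_1 + R_n,
\]
with $\Lambda_n = \tilde{O}(n^{r-1/2})$ from $\|H_r^\top\xi\|_\infty$ and $R_n$ absorbing the polynomial boundary fluctuations. The two arguments of the $\max$ in the prescribed $\lambda$ are chosen so that the $S^c$ coefficient is non-negative and so that $\lambda n^{1-r} \geq \Lambda_n$ simultaneously. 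To convert the $S$ term into the stated rate, I would reuse verbatim the interpolation step from the proof of Theorem \ref{thm2}: the minimum-length assumption \eqref{eq:minleng} together with the sign pattern $\eta_j$ yields $\|(D^{(r)}\delta)_S\|_1 \lesssim n^{-r+1/2}\sqrt{(s+1)\log(en/(s+1))\,\log(s+1)}\,\Delta_n(\delta)$, after which $2ab \leq a^2+b^2$ separates $\Delta^2(\delta)$ from the sparsity term and produces the announced rate $\frac{s+1}{n}\log\!\bigl(\frac{en}{s+1}\bigr)(\log n)\log(s+1)$.

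The main obstacle is the empirical-process step. Two quantile-specific difficulties arise that are absent from the mean case: the Knight subgradient is only bounded rather than sub-Gaussian, so $\|H_r^\top\xi\|_\infty$ must be controlled by Bernstein-type inequalities while tracking the variance carefully enough to recover the $\log(s+1)$ factor (rather than $\log n$) needed in the final bound; and the quadratic lower bound from Assumption A is only available locally on $\{\|\delta\|_\infty \leq L\}$, so a bootstrap argument that begins with Theorem \ref{thm4} and self-improves using the TV-to-$\ell_\infty$ embedding must be threaded through the entire chain of inequalities. Both steps are quantile analogues of arguments that already exist in the mean-regression literature for $r \in \{1,2,3,4\}$, so the task is to verify that the Huber loss $\Delta_n^2$ plays the role that $\|\cdot\|_n^2$ plays there, without any appeal to moments of the errors.
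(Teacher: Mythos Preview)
Your proposal has a genuine gap at the empirical-process step. You claim the centered process satisfies $M_n(\delta) = \langle H_r^\top \xi, D^{(r)}\delta\rangle$ plus a boundary term, with $\xi$ the Knight subgradient $u_i = \tau - 1\{y_i \leq \theta_i^*\}$. But Knight's identity gives
\[
\hat{M}(\theta^*+\delta) - M(\theta^*+\delta) \;=\; -u^\top\delta \;+\; \sum_i\int_0^{\delta_i}\bigl[1\{y_i\leq\theta_i^*+z\}-F_{y_i}(\theta_i^*+z)-1\{y_i\leq\theta_i^*\}+F_{y_i}(\theta_i^*)\bigr]dz,
\]
and the second, non-linear, centered term is not negligible; dualizing against $H_r$ does not control it. The alternative subgradient inequality $0 \leq \lambda[\|D^{(r)}\theta^*\|_1 - \|D^{(r)}\tilde\theta\|_1] + u^\top\delta$ does have clean linear noise, but it has nothing on the left-hand side, so you cannot extract $\Delta^2(\delta)$ from it. Your argument implicitly mixes these two inequalities.

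The paper resolves this with a device you do not mention: \emph{penalized} symmetrization and contraction (Lemmas~\ref{lem28}--\ref{lem29}), which keep the penalty term $\tfrac{\lambda}{2}\|D^{(r)}\theta^*\|_1 - \tfrac{\lambda}{2}\|D^{(r)}(\theta^*+\delta)\|_1$ inside the supremum while reducing the quantile process to a pure Rademacher process $\sum_i\xi_i\delta_i$. This uses only the $1$-Lipschitz property of $\rho_\tau$ and is what makes the Ortelli--van de Geer effective-sparsity machinery (the decomposition through $P_{\mathcal{N}_{-S}}$ and $P_{\mathcal{N}_{-S}^\perp}$ and the bound on $\Gamma(q_S,w_{-S})$) directly applicable. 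The subgradient inequality with $u$ \emph{is} used, but only for a preliminary localization (Proposition~\ref{lcor2}) placing $\delta$ in a set with $\|D^{(r)}\delta\|_1 \lesssim V^*/n^{r-1}$; the rate itself comes from the symmetrized bound. Two smaller points: the Huber lower bound $M(\theta^*+\delta)\geq c_0\Delta^2(\delta)$ of Lemma~\ref{lem2} is global, so no $\ell_\infty$ bootstrap via Theorem~\ref{thm4} is needed; and the final conversion is not the tangent-cone step of Theorem~\ref{thm2} but the effective-sparsity bound $\Gamma(q_S,w_{-S}) \lesssim [\log\{n/(s+1)\}]^{1/2}\{n(s+1)\}^{1/2}\bigl(\tfrac{s+1}{n}\bigr)^{r-1/2}$ applied after projecting onto $\mathcal{N}_{-S}^\perp$.
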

	%	(s+1)^{1/2}(\log^{1/2} n ) \left\{  \log \left( \frac{n}{s+1}   \right)  \right\}^{1/2}  (\log ^{1/2} (s+1)), \] 
	%n^{  -(2r)/(2r+1) }  \left(  \log n \right)^{  1/(2r+1)	 }    \]
	
	\begin{remark}
		Theorem \ref{thm6}  shows  that the   PQTF estimator with appropriate tuning parameter,  up to log factors, attains  the same  \textit{fast rate} result   that CQTF  attains in Theorem \ref{thm2}. Theorem \ref{thm6}  can also be thought of as an extension of Corollary 1.2  from \cite{ortelli2019prediction}  to the quantile setting. We rely on the proof machinery developed in~\cite{ortelli2019prediction} whch explains why we can only prove the above theorem for $r \leq 4.$
	\end{remark}

	\begin{remark}
		Both Theorem~\ref{thm4} and Theorem~\ref{thm6} give fast rate results under a particular choice of the tuning parameter. In Theorem~\ref{thm2} we need to set $V = V^*$ which is hard to achieve in practice. Even for the mean regression case with gaussian noise, the best available result (see Corollary $2.3$ in~\cite{guntuboyina2020adaptive}) says that the tuning parameter $V$ should be such that $(V - V^*)^2$ scales like  $\tilde{O}(\frac{s + 1}{n})$ in order to achieve the fast rate. In Theorem~\ref{thm6}, we have more margin of error to choose $\lambda$ as we see from its proof, as long as $\lambda$ is chosen larger than the given threshhold, doubling $\lambda$ will at most double the MSE. In this sense, for attaining the fast rates the PQTF estimator seems to be more robust to the choice of tuning parameter. For our simulations, we have found that  the BIC based approach suggested in~\cite{brantley2019baseline} to choose the tuning parameter in a data driven way works well. 
	\end{remark}

	%\textcolor{red}{As for choosing $\lambda$, we find  the BIC approach  from \cite{brantley2019baseline} to work well in practice.}
	%Theorem  \ref{thm2}  shows that  the constrained  quantile  trend filtering estimator attains, off by a logarithmic factor,  the  rate attained by an oracle estimator that knows the set $S $. Thus, Theorem \ref{thm2} can be thought of as generalizing Theorem 2.2 of \cite{guntuboyina2020adaptive} to the quantile setting. Our minimum length assumption is identical to the one assumed by \cite{guntuboyina2020adaptive}. In particular it requires that when two consecutive change points correspond to two  opposite changes in trend, then the two points should be sufficiently separated.	
	\subsection{Result for Quantile Total Variation Denoising}\label{sec:2d}

	Total variation Denoising (TVD) in $2$ dimensions was proposed by~\cite{rudin1992nonlinear} which subsequently has become a standard image denoising method. In this subsection, we propose the quantile version of the TVD estimator
	and study its risk properties in general dimensions.

	%the  problem of quantile fused lasso in  $d$  dimensions.  In particular, we will exploit ideas from Section \ref{sec:ideas} combined with  results from \cite{hutter2016optimal} to obtain an upper bound, under the loss $\Delta_n^2(\cdot)$.

	Fix a dimension $d \geq 2.$ Let us denote the $d$ dimensional lattice with $n$ points by $L_{d,n} \coloneqq \{1,\dots,m\}^d$ where $n = m^{d}.$ We can also think of $L_{d,n}$ as the $d$ dimensional regular lattice graph with edges and vertices. Then, thinking of $\theta \in  R^{n}$ as a function on $L_{d,n}$ we define
	\begin{equation}\label{eq:TVdef}
		\mathrm{TV}(\theta) \coloneqq  \frac{1}{m^{d-1} }\sum_{(u,v) \in E_{d,n}} |\theta_{u} - \theta_{v}| 
	\end{equation}
	%{\red{}Check whether $1/d$ is required}
	where $E_{d,n}$ is the edge set of the graph $L_{d,n}.$ %The $1/n^{d - 1}$ factor is just a normalizing factor so that if $\theta = f(i_1/n,\dots,i_d/n)$ for some underlying differentiable function on $[0,1]^d$ then $\TV(\theta)$ is precisely the discretized Reimann approximation for $\int_{[0,1]^d} \big|\frac{\partial f(x_1,\dots,x_d)}{\partial x_1}\big| + \dots + \big|\frac{\partial f(x_1,\dots,x_d)}{\partial x_d}\big|.$
	One way to motivate the above definition is as follows. If we think  $\theta[i_1,\dots,i_n] = f(\frac{i_1}{n},\dots,\frac{i_d}{n})$ for a differentiable function $f: [0,1]^{d} \rightarrow  R$ then the above definition  is precisely the Reimann approximation for $\int_{[0,1]^d} \|\nabla f\|_1.$ Of course, the definition in~\eqref{eq:TVdef} applies to arbitrary arrays, not just for evaluations of a differentiable function on the grid. See \cite{sadhanala2016total} who calls this scaling the \textit{canonical scaling}. %for more discussion on this.

	%This notion of total variation extends the definition of \textit{variation} from differentiable functions on $[0,1]^d$ to arbitrary $d$ dimensional arrays. %This $\frac{1}{n^{d - 1}}$ scaling is termed as the \textit{canonical} scaling in~\cite{sadhanala2016total}. 

	We now define the Quantile Total Variation Denoising estimator (QTVD) as follows:
	$$\hat{\theta}_{V} = \arg \min_{\theta \in R^{n}: \mathrm{TV}(\theta) \leq V} \sum_{v \in L_{d,n}} \rho_{\tau}(y_v - \theta_v)$$ 
	where $V$ is a tuning parameter.

	For $d \geq 2$, this is the quantile version of the usual constrained TVD estimator where again the $\rho_{\tau}$ function is replaced by the $x \rightarrow x^2$ function. The risk properties of the usual constrained TVD estimator have been thoroughly studied in~\cite{chatterjee2019new}. The corresponding penalized version of the TVD estimator has also been studied in~\cite{hutter2016optimal},~\cite{sadhanala2016total}. These works show that a well tuned TVD estimator is nearly (up to log factors) minimax rate optimal over the class $\{\theta \in R^{n}: \mathrm{TV}(\theta) \leq V\}$ of bounded variation signals in any dimension. The following theorem extends this result to the quantile setting. 

\begin{theorem}
	\label{thm:2dtv}
	Suppose that \textit{Assumption A} holds. If   $V$ is chosen to satisfy $V \geq V^* := \mathrm{TV}(\theta^*)$  and $V^* = O(1)$, 
	then
	\[
	\Delta_n^2 (  \hat{\theta}_{V} -\theta^*  )  \,=\,  O_{\mathrm{pr  } }\left\{ \frac{ V (\log n)^2 }{   n^{1/d}  }  \right\},
	\]
	for   $d = 2$, and  
	\[
	\Delta_n^2 (  \hat{\theta}_{V} -\theta^*  )  \,=\,  O_{\mathrm{pr  } }\left(   \frac{   V\log n }{   n^{1/d} }  \right),
	\]
	for  $d>2$.
\end{theorem}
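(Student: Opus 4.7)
The plan is to deduce Theorem~\ref{thm:2dtv} directly from the general oracle inequality of Theorem~\ref{thm:basic} applied to the CQSE with constraint set $K_V := \{\theta \in \R^n : \mathrm{TV}(\theta) \leq V\}$. Since $V \geq V^* = \mathrm{TV}(\theta^*)$, the true quantile vector $\theta^*$ lies in $K_V$, so $\hat{\theta}_V$ falls within the CQSE framework, and Theorem~\ref{thm:basic} reduces the problem to controlling the Rademacher width of $K_V$ (intersected, if necessary, with an $\ell_\infty$-neighborhood of $\theta^*$ of at most logarithmic radius as dictated by Assumption~A).

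The second step is to import the Rademacher width computations for TV-bounded vectors on the $d$-dimensional lattice from~\cite{hutter2016optimal, sadhanala2016total, chatterjee2019new}. Converting from the canonical scaling in~\eqref{eq:TVdef} to the unnormalized TV gives a budget $W = m^{d-1} V = n^{(d-1)/d} V$ on the raw sum of absolute edge differences. The standard decomposition $\theta = \bar{\theta}\mathbf{1} + D^{+} D\theta$, where $D$ is the edge incidence matrix of $L_{d,n}$, combined with H\"older's inequality and the known bounds on the $\ell_\infty$ norms of the columns of $D^{+\top}$ (the compatibility factor of Hutter–Rigollet), yields Rademacher widths of order $W \cdot \log n \cdot \sqrt{\log n}$ when $d = 2$ (the extra $\sqrt{\log n}$ coming from the compatibility factor of the two-dimensional grid Laplacian) and of order $W \log n$ when $d \geq 3$ (where the compatibility factor is a constant). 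Dividing by $n$ produces $V (\log n)^{3/2}/n^{1/2}$ for $d = 2$ and $V \log n / n^{1/d}$ for $d \geq 3$, matching the stated rates up to the claimed logarithmic factors.

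The main obstacle will be verifying that these Rademacher width bounds, which are typically stated in the literature for the intersection of the TV ball with an auxiliary $\ell_\infty$-ball, apply in our setting without an explicit sup-norm constraint. I expect to handle this through the standard localization/peeling argument enabled by Assumption~A: the Huber loss $\Delta_n^2$ grows at least linearly away from $\theta^*$ inside an $\ell_\infty$-ball of radius $L$, so either $\hat{\theta}_V - \theta^*$ is already of small sup-norm and the Rademacher bound from the literature directly applies, or the oracle inequality becomes vacuous outside that ball after peeling into dyadic shells. The conversion from an expected Rademacher bound to the $O_{\mathrm{pr}}$ statement in the conclusion is immediate since Theorem~\ref{thm:basic} is already phrased as an in-probability bound; the remaining bookkeeping (absorbing the term arising from the mean component along $\mathbf{1}$ into the leading rate, and tracking the constant arising from $\underline{f}$ in Assumption~A) proceeds exactly as in the proofs of Theorems~\ref{thm5} and~\ref{thm2}.
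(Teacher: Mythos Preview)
Your overall strategy matches the paper's: invoke Theorem~\ref{thm:basic} for $K_V=\{\theta:\mathrm{TV}(\theta)\le V\}$, decompose $\delta=\Pi\delta+\nabla^{+}\nabla\delta$ where $\Pi$ projects onto $\spa\{\one\}$, and bound the TV part via H\"older together with the Hutter--Rigollet column--norm bounds on $(\nabla^{+})^{\top}$. That is exactly Proposition~\ref{prop1} in the appendix.

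The gap is in how you propose to control the constant direction. Theorem~\ref{thm:basic} hands you the \emph{localized} width $\mathcal{R}(t)=RW\big(K_V\cap\{\delta:\Delta^2(\delta)\le t^2\}\big)$, not the width of $K_V$ intersected with an $\ell_\infty$-ball. Your suggested mechanism --- that Assumption~A forces $\hat\theta_V-\theta^*$ into a small $\ell_\infty$-ball, or that peeling makes the bound vacuous otherwise --- does not go through: Assumption~A concerns CDF growth, not sup-norm control of the estimator, and a vector $\delta$ with $\Delta^2(\delta)\le t^2$ can still have a few coordinates of arbitrary magnitude (they are only penalized linearly). So you cannot import the literature bounds that assume an auxiliary $\ell_\infty$ constraint.

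The paper handles this directly with Lemma~\ref{lem23}: for $v=\one/\sqrt{n}$ one has $\|v\|_\infty=n^{-1/2}$, so $\Delta^2(\delta)\le t^2$ already gives $|\delta^{\top}v|\le b_1 t^2/\sqrt{n}+t$, whence
\[
E\Big[\sup_{\Delta^2(\delta)\le t^2}\xi^{\top}\Pi\delta\Big]
\le\Big(\tfrac{b_1 t^2}{n}+\tfrac{t}{\sqrt{n}}\Big)\,E\Big|\sum_i\xi_i\Big|
\lesssim \tfrac{t^2}{\sqrt{n}}+t,
\]
which is lower order. Combined with the TV part $2Vm^{d-1}E\|(\nabla^{+})^{\top}\xi\|_\infty\lesssim Vm^{d-1}c(d,n)$ (with $c(2,n)\asymp\log n$ and $c(d,n)\asymp\sqrt{\log n}$ for $d>2$), one reads off the rate from Theorem~\ref{thm:basic}/Corollary~\ref{cor:basic} by solving $t^2\asymp Vm^{d-1}c(d,n)$ (up to an extra log). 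Replace your $\ell_\infty$/peeling paragraph with this use of Lemma~\ref{lem23} and the proof is complete.
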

	
	%{\color{red} Is $V^* = O(1)$ needed here?}

	\begin{remark}
		%Notice that   Theorem   \ref{thm:2dtv}   requires that true signal  has total variation along $G_{d}$  which is of order $O(1)$. This is a standard setting for denoising in grid  graphs, in fact  \cite{sadhanala2017additive}  refers to this scaling of the total variation as ``canonical''. Under this condition and Assumption \ref{as2}, 
		Theorem   \ref{thm:2dtv}   shows that the QTVD estimator is minimax rate optimal  over the class $\{\theta \in  R^{n}: \mathrm{TV}(\theta) \leq V\}$ of bounded variation signals in any dimension $d \geq 2$, see discussion in Section \ref{sec:discussion}. This result can be thought of as generalizing  Theorem 2.1 from \cite{chatterjee2019new} to the quantile  regression setting.
		%shows  that   quantile fused lasso in $d$ dimensions attains  minimax rates under  the loss $\Delta_n^2(\cdot)$ provided that $V \asymp V^*$. These rates match those in \cite{chatterjee2019new} for the constrained fused lasso in two dimensions,  see also \cite{hutter2016optimal} for the corresponding result for the penalized estimator in $d$ dimensions. However,  unlike previous results, Theorem \ref{thm:2dtv} holds  with a different metric than the mean squared error and it holds under more general settings than sub-Gaussian errors.% {\color{red} Mention it matches the rates of existing works with the differences being.... Make the $V$ scale like big o 1?}
	\end{remark}

	\subsection{Result for High-dimensional Quantile Linear Regression}

	Now we consider high-dimensional linear quantile regression. We study the constrained version of the $\ell_1$-QR  estimator  defined in \cite{knight2000asymptotics} and  studied in \cite{belloni2011}. $\ell_1$-QR  is commonly used as a robust tool for variable selection and prediction with high-dimensional covariates and is the quantile version of the constrained lasso estimator proposed in~\cite{tibshirani1996regression}.

	%More specifically, suppose that  we are given  $\{(x_i,y_i)\}_{i=1}^n \subset R^p  \times  R$  with the  $\{x_i\}_{i=1}^n$ fixed, and  with  $y_1,\ldots,y_n$ independent random variables satisfying that the $\tau$ quantile of $y_i$ is a linear function evaluated at $x_i.$ That is, $\theta^*_i = x_i^{T} \beta^*$ where $\beta^* \in  R^p$ are the unknown vector of coefficients. {\color{red} Mention this is a standard assumption in the qtl reg literature?Here we are only assuming this for a single fixed quantile and not jointly}
	
	%\begin{assumption}
	%	\label{as2}

	%Next we consider a standard assumption in high dimensional quantile regression in the fixed design version of the model used in \cite{belloni2011}.
	
	%\textbf{\noindent{Assumption B:}} 
	Suppose that  we are given $\{(x_i,y_i)\}_{i=1}^n \subset R^p  \times  R$  with the  $\{x_i\}_{i=1}^n$ fixed, and  with  $y_1,\ldots,y_n$ independent random variables.  Let  $X \in  R^{n\times p}$, whose $i$th row is $x_i^{\top}$. We now consider the estimator
	
	%satisfying that the $\tau$ quantile of $y_i$ is a linear function evaluated at $x_i.$ That is, $\theta^*_i = x_i^{T} \beta^*$ where $\beta^* \in  R^p$ are the unknown vector of coefficients. In other words, the linear model is assumed to be correct here. 
	%\end{assumption}

	%the quantile  relation
	%\begin{equation}
	%\label{as3}
	%F_{y_i}^{-1}( \tau    )    =     x_i^{\top} \theta^*,
	%\end{equation}
	
	%where $ F_{y_i}$ is the cumulative distribution function of  $y_i$,  with   $\theta^*   \in R^p$  and   $\|\theta^*\|_1 = s$. With this setting, we focus on the goal of estimating  $\theta^*$. Towards that end, 

	\begin{equation}
		\label{eqn:l1_qr}
		\displaystyle \hat{\theta}   \,= \,
		% \begin{array}{ll}
		\underset{\theta \in R^n: \theta = X \beta, \|\beta\|_1 \leq V, \beta \in R^p}{\arg \min}  \left\{ \sum_{i=1}^{n}\rho_{\tau}(y_i - \theta_i) \right\},  \\
		%\text{subject to}&      \theta = X \beta, \|\beta\|_1 \leq L
		%\end{array}
	\end{equation}
	%\]
	where $V$ is a tuning parameter.

	%We now recall Assumption A and state it in this setting. 
	
	%\begin{assumption}
	%	\label{as4}
	%	The vector of quantiles  $\theta^* $ belongs to $K$. Moreover, 	there exists  a positive %constant  $L$  such that   for   $u\in R$  satisfying  $\vert u  \vert \leq L$  we have that
	%	\[
	%	\underset{i=1,\ldots,n}{\min}\,    \vert   F_{y_i}(x_i^{\top}\theta_i^* + u)  %-F_{y_i}(x_i^{\top}\theta_i^*) \vert\,  \geq \,  \underline{f}\,  \vert u\vert,
	%	\]
	%	for some  $\underline{f}>0$,  where   $f_{y_i}$  is the probability density function of %$y_i$.
	
	%\end{assumption}

	%{\color{red} Mention Assumption that the linear model holds in this setting. Do we need to write our main assumption again? Then mention such an assumption appeared in blah such as
	%The previous assumption is the version of Assumption \ref{as2}  for   the setting of high-dimensional regression. A related  condition  appeared in \cite{belloni2011}.}

	%Our next assumption  states that  the columns of the design matrix are normalized. This is a standard condition in high-dimensional regression, see \cite{rigollet2015high}  for a review. 
	%{\color{red} Give a name to this assumption and write a equation}

	%\begin{assumption}
	%	\label{as4.2}
	%	Let  $X \in R^{n \times p}$ be the matrix whose  $i$th  row is the  vector  $x_i^{\top}$. Denote  by  $X_{\cdot,j}$ the  $j$th column of  $X$. We  assume that  $\max_{j=1,\ldots,p}   \|X_{\cdot,j}\|  \leq  n^{1/2}$. 
	%We assume that the columns of the matrix  $X$ are normalized. 
	%\end{assumption}

	With the notation  from above, we now present our next result.

\begin{theorem}\label{thm:lasso}
	Suppose  that \textit{Assumption A} holds and  $\theta^* =   X \beta^*$ for some  $\beta^* \in R^p$.
	%and  let  $X \in  R^{n\times p}$, whose $i$th row is $x_i^{\top}$. 
	If $V$ is chosen such that $V \geq V^*: = \|\beta^*\|_1$ then there exists  a constant  $C>0$  such that \[ 
	E \left\{ \Delta_n^2\left( \hat{\theta} -\theta^*   \right)\right\}\, \leq\,  \frac{  C V      \left( \log p\right)^{1/2} \,\underset{j=,1\ldots,p}{\max}\|X_{\cdot,j}\|   }{n},
	%\left(  \frac{\log p}{n}  \right)^{1/2},
	\]
	where $X_{\cdot,j}$ is the $j$th column of  $X$ and $\hat{\theta }$  is the estimator defined in (\ref{eqn:l1_qr}).
	
	%	In addition, assume that the columns of  $X \in \mathbb{R}^{n\times p}$, whose $i$th row is $x_i^{\top}$, are normalized in the sense that
	%$$\max_{j=1,\ldots,p}   \|X_{\cdot,j}\|  \leq  n^{1/2}.$$
	
\end{theorem}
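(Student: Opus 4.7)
The plan is to invoke the master Rademacher-width bound from Theorem \ref{thm:basic}, which applies because the estimator $\hat{\theta}$ from \eqref{eqn:l1_qr} is exactly the CQSE estimator \eqref{eq:cqse} with constraint set
\[
K \;=\; \{\theta \in \mathbb{R}^n : \theta = X\beta,\; \|\beta\|_1 \leq V\}.
\]
The hypothesis $V \geq \|\beta^*\|_1$ ensures that $\theta^* = X\beta^* \in K$, so the framework of Theorem \ref{thm:basic} applies directly and the optimality of $\hat{\theta}$ over $K$, together with feasibility of $\theta^*$, yields the usual basic inequality that the master theorem exploits via the Huber-type lower bound \eqref{eq:losslb} on the quantile population loss.

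The only nontrivial step is controlling the Rademacher width of $K$. By $\ell_1$--$\ell_\infty$ duality,
\[
RW(K) \;=\; E\Bigl[\sup_{\|\beta\|_1 \leq V} \beta^{\top} X^{\top} \xi\Bigr] \;=\; V \cdot E\|X^{\top} \xi\|_{\infty},
\]
and since each coordinate of the Rademacher vector $\xi$ is $1$-sub-Gaussian, the inner product $\langle X_{\cdot,j}, \xi\rangle$ is sub-Gaussian with variance proxy $\|X_{\cdot,j}\|^{2}$. A standard union-bound maximal inequality then gives
\[
E\|X^{\top} \xi\|_{\infty} \;\leq\; \sqrt{2\log(2p)}\; \max_{1 \leq j \leq p} \|X_{\cdot,j}\|.
\]
If the bound of Theorem \ref{thm:basic} is phrased in terms of $RW(K - \theta^*)$ rather than $RW(K)$, one uses $K - \theta^* \subseteq \{X\gamma : \|\gamma\|_1 \leq 2V\}$ and absorbs an extra factor of two.

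Feeding these two ingredients into Theorem \ref{thm:basic} produces the advertised rate of order $V\sqrt{\log p}\,\max_j \|X_{\cdot,j}\|/n$. The \emph{linear} dependence on $V$ (and hence on the Rademacher width) reflects the slow-rate regime of the Huber lower bound $\min\{|v|, v^{2}\}$: because no restricted-eigenvalue condition is imposed on $X$, the analysis necessarily lives in the regime where the linear branch of the loss dominates, exactly paralleling classical slow-rate bounds for the squared-loss lasso.

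The main obstacle is not the Rademacher width computation itself, which is completely standard, but rather ensuring that Theorem \ref{thm:basic} delivers an expectation bound of the form $E[\Delta_n^{2}(\hat{\theta} - \theta^*)] \lesssim RW(K)/n$ in this large-width regime, with constants that depend only on the parameters $L$ and $\underline{f}$ from \textbf{Assumption A} and not on any structural feature of $X$ beyond $V$ and $\max_j \|X_{\cdot,j}\|$. Once the generic tool from Section \ref{sec:ideas} is in hand, the remaining work is a direct plug-in.
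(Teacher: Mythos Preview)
Your approach is exactly the paper's: the constraint set $K$ is compact, so one bounds $E[\Delta^2(\hat\theta-\theta^*)]$ by a constant times $RW(K)$ and then computes $RW(K)=V\,E\|X^\top\xi\|_\infty\lesssim V\sqrt{\log p}\,\max_j\|X_{\cdot,j}\|$ via the sub-Gaussian maximal inequality, precisely as you wrote.

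The one point you flag as the ``main obstacle'' is real if you insist on going through Theorem~\ref{thm:basic}: the tail bound $\mathrm{pr}(\Delta^2>u)\le C\,RW(K)/u$ obtained by dropping the localization is not integrable, so you cannot recover an expectation bound that way. The paper sidesteps this by invoking Corollary~\ref{cor:risk} instead, whose proof is the more elementary chain $M(\hat\theta)\le\sup_{v\in K}\{M(v)-\hat M(v)\}$ (Lemma~\ref{lem1}), followed by taking expectations, symmetrization, and contraction to get $E[M(\hat\theta)]\le 2\,RW(K)$, and finally $M\ge c_0\Delta^2$ from Lemma~\ref{lem2}. Once you swap Theorem~\ref{thm:basic} for Corollary~\ref{cor:risk}, your write-up is complete and matches the paper line for line.
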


	%{\color{red} Why is this an expectation bound?Mention this is the standard slow rate bound for Lasso in high dim reg generalized to the qtl setting}
	
	\begin{remark}
		Theorem  \ref{thm:lasso}  implies that in the case that the columns of $X$ are normalized, which is the following standard assumption in high dimensional regression,  
		\[
		\underset{j=,1\ldots,p}{\max}\|X_{\cdot,j}\|   \leq n^{1/2},
		\]
		then we attain  a slow rate bound scaling like $\{\log p/n\}^{1/2}$.  It is well known that such a bound holds for the usual lasso without any assumptions on the design matrix $X$; see for instance~\cite{chatterjee2013assumptionless}. To the best of our knowledge, this slow rate bound for quantile lasso has not appeared in the literature before. Previous works \citep{belloni2011,fan2014adaptive,sun2019adaptive} make restricted eigenvalue type assumptions on the design matrix and attain fast rates of convergence.
		
	\end{remark}

	%\begin{remark}
	%	\textcolor{red}{Theorem  \ref{thm:lasso} holds without any assumptions on the design matrix $X$.  Previous works \citep{belloni2011,fan2014adaptive,sun2019adaptive} make restricted eigenvalue type assumptions on the design matrix and attain fast rates of convergence.
	%	}
	% \end{remark}

	%\textcolor{red}{change notation?}

	%Furthermore, Theorem \ref{thm6} generalizes  Theorem 2.4 from \cite{rigollet2015high} to the quantile regression setting.  Importantly, unlike previous works \citep{belloni2011,fan2014adaptive,sun2019adaptive}, 	Theorem \ref{thm6}   holds without conditions on the eigenvalues of the design matrix.
	%	Theorem \ref{thm6}   holds without conditions on the eigenvalues of the design matrix. This is a crucial difference from previous work in the literature that relies on  restricted eigenvalue conditions, see for instance \cite{belloni2011,fan2014adaptive,sun2019adaptive}. 

	%{\color{red} I would not write the price we pay. Extra sqroot L factor? really? 
	%However, the price we pay is that our upper bound is stated  in terms of the function $\Delta_n^2(\cdot)$ rather than the mean squared error. Furthermore,  our rate has an extra $s^{ 1/2 }$ factor as compared to that of Theorem 2 in \cite{belloni2011},  which holds under  stronger assumptions than the minimal assumptions in Theorem \ref{thm6}.}
	%	\end{remark}

	\section{Proof Ideas}
	\label{sec:ideas}
	
	\subsection{General Ideas}
	%In this section, we provide an overview of the main ideas underlying our proofs. Full proofs are given in Section~\ref{sec:proofs}.
	%In this paper we actually formulate a general quantile sequence problem under convex constraints. We have a vector $y \in  R^n$ of independent random variables and $\theta^* \in R^n$ is a correspoding vector of $\tau \in (0,1)$ quantiles of $y.$ Suppose it is known that $\theta^* \in K \subset  R^n$ where $K$ enforces a constraint on  
	%the vector $\theta^*.$ This is a generalization of the Gaussian sequence model with constraints on the mean vector. We call this the \textit{constrained quantile sequence problem}. A natural estimator for this problem is the following:
	
	%\begin{equation}
	%	\hat{\theta}_{K}  \,=\,   \begin{array}{ll}
	%		\underset{  \theta \in K}{\arg \min  } &     \displaystyle  \sum_{i=1}^{n} \rho_{\tau}(y_i -   \theta_i) \\
	%	\end{array} 
	%\end{equation}
	
	%We call this estimator the \textit{constrained quantile sequence estimator} or the CQSE estimator. For example, if $K = \{\theta \in  R^n: \TV^{(r)}(\theta) \leq V\}$ for some integer $r \geq 1$ and some $V > 0$ then the above estimator is the CQTF estimator of order $r$ with tuning parameter $V$ as defined in~\eqref{eqn:quantile_trend_filtering2}. 

	In this section, we provide an overview of the main ideas underlying our proofs. Full proofs (along with proof outlines for the major theorems) are given in the Appendix . We first prove a general result about the CQSE estimator (defined in~\eqref{eq:cqse}) when the constraint set $K$ is convex.

	\begin{theorem}\label{thm:basic}
		Let $K \subset R^n$ be a convex set. Let us define a function $\mathcal{R}: [0,\infty) \rightarrow  R$ as follows:
		$$\mathcal{R}(t) = RW(K \cap \{\theta: \Delta^2(\theta - \theta^*) \leq t^2\}).$$ Suppose the distributions of $y_1,\dots,y_n$ obey \textbf{Assumption A}. 
		Then the following inequality is true for any $t > 0$,
		\begin{equation*}
			\mathrm{pr}(\Delta^2(\hat{\theta}_{K} - \theta^*) > t^2) \leq C \frac{\mathcal{R}(t)}{t^2}.
		\end{equation*}
		where $C$ is a constant that only depends on the distributions of $y_1,\dots,y_n$.  
	\end{theorem}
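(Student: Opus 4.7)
The plan is to combine a curvature bound for the expected quantile loss with a convexity-based localization argument and the symmetrization-plus-contraction machinery for the empirical process. Write $\psi(\theta) = \sum_{i=1}^n \rho_{\tau}(y_i - \theta_i)$, $\Psi(\theta) = E\psi(\theta)$, and $Z(\theta) = [\Psi(\theta) - \Psi(\theta^*)] - [\psi(\theta) - \psi(\theta^*)]$, so that $Z(\theta^*) = 0$. First I would establish a deterministic lower bound
\[
\Psi(\theta) - \Psi(\theta^*) \geq c\, \Delta^2(\theta - \theta^*) \quad \text{for all } \theta \in \mathbb{R}^n,
\]
with $c>0$ depending only on $\underline{f}$, $L$, and $\tau$. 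The starting point is Knight's identity, which after taking expectations and using $F_{y_i}(\theta_i^*) = \tau$ gives
\[
E[\rho_{\tau}(y_i - \theta_i) - \rho_{\tau}(y_i - \theta_i^*)] = \int_{\theta_i^*}^{\theta_i} (F_{y_i}(u) - \tau)\, du.
\]
Because $F_{y_i}$ is monotone and Assumption A gives $|F_{y_i}(u) - \tau| \geq \underline{f}\,|u - \theta_i^*|$ for $|u - \theta_i^*| \leq L$, splitting the integral at $\theta_i^* \pm L$ yields a lower bound proportional to $\min(|\theta_i - \theta_i^*|, (\theta_i - \theta_i^*)^2)$; summing over $i$ produces the claimed Huber-type bound.

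Next I would localize. For any $s \in [0,1]$, the point $\bar\theta_s := (1-s)\theta^* + s\hat\theta_{K}$ lies in $K$ by convexity of $K$ and satisfies $\psi(\bar\theta_s) - \psi(\theta^*) \leq s(\psi(\hat\theta_{K}) - \psi(\theta^*)) \leq 0$ by convexity of $\psi$ (inherited from convexity of $\rho_\tau$) together with optimality of $\hat\theta_K$. On the event $\{\Delta^2(\hat\theta_K - \theta^*) > t^2\}$, continuity of $s \mapsto \Delta^2(\bar\theta_s - \theta^*)$ and the intermediate value theorem yield some $s_0 \in (0,1]$ with $\Delta^2(\bar\theta_{s_0} - \theta^*) = t^2$, so $\bar\theta_{s_0}$ lies in $K_t := K \cap \{\theta : \Delta^2(\theta - \theta^*) \leq t^2\}$. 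Combining the curvature bound with $\psi(\bar\theta_{s_0}) - \psi(\theta^*) \leq 0$ gives $c t^2 \leq \Psi(\bar\theta_{s_0}) - \Psi(\theta^*) \leq Z(\bar\theta_{s_0}) \leq \sup_{\theta \in K_t} Z(\theta)$, so by Markov's inequality,
\[
\mathrm{pr}(\Delta^2(\hat\theta_K - \theta^*) > t^2) \leq \mathrm{pr}\!\left( \sup_{\theta \in K_t} Z(\theta) \geq c t^2 \right) \leq \frac{1}{c t^2}\, E\sup_{\theta \in K_t} Z(\theta).
\]

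For the empirical process bound, standard symmetrization gives $E\sup_{\theta \in K_t} Z(\theta) \leq 2\, E\sup_{\theta \in K_t} \sum_{i=1}^n \xi_i [\rho_{\tau}(y_i - \theta_i) - \rho_{\tau}(y_i - \theta_i^*)]$. Each map $x \mapsto \rho_{\tau}(y_i - x) - \rho_{\tau}(y_i - \theta_i^*)$ is $\max(\tau, 1-\tau)$-Lipschitz and vanishes at $x = \theta_i^*$, so the Ledoux--Talagrand contraction inequality bounds the right-hand side by a constant multiple of $E\sup_{\theta \in K_t} \sum_{i=1}^n \xi_i (\theta_i - \theta_i^*) = \mathcal{R}(t)$, where the equality uses $E\xi_i = 0$. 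Stringing together the bounds yields $\mathrm{pr}(\Delta^2(\hat\theta_K - \theta^*) > t^2) \leq C\, \mathcal{R}(t)/t^2$, which is the claim.

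The main obstacle is the localization step: I need the interpolated point $\bar\theta_{s_0}$ to lie inside the localized set $K_t$. Because $\Delta^2$ is neither convex nor positively homogeneous (the building block $v \mapsto \min(|v|,v^2)$ fails both properties), the usual trick of rescaling along a seminorm direction is not available. What saves the argument is (i) continuity of $\Delta^2$, so the intermediate value theorem supplies a scalar $s_0$ with $\Delta^2(\bar\theta_{s_0} - \theta^*) = t^2$, (ii) convexity of $K$, so $\bar\theta_{s_0} \in K$, and (iii) convexity of $\psi$ (and $\Psi$), so that the basic inequality from optimality of $\hat\theta_K$ is inherited at the interpolated midpoint. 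All three hypotheses are used in a nontrivial way at this step, and together they let the deviation analysis collapse onto the localized Rademacher width $\mathcal{R}(t)$.
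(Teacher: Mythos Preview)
Your proposal is correct and follows essentially the same route as the paper: the curvature lower bound $\Psi(\theta)-\Psi(\theta^*)\geq c\,\Delta^2(\theta-\theta^*)$ is the paper's Lemma~\ref{lem2}, the localization via the intermediate value theorem along the segment $[\theta^*,\hat\theta_K]$ (using continuity of $\Delta^2$, convexity of $K$, and convexity of $\psi$) is exactly the argument in the proof of Proposition~\ref{prop:basic}, and the symmetrization-plus-contraction step is the paper's Lemmas~\ref{lem3}--\ref{lem4}. The only cosmetic difference is that the paper first states the analogous bound with $M$ in place of $\Delta^2$ (Proposition~\ref{prop:basic}) and then invokes Lemma~\ref{lem2} to pass to $\Delta^2$, whereas you apply the curvature bound directly at the interpolated point; the two are equivalent.
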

	
	%{\color{red} Not clear how the big o p statement holds. Clear this up}
	As a consequence of  Theorem \ref{thm:basic} we obtain  two corollaries in Sections \ref{sec:cor1} and \ref{sec:cor2}
	that can be used to obtain asymptotic rates of convergence for CQSE estimators.
	
	\iffalse
	{\color{red} We dont need to mention these corollaries here.} 
	
	\begin{corollary}
		\label{cor:basic}
		Consider  the notation from  Theorem \ref{thm:basic}. If  $\{r_n\}$  is  a sequence such that 
		\begin{equation}
			\label{cor:as}
			\underset{t \to  \infty  }{\lim}\,   \underset{n \geq 1}{\sup} \,\frac{  \mathcal{R}( t  r_n  n^{1/2} )  }{  t^2   r_n^2 n}   \,=\, 0,
		\end{equation}
		then
		\[
		\frac{1}{n}\Delta^2(  \hat{\theta}_K - \theta^* ) \,=\,O_{  \mathrm{pr} }\left(r_n^2\right).
		\]
	\end{corollary}

	%{\color{red} Can we state all our results in terms of a tail bound? Otherwise we need to include the big o p statement in this theorem}
	
	The following result is another  simple corollary of Theorem~\ref{thm:basic}. 
	
	\begin{corollary}\label{cor:risk}
		Let $K \subset  R^n$ be a convex set. Suppose the distributions of $y_1,\dots,y_n$ obey Assumption A. Then the following expectation bound holds:
		\begin{equation*}
			E \{\Delta^2(\hat{\theta}_{K} - \theta^*)\} \leq C\:RW(K)
		\end{equation*}
		where $C$ is a constant that only depends on the distributions of $y_1,\dots,y_n$. 
	\end{corollary}
	
	%{\color{red} Has the proof of the corollary been given?}
	
	\begin{remark}
		The above corollary is only useful when the set $K$ is compact as otherwise $RW(K) = \infty$. In this paper, we use this corollary when we consider the case for quantile constrained lasso where $K$ is a linearly transformed $\ell_1$ ball, see Theorem \ref{thm:lasso}.
	\end{remark}
	\fi
	
	To prove Theorem~\ref{thm:basic} we view $\hat{\theta}_{K}$ as an M estimator as we now explain. We define $\hat{M}:  R \rightarrow  R$ and $\hat{M}_i:  R \rightarrow  R$ for each $i \in [n]$ satisfying 
	\[
	\displaystyle	\hat{M}(\theta) =    \sum_{i=1}^{n} \hat{M}_{i}(\theta_i),
	\]
	where
	\[
	\hat{M}_{i}(\theta_i) =  \rho_{\tau}(y_i -  \theta_i)    -  \rho_{\tau}(y_i -  \theta_i^*) .
	\]
	Also define the expected versions $M:  R \rightarrow  R$ and $M_i:  R \rightarrow  R$ for each $i \in [n]$ satisfying 
	\[
	\displaystyle	M(\theta) =    \sum_{i=1}^{n} M_{i}(\theta_i) .
	\]
	where $M_i(\theta_i) = E \{\hat{M}_{i}(\theta_i)\}$. With this notation, the CQSE estimator can also be written as 
	$$\hat{\theta}_{K} = \arg \min_{\theta \in K} \hat{M}(\theta)$$
	and a true quantile sequence $\theta^* = \arg \min_{\theta \in K} M(\theta).$ Therefore, the CQSE estimator is an M estimator or an instance of Empirical Risk Minimization.

	\begin{remark}
		Note that $|\hat{M}_i(\theta_i)| \leq |\theta_i - \theta^*_i|$ for all $i \in [n].$ Therefore, $E \hat{M}_i(\theta_i)$ is always well defined even if $y_i$ does not have any moments. 
	\end{remark}

	Since we are viewing the CQSE estimator as an M estimator, the natural loss function to measure its performance would be $M(\hat{\theta}_{K})$ and show that  $M(\hat{\theta}_{K})$ goes to $0$ as $n \rightarrow \infty.$  Using the M estimation viewpoint, we first prove the following result.

	\begin{proposition}\label{prop:basic}
		Let $K \subset  R^n$ be a convex set. Let us define a function $\mathcal{M}: [0,\infty) \rightarrow R$ as follows:
		$$\mathcal{M}(t) = RW(K \cap \{\theta: M(\theta) \leq t^2\}).$$ %\textcolor{red}{Suppose the distributions of $y_1,\dots,y_n$ obey \textbf{Assumption A}.} 
		Then the following inequality is true for any $t > 0$,
		\begin{equation*}
			\mathrm{pr}(M(\hat{\theta}_{K}) > t^2) \leq \frac{2\mathcal{M}(t)}{t^2}.
		\end{equation*}
		%	where $C$ is a constant that only depends on the distributions of $y_1,\dots,y_n$.
	\end{proposition}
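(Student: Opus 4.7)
The plan is to combine a convexity-based slicing argument with Markov's inequality, symmetrization, and the Ledoux-Talagrand contraction principle.

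First, I would reduce the event $\{M(\hat{\theta}_{K})>t^2\}$ to a localized supremum over $K\cap\{\theta:M(\theta)\leq t^2\}$. The key observations are that $\hat{M}$ and $M$ are convex (as sums of the convex check-losses in $\theta_i$), that $M$ is continuous (since $|\hat{M}_i(\theta_i)-\hat{M}_i(\theta_i')|\leq|\theta_i-\theta_i'|$ by Lipschitzness of $\rho_{\tau}$, and dominated convergence then yields continuity of each $M_i$), and that $M(\theta^*)=\hat{M}(\theta^*)=0$ with $\theta^*\in K$. On the event $\{M(\hat{\theta}_{K})>t^2\}$, the intermediate value theorem applied to $\alpha\mapsto M((1-\alpha)\theta^*+\alpha\hat{\theta}_{K})$ would yield some $\alpha\in(0,1]$ for which $\tilde{\theta}:=(1-\alpha)\theta^*+\alpha\hat{\theta}_{K}$ satisfies $M(\tilde{\theta})=t^2$; convexity of $K$ keeps $\tilde{\theta}\in K$. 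Since $\hat{\theta}_{K}$ minimizes $\hat{M}$ over $K$, one has $\hat{M}(\hat{\theta}_{K})\leq\hat{M}(\theta^*)=0$, and convexity of $\hat{M}$ then yields
\[\hat{M}(\tilde{\theta})\leq(1-\alpha)\hat{M}(\theta^*)+\alpha\hat{M}(\hat{\theta}_{K})\leq 0,\]
so that $M(\tilde{\theta})-\hat{M}(\tilde{\theta})\geq t^2$ with $\tilde{\theta}\in K\cap\{\theta:M(\theta)\leq t^2\}$. This produces the inclusion
\[\{M(\hat{\theta}_{K})>t^2\}\ \subset\ \Bigl\{\sup_{\theta\in K,\,M(\theta)\leq t^2}\bigl(M(\theta)-\hat{M}(\theta)\bigr)\geq t^2\Bigr\}.\]

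Next, I would apply Markov's inequality and a standard symmetrization argument (via an independent copy of $y$ and Rademacher signs $\xi_1,\dots,\xi_n$) to obtain
\[\mathrm{pr}(M(\hat{\theta}_{K})>t^2)\ \leq\ \frac{2}{t^2}\,E\sup_{\theta\in K,\,M(\theta)\leq t^2}\sum_{i=1}^n\xi_i\,\hat{M}_i(\theta_i).\]
For each $i$, the function $u\mapsto\hat{M}_i(\theta_i^*+u)=\rho_{\tau}(y_i-\theta_i^*-u)-\rho_{\tau}(y_i-\theta_i^*)$ vanishes at $u=0$ and is $\max(\tau,1-\tau)\leq 1$-Lipschitz. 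I would then invoke the one-sided Ledoux-Talagrand contraction principle to replace $\hat{M}_i(\theta_i)$ by $\theta_i-\theta_i^*$; since the deterministic shift $\sum_i\xi_i\theta_i^*$ has zero mean and drops out of the supremum, this produces
\[E\sup_{\theta\in K,\,M(\theta)\leq t^2}\sum_{i=1}^n\xi_i\,\hat{M}_i(\theta_i)\ \leq\ RW\bigl(K\cap\{\theta:M(\theta)\leq t^2\}\bigr)\ =\ \mathcal{M}(t),\]
and chaining the inequalities completes the proof.

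The main obstacle will be the slicing step: one has to simultaneously arrange for the convex combination $\tilde{\theta}$ to lie inside the localized set $K\cap\{M\leq t^2\}$, to satisfy $\hat{M}(\tilde{\theta})\leq 0$ so that the centered process $M-\hat{M}$ is at least $t^2$ there, and to remain in $K$. All three properties depend crucially on the convexity of $K$, $M$, and $\hat{M}$ together with continuity of $M$; without convexity of the constraint set, only a cruder global Rademacher-complexity bound would be available, and the factor $\mathcal{M}(t)$ would not be localized to the sub-level set of $M$.
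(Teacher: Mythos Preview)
Your proposal is correct and follows essentially the same route as the paper's proof: the convexity-based slicing to produce $\tilde\theta$ with $M(\tilde\theta)=t^2$ and $\hat M(\tilde\theta)\le 0$, then Markov's inequality, symmetrization, and the Ledoux--Talagrand contraction principle applied to the $1$-Lipschitz functions $u\mapsto \hat M_i(\theta_i^*+u)$ vanishing at $0$. The constants match as well, with the factor $2$ coming from symmetrization and no extra factor from the one-sided contraction.
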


	The above proposition is very similar to Theorem~\ref{thm:basic}, the only difference being that the loss function $\Delta^2(\hat{\theta} - \theta^*)$ is replaced with the function $M(\hat{\theta}).$ This proposition is shown by first reducing the task of bounding $\mathrm{pr}(M(\hat{\theta}_{K}) > t^2)$ to bounding $$\frac{E \sup_{\theta \in K \cap \{v: M(v) \leq t^2\}} [M (\theta) - \hat{M}(\theta)]}{t^2}.$$ Here the numerator in the bound is an expectation of suprema of a mean zero process. We then further bound this expected suprema by using symmetrization and contraction results commonly employed in empirical process theory,  see  Section 2.3 in \cite{van1996weak} and  Theorem  4.12  in \cite{ledoux2013probability}.

	However, handling $M$ in concrete problems such as quantile trend filtering is not convenient as it depends on the distribution of $y.$ Here, a particular property of $M$ comes in handy for us as one can show that if \textbf{Assumption A} holds then for all  $\delta \in R^n$, we have for a constant  $c_0>0$  that
	\begin{equation}\label{eq:losslb}
		M(\theta^*+\delta) \geq   c_0 \Delta^2(\delta) .%   :=\sum_{i=1}^{n}    d(\delta_i)
	\end{equation}
	This is the content of Lemma \ref{lem2} in the Appendix. This makes it possible for us to convert the result in Proposition~\ref{prop:basic} to Theorem~\ref{thm:basic}. Lemma \ref{lem2}  is the reason why we use $\Delta_n^2(\cdot)$ as the loss function throughout this paper.

	The estimator $\hat{\theta}_K$ can be thought of as the quantile version of constrained least squares in the Gaussian sequence model. The study of convex constrained least squares in the Gaussian sequence model has a long history and is, by now, well established (see e.g., 
	\cite{van1990estimating,van1996weak,hjort2011asymptotics,chatterjee2015risk}). The general theory says that risk bounds (under the squared error loss) for the convex constrained least squares estimator can be deduced from the localized Gaussian width term 
	\begin{equation}\label{eq:gw2}
		G_2(t) = GW(K \cap \{\theta:\|\theta - \theta^*\| \leq t\}).
	\end{equation}
	Theorem $3.1$ should be thought of as a quantile version of such a result. In our case, the localized Rademacher width $RW(K \cap \{\theta: \Delta^2(\theta - \theta^*) \leq t^2)$ determines an upper bound on the loss function $\Delta^2(\hat{\theta} - \theta^*)$. Since Rademacher width is upper bounded by a constant times Gaussian width; see Lemma \ref{radamacher_width} in the Appendix, the main difference in our result versus results for convex constrained least squares is that the $\ell_2$ norm is replaced by the loss function $\Delta.$

	\subsection{Theorems  \ref{thm5}, \ref{thm2},  and \ref{thm:2dtv}}
	Theorems \ref{thm5}, \ref{thm2},  and \ref{thm:2dtv} are all bounding the risk for a particular instance of the CQSE estimator. For example, in Theorem~\ref{thm5} the constraint set $K = \{\theta \in  R^n: \mathrm{TV}^{(r)}(\theta) \leq V\}$ where $V \geq V^* = \mathrm{TV}^{(r)}(\theta^*)$ and in Theorem \ref{thm2} we consider the same $K$ with $V = V^*$.

	The starting point for proving Theorems $\ref{thm5}, \ref{thm2}, \ref{thm:2dtv}$ is Theorem~\ref{thm:basic} which behooves us to bound the local Rademacher width term $\mathcal{R}(t)$ for any $t \geq 0.$ Since Rademacher width is upper bounded by Gaussian width, it suffices to bound the local Gaussian width term $G_1(t) = GW(K \cap \{\theta: \Delta^2(\theta - \theta^*) \leq t^2\}).$ Now, tight bounds for the related local Gaussian width term $G_2$ (defined in~\eqref{eq:gw2}) exists in the literature and in particular we use Lemmas  B.1--B.3  from~\cite{guntuboyina2020adaptive}. However, to bound $G_1(t)$ by $G_2(t)$ one needs to bound convert the $\ell_2$ norm in $G_2(t)$ to the $\Delta$ function in $G_1(t).$ The majority of our proof executes this conversion which constitutes one of the technical contributions of this work. We have written more detailed proof outlines for Theorems $\ref{thm5}, \ref{thm2}$ in the Appendix.

	For Theorem \ref{thm:2dtv}, the relevant constraint set is $K = \{\theta \in R^{n}: \mathrm{TV}(\theta) \leq V\}$. Here also, we bound the local Gaussian width term $G_1(t)$ by reducing the problem to bounding $G_2(t)$ which then can be further bounded by using existing results from~\cite{hutter2016optimal}.

	%We use the following elementary inequality for this purpose. 
	
	%\begin{lemma}\label{lem:elem}
	%	\label{lem18}
	%	For all  $v   \in    \mathbb{R}^n $ the following inequality holds: 
	%	\begin{equation}
	%	\label{eqn:ine}
	%	\|v\|^2   \,\leq \,  \max\{ \|v\|_{\infty},1  \} \Delta^2(v).
	%	\end{equation}
	%\end{lemma}

	%To bound $G_1(t)$ for the constraint set $K$, we write it as a sum of $G_1(t)$ for the sets and . We show the second term is small by a direct argument. 
	%The main idea to bound the first term now in our proofs of Theorems $1.1$ and $1.2$ is to show that if $\theta \in K$ then the $\|(\theta - P \theta)\|_{\infty}$ is not too large and then invoking Lemma~\ref{lem:elem} and theorems blah from guntuboyina to bound $G_(t)$. 
	
	\subsection{Theorem   \ref{thm:lasso} }
	In this case, the constraint set is $$K = \{\theta \in R^n: \theta = X \beta, \|\beta\|_1 \leq L, \beta \in R^p\}$$ for a given fixed design matrix $X \in  R^{n \times p}$. Since this is a compact set, we directly use Corollary~\ref{cor:risk} to get an expectation bound. This means that we need to simply bound $RW(K)$ which can be done using standard existing results.

	\subsection{Theorem~\ref{thm4} and Theorem  \ref{thm6}}
	The PQTF estimator is not an instance of the CQSE estimator. Therefore, the proof here is necessarily different. Here, we still continue to use the general idea of viewing the PQTF estimator as a penalized M estimator and using appropriately modified versions of the symmetrization and contraction results. 
	
	However, due to the presence of the penalty term, the proofs of Theorem~\ref{thm4} and Theorem~\ref{thm6} are longer and contain additional preliminary localization arguments compared to the proofs of Theorem~\ref{thm5} and Theorem~\ref{thm2} respectively. In Theorem~\ref{thm6}, we have extensively used the recent ideas developed in~\cite{ortelli2019prediction} and adapted their argument to our quantile setting. 
	
	For the convenience of the reader, we have included a proof outline (before the formal proof) for each of our Theorems $1-4$ in the Appendix. We hope that these proof outlines convey the main ideas of our proofs and make reading our proofs easier.
	
	%\subsection{Theorem  \ref{thm6}}
	
	%\textcolor{red}{The proof of Theorem \ref{thm6} follows, at a high level, a similar  argument to that of Theorem \ref{thm4}. Some differences are the following. First, in \textit{Step 1} instead of using tools from \cite{wang2016trend} we exploit ideas from \cite{ortelli2019prediction}. Second, in \textit{Step 4} we use modified versions of   symmetrization and Talagrand-Ledoux inequality combined with ideas from \cite{ortelli2019prediction}.}
	
	\section{Experiments}

	We now proceed to  illustrate with simulations the empirical performance of quantile trend filtering. As benchmark methods, we consider the usual (mean regression) trend filtering  estimator of order $r=1$ and $r=2$  denoted  as TF1 and TF2 respectively, and quantile smoothing splines (QS) (introduced in~\cite{koenker1994quantile}) which we implement using the R package ``fields" . Notice that TF1 and TF2 only provide estimates for  $\tau =0.5$. As   for quantile  trend filtering, we consider the penalized estimator  (\ref{eqn:quantile_trend_filtering}) with orders $r=1$  and $r=2$ which we denote as PQTF1 and PQTF2 respectively. These are implemented in R via ADMM, similarly to 
	\cite{brantley2019baseline}, using the R package  ``glmgen". 
	%We also compared against quantile random forest  using the R package ``quantregForest" but we omit the results due to poor performance. {\color{red} Why is the last line necessary?}
	
	For the different trend filtering based methods we consider values of $\lambda$
	such that $\log_{10}( \lambda n^{r-1})$  is in a grid of 300 evenly spaced points  between 1 and 4.5 and we  choose their corresponding  penalty parameter to be the value that minimizes the average mean squared error over 100 Monte Carlo replicates. Here, for each instance of an estimator $\hat{\theta}$ we consider the mean squared error $\frac{1}{n}\sum_{i=1}^{n} (  \hat{\theta}_i -\theta^*_i )^2$ as a measure of its performance
	with $\theta^*$  the true vector of quantiles. Additional simulation results reporting the $\Delta_n^2(\theta^* -\hat{\theta})$ values instead of the MSE  are presented in Section \ref{sec:additonal} in the Appendix.
	%\textcolor{red}{}
	%the average MSE over 100 Monte Carlo replicates.
	%{\color{red} Should we report MSE or our loss}

	\begin{table}[t!]
		\centering
		\caption{\label{tab1}  Average mean squared error times  10,   $\frac{10}{n} \sum_{i=1}^n( \theta_i^*-\hat{\theta}_i )^2 $,  averaging over 100 Monte carlo simulations for the different methods considered. Captions are described in the text.  }
		\medskip
		\setlength{\tabcolsep}{14pt}
		\begin{small}
			\begin{tabular}{ rrrrrrrr}
				\hline
				$n$ & Scenario            &$\tau$                     & PQTF1           & PQTF2          &QS                  &TF1               & TF2     \\  
				\hline	
				10000 &1                     &    0.5                   &       0.023    &   0.08   &    0.21     &   \textbf{0.016} & 0.4\\			
				5000 &1                     &    0.5                     &  0.046       &     0.12    &  0.23       &  \textbf{0.034}&0.65  \\	
				1000 &1                     &    0.5                     &     0.18  &    0.29      &  0.32        &    \textbf{0.12}& 0.94 \\			
				\hline				
				10000 &2                    &    0.5                     &   \textbf{ 0.037} &   0.11 &   0.13 & 4917385.2  & 5743.119 \\			
				5000 &2                    &    0.5                      & \textbf{0.066 }&  0.15    &  0.17 &   25215.87          &286.45  \\	
				1000 &2                     &    0.5                      &  \textbf{0.29 }   & 0.43   & 0.45      &    354693.6      &   11522.6        \\
				\hline	
				10000 &3                    &    0.5                      & \textbf{0.015}&  0.063    &   0.17   &   2.26  &    0.95  \\			
				5000 &3                   &    0.5                      &   \textbf{0.029} &   0.092              &     0.18       &    0.14       &    0.65             \\	
				1000 &3                     &    0.5                      &      \textbf{0.13} &  0.24      & 0.26     &      2.23          &     1.04            \\				
				\hline		         
				10000 &4                    &    0.5                      &  0.045   &  \textbf{0.009} &     0.015   &      0.065      &     0.016      \\			 
				5000 &4                   &    0.5                      &   0.075     &    \textbf{0.019}      &     0.027          &   0.24             &      0.031            \\	
				1000 &4                     &    0.5                      &0.30       &    \textbf{0.082}      &   0.098       &       0.29   &           0.31         \\	
				\hline	
				10000 &5                   &    0.5                      &    0.13      &  0.056    &   \textbf{0.041}  &   61625.82         &      134.80    \\			
				5000 &5                   &    0.5                      &    0.24  &     0.099    &   \textbf{0.086}     &     1063110.0           &     877.85    \\	
				1000 &5                   &    0.5                      & 1.92    &      \textbf{0.35}    &  \textbf{0.35}     &        1443060.0         &     11531.79   \\		
				\hline			
				%10000 &6                     &    0.9                      &                &        \textbf{}          &             &         *        &            *            \\	               
				10000 &6                   &    0.9                     &      0.18     &\textbf{0.070}  &    0.075         &         *       &                  *      \\	   
				5000&6                   &   0.9                      &    0.29        &  \textbf{0.13}  &   0.14          &             *    &                 *       \\			         
				1000&6                   &   0.9                      &      1.19      &  \textbf{0.39}  &  0.41           &             *    &                 *       \\			
				10000 &6                   &   0.1                      &  0.16    & \textbf{0.065}  &  0.070        &          *       &      *                  \\			
				5000 &6                   &    0.1                     & 0.31   &\textbf{0.13}&   0.14&            *     &         *               \\	
				1000 &6                     &    0.1                      & 1.27    &  \textbf{0.46}    & 0.47    &           *      &              *          \\            
				\hline  
			\end{tabular}
		\end{small}
	\end{table}

	\begin{figure}[t!]
		\begin{center}%1.42/1.8
			\includegraphics[width=1.42in,height=1.5in]{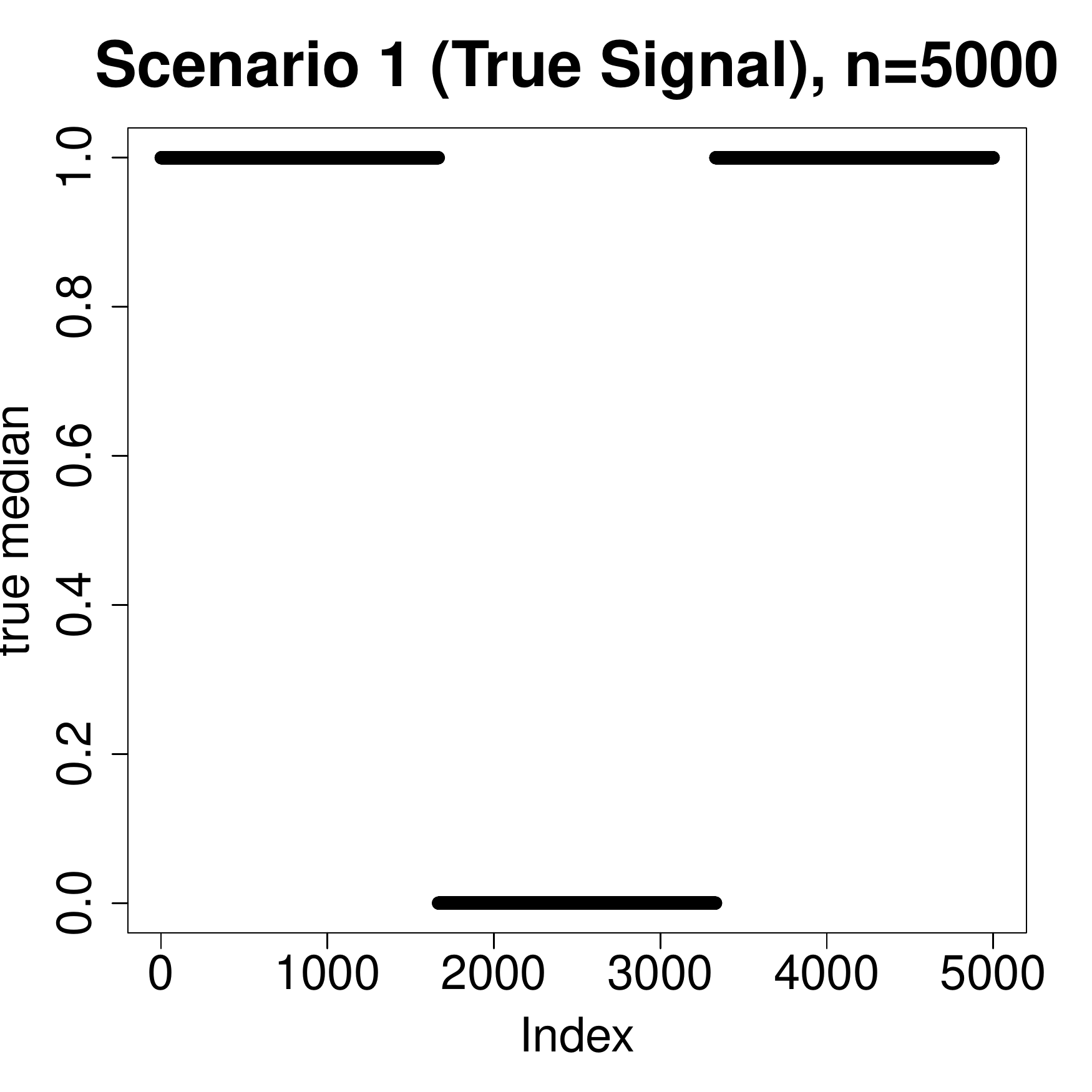} %scenario_p21n1media
			\includegraphics[width=1.42in,height=1.5in]{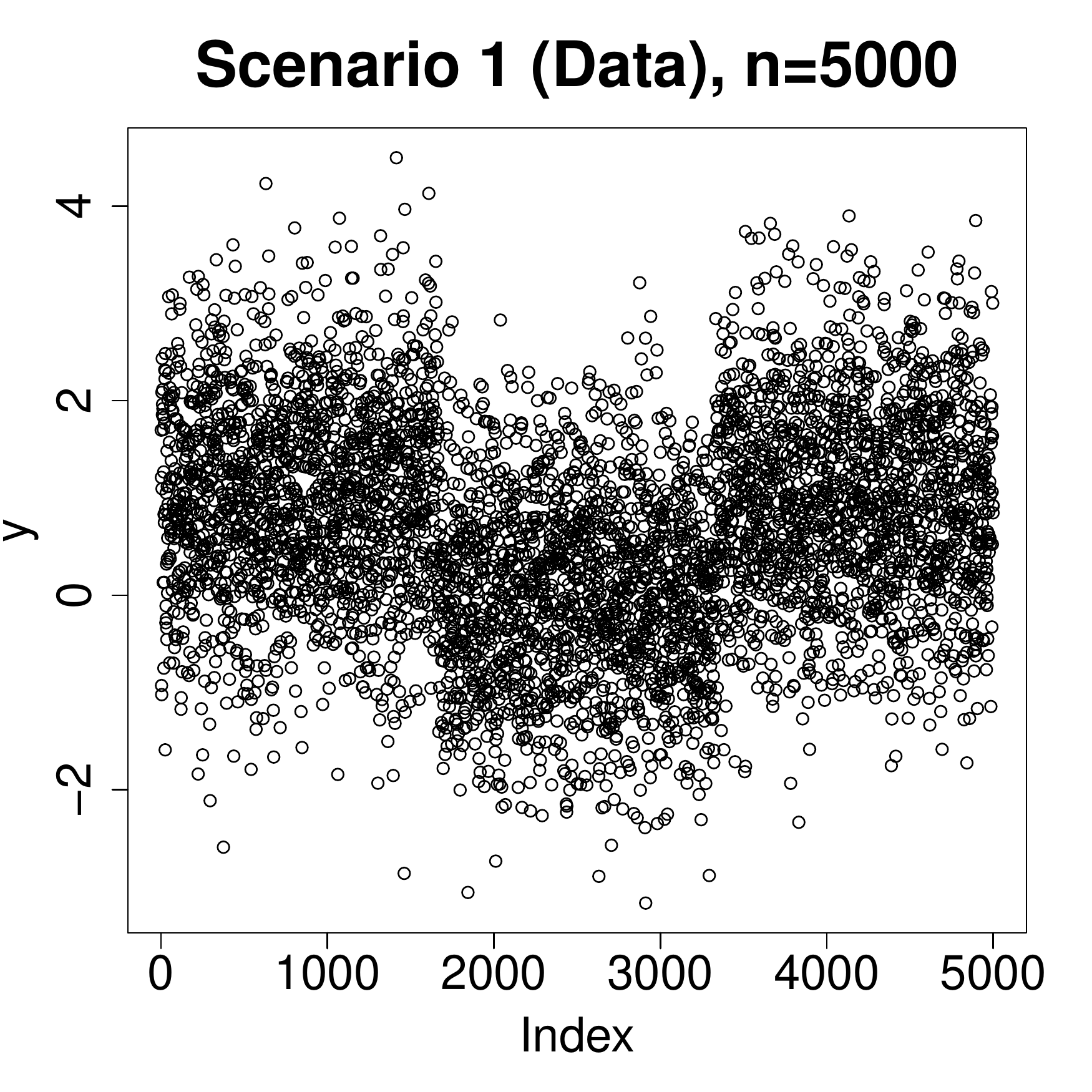} 
			\includegraphics[width=1.42in,height=1.5in]{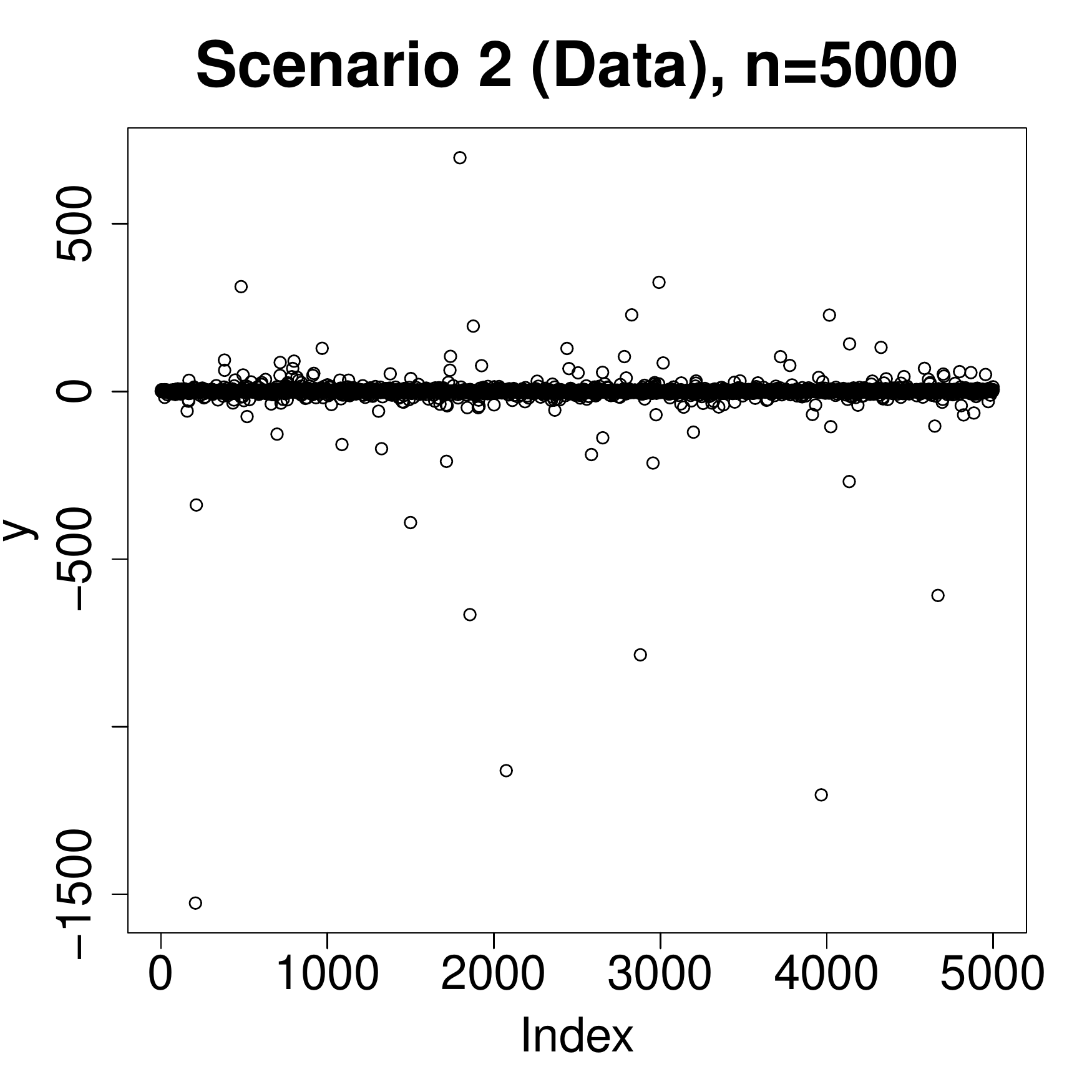} 
			\includegraphics[width=1.42in,height=1.5in]{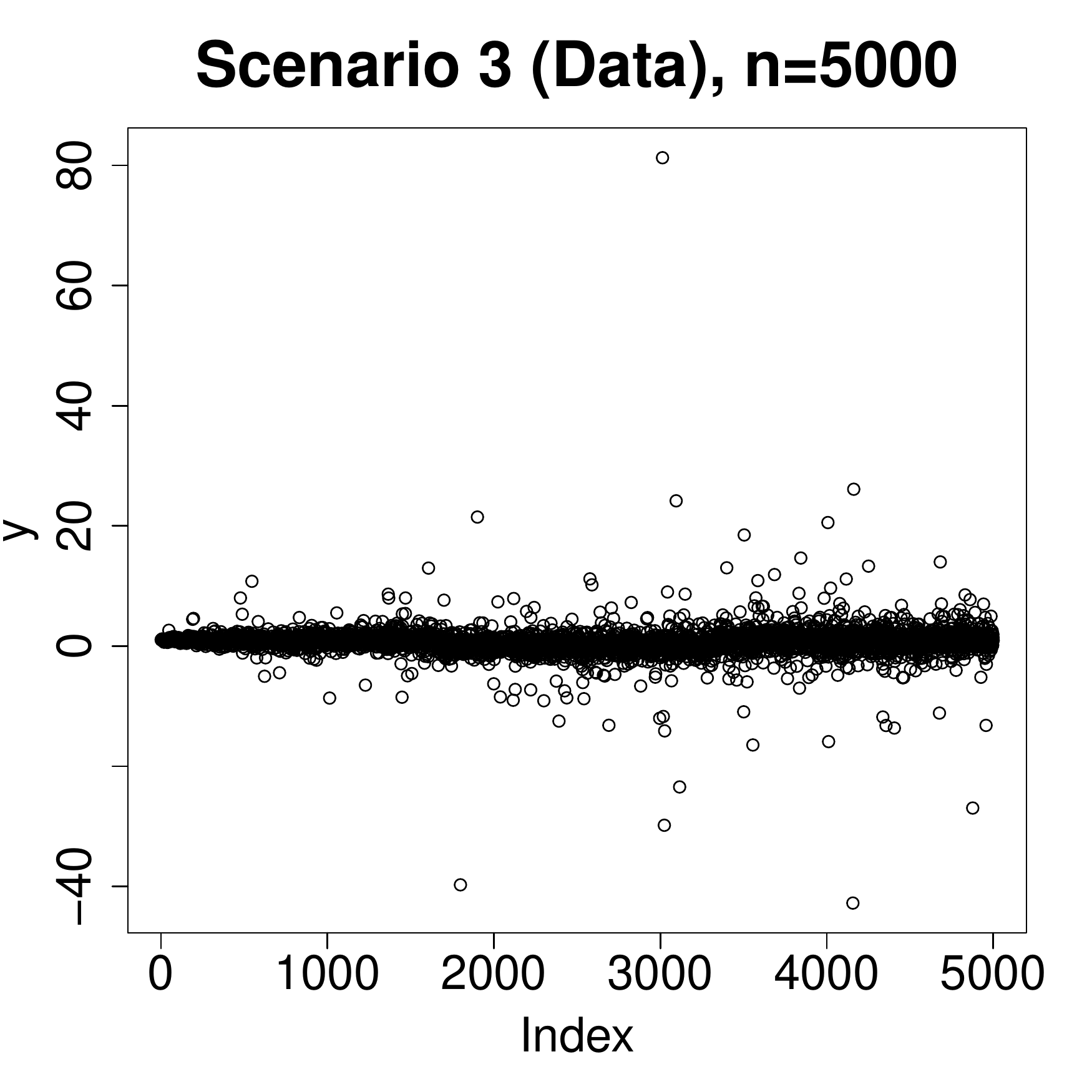} 
			\includegraphics[width=1.42in,height=1.5in]{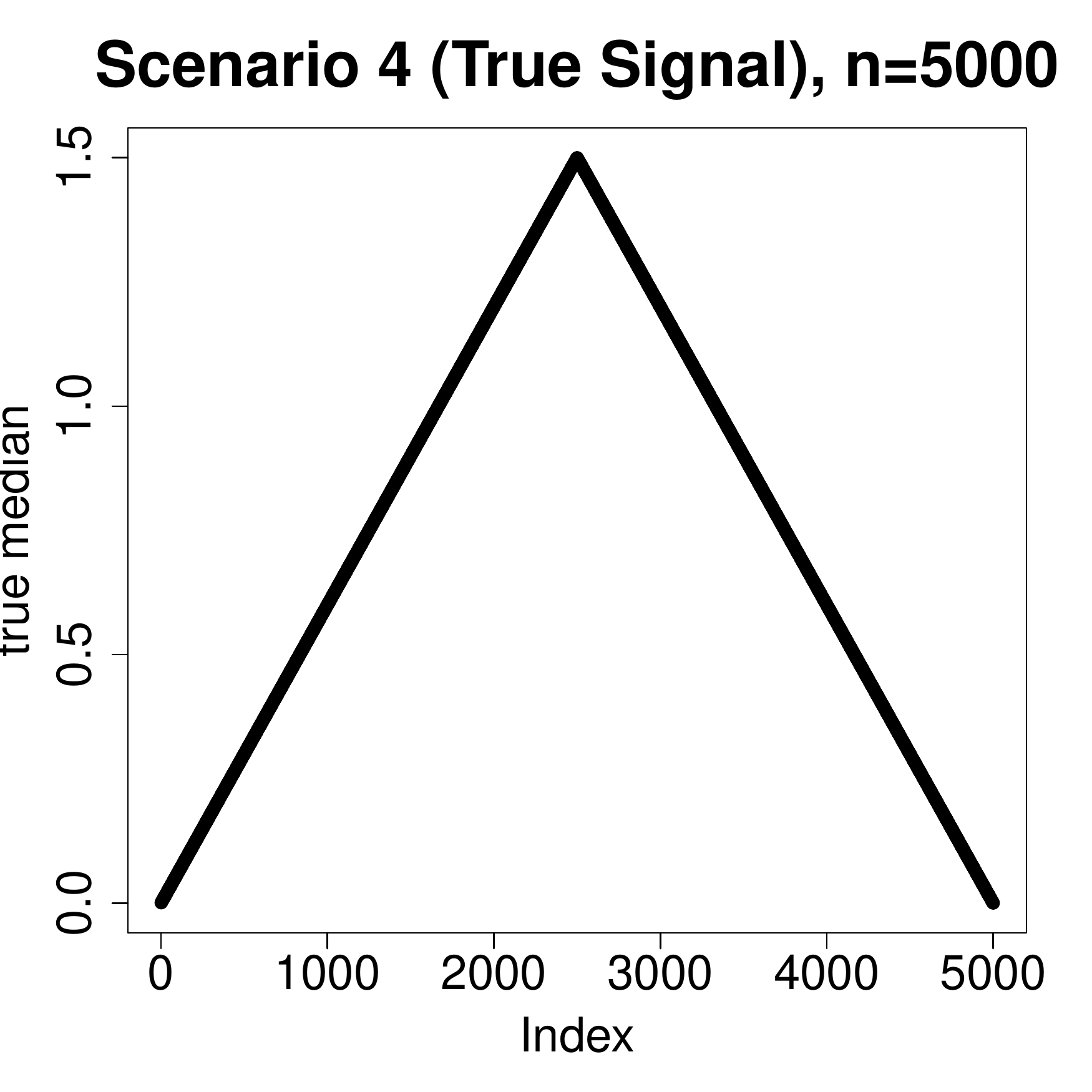} %scenario_p21n1media
			\includegraphics[width=1.42in,height=1.5in]{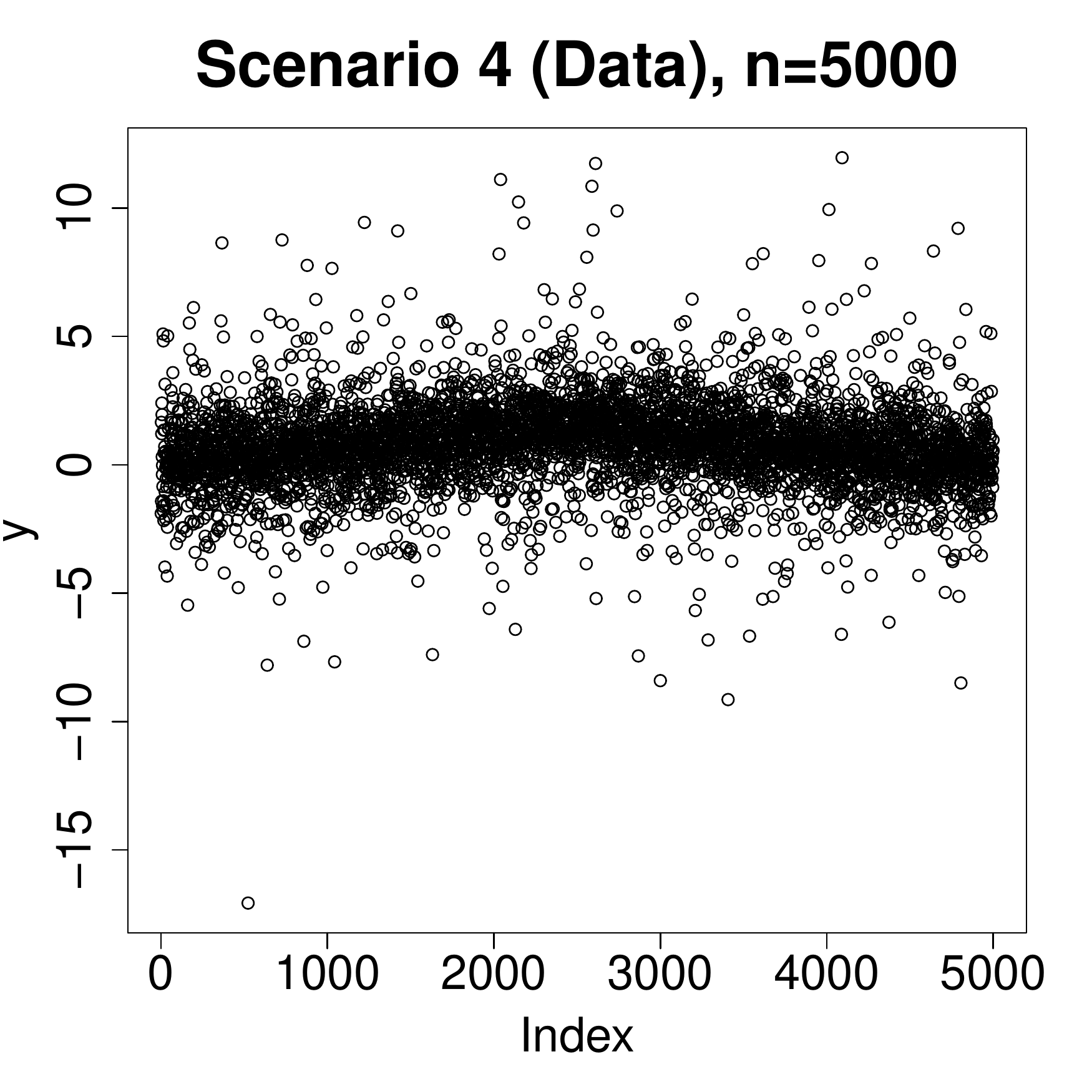} 
			\includegraphics[width=1.42in,height=1.5in]{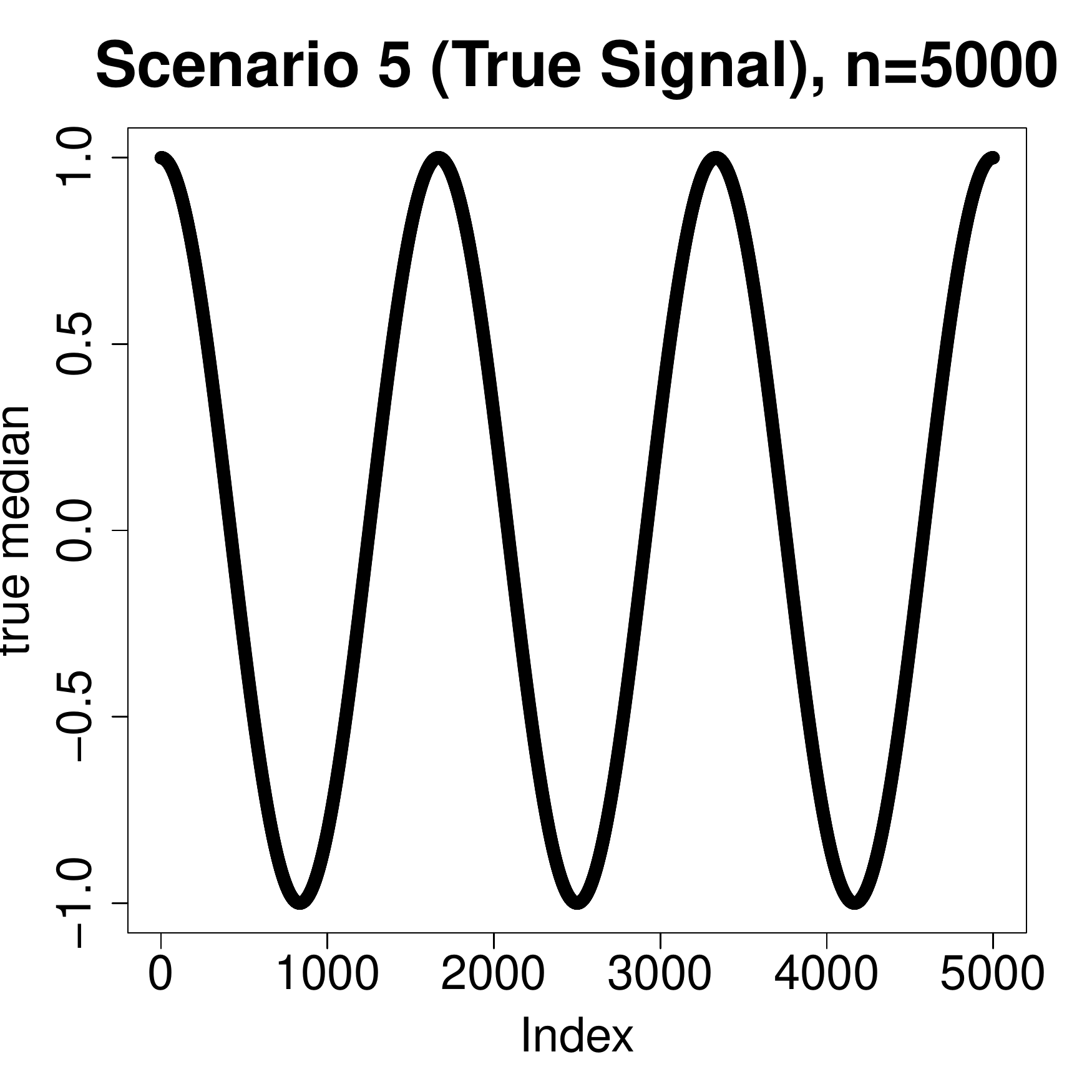} %scenario_p21n1media
			\includegraphics[width=1.42in,height=1.5in]{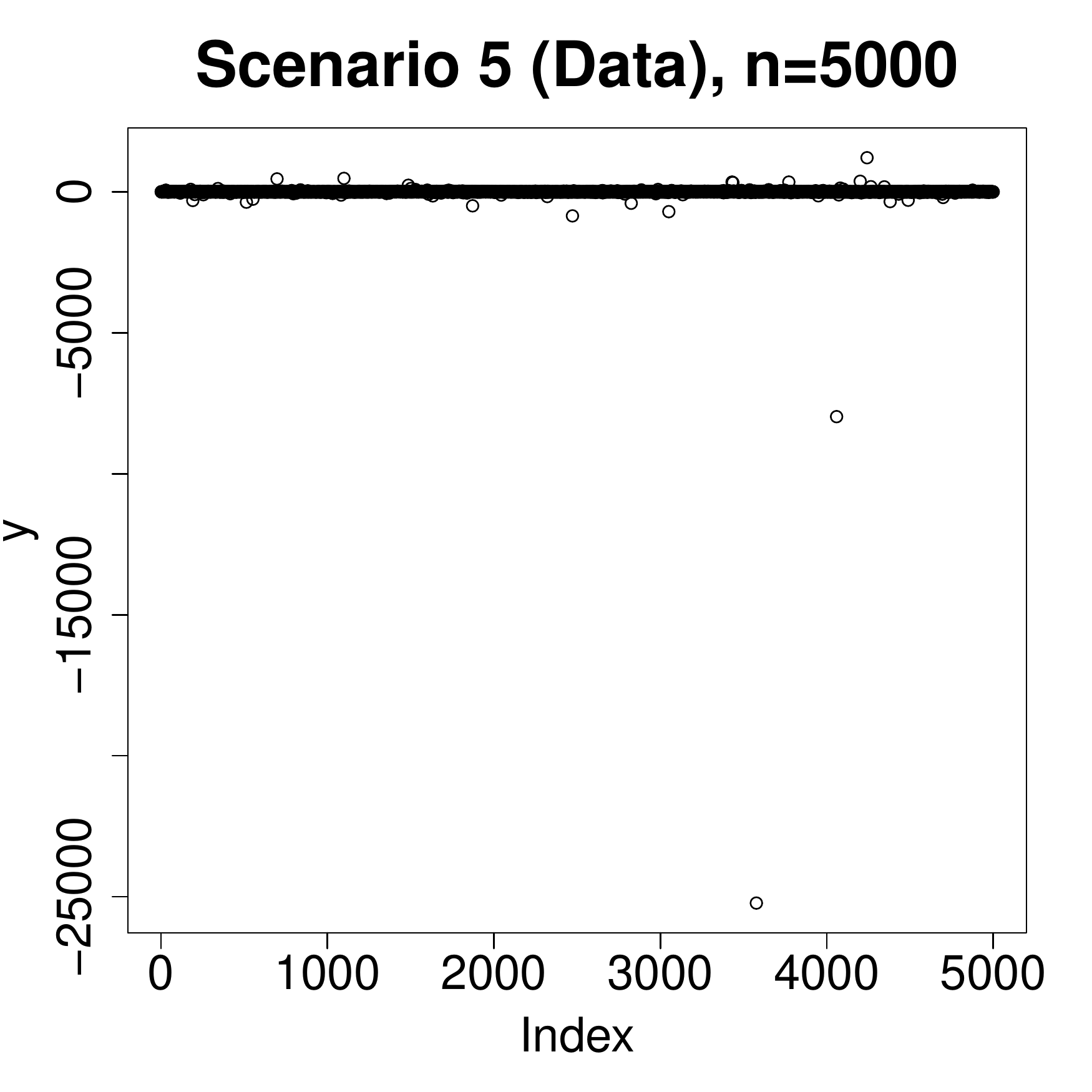} 
			\includegraphics[width=1.42in,height=1.5in]{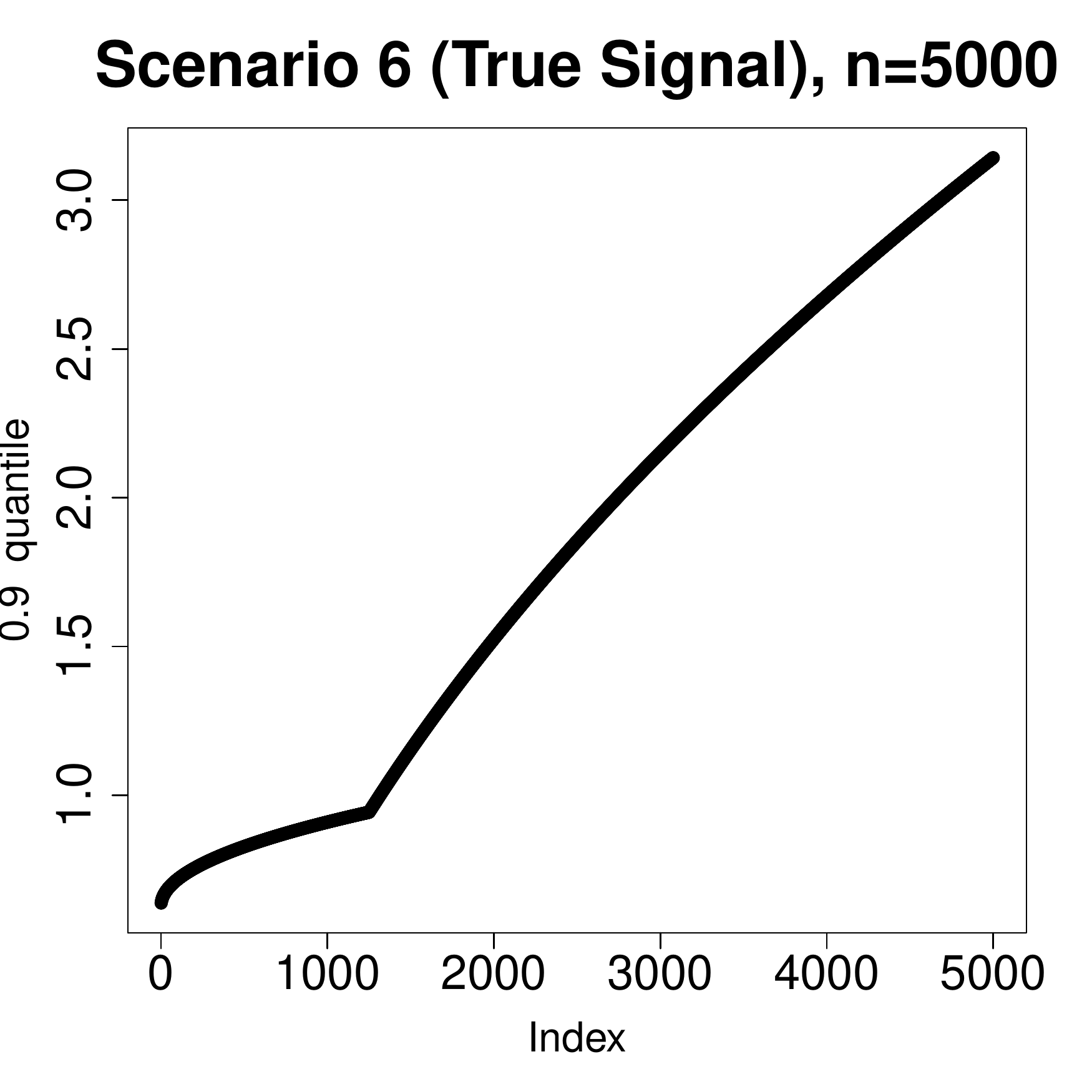} %scenario_p21n1media
			\includegraphics[width=1.42in,height=1.5in]{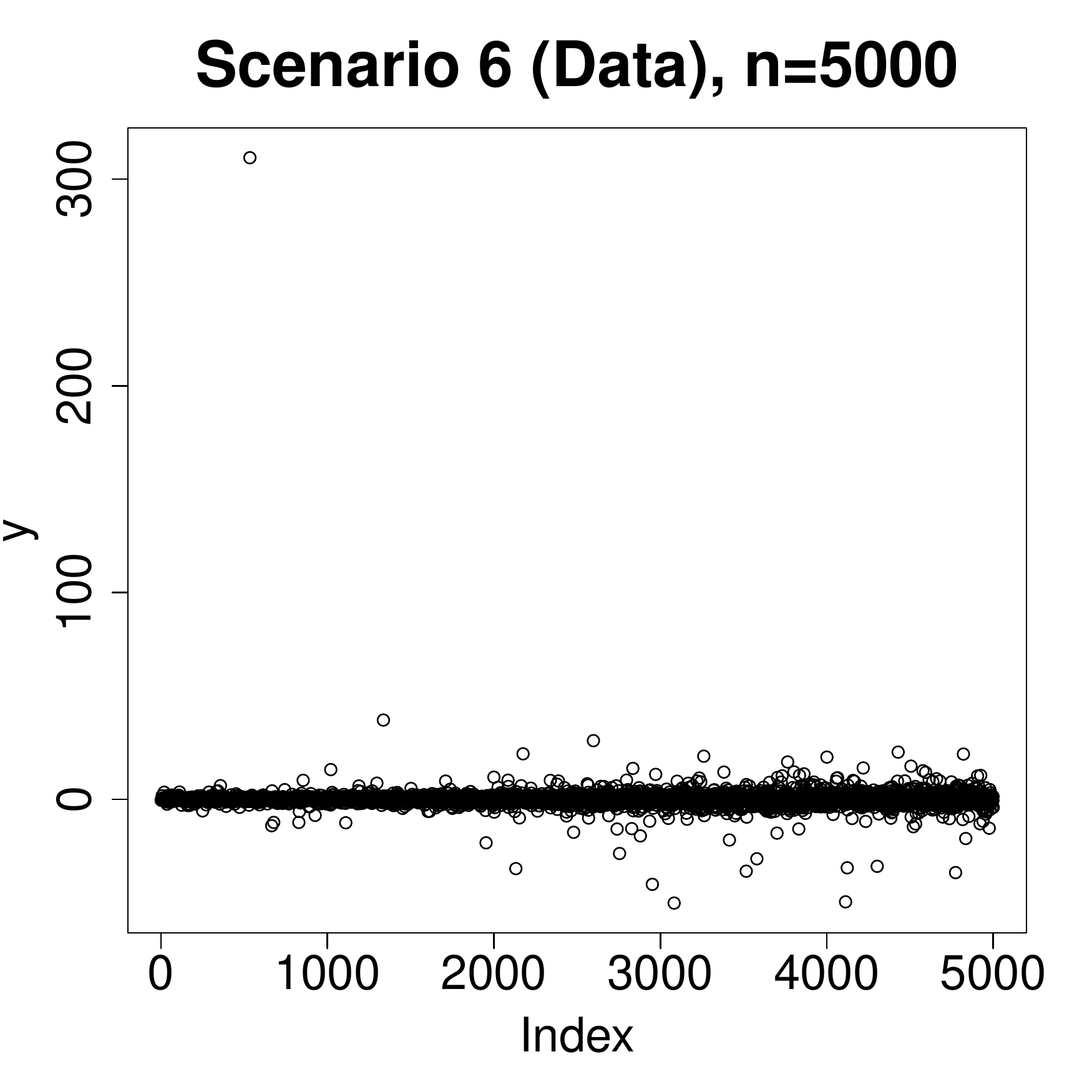} 
			%scenario_p21n1median
			%		\includegraphics[width=2in,height=2.2in]{figures/v2_scenario_p23n1median.pdf} %  		scenario1n2median
			%\includegraphics[width=2in,height=2.2in]{figures/scenario_p53n3q10.pdf} %  		
			%\includegraphics[width=2in,height=2.2in]{figures/scenario_p53n2q10.pdf} %  		
			%\includegraphics[width=2in,height=2.2in]{figures/scenario_p53n1q10.pdf} %  		
			\caption{ 		\label{fig1} The top left panel shows $\theta^*$, the true median, for Scenarios 1,2, and 3.   The next three panels in the top row correspond to data generated according to Scenarios 1, 2 and 3.  Similarly, the  middle panels show the true median curve and  instances of data for  Scenarios 4 and 5. Finally, the bottom  row shows the true quantile curve   for Scenario  6 associated with  $\tau =0.9$, and an instance of data generated according to Scenario 6. }
		\end{center}
	\end{figure}

	Next we describe  the generative models or scenarios. For each scenario we generate 100 data sets for different values of $n$ in the set $\{1000,5000,10000\}$. We then report  the average mean squared error, based on optimal tuning, of the different competing methods. In each scenario the data are generated as 
	\begin{equation}
		\label{eqn:model}
		%	 \[
		y_i    =    \theta^*_i  +  \epsilon_i,\,\,\,\,\,\,i =1,\ldots,n,
		% 	\]   
	\end{equation}
	where  $\theta^*  \in R^n$, and the errors  $\{\epsilon_i\}_{i=1}^n$ are independent with  
	$\epsilon_i  \sim   F_i$  for  some distributions $F_i$ with $i\in [n]$.  We now explain the different choices of $\theta^*$  and  $F_i$'s that we consider.
	
	\emph{Scenario 1(Piecewise Constant Quantiles, Normal Errors)}  In  this  case  we  take  $\theta^*$  to satisfy  $\theta_i^* = 1$  for  $i \in   \{  1,\ldots,\floor{n/3   } \} \cup  \{ n- 2\floor{n/3   }+1,\ldots,n \}$ and  $\theta_i^* = 0$  otherwise.   We take the  $F_i$'s  to be $N(0,1)$. Since the errors are normal and the true signal is piecewise constant, it is natural to expect that TF1 will be the best method. This is verified in Table \ref{tab1}. However, we also see that PQTF1  is a close competitor.
	
	%{\color{red} Are the the indices correct?}
	
	\emph{Scenario 2(Piecewise Constant Quantiles, Cauchy Errors)}  This is the same  as Scenario 1,  where we replace  $N(0,1)$  with $\mathrm{Cauchy}(0,1)$ errors. In this situation the errors  have no mean. As a result, TF1 and TF2  completely breakdown as shown in Table \ref{tab1}. In contrast, as expected, the quantile  methods are robust and can still provide reasonable estimates. In fact, we see that PQTF1 is the best method. This is reasonable since the true median curve is piecewise constant. The second best method is PQTF2.
	
	\emph{Scenario 3(Piecewise Constant Quantiles, Heteroscedastic t Errors)} Once again, we take  $\theta^*$ as in Scenario 1. With regards to the $F_i$'s, we set  $\epsilon_i =    i^{1/2}/n^{1/2}     v_i$, where the $v_i$'s    are independent draws from  $  t(2)$. Here  $t(2)$  denotes  the t-distribution with  $2$ degrees of freedom. The empirical performances here are similar to that of Scenario 2. Table \ref{tab1} suggests that PQTF1 is the best method followed by PQTF2.  Interestingly,  TF1 and TF2  are  not so unreasonable but their behavior seems erratic as the MSE does not  decrease with $n$. A possible explanation for this is that the errors  have mean but do not have variance.   
	
	\emph{Scenario 4(Piecewise Linear Quantiles, t Errors)} We set $\theta_i^* = 3(i/n)$,  for $i \in \{ 1,\ldots, \floor{n/2}\}$,  and $\theta_i^* = 3(1- i/n)$ for $\{\floor{n/2} +1,\ldots,n\}$.  The errors are then independent draws from $t(3)$. Since the true median curve is piecewise linear, Scenario 4  offers  a model that seems more amenable for PQTF2.  This intuition is confirmed in Table \ref{tab1}  where   PQTF2 outperforms the competitors followed by QS. %We also see in Table \ref{tab1} that  TF1 and TF2 have MSEs  that seem to decrease with $n$, perhaps due to the fact that errors have finite mean and variance. However, t

	\emph{Scenario 5(Sinusoidal Quantiles, Cauchy Errors)} The signal is taken as $\theta_i^*  = \mathrm{cos}(6\pi i/n)$  for  $i \in \{1,\ldots,n\}$.  We then generate  $\epsilon_i \sim^{ind} \mathrm{Cauchy}(0,1)$ for $i=1,\ldots,n$. Here the true median  curve   is  infinitely differentiable. Table \ref{tab1} shows that the best performance is given by QS and PQTF2 is a close second. As with the other scenarios that have Cauchy errors, TF1 and TF2  provide poor estimates.
	
	\emph{Scenario 6(Piecewise smooth quantiles, Heteroscedastic Errors)}  For our last scenario we   generate data  as $y $ as
	\[
	y_i  = \begin{cases}
		\frac{v_i( 0.25\sqrt{  (i/n)    }  +1.375)}{3}    &   \text{if}  \,\,\,  i\in \left\{1,\ldots,\floor{n/2} \right\}  \\
		\frac{v_i(7 \sqrt{  (i/n)    }  -2)    }{3}&\text{if}  \,\,\,  i\in \left\{\floor{n/2}+1,\ldots,n \right\},  \\
		%(sqrt(x[1:(n/4)])+1)*qt(tau,2)/3
	\end{cases}
	\] 
	where  the $v_i '$s are  independent draws from  $ t(2)$. Unlike the previous scenarios, Scenario 6 presents a case where the  median is constant  but the  other quantiles  change. For instance, as illustrated in Figure \ref{fig1},  the $0.9$th quantile  is  piecewise smooth  but continuous.  By the nature of Scenario 6, one would expect PQTF2 to be the  best method as the pieces of the $0.1$ and $0.9$th quantile curves can be well approximated by  linear functions. This is indeed what we find in Table \ref{tab1}.

	Finally, Figure \ref{fig1}  illustrates the true signals and one data set example for each of the  different scenarios that we consider. Overall, we see that the PQTF estimator performs well across different scenarios and under the presence of heavy tailed errors thereby supporting our theoretical findings.

	%The results in Table \ref{tab1}  show that, overall,  PQTF1 and PQTF2 outperform the competitors.   For Scenario 1 which consists of  a piecewise constant signal with Gaussian errors, as expected, we can see that TF1  is the best method. For Scenarios 2--3.  which have a piecewise constant median but heavy tail errors, the best method is  PQTF1. For Scenario 5, a model with a smooth median, the best method is quantile splines. Finally,
	%for Scenarios 4 and 6 we observe that PQTF2 outperforms the competitors. This  is reasonable since in such  scenarios $\theta^*$  is or can be well approximated by  a piecewise linear signal.
	
	%{\color{red} Why did we choose these plots?}

	\section{Discussion}
	\label{sec:discussion}

	To summarize, in this paper we have studied  quantile trend filtering and some other quantile regression methods. Our risk adaptive bounds generalize  previous work to quantile setting. The main advantage of our results is that they hold under very general conditions without requiring moment conditions and allowing for heavy-tailed  distributions. We now discuss some issues related to our work in this paper.

	Unlike trend filtering with sub-Gaussian errors, our risk bounds are based on   $\Delta(\cdot)$ instead of the squared error loss function $\|\cdot\|$.  In general, it is the case that the former is smaller. It is a natural question whether our results also hold under squared error loss. One thing we can say is that when the set $K$ in the constrained quantile sequence model is contained in an $\ell_{\infty}$ ball whose radius does not grow with $n$, then our convergence rates based on $\Delta(\cdot)$  also hold under $\|\cdot\|$. This is because $\|\cdot\|$ and  $\Delta(\cdot)$ are equivalent up to constants when evaluated in a compact set.

	We have often stated that our convergence rates are minimax rate optimal. To clarify on this let us consider the case of Theorem \ref{thm5}. As  \cite{nussbaum1985spline}  showed (see the discussion in \cite{tibshirani2014adaptive}), there exists a constant $c > 0$ such that
	\begin{equation}
		\label{eqn:minmax_lower_bound}
		\underset{ \hat{\theta}  }{\inf}\,\,\underset{\theta^*   \,:\,  \mathrm{TV}^{(r)}(\theta^*)\leq V,\,\,\,  \|\theta^*\|_{\infty}\leq 1    }{\sup}\, E\left(     \frac{1}{n} \|  \hat{\theta}   -  \theta^* \|^2   \right)  \,\geq\, c  \left(\frac{V}{n}\right)^{2r/(2r+1)} 
	\end{equation}
	where the inifimum is taken over all estimators  and the $y_i$'s are independent draws from $N(\theta^*_i,  \sigma^2 )$ for a known $\sigma$.  Since the parameter space is within the $\ell_{\infty}$ ball of radius $1$, therefore the left hand side in (\ref{eqn:minmax_lower_bound}) equals the following up to a constant
	\[
	\underset{ \hat{\theta}  }{\inf}\,\,\underset{\theta^*   \,:\,   \mathrm{TV}^{(r)}(\theta^*)\leq V,\,\,\,  \|\theta^*\|_{\infty}\leq 1    }{\sup}\, E\left\{   \Delta_n^2\left(\hat{\theta}   -  \theta^*\right)   \right\}.    
	\]
	It now follows that  the rates in Theorem \ref{thm5} and Theorem \ref{thm4} (up to log factors) are minimax  in the sense that they match the rate in (\ref{eqn:minmax_lower_bound}). Similarly, it can be seen that the rate in Theorem  \ref{thm:2dtv} is minimax up to log factors in that sense that it matches the minimax rates of mean  estimation with sub-Gaussian noise in the class of 2D bounded variation signals, see \cite{hutter2016optimal,chatterjee2019new}.
	
	%	s minimax in the sense that such rate holds for abitrary  distributions and it matches the rate in (\ref{eqn:minmax_lower_bound}).}

	One natural extension of our work is to consider  estimation of  multiple quantiles with  trend filtering. This  can be formulated as follows.
	%\begin{remark}
	%\label{remark1}
	Let  $\Lambda \subset  (0,1) $  be a finite  set and  consider the  estimator
	\begin{equation}
		\label{eqn:multiples}
		\begin{array}{llll}
			\{\hat{\theta}^{(r)}_\tau \}  &  \,=\,&   	\underset{  \{\theta(\tau)\}_{\tau  \in \Lambda } \subset R^n }{\arg \min  } &     \displaystyle   \sum_{\tau \in   \Lambda} \sum_{i=1}^{n} \rho_{\tau}\{y_i -   \theta_i(\tau)\} ,   \\
			&&	\text{subject to } &   \mathrm{TV}^{(r)}\left\{ \theta(\tau) \right\} \leq  V(\tau),\,\,\,\forall  \tau \in  \Lambda \\
			&	&&    \theta(\tau) \leq  \theta(\tau^{\prime})   ,\,\,\,\,\,\forall \tau < \tau^{\prime} ,   \,\,\,\,\tau , \tau^{\prime} \in \Lambda,
		\end{array} 
	\end{equation}
	where $\{V(\tau)\}$ are tuning parameters for  $\tau \in \Lambda$. Let $\theta_i^*(\tau)   $  be a true $\tau$th quantile sequence for each $\tau \in \Lambda$. If \textbf{Assumption} A holds for each $\theta^*(\tau)$ instead of  $\theta^*$,  then  one can show using similar arguments as in the proof of Theorem \ref{thm5} that
	\[
	\sum_{\tau \in \Lambda}\Delta_n^2\left\{  \theta^*(\tau)  -\hat{\theta}^{(r)}_\tau   \right\}    =O_{\mathrm{pr} }\left\{    n^{   -2r/( 2r+1)   }    \right\},
	\] 
	provided that $V(\tau)\geq    \mathrm{TV}^{(r)}\left( \theta(\tau)^* \right)  = O(1)$ and $V(\tau) = O(1)$ . This is an extension of the upper bound in  Theorem \ref{thm5} to the case where we are estimating finitely many quantiles simultaneously. However, it might be of interest to consider the case when the number of quantiles to be estimated is allowed to grow with $n$. We leave this for future investigation. %we are not aware of how to handle the case where size of $\Lambda$ grows with $n$.  We also  notice that without  the monotonicity constraint the   estimator (\ref{eqn:multiples})   would be decomposed into independent problems. 
	
	%	This shows that the upper bound in  Theorem \ref{thm5} translates to the multiple quantile setting. A similar phenomenon also holds for all other constrained estimators  results that we provide. Hence, for simplicity we focus on the analysis of single quantiles.
	%In fact, a similar  result holds  for all the reminder constrained estimators upper bounds that we provide. 
	%\end{remark}

	%In this paper we have proved the \textit{fast rate} bound for the CQTF estimator but not the PQTF estimator. Recently, \cite{ortelli2019prediction}  showed that the \textit{fast rate} bound holds (with extra log factors) for the usual penalized trend filtering estimator for  $r \in \{1,2,3,4\}$. It is an interesting question as to whether the results of~\cite{ortelli2019prediction} can be generalized for the PQTF estimators. We think that our proofs need to be modified significantly for this and therefore this is out of scope of the current paper. 
	
	With regards to our results on both the CQTF and PQTF estimators, all of our bounds are $O_{\mathrm{pr}}(\cdot)$ statements. It would be interesting to attempt to translate these results to expectation or high probability bounds on the estimation error measured with $\Delta_n^2(\cdot)$. Our guess is that if we allow heavy tailed errors such as the Cauchy distribution then the expectation of $\Delta_n^2(\hat{\theta} - \theta^*)$ may not even exist. More investigations need to be done on how heavy the errors can be while ensuring in expectation or high probability bounds for $\Delta_n^2(\hat{\theta} - \theta^*).$

	Since we give a general bound for the convex constrained quantile sequence estimation problem it would also be interesting to investigate whether our proof technique can be used in shape constrained quantile problems such as isotonic regression (see~\cite{chatterjee2015risk}) and convex regression (see~\cite{guntuboyina2015global}).
	
	%However, there are some limitations to our work. For instance,  two frameworks  where our machinery falls short are  isotonic and convex regression. In such settings, we are not  able to extend  to quantile regression the results from   and . %Another setting where our work does not extend existing 

	%Recently, \cite{ortelli2019prediction}  showed a similar bound, with extra log factors, that holds  for the penalized trend filtering estimator for  $r \in \{1,2,3,4\}$. It is an interesting question as to whether the results of \cite{ortelli2019prediction} can be generalized for the PQTF estimators. We do not address such question in this paper. 
	
	%Furthermore,  we emphasize that  when it comes to fast rates of convergence, estimation in the class of piecewise polynomial signals, we have only presented  nearly minimax guarantees for the constrained version of quantile trend filtering.  One potential way to prove the same for the penalized estimator could be to exploit some of the results from \cite{ortelli2019prediction}. However, our proof technique would have to be significantly modified and it goes beyond the scope of this paper.

	It is worthwhile to mention that trend filtering can be generalized for general graphs as was proposed by \cite{wang2016trend} which included theoretical and computational developments. In the particular case of the fused lasso on general graphs, several recent works \cite{padilla2017dfs,madrid2020adaptive,ortelli2019synthesis} have studied its risk properties. It will be interesting to investigate whether these types of results can be extended to the quantile setting.

	Finally, the \textit{Dyadic CART} estimator; orginally proposed in~\cite{donoho1997cart}, has been shown to enjoy computational and certain statistical advantages over trend filtering while nearly maintaining all its known theoretical guarantees; see~\cite{chatterjee2019adaptive}. It would be also be interesting to develop quantile versions of Dyadic CART as an alternative to quantile trend filtering. 
	% {\color{red} Move this para to the discussion setting?}

\section*{Acknowledgement}

The authors  thank Ryan Tibshirani for  helpful and stimulating conversations.
%Acknowledgements should appear after the body of the paper but before any appendices and be as brief as possible
%subject to politeness. Information, such as contract numbers, of no interest to readers, must
%be excluded.
	\appendix

\section{Proof of Proposition~\ref{prop:basic}}\label{sec:proofs}

\subsection{Lemmas Required for Proof of Proposition \ref{prop:basic} }

We first recall the following well known fact bounding Rademacher Width by Gaussian Width; e.g see Page 132 in \cite{wainwright2019high}. %well known fact that will be used  in our proofs. 

\begin{lemma}
	\label{radamacher_width}
	% and also \cite{tomczak1989banach,bartlett2002rademacher}). For a set  $K \subset R^n$
	%, the Rademacher  width of $K$ is defined as
	%	
	%	\[
	%	RW(K) =        E\left(    \underset{v \in K}{\sup}     \sum_{i=1}^{n}  \xi_i  v_i    \right),
	%\]
	%where  $\xi_1,\ldots, \xi_n$  are independent  Rademacher  random variables. Similarly, the Gaussian width  of  $K$ is defined as 
	%\[
	%GW(K) =        E\left(    \underset{v \in K}{\sup}     \sum_{i=1}^{n}  z_i  v_i    \right),
	%\]
	%where  $z_1,\ldots, z_n$  are independent  standard normal random variables.  With this notation, 
	We have 
	\begin{equation}
		\label{eqn:Gaussian_width}
		RW(K)     \leq     \left(   \frac{\pi}{2}   \right)^{1/2} GW(K) 
	\end{equation}
	where 
	\[
	GW(K) =        E\left(    \underset{v \in K}{\sup}     \sum_{i=1}^{n}  z_i  v_i    \right),
	\]
	for $z_1,\ldots, z_n$   independent  standard normal random variables.  
\end{lemma}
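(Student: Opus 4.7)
The plan is to use the standard symmetrization identity between Gaussian and Rademacher averages, exploiting the representation of a standard normal variable as the product of its sign and its absolute value. The key observation is that $z_i \stackrel{d}{=} \xi_i |z_i|$, where $\xi_i$ is a Rademacher variable independent of $|z_i|$; this is immediate from the symmetry of the standard normal distribution about zero.

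First, I would introduce, on a possibly enlarged probability space, independent Rademacher variables $\xi_1, \ldots, \xi_n$ that are independent of $z_1, \ldots, z_n$. By the representation above, the vector $(\xi_1 |z_1|, \ldots, \xi_n |z_n|)$ has the same joint distribution as $(z_1, \ldots, z_n)$, so
\[
GW(K) \,=\, E\!\left[\sup_{v \in K} \sum_{i=1}^n \xi_i |z_i| v_i\right].
\]

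Next, I would define, for $a = (a_1, \ldots, a_n) \in R^n$ with $a_i \geq 0$,
\[
G(a) \,=\, E_\xi\!\left[\sup_{v \in K} \sum_{i=1}^n a_i \xi_i v_i\right].
\]
For each fixed realization of $\xi$, the map $a \mapsto \sup_{v \in K} \sum_{i=1}^n a_i \xi_i v_i$ is a supremum of linear functions of $a$ and is therefore convex; averaging over $\xi$ preserves convexity, so $G$ is a convex function on the positive orthant. Conditioning on $(|z_1|, \ldots, |z_n|)$ and applying Jensen's inequality would then yield
\[
GW(K) \,=\, E\!\left[G(|z_1|, \ldots, |z_n|)\right] \,\geq\, G(E|z_1|, \ldots, E|z_n|).
\]

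Finally, I would use the well-known identity $E|z_i| = \sqrt{2/\pi}$ together with the positive homogeneity $G(c, \ldots, c) = c \cdot RW(K)$ for $c \geq 0$ to conclude $GW(K) \geq \sqrt{2/\pi}\, RW(K)$, which rearranges to the claimed bound. Since this is a classical fact, I do not anticipate any real obstacle; the only point that merits care is the verification that $G$ is convex on the positive orthant so that Jensen's inequality can be invoked, and this follows directly from the supremum-of-linear-functions structure.
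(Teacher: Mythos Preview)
Your proof is correct and is the standard argument for this classical inequality. The paper itself does not give a proof: it simply states the lemma as a well-known fact and cites Page~132 of Wainwright (2019). Your symmetrization-plus-Jensen approach, exploiting $z_i \stackrel{d}{=} \xi_i|z_i|$, convexity of $a \mapsto E_\xi\bigl[\sup_{v\in K}\sum_i a_i\xi_i v_i\bigr]$, and $E|z_i| = \sqrt{2/\pi}$, is exactly the argument one finds in such references, so there is nothing to add.
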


%We now prove lemmas that hold  for a general constraint set  $K$.  Throughout this section  $K$  is a subset of $R^n$. We start  by introducing some notation. 
%First, we denote by $F_{y_i}$  the cumulative distribution function of $y_i$. Next, we recall some definitions.
%{\color{red} Where is $F_{y_i}$ used?}

We now recall some definitions. 
\begin{definition}
	The function  $\Delta^2  \,:\, R^n\rightarrow  R$  is defined as%
	%	\[%
	%	 and where
	\[
	\displaystyle 	  \Delta^2(\delta) \,:=\,  \sum_{i=1}^{n} \min\{ \vert  \delta_i \vert, \delta_i^2 \}.
	\]
	We also write  $\Delta(\delta)  =   \{\Delta^2(\delta) \}^{1/2}$.
	%	\] 
\end{definition}

\begin{definition}
	
	\label{def1}
	
	%	\label{def1}
	We define the empirical loss function \[
	\displaystyle	\hat{M}(\theta) =    \sum_{i=1}^{n} \hat{M}_{i}(\theta_i),
	\]
	where
	\[
	\hat{M}_{i}(\theta_i) =  \rho_{\tau}(y_i -  \theta_i)    -  \rho_{\tau}(y_i -  \theta_i^*) .
	\]
	Setting $M_{i}(\theta_i) = E\{\rho_{\tau}(y_i -  \theta_i)    -  \rho_{\tau}(y_i -  \theta_i^*) \}$,  the population  version of $\hat{M}$ becomes
	\[
	\displaystyle	M(\theta) =    \sum_{i=1}^{n} M_{i}(\theta_i) .
	\]
\end{definition}

Notice that in the previous definition the functions  $M$ and $\hat{M}$ depend  on $n$ and  $\theta^*$ but we omit making this dependence explicit for simplicity.

With the notation from Definition \ref{def1}, we  consider the $M$-estimator

\begin{equation}
	\label{eqn:constrained_estimator}
	\displaystyle \hat{\theta}   =  \begin{array}{ll}
		\underset{  \theta \in R^{n} }{\arg \min}&    \hat{M}(\theta)  \\
		\text{subject to}&      \theta \in K,
	\end{array}
\end{equation}
and  $\theta^* \in \arg\min_{\theta \in  R^n}   M(\theta)$.  Throughout, we assume that $\theta^* \in K   \subset R^n$ where $K$ denotes a general constraint set throughout this section.

\begin{lemma}
	\label{lem1}
	With the notation from  before,
	\begin{equation}
		\label{eqn:fbound}
		M(\hat{\theta}) \, \leq\,    \underset{v \in K}{\sup}\,\left\{M(v) - \hat{M}(v)\right\}.
	\end{equation}
	%M(\theta^*)
\end{lemma}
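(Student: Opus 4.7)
The plan is a direct basic-inequality argument, exactly in the spirit of the standard M-estimation starting point. The three ingredients are: (i) the fact that both $\hat{M}$ and $M$ are normalized so that evaluation at the true parameter $\theta^*$ gives zero; (ii) the optimality of $\hat{\theta}$ over $K$; and (iii) the observation $\hat{\theta}\in K$, which lets us pass to a supremum over $K$.

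First I would note that $\hat{M}(\theta^*)=\sum_{i=1}^{n}[\rho_\tau(y_i-\theta^*_i)-\rho_\tau(y_i-\theta^*_i)]=0$ term-by-term, and similarly $M(\theta^*)=0$ by taking expectations. Since $\theta^*\in K$ and $\hat{\theta}\in\arg\min_{\theta\in K}\hat{M}(\theta)$, optimality gives
\[
\hat{M}(\hat{\theta})\ \leq\ \hat{M}(\theta^*)\ =\ 0.
\]

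Next I would do the standard add-and-subtract trick on $M(\hat{\theta})$, writing
\[
M(\hat{\theta})\ =\ \bigl[M(\hat{\theta})-\hat{M}(\hat{\theta})\bigr]+\hat{M}(\hat{\theta})\ \leq\ M(\hat{\theta})-\hat{M}(\hat{\theta}),
\]
using the inequality $\hat{M}(\hat{\theta})\le 0$ just established. Finally, since $\hat{\theta}\in K$, we may upper bound the right-hand side by the supremum of $M(v)-\hat{M}(v)$ over $v\in K$, yielding the claimed inequality~\eqref{eqn:fbound}.

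There is no real obstacle here; the only care needed is to verify the normalization $\hat{M}(\theta^*)=M(\theta^*)=0$, which is immediate from Definition~\ref{def1}, and to invoke $\theta^*\in K$ so that $\theta^*$ is feasible for the optimization defining $\hat{\theta}$. Note that minimality of $\theta^*$ for the population objective $M$ over $\mathbb{R}^n$ is not even used in this lemma; it will be used later, together with~\eqref{eq:losslb}, to translate bounds on $M(\hat{\theta})$ into bounds on $\Delta^2(\hat{\theta}-\theta^*)$.
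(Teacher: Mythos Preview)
Your proof is correct and follows essentially the same approach as the paper: add and subtract $\hat{M}(\hat{\theta})$, use $\hat{M}(\hat{\theta})\le \hat{M}(\theta^*)=0$ from optimality, and pass to the supremum over $K$. The paper's version is slightly terser but the argument is identical.
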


\begin{proof}
	\[
	\begin{array}{lll}
		M(\hat{\theta}) &=& 	M(\hat{\theta}) -    \hat{M}(\hat{\theta})  + \hat{M}(\hat{\theta})  \\
		&\leq& M(\hat{\theta}) -    \hat{M}(\hat{\theta})  \\
		%  &=  & M(\hat{\theta}) -    \hat{M}(\hat{\theta})  +  \hat{M}(\theta^*)     - M(\theta^*) \\
		& \leq &     \underset{v \in K}{\sup}\,\,\left\{M(v) - \hat{M}(v)\right\},
	\end{array}
	\]
	where the first  inequality follows since   $\hat{M}(\hat{\theta})   \leq 0 $.
\end{proof}

Next,  we proceed to bound the right hand side of Equation  \ref{eqn:fbound} by the standard technique of symmetrization.

\begin{lemma}
	\label{lem3}
	%\label{eqn:symmetrization}
	(Symmetrization).	It holds that
	\[
	\displaystyle E\left[  \underset{v \in K}{\sup}\,\,\left\{M(v) - \hat{M}(v)\right\}\right]\leq   2 \,E\left\{   \underset{v \in K}{\sup}\,\,  \sum_{i=1}^{n}  \xi_i \hat{M}_{i}(v_i)   \right\}, 
	\]
	where  $\xi_1,\ldots,\xi_n$ are   independent  Rademacher variables  independent  of  $\{y_i\}_{i=1}^n$.
\end{lemma}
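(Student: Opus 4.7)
The plan is to implement the textbook symmetrization argument from empirical process theory, making only small adaptations to the quantile check-function setting. First I would introduce an independent ``ghost'' sample $\{y_i'\}_{i=1}^n$ with the same joint distribution as $\{y_i\}_{i=1}^n$ and independent of the Rademacher variables $\{\xi_i\}$. In parallel to Definition~\ref{def1}, set $\hat{M}_i'(\theta_i) = \rho_\tau(y_i' - \theta_i) - \rho_\tau(y_i' - \theta_i^*)$ and $\hat{M}'(\theta) = \sum_{i=1}^n \hat{M}_i'(\theta_i)$. Writing $E'$ for the expectation taken only with respect to the ghost sample, the key identity is $M(v) = E'\{\hat{M}'(v)\}$ for every fixed $v \in \mathbb{R}^n$.

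Next, I would use Jensen's inequality to pull $E'$ outside the supremum:
\[
E\!\left[\sup_{v \in K}\{M(v) - \hat{M}(v)\}\right] = E\!\left[\sup_{v \in K} E'\{\hat{M}'(v) - \hat{M}(v)\}\right] \leq E\!\left[\sup_{v \in K}\{\hat{M}'(v) - \hat{M}(v)\}\right],
\]
where the right-hand $E$ is now the joint expectation over the original and ghost samples.

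For the randomization step, for each coordinate $i$ and each fixed $v_i$ the random variable $\hat{M}_i'(v_i) - \hat{M}_i(v_i)$ is symmetric about $0$, since swapping $y_i$ with $y_i'$ is distribution-preserving (they are i.i.d.) and flips its sign. Because the $n$ coordinates are independent, these swaps can be performed simultaneously and independently across $i$, so the joint distribution of $\{\hat{M}_i'(v_i) - \hat{M}_i(v_i)\}_{v \in K}$ coincides with $\{\xi_i(\hat{M}_i'(v_i) - \hat{M}_i(v_i))\}_{v \in K}$. Therefore
\[
E\!\left[\sup_{v \in K}\{\hat{M}'(v) - \hat{M}(v)\}\right] = E\!\left[\sup_{v \in K} \sum_{i=1}^n \xi_i\{\hat{M}_i'(v_i) - \hat{M}_i(v_i)\}\right].
\]

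Finally, I would split by sublinearity of the supremum, $\sup(a-b) \leq \sup a + \sup(-b)$, into two terms. Each term equals $E\{\sup_{v \in K}\sum_i \xi_i \hat{M}_i(v_i)\}$: one by the distributional identity of $\hat{M}$ and $\hat{M}'$, and the other because $\xi_i$ and $-\xi_i$ are equidistributed. Summing the two contributions gives the factor of $2$ and completes the argument. There is no real obstacle here; the only point worth flagging is integrability of the suprema, which is guaranteed by the remark preceding this lemma that $|\hat{M}_i(\theta_i)| \leq |\theta_i - \theta_i^*|$, so all expectations are finite whenever $K$ has bounded coordinates (which can be arranged by the usual truncation or by treating $K$ as a separable set so that the suprema are measurable).
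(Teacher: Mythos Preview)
Your proposal is correct and follows essentially the same approach as the paper: introduce an independent ghost sample, use Jensen's inequality to move its expectation outside the supremum, exploit the sign-symmetry of the difference to insert Rademacher multipliers, and then split the supremum into two identically distributed pieces to obtain the factor of $2$. The paper's proof is the same textbook symmetrization with only notational differences (it writes $\tilde{y}_i$, $\tilde{M}_i$ where you write $y_i'$, $\hat{M}_i'$) and does not separately discuss integrability.
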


\begin{proof}
	Let  $\tilde{y}_1,\ldots,\tilde{y}_n$  be an independent and identically distributed   copy  of  $y_1,\ldots,y_n$, and  let  $\tilde{M}_{i}$  the version of  $\hat{M}_{i}$ corresponding to  $\tilde{y}_{1},\ldots,\tilde{y}_n$.
	Then,
	\begin{equation*}
		\label{eqn:m}
		\begin{array}{lll}
			\displaystyle  E\left(   \underset{v \in K}{\sup}\,\,  \sum_{i=1}^{n}   [  E\{\hat{M}_{i}(v_i) \}       - \hat{M}_{i}(v_i) ]     \right)  & = &\displaystyle E\left(   \underset{v \in K}{\sup}\,\,  \sum_{i=1}^{n}   [  E\{ \tilde{M}_{i}(v_i) \}   - \hat{M}_{i}(v_i) ]     \right). 
		\end{array}
	\end{equation*}
	Condition on $y_1,\ldots,y_n$  and let 
	\[
	\displaystyle X_v =   \sum_{i=1}^{n}   \left\{\tilde{M}_{i}(v_i) -    \hat{M}_{i}(v_i) )     \right\}  .
	\]
	Then
	\[
	\begin{array}{lll}
		\underset{v \in K}{\sup}\,\,   E_{   \tilde{y}_1,\ldots,\tilde{y}_n  | y_1,\ldots,y_n   }  X_v  \,\leq \, E_{   \tilde{y}_1,\ldots,\tilde{y}_n  | y_1,\ldots,y_n   } \underset{v \in K}{\sup}\,\,    X_v.
	\end{array}
	\]
	We  can  take the expected  value with respect to  $y_1,\ldots,y_n$  to get
	\[
	\begin{array}{lll}
		\displaystyle	E\left[ \underset{v \in K}{\sup}\,\,  \sum_{i=1}^{n}\left\{  M_i(v_i)   -\hat{M}_{i}(v_i)  \right\}  \right]&\leq &  \displaystyle  E \left[\underset{v \in K}{\sup}\,\, \sum_{i=1}^{n}   \left\{  \tilde{M}_{i}(v_i) -    \hat{M}_{i}(v_i) \right\}     \right]\\
		& = & \displaystyle E \left[\underset{v \in K}{\sup}\,\, \sum_{i=1}^{n}  \xi_i  \left\{  \tilde{M}_{i}(v_i) -    \hat{M}_{i}(v_i) \right\}      \right]\\
		& \leq&\displaystyle E \left\{\underset{v \in K}{\sup}\,\, \sum_{i=1}^{n}  \xi_i  \tilde{M}_{i}(v_i)      \right\}  \,+\,\\
		&&  \displaystyle E \left\{\underset{v \in K}{\sup}\,\, \sum_{i=1}^{n}  -\xi_i    \hat{M}_{i}(v_i)       \right\}\\
		& = &2 \displaystyle E \left\{\underset{v \in K}{\sup}\,\, \sum_{i=1}^{n}  \xi_i    \hat{M}_{i}(v_i)       \right\},\\
		%&   = & \displaystyle\mathbb{E} \left(\underset{v \in K}{\sup}\,\, \sum_{i=1}^{n} (  (  \tilde{M}_{i}(v) -   \mathbb{E}\tilde{M}_{i}(v)   )  +( \mathbb{E}\hat{M}_{i}(v)   - \hat{M}_{i}(v) ) )     \right)\\
		%& \leq &\displaystyle\mathbb{E} \left(\underset{v \in K}{\sup}\,\, \sum_{i=1}^{n}  (  \tilde{M}_{i}(v) -   \mathbb{E}\tilde{M}_{i}(v)   )      \right) + \\
		%& &\displaystyle\mathbb{E} \left(\underset{v \in K}{\sup}\,\, \sum_{i=1}^{n} (  \mathbb{E}\hat{M}_{i}(v)   - \hat{M}_{i}(v) )      \right)\\
		%& =& 2	 \displaystyle\mathbb{E} \left(\underset{v \in K}{\sup}\,\, \sum_{i=1}^{n} (  \mathbb{E}\hat{M}_{i}(v)   - \hat{M}_{i}(v) )      \right),
	\end{array}
	\]
	where the first  equality follows because  $( \xi_1  (  \tilde{M}_{n,1}(v_1) -    \hat{M}_{n,1}(v_1) ) , \ldots, \xi_n  (  \tilde{M}_{n,n}(v_n) -    \hat{M}_{n,n}(v_n) )  )$ and  $( \tilde{M}_{n,1}(v_1) -    \hat{M}_{n,1}(v_1)  ,\ldots, \tilde{M}_{n,n}(v_n) -    \hat{M}_{n,n}(v_n)  )$  have the same distribution. The second equality  follows because  $-\xi_1,\ldots,-\xi_n$ are also independent Rademacher variables.

\end{proof}

\begin{lemma}
	\label{lem4}
	%Suppose  that Assumption \ref{as1.2}
	%holds. Then
	(Contraction principle).	With the notation from before we  have that
	\[
	E\left\{  \underset{v \in K}{\sup}\,\, \sum_{i=1}^{n}   \xi_i   \hat{M}_{i} (v_i)  \right\}   \,\leq
	\, 2	RW\left(   K -   \theta^* \right) = 2	RW\left(   K \right).
	%  E\left\{    \underset{v \in K}{\sup}\,\, \sum_{i=1}^{n}   \xi_i (v_i -\theta_i^*) \right\}   .
	\]
\end{lemma}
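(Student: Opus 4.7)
The plan is to reduce the bound to the classical Ledoux--Talagrand contraction principle (Theorem 4.12 in \cite{ledoux2013probability}) by first centering the summands so they vanish at a fixed point, and then carrying out the contraction argument conditionally on $y_1,\ldots,y_n$. The central observation is that the check function $\rho_\tau$ is $1$-Lipschitz on $R$, since its subgradient everywhere lies in $[\tau-1,\tau]\subset [-1,1]$. Consequently, for each fixed $i$, the (random) map $u \mapsto \phi_i(u) := \hat{M}_i(\theta_i^*+u) = \rho_\tau(y_i - \theta_i^* - u) - \rho_\tau(y_i - \theta_i^*)$ is $1$-Lipschitz and satisfies $\phi_i(0)=0$ almost surely.

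Next, I would reparametrize the supremum by letting $u = v - \theta^*$ range over $K - \theta^*$, so that
\[
\sum_{i=1}^{n} \xi_i \hat{M}_i(v_i) \;=\; \sum_{i=1}^{n} \xi_i \phi_i(u_i), \qquad u \in K-\theta^*.
\]
Conditional on $y_1,\ldots,y_n$, the functions $\phi_i$ are deterministic $1$-Lipschitz contractions with $\phi_i(0)=0$, so the contraction principle in the form of Theorem 4.12 of \cite{ledoux2013probability} applies to give
\[
E_\xi \sup_{u \in K-\theta^*} \sum_{i=1}^{n} \xi_i \phi_i(u_i) \;\leq\; 2\, E_\xi \sup_{u \in K-\theta^*} \sum_{i=1}^{n} \xi_i u_i.
\]
Taking expectation over $y$ (the right-hand side is $y$-free) yields the upper bound $2\,RW(K-\theta^*)$.

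To conclude, I would show that $RW(K-\theta^*) = RW(K)$. This follows immediately from translation: for any $\theta^* \in R^n$,
\[
E \sup_{v \in K}\sum_{i=1}^{n} \xi_i (v_i - \theta_i^*) \;=\; E\sup_{v\in K}\sum_{i=1}^{n}\xi_i v_i - \sum_{i=1}^{n}\theta_i^* \, E\xi_i \;=\; RW(K),
\]
since $E\xi_i = 0$. Chaining this identity with the previous display gives the desired inequality.

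The main obstacle I anticipate is a technical one rather than conceptual: verifying that the contraction principle is applicable without moment assumptions on $y$. The concern is measurability and integrability of the conditional supremum, but this is handled by the uniform bound $|\phi_i(u)| \leq |u|$ (a direct consequence of the $1$-Lipschitz property together with $\phi_i(0)=0$), which is noted in the remark preceding the lemma; thus the suprema are dominated by $\sup_{u\in K-\theta^*}\sum_i |u_i|$, making Fubini and the conditioning step legitimate regardless of the tail behavior of $y_i$. A secondary minor point is that Theorem 4.12 in \cite{ledoux2013probability} is stated for a single contraction $\phi$ applied coordinatewise, but the standard extension to different $\phi_i$'s (each $1$-Lipschitz vanishing at $0$) is routine and widely used in this literature.
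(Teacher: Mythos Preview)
Your proposal is correct and follows essentially the same route as the paper: observe that $\hat{M}_i$ is $1$-Lipschitz, condition on $y$, apply the Ledoux--Talagrand contraction principle (Theorem 4.12 in \cite{ledoux2013probability}), and then use $E\xi_i=0$ to identify $RW(K-\theta^*)=RW(K)$. Your write-up is in fact slightly more careful than the paper's, since you explicitly center via $\phi_i(u)=\hat{M}_i(\theta_i^*+u)$ so that $\phi_i(0)=0$ before invoking contraction, whereas the paper applies contraction directly to $v_i\mapsto \hat{M}_i(v_i)$ over $K$ without making this shift explicit.
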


\begin{proof}
	Recall that   $\hat{M}_{i} (v_i) =  \rho_{\tau}(y_i - v_i  )- \rho_{\tau}(y_i - \theta_i^*  )$. Clearly, these are 1-Lipschitz continuous functions. Therefore,  
	\[
	\begin{array}{lll}
		\displaystyle 	E\left\{  \underset{v \in K}{\sup}\,\, \sum_{i=1}^{n}   \xi_i   \hat{M}_{i} (v_i)  \right\}   &  = & \displaystyle 	 E\left(E\left\{  \underset{v \in K}{\sup}\,\, \sum_{i=1}^{n}   \xi_i  \hat{M}_{i} (v_i)      \bigg|  y\right\} \right)\\
		&\leq  &\displaystyle 	 E\left(E\left\{  \underset{v \in K}{\sup}\,\, \sum_{i=1}^{n}   \xi_i v_i      \bigg|  y\right\} \right)\\
		&= &\displaystyle 	E\left\{  \underset{v \in K}{\sup}\,\, \sum_{i=1}^{n}   \xi_i (v_i-\theta_i^*  )  \right\} +      E \left(  \sum_{i=1}^{n}   \xi_i \theta_i^*    \right)   \\
		& =&  \displaystyle  E\left\{    \underset{v \in K}{\sup}\,\, \sum_{i=1}^{n}   \xi_i (v_i -\theta_i^*) \right\} , 
	\end{array}
	\]
	where the inequality follows from, the contraction principle for Rademacher complexity, see  Theorem  4.12  in \cite{ledoux2013probability}.
	%Conditioning  on $y_1,\ldots,y_n$,  we notice  that  the  functions  $x \rightarrow M_i(\theta_i^*+x)$  are  1-Lipschitz continuous. The claim  follows  from  Theorem  4.12  in \cite{ledoux2013probability}.
\end{proof}

The following corollary can be used for proving upper bounds  for general constraint  estimators as in (\ref{eqn:constrained_estimator}) when the set $K$ is compact.

\begin{corollary}
	\label{thm1}
	With the notation from before, 
	\[
	%E	\left\{\Delta^2(  \hat{\theta} - \theta^* ) \right\}  \,\leq \, 
	E\left\{M( \hat{\theta}) \right\} \,\leq \,  2 \,RW\left(   K \right),
	%E\left\{    \underset{v \in K}{\sup}\,\, \sum_{i=1}^{n}   \xi_i  (v_i -\theta_i^*)  \right\}.
	\]
	where  the  right most inequality  holds for a general  set $K$.%, and the  left most  requires  that  Assumption A     holds.
	%	If in addition  (\ref{as2})   holds,  then
	%	\[
	%	E	\left\{\Delta^2(  \hat{\theta} - \theta^* ) \right\}  \,\leq \,  2\,RW\left(   K -   \theta^* \right).
	%	% E\left\{    \underset{v \in K}{\sup}\,\, \sum_{i=1}^{n}   \xi_i  (v_i -\theta_i^*)\right\}.
	%	\]
\end{corollary}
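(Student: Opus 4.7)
The proof of Corollary~\ref{thm1} is essentially a direct chaining of the three preceding lemmas, and my plan is simply to execute that chain cleanly.

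First I would take expectations on both sides of the deterministic inequality from Lemma~\ref{lem1}. Since $M(\hat{\theta}) \leq \sup_{v \in K}\{M(v) - \hat{M}(v)\}$ holds pathwise, monotonicity of expectation gives
\[
E\{M(\hat{\theta})\} \;\leq\; E\Bigl[\sup_{v \in K}\{M(v) - \hat{M}(v)\}\Bigr].
\]
(A brief measurability comment is warranted: since $K$ is a subset of $\mathbb{R}^n$, one can pass to a countable dense subset of $K$ in the supremum using continuity of $\hat{M}_i$ in $\theta_i$, so the supremum is measurable; since $|\hat{M}_i(\theta_i)| \le |\theta_i - \theta_i^*|$ the expectation is always defined.)

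Second, I would apply the symmetrization bound of Lemma~\ref{lem3} to replace the centered process $M - \hat{M}$ by a Rademacher-weighted version of $\hat{M}$, yielding
\[
E\Bigl[\sup_{v \in K}\{M(v) - \hat{M}(v)\}\Bigr] \;\leq\; 2\, E\Bigl\{\sup_{v \in K}\sum_{i=1}^{n}\xi_i \hat{M}_i(v_i)\Bigr\}.
\]
Here $\xi_1,\ldots,\xi_n$ are independent Rademacher variables independent of $y$. This step removes the distribution-dependent object $M$ and replaces it with a purely data-driven quantity.

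Third, I would invoke the contraction step encapsulated in Lemma~\ref{lem4}. The key input is that each function $\theta_i \mapsto \hat{M}_i(\theta_i) = \rho_\tau(y_i - \theta_i) - \rho_\tau(y_i - \theta_i^*)$ is $1$-Lipschitz (uniformly in $y_i$), since $\rho_\tau$ has slope at most $\max\{\tau,1-\tau\}\le 1$. Conditioning on $y$ and applying the Ledoux--Talagrand contraction principle (Theorem~4.12 in \cite{ledoux2013probability}) gives
\[
E\Bigl\{\sup_{v \in K}\sum_{i=1}^{n}\xi_i \hat{M}_i(v_i)\Bigr\} \;\leq\; RW(K - \theta^*) \;=\; RW(K),
\]
where the equality uses translation invariance of Rademacher width together with $E\sum_i \xi_i \theta_i^* = 0$. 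Chaining these three inequalities produces $E\{M(\hat{\theta})\}\le 2\,RW(K)$, as claimed.

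There is no serious obstacle; the only subtle point is handling the Lipschitz constant of $\rho_\tau$ correctly in the contraction step and keeping the numerical constants consistent across the symmetrization and contraction bounds. The proof requires no assumptions on the distribution of $y$ beyond what is already used to define $\hat{M}_i$, which is exactly why this bound holds under the minimal Assumption~A of the paper.
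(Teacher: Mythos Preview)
Your proposal is correct and follows exactly the paper's approach: the paper's own proof of this corollary is the one-line statement ``This follows from Lemmas~\ref{lem1}--\ref{lem4},'' which is precisely the chain you have written out (basic inequality, symmetrization, contraction). Your added remarks on measurability and the Lipschitz property of $\rho_\tau$ are fine elaborations but not a departure from the paper's route.
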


\begin{proof}
	This follows  from Lemmas \ref{lem1}--\ref{lem4}.
\end{proof}

\subsection{Proof of Proposition   \ref{prop:basic}  }

We now prove Proposition \ref{prop:basic} whose statement we now recall here.

%\begin{proposition}%\label{prop:basic}
\textbf{Proposition 1.}	\textit{Let $K \subset  R^n$ be a convex set. Let us define a function $\mathcal{M}: [0,\infty) \rightarrow  R$ as follows:
	$$\mathcal{M}(t) = RW[K \cap \{\theta: M(\theta) \leq t^2\}].$$ %Suppose the distributions of $y_1,\dots,y_n$ obey Assumption A. 
	Then the following inequality is true for any $t > 0$,}
\begin{equation*}
	\mathrm{pr}\{M(\hat{\theta}) > t^2\}\leq \frac{2\mathcal{M}(t)}{t^2}.
\end{equation*}
%	where $C$ is a constant that only depends on the distributions of $y_1,\dots,y_n$.
%\end{proposition}
\begin{proof}
	Suppose  that
	\begin{equation}
		\label{eqn:otherD}
		M(  \hat{\theta} ) > t^2.%=  c_1^2  r_n^2,  
		%n^{   -( 2k+2)/(2k+3)  },  
	\end{equation}
	
	First, notice that $M$ is continuous. To see this,  let  $\theta,\tilde{\theta} \in  R^n$. Then
	\[
	\begin{array}{lll}
		\vert  M(\theta) -M(\tilde{\theta}) \vert &=& \displaystyle \left\vert    \sum_{i=1}^{n}     E\left\{   \rho_{\tau}(y_i-   \theta_i)  - \rho_{\tau}(y_i-   \tilde{\theta}_i) \right\}  \right\vert\\
		& \leq &  \displaystyle \sum_{i=1}^{n}  E\left\{ \vert \rho_{\tau}(y_i-   \theta_i)  - \rho_{\tau}(y_i-   \tilde{\theta}_i)   \vert   \right\}\\
		& \leq& \displaystyle \sum_{i=1}^{n}    \vert \theta_i -\tilde{\theta}_i\vert, 
	\end{array}
	\]
	where the second inequality follows  by the fact that $\rho_{\tau}(\cdot)$ is a 1-Lipschitz function. Hence,  $M$ is continuous.

	Next,  let $q^2 :=   M( \hat{\theta} )$. Then  define  $g \,:\, [0,1] \rightarrow R $  as  $g(u) =    M\{ (1-u)\theta^*+  u \hat{\theta} \}$.  Clearly,   $g$ is a continuous function  with  $g(0) =0$, and  $g(1)   = q^2$. Therefore,  there exists   $u_{  \hat{\theta} } \in [0,1]$ such that  $g(u_{ \hat{\theta} }) = t^2$.  Hence, letting  $\tilde{ \theta} = (1-u_{  \hat{\theta} } )\theta^*   +u_{  \hat{\theta} } \hat{\theta}$  we observe that  by the convexity of $\hat{M}$ and the basic inequality
	\[
	\hat{M}(\tilde{ \theta}  ) = \hat{M}\{   (1- u_{  \hat{\theta} } )\theta^* +   u_{  \hat{\theta} } \hat{\theta}  \} \leq  (1- u_{  \hat{\theta} } ) \hat{M}(\theta^*) +   u_{  \hat{\theta} }  \hat{M}(\hat{\theta}) \leq 0.
	\]
	Furthermore,  $\tilde{\theta}\in K$  by convexity of  $K$, and  $M(\tilde{\theta}) = t^2$   by construction.   This implies  that
	\[
	\underset{ v \in K\,:\,  M(v )   \leq  t^2   }{\sup}\,\,M(v ) -\hat{M}(v)   \,\geq  \,  	     M(\tilde{ \theta} ) -\hat{M}(\tilde{ \theta}) \,\geq \,   M(\tilde{ \theta}) .%\, \geq\, c_0 t^2.
	\]
	Therefore, 
	\[
	\begin{array}{lll}
		\mathrm{pr}\left\{ M(   \hat{\theta}  )    >t^2\right\}  & \leq & \displaystyle  \mathrm{pr}\left\{ \underset{v \in K   \,:\,    M(v)    \leq  t^2  }{\sup}\,\,M(v) -\hat{M}(v)    \,\geq \,  t^2 \right\} \\%\geq  c_0 t^2   \right\}\\
		& \leq&    \displaystyle  \frac{1}{t^2}E\left\{ \underset{v \in K   \,:\,   M(v)    \leq  t^2  }{\sup}\,\,M(v) -\hat{M}(v)   \right\}\\
		& \leq& \displaystyle \frac{2}{t^2}  RW\left[ \{    v \in   K \,:\, M(v) \leq t^2     \}          \right]\\
		&=&  \displaystyle 2\mathcal{M}(t) /t^2 ,
	\end{array}
	\]
	where the second inequality follows from  Markov's inequality, and the third as in Lemmas  \ref{lem3} and \ref{lem4}.
	This  completes the proof.

\end{proof}

\subsection{Proof of Theorem \ref{thm:basic} and Associated Corollaries}

We start by recalling \textit{Assumption A}. %Recall that we denote by $F_{y_i}$  the cumulative distribution function of $y_i$. 

\textit{\noindent{Assumption A:}} There exists  a constant $L>0$ and $\underline{f}>0$ such that for any positive integer $n$ and any $\delta \in R^n$  satisfying  $\|\delta\|_{\infty} \leq L$  we have that 
\[
\,   \vert   F_{y_i}(\theta_i^* + \delta_i)  -F_{y_i}(\theta_i^*) \vert\,  \geq \,  \underline{f}\,  \vert \delta_i \vert,
\]
for all $i= 1,\ldots, n$,  where we recall that $F_{y_i}$ is the CDF of  $y_i$.

Theorem  \ref{thm:basic}  follows with the same argument in the proof of Proposition \ref{prop:basic}  combined with the following lemma.

%Before  stating our  next lemma  we recall Assumption \ref{as2}  from the paper.

%\begin{assumption} There exists  a constant  $L>0$  such that   for   $\delta \in R^n$  satisfying  $\|\delta\|_{\infty} \leq L$  we have that 
%	\[%
%	\underset{i=1,\ldots,n}{\min}\,    \vert   F_{y_i}(\theta_i^* + \delta_i)  -F_{y_i}(\theta_i^*) \vert\,  \geq \,  \underline{f}\,  \vert \delta_i \vert,
%\]
%for some  cinstant $\underline{f}>0$, and where   $F_{y_i}$ is the cumulative distribution function of  $y_i$.
%%%%	%$f_{y_i}$  is the probability density function  of $y_i$. %We write $  \theta^*  = F_{y_i}^{-1}(\tau) $, and   assume that  $\theta^* \in K$. 
%\end{assumption}

%We are now ready  to  prooceed with a lemma characterizing the behavior of the function $M$ arround $\theta^*$.

\begin{lemma}
	\label{lem2}
	Suppose  that Assumption  A holds. Then there exists  a constant   $c_0$ such that  for all  $\delta \in R^n$, we have
	\[
	\displaystyle  M(\theta^*+\delta) \geq   c_0 \Delta^2(\delta) .%   :=\sum_{i=1}^{n}    d(\delta_i)
	\]
	%	where 
	%	\begin{equation}
	%	content...
	%	\label{eqn:distance}
	%	d(x) =\begin{cases}
	%	c_0     x^2   & \text{if}    \,\,\, \vert x\vert \leq L,\\       
	%	c_0     \vert x\vert   & \text{if}    \,\,\, \vert x\vert >L,\\ 
	%	\end{cases} 
	%	\end{equation}
	%	for some constant  $c_0>0$.
	%  that scales like $\min\{   \underline{f}L,\underline{f}  \}$.
\end{lemma}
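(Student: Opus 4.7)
The plan is to reduce the claim to a per-coordinate bound, namely to show that there is a constant $c_0>0$ (depending only on $\underline{f}$ and $L$ from \textbf{Assumption A}) such that for every $i$ and every $\delta_i \in \mathbb{R}$,
\[
M_i(\theta_i^* + \delta_i) \;\geq\; c_0\,\min\{|\delta_i|,\delta_i^2\}.
\]
Summing this over $i$ gives the stated bound immediately.

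The starting point will be the well-known Knight/Koenker identity for the check function: for every fixed $a$ and every real $Y$,
\[
\rho_\tau(Y-(\theta_i^*+\delta_i)) - \rho_\tau(Y-\theta_i^*) \;=\; -\delta_i\big(\tau - \mathbf{1}\{Y < \theta_i^*\}\big) + \int_0^{\delta_i}\!\big(\mathbf{1}\{Y \leq \theta_i^*+s\} - \mathbf{1}\{Y \leq \theta_i^*\}\big)\,ds.
\]
Taking expectations under $y_i$ and noting that \textbf{Assumption A} forces $F_{y_i}$ to be strictly increasing (hence continuous) on a neighborhood of $\theta_i^*$, so that the minimizer $\theta_i^*$ satisfies $F_{y_i}(\theta_i^*)=\tau$, the linear term vanishes and
\[
M_i(\theta_i^* + \delta_i) \;=\; \int_0^{\delta_i}\!\big(F_{y_i}(\theta_i^* + s) - F_{y_i}(\theta_i^*)\big)\,ds.
\]
Since $F_{y_i}$ is non-decreasing, the integrand has the same sign as $s$, so the integral is non-negative, and \textbf{Assumption A} applied for $|s|\leq L$ yields $|F_{y_i}(\theta_i^*+s)-F_{y_i}(\theta_i^*)| \geq \underline{f}|s|$ with the correct sign, giving the small-$\delta$ bound
\[
M_i(\theta_i^* + \delta_i) \;\geq\; \tfrac{\underline{f}}{2}\,\delta_i^2 \qquad\text{whenever }|\delta_i|\leq L.
\]

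To handle $|\delta_i| > L$, I would invoke convexity of $M_i$ (which follows from convexity of $\rho_\tau$). Writing $\theta_i^* + L\,\mathrm{sign}(\delta_i)$ as the convex combination $(1 - L/|\delta_i|)\,\theta_i^* + (L/|\delta_i|)\,(\theta_i^* + \delta_i)$ and using $M_i(\theta_i^*)=0$ gives
\[
M_i\!\left(\theta_i^* + L\,\mathrm{sign}(\delta_i)\right) \;\leq\; \frac{L}{|\delta_i|}\,M_i(\theta_i^* + \delta_i),
\]
which together with the small-$\delta$ bound applied at $\pm L$ yields $M_i(\theta_i^* + \delta_i) \geq \tfrac{\underline{f} L}{2}\,|\delta_i|$ for $|\delta_i| > L$. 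Setting $c_0 := \tfrac{\underline{f}}{2}\min\{1,L\}$, a short case check against $\min\{|\delta_i|,\delta_i^2\}$ (splitting on whether $|\delta_i|\leq 1$, $1<|\delta_i|\leq L$, or $|\delta_i|>L$, and in each regime comparing the bound against the smaller of $|\delta_i|$ and $\delta_i^2$) gives the per-coordinate inequality, and summation finishes the proof.

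The only real obstacle is bookkeeping: keeping the signs straight in the Knight identity (including handling the case $\delta_i<0$, where the integral runs backwards and $F_{y_i}(\theta_i^*+s) - F_{y_i}(\theta_i^*)\leq 0$), and ensuring that the quadratic regime for $|\delta_i|\leq L$ and the linear regime for $|\delta_i|>L$ stitch together correctly to dominate $\min\{|\delta_i|,\delta_i^2\}$ with a single constant $c_0$. Everything else is routine.
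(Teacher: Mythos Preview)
Your proof is correct and follows essentially the same route as the paper: both start from the Knight/Belloni integral representation $M_i(\theta_i^*+\delta_i)=\int_0^{\delta_i}\{F_{y_i}(\theta_i^*+z)-F_{y_i}(\theta_i^*)\}\,dz$ and use \textbf{Assumption A} to get the quadratic lower bound $\tfrac{\underline{f}}{2}\delta_i^2$ on $|\delta_i|\le L$. The only difference is in the extension to $|\delta_i|>L$: the paper truncates the integral at $L/2$ and uses monotonicity of $F_{y_i}$ to get $M_i(\theta_i^*+\delta_i)\ge (\delta_i-L/2)\{F_{y_i}(\theta_i^*+L/2)-F_{y_i}(\theta_i^*)\}\ge \tfrac{L\underline{f}}{4}|\delta_i|$, whereas you use convexity of $M_i$ to transfer the bound at $\pm L$ outward. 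Both yield the same linear growth and the same constant up to a factor, so this is a minor stylistic variation rather than a different argument; your convexity step is arguably a touch cleaner since it avoids re-entering the integral.
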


\begin{proof}
	First, we notice that by Equation B.3 in  \cite{belloni2011}, we have that
	\begin{equation}
		\label{eqn:b3}
		M_{i}(\theta_i^* +\delta_i ) - M_{i}(\theta_i^* ) \,=\,\displaystyle  \int_0^{\delta_i}    \left\{ F_{y_i}(\theta_i^*+z) -   F_{y_i}(\theta_i^*)   \right\}dz,\\
	\end{equation}
	for all $i = 1,\ldots,n$.    Hence,  supposing  that  $\vert\delta_i\vert\leq L$,  we obtain from Assumption  A that
	\[
	\begin{array}{lll}
		M_{i}(\theta_i^* +\delta_i ) - M_{i}(\theta_i^* ) \,\geq \,  \displaystyle\frac{\delta_i^2  \underline{f} }{2}.
		%&=&\displaystyle  \int_0^{\delta_i}    \left[ F_{y_i}(\theta_i^*+z) -   F_{y_i}(\theta_i^*)   \right]dz\\
		%& =&\displaystyle  \int_0^{\delta_i}      f_{y_i}(u(\theta_i^*,z))  z   dz\\
		%&\geq &\displaystyle\frac{\delta_i^2  \underline{f} }{2},
	\end{array}
	\]	
	%where  $u(\theta_i^*,z)$ is a point between  $\theta_i^* +z$ and $\theta_i^*$, and the inequality    follows  from Assumption  \ref{as2}. 
	
	Suppose  now that  $\delta_i> L$. Then by  (\ref{eqn:b3}), we obtain
	\[
	\begin{array}{lll}
		M_{i}(\theta_i^* +\delta_i ) - M_{i}(\theta_i^* ) %&=&\displaystyle  \int_0^{\delta_i}    \left\{ F_{y_i}(\theta_i^*+z) -   F_{y_i}(\theta_i^*)   \right\}dz\\
		& \geq &  \displaystyle  \int_{L/2}^{\delta_i}    \left\{ F_{y_i}(\theta_i^*+z) -   F_{y_i}(\theta_i^*)  \right\}dz\\
		&\geq &  \displaystyle  \int_{L/2}^{\delta_i}    \left\{ F_{y_i}(\theta_i^*+L/2) -   F_{y_i}(\theta_i^*)   \right\}dz\\
		&=&  \displaystyle  \left(\delta_i-\frac{L}{2}\right) \{F_{y_i}(\theta_i^*+L/2) -   F_{y_i}(\theta_i^*)\}   \\
		& \geq &  \frac{\delta_i}{2}  \frac{L \underline{f}}{2}\\
		& =: & \vert\delta_i\vert c_0,  
	\end{array}
	\]
	where the first  two inequalities follow because  $F_{y_i}$ is monotone, and the third inequality by  Assumption A.
	
	The case  $\delta_i <-L$ can be handled similarly. The conclusion follows combining the  three different cases.
\end{proof}

%Next we state  to conditions that   generalize Assumptions \ref{as1}--\ref{as2} in the paper.

%\begin{assumption}
%	\label{as1.2}
%	We write $  \theta_i^*  = F_{y_i}^{-1}(\tau) $, and   assume that  $\theta^* \in K$. Here  $F_{y_i}$  is cumulative distribution function of $y_i$ for  $i=1,\ldots,n$. We require   $y_1,\ldots,y_n$ to be independent.
%\end{assumption}

%\begin{assumption}
%	\label{as2.2}
%	There exists  a constant  $L$  such that   for   $\delta \in R^n$  satisfying  $\|\delta\|_{\infty} \leq L$  we have that%
%	\[
%	\underset{i = 1,\ldots,n}{\min}\,\,f_{y_i}(  \theta_i^*  +\delta_i ) \geq \underline{f},
%	\]
%	for some  $\underline{f}>0$, and where   $f_{y_i}$  is the probability density function of $y_i$.
%We write $  \theta^*  = F_{y_i}^{-1}(\tau) $, and   assume that  $\theta^* \in K$.
%\end{assumption}

%\newpage

%\section{General lemmas}\label{sec-comp}

\subsubsection{ Corollary  \ref{cor:basic}  }
\label{sec:cor1}

\begin{corollary}
	\label{cor:basic}
	Consider  the notation from  Theorem \ref{thm:basic}. If  $\{r_n\}$  is  a sequence such that 
	\begin{equation}
		\label{cor:as}
		\underset{t \to  \infty  }{\lim}\,   \underset{n \geq 1}{\sup} \,\frac{  \mathcal{R}( t  r_n  n^{1/2} )  }{  t^2   r_n^2 n}   \,=\, 0,
	\end{equation}
	then
	\[
	\frac{1}{n}\Delta^2(  \hat{\theta}_K - \theta^* ) \,=\,O_{  \mathrm{pr} }\left(r_n^2\right).
	\]
\end{corollary}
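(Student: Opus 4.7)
The plan is to invoke Theorem~\ref{thm:basic} directly with a carefully chosen threshold and then pass the hypothesis (\ref{cor:as}) through verbatim. By definition of $O_{\mathrm{pr}}(\cdot)$ (Section~\ref{sec:notation}), the claim $\frac{1}{n}\Delta^2(\hat{\theta}_K - \theta^*) = O_{\mathrm{pr}}(r_n^2)$ is equivalent to the assertion that for every $\epsilon > 0$ there exists $M > 0$ such that
\[
\sup_{n \geq 1} \mathrm{pr}\!\left(\Delta^2(\hat{\theta}_K - \theta^*) > M r_n^2 n\right) < \epsilon.
\]
So the goal reduces to bounding this tail probability uniformly in $n$ by something that can be made arbitrarily small by enlarging $M$.

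The key step is the substitution $t^2 = M r_n^2 n$, equivalently $t = M^{1/2} r_n n^{1/2}$, in Theorem~\ref{thm:basic}. This yields
\[
\mathrm{pr}\!\left(\Delta^2(\hat{\theta}_K - \theta^*) > M r_n^2 n\right) \;\leq\; C \, \frac{\mathcal{R}(M^{1/2} r_n n^{1/2})}{M r_n^2 n},
\]
where $C$ is the absolute constant from Theorem~\ref{thm:basic}. Now I would take the supremum over $n \geq 1$ on both sides and invoke the hypothesis (\ref{cor:as}) with the dummy variable $t$ set to $M^{1/2}$: as $M \to \infty$, we have $M^{1/2} \to \infty$, so
\[
\sup_{n \geq 1} \frac{\mathcal{R}(M^{1/2} r_n n^{1/2})}{M r_n^2 n} \;\longrightarrow\; 0.
\]
Hence, given any $\epsilon > 0$, I can choose $M$ large enough so that $C$ times the above supremum is less than $\epsilon$, which yields the uniform tail bound and therefore the stochastic boundedness claim.

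I do not expect a genuine obstacle here; the hypothesis (\ref{cor:as}) is essentially crafted to make Theorem~\ref{thm:basic} yield the $O_{\mathrm{pr}}$ statement after the rescaling $t \mapsto M^{1/2} r_n n^{1/2}$. The only point deserving a bit of care is to keep the supremum over $n$ inside the limit, rather than fixing $n$ and then letting $M \to \infty$; this is exactly why (\ref{cor:as}) is phrased with $\sup_{n \geq 1}$ taken before the $t \to \infty$ limit, so the argument goes through cleanly and gives a bound valid uniformly in $n$.
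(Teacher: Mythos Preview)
Your proposal is correct and is essentially the paper's proof: apply Theorem~\ref{thm:basic} with $t = c_1 r_n n^{1/2}$ (your $M^{1/2}$ is the paper's $c_1$) and invoke (\ref{cor:as}) to make the resulting bound smaller than $\epsilon$ for $c_1$ large. Your extra remark about why the $\sup_{n\geq 1}$ must sit inside the limit is a useful clarification the paper leaves implicit.
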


\begin{proof}
	Let  $\epsilon>0$ be given.  Notice that  for any $c_1>0$ we have that 
	\[
	\begin{array}{lll}
		\mathrm{pr}\left\{  \frac{1}{n}\Delta^2(  \hat{\theta}_K - \theta^* )     >  c_1^2 r_n^2 \right\}   & \leq&  		   C \frac{\mathcal{R}(  c_1 r_n  n^{1/2} )}{c_1 ^2 r_n^2  n}\\ 
		& <& \epsilon,
		%P(\Delta^2(\hat{\theta}_{K} - \theta^*) > t^2) \leq C \frac{\mathcal{R}(t)}{t^2}.
	\end{array}
	\]
	where the first inequality holds by Theorem \ref{thm:basic} and the last  by choosing  $c_1$  large enough exploiting (\ref{cor:as}).
	%}
\end{proof}

\subsubsection{Corollary   \ref{cor:risk} }
\label{sec:cor2}

\begin{corollary}\label{cor:risk}
	Let $K \subset  R^n$ be a convex set. Suppose the distributions of $y_1,\dots,y_n$ obey Assumption A. Then the following expectation bound holds:
	\begin{equation*}
		E \{\Delta^2(\hat{\theta}_{K} - \theta^*)\} \leq C\:RW(K)
	\end{equation*}
	where $C$ is a constant that only depends on the distributions of $y_1,\dots,y_n$. 
\end{corollary}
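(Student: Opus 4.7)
The plan is to combine two ingredients that are already in place: the unconditional expectation bound for $E\{M(\hat{\theta}_K)\}$ furnished by Corollary \ref{thm1}, and the pointwise comparison between the Huber-style loss $\Delta^2(\cdot)$ and the population criterion $M(\cdot)$ furnished by Lemma \ref{lem2}. Neither step uses convexity of $K$, so the proof is essentially a two-line consequence of earlier results; convexity is only listed in the hypothesis for uniformity with Theorem \ref{thm:basic}.

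More concretely, first I would apply Corollary \ref{thm1} (which is a direct consequence of Lemmas \ref{lem1}, \ref{lem3}, and \ref{lem4}) to obtain a constant $C_1 > 0$ with
\[
E\{M(\hat{\theta}_K)\} \,\leq\, C_1 \, RW(K).
\]
This bound holds for arbitrary $K$ containing $\theta^*$, since the symmetrization and contraction steps only need the basic inequality $\hat{M}(\hat{\theta}_K) \leq \hat{M}(\theta^*) = 0$.

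Next I would invoke Lemma \ref{lem2} with $\delta = \hat{\theta}_K - \theta^*$, which under Assumption A yields a constant $c_0 > 0$ (depending only on the constants $L$ and $\underline{f}$ from Assumption A) such that
\[
M(\hat{\theta}_K) \,=\, M(\theta^* + (\hat{\theta}_K - \theta^*)) \,\geq\, c_0 \, \Delta^2(\hat{\theta}_K - \theta^*).
\]
Taking expectations on both sides and chaining with the previous display gives
\[
c_0 \, E\{\Delta^2(\hat{\theta}_K - \theta^*)\} \,\leq\, E\{M(\hat{\theta}_K)\} \,\leq\, C_1 \, RW(K),
\]
so the stated conclusion holds with $C = C_1/c_0$, a constant depending only on the distributions of $y_1,\dots,y_n$ through the constants $L$ and $\underline{f}$ appearing in Assumption A.

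There is no real obstacle here: unlike Theorem \ref{thm:basic}, which needed a convexity-based localization argument to produce a tail bound on $\Delta^2(\hat{\theta}_K - \theta^*)$, this corollary is just an expectation bound and follows by substituting the uniform lower bound on $M$ into the already-established expectation bound on $M(\hat{\theta}_K)$. The only subtlety worth noting is that both sides are a priori finite: $\hat{M}_i$ and hence $M_i$ are well defined regardless of moments of $y_i$ (as remarked after Definition \ref{def1}), and $\Delta^2(\cdot) \leq \|\cdot\|_1$ is also well defined, so the chain of inequalities is meaningful even under the heavy-tailed regime that Assumption A allows.
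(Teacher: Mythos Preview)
Your proposal is correct and matches the paper's own proof exactly: the paper's proof reads ``This follows by combining Theorem \ref{thm1} with Lemma \ref{lem2},'' which is precisely the two-step argument you outline. Your additional remarks (that convexity is not actually needed here, and that finiteness of both sides holds without moment assumptions) are accurate elaborations beyond what the paper spells out.
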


\begin{proof}
	This follows by   combining  Theorem \ref{thm1} with  Lemma \ref{lem2}.
\end{proof}
%\begin{corollary}
%\label{cor1}
%Under  Assumption A, if  $\theta^* \in K$ we have that 
%\begin{equation}
%		\label{eqn:lower_bound}
%	E\left\{\Delta^2(  \hat{\theta} -\theta^* )\right\}   \,\leq \,  E\left[\underset{v \in K}{\sup}\,\,\left\{M(v) - \hat{M}(v)\right\}\right]. 
%\end{equation}
%\end{corollary}
%\newpage

%\newpage

%{\color{red} Make subsubsections for the corollaries?}

% See Lemma  \ref{lem22} for a characterization of $\mathcal{R}  ^{\perp}$.
% Notice

%Furthermore, we recall that  $\mathrm{TV}^{(r)}(\theta) = n^{r-1} \|D^{(r)} \theta\|_1$  for  $\theta\in R^n$.

% that  when $r =1$,  we have that $\mathcal{R}^{\perp}  = \text{span}\{   \boldsymbol{1} \}$, where  $\boldsymbol{1} = (1,\ldots,1)^{\top} \in R^n$.  

\section{Proof of Theorem~\ref{thm5} and Theorem~\ref{thm2}}

We first provide a sketch of our proofs for the sake of convenience of the reader. This sketch is meant to convey the overall proof structure. 
\subsection{Proof sketch of Theorem~\ref{thm5} and Theorem~\ref{thm2} }\label{sec:sketch}
%{\color{red} Merge the above here}.

As discussed in Section \ref{sec:ideas},  we must upper bound the quantity 
\[
RW\big[K_{V} - \theta^* \:\:\cap \{\delta: \Delta^2(\delta) \leq t^2\}\big]
\]
for $t>0$. Here, $K_{V} =    \{   \theta    \,:\, \|D^{(r) }  \theta \|_1   \leq   V n^{1-r} \}$,  where $V^* =  n^{r-1} \|D^{(r) }  \theta^* \|_1$ and  $V \geq  V^*$ when we are proving Theorem~\ref{thm5} and $V = V^*$ when we are proving Theorem~\ref{thm2}. This differs from the usual least squares setting where the quantity of interest is 
\[
RW\big[K_{V} - \theta^* \:\:\cap \{\delta: \|\delta\|^2 \leq t^2\}\big].
\]
To proceed in the proof of Theorems \ref{thm5} and  \ref{thm2}, we start by writing
\[
RW\big[K_{V} - \theta^* \:\:\cap \{\delta: \Delta^2(\delta) \leq t^2\}\big] \leq  T_1  + T_2,
\]
where 
\[
T_1 \,:=\,  \,E\left\{   \underset{  \delta \in   K_{V} - \theta^*\,:\, \Delta^2(\delta) \leq t^2     }{\sup} \,\,  \xi^{\top} P_{  \mathcal{R}^{\perp}}\delta    \right\}
\]
and 
\[
T_2   \,:=\,   \,E\left\{   \underset{  \delta \in   K_{V} - \theta^*\,:\, \Delta^2(\delta) \leq t^2     }{\sup} \,\,  \xi^{\top} P_{  \mathcal{R}}\delta    \right\}.
\]
where $\mathcal{R}$ is the subspace spanned by the rows of $D^{(r)}$ (recall its definition fon Page 2 in the paper), $\mathcal{R}  ^{\perp}$ is the orthogonal  complement of  $\mathcal{R}  $ and $P_{  \mathcal{R}},P_{  \mathcal{R}^{\perp}}$ are the orthogonal projection matrices for the corresponding subspaces. %We will show that $T_2$ is the dominating term an % Thus,

Next we consider different  steps.

\textbf{Step 1}: Bounding  $T_1$. We attain this by writing
%\[
\[
\displaystyle  P_{\mathcal{R}^{\perp}} \delta  =  \sum_{j=1}^{r}  (\delta^{\top} v_j )v_j,
\]
where $\{v_1,\dots,v_r\}$ form an orthonormal basis of  $\mathcal{R}^{\perp}$. We then  upper bound  $\vert \xi^{\top}  P_{\mathcal{R}^{\perp}} \delta  \vert$ using Lemmas \ref{lem23}  and  \ref{lem5}, exploiting the fact that  $\delta \in   K - \theta^*$ and $ \Delta^2(\delta) \leq t^2  $ as in the definition of  $T_1$.

%lem23
%lem5
%\]
\textbf{Step 2}: Bounding  $T_2$. The bound for $T_2$ is going to be the leading order term. The key observation we use here is Lemma \ref{lem18}
which states that
\begin{equation}
	\label{eqn:aux1} 	\|  \delta\|^2   \,\leq \,  \max\{ \| \delta\|_{\infty},1  \} \Delta^2( \delta),
\end{equation}
for all $\delta \in R^n$. Then Lemmas \ref{lem5} and \ref{lem6}  provide upper bounds on $\|P_{   \mathcal{R}  }\delta  \|_{\infty} $ and  $\Delta^2(   P_{   \mathcal{R}  }\delta  )$ respetively. This together with (\ref{eqn:aux1})  leads to 
\[
T_2 \,\leq \,  \,E\left(   \underset{  \delta \in   K_{V} - \theta^*\,:\, \|\delta\| \leq \tilde{t}     }{\sup} \,\,  \xi^{\top} P_{  \mathcal{R}}\delta    \right)
\]
for some  $\tilde{t}$ that is of the same order of magnitude as $t$.

\textbf{Step 3}:

We obtain that the bound on $T_1$ is a lower order term. Hence, the main task is to get good bounds on $T_2.$

\begin{itemize}
	\item Proof of Theorem~\ref{thm5}: The bound for $T_2$ given in the last display is exactly the local Gaussian Width of the set $K_{V} - \theta^*$ for $V \geq V^*$ and a bound for this local Gaussian width is available in Lemma  B.1  from \cite{guntuboyina2020adaptive}. 
	
	\item {Proof of Theorem~\ref{thm2}:} The main difference with Theorem \ref{thm5} is in the way we handle $T_2$.
	Similarly as in \textbf{Step 2}, we show that for $V = V^*$
	\[
	T_2  \,\leq \, \displaystyle  E\left\{  \underset{  \delta    \,: \,   K- P_{   \mathcal{R}  }\theta^*,\,\, \|\delta \|\leq  \tilde{t}       }{\sup} \,\,     \xi^{\top} \delta    \right\}
	\leq \,E\left\{   \underset{  \delta   \in T_{K_{V} }( P_{   \mathcal{R}  }\theta^*), \,\,   \|\delta\|\leq \tilde{t}}{\sup} \,\,     \xi^{\top} \delta    \right\}\]
	for some  $\tilde{t}$ that is of the same order of magnitude as $t$ and $T_{K_{V}}(P_{   \mathcal{R}  }\theta^*)$ is the tangent cone at $P_{   \mathcal{R}  }\theta^*$ with respect to the convex set $K_V$; see~\eqref{eqn:cone} for the precise definition. The Gaussian width of such a tangent cone is again available in Appendix B.2 in \cite{guntuboyina2020adaptive} and we directly employ this result to finish the proof. %{\color{red} Cite the lemma}
\end{itemize}

%The bound for $T_2$ given in the last display is of the 

%\textbf{Theorem \ref{thm5}.}  This is obtained by arriving at  an upper bound to $T_2$ based on \textbf{Step 2} and   Lemma  B.1  from \cite{guntuboyina2020adaptive}. 

\subsection{Proofs of Theorem~\ref{thm5} and Theorem~\ref{thm2}}
We now start our formal proofs. We first state some lemmas that we will require. 

\subsubsection{Intermediate results required for Proofs of Theorem  \ref{thm5} and Theorem~\ref{thm2}}

\begin{lemma}
	\label{lem22}	
	It  holds that
	\[
	\mathcal{R}^{\perp}  \,=\,   \Pi:=   \mathrm{Span}\left\{   v \in   R^{n} \,:\,   v_i =  p(i/n), \,\text{for}\,\,\, i = 1,\ldots,n,\,\,\, \text{and} \,\,p(\cdot) \, \,\text{a polynomial of degree  at most }\,\,r-1  \right\}.
	\]
\end{lemma}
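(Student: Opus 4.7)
The plan is to identify $\mathcal{R}^{\perp}$ with the null space of $D^{(r)}$, show directly that polynomial evaluations of degree at most $r-1$ lie in this null space, and then conclude by a dimension count.

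First I would note that since $\mathcal{R}$ is the row space of the matrix $D^{(r)} \in \mathbb{R}^{(n-r)\times n}$, standard linear algebra gives $\mathcal{R}^{\perp} = \ker(D^{(r)})$. The whole statement therefore reduces to proving $\ker(D^{(r)}) = \Pi$.

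For the inclusion $\Pi \subseteq \ker(D^{(r)})$, I would argue by induction on $r$. Given $v \in \mathbb{R}^n$ with $v_i = p(i/n)$ for a polynomial $p$ of degree at most $r-1$, the first difference satisfies $(Dv)_i = p((i+1)/n) - p(i/n)$, which, viewed as a function of $i$, is a polynomial of degree at most $r-2$ in $i/n$ (equivalently in $i$). Iterating this $r$ times shows $D^{(r)}v = 0$. This establishes one inclusion.

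For the reverse inclusion, I would use a dimension count. On the one hand, $\Pi$ is spanned by the $r$ vectors $u^{(j)} \in \mathbb{R}^n$ with $u^{(j)}_i = (i/n)^{j}$ for $j = 0,1,\dots,r-1$, and these are linearly independent whenever $n \geq r$ by a Vandermonde argument (the $r \times r$ submatrix with rows $i = 1,\dots, r$ is a nonsingular Vandermonde matrix since the nodes $i/n$ are distinct). Hence $\dim \Pi = r$. On the other hand, I would show $D^{(r)}$ has full row rank $n-r$, so $\dim \ker(D^{(r)}) = r$; this can be seen for instance by noting that each row of $D^{(r)}$ has its leftmost nonzero entry in a different column (the binomial-coefficient pattern $\binom{r}{k}(-1)^{r-k}$ starting at position $i$ for the $i$-th row), yielding a staircase pattern that forces linear independence of the rows. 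Matching dimensions $r = r$ combined with the inclusion $\Pi \subseteq \ker(D^{(r)})$ then forces equality.

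No step here looks like a serious obstacle — the only mildly subtle point is verifying that the first-difference operator reduces polynomial degree on the grid $\{i/n\}$, which is immediate once one recognises $i \mapsto p(i/n)$ as a polynomial in $i$ of the same degree as $p$. The dimension argument for $D^{(r)}$ having full row rank is standard, and I would cite it (or the explicit staircase structure) rather than grinding through the binomial identity, since the same fact already underlies the trend filtering literature referenced in the paper.
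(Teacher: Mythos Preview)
Your proof is correct and follows essentially the same strategy as the paper: establish $\Pi \subseteq \ker(D^{(r)})$ by showing discrete differencing reduces polynomial degree, then conclude equality by matching dimensions via a Vandermonde argument on $\Pi$ and full row rank of $D^{(r)}$. The paper phrases the degree-reduction step in terms of the mean value theorem and is slightly terser on the rank of $D^{(r)}$, but the logical skeleton is identical.
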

\begin{proof}
	First, notice that 	$\Pi$  equals to the column space of  the matrix
	\[
	\left(  \begin{matrix}
		1   &    \frac{1}{n }  &   \cdots  &   \left( \frac{1}{n}\right)^{r-1}\\
		1  &    \frac{2}{n }  &   \cdots  &   \left( \frac{2}{n}\right)^{r-1}\\
		\vdots &\vdots &   \vdots   &  \vdots\\
		1  &    1 &   \cdots  &  1\\
	\end{matrix} \right)  \in R^{n \times r},
	\]
	which is a Vandermonde matrix  of  rank  $r $, if  $r< n$.   Furthermore, $v \in \Pi$  implies that $D^{(r)} v =0$, which holds by an iterative application of the mean value theorem  and the fact that the  $r$th  derivative of a polynomial of  is  constant and equals to  $0$. Therefore,  $\Pi \subset  \mathcal{R}^{\perp} $   and  $\text{dim}(\mathcal{R}^{\perp} ) =  \text{dim}(\Pi)$.  Hence, the claim follows.
\end{proof}
\begin{lemma}
	\label{lem18}
	Let  $\delta   \in   R^n $. Then 
	\begin{equation}
		\label{eqn:ine}
		\|  \delta\|^2   \,\leq \,  \max\{ \| \delta\|_{\infty},1  \} \Delta^2( \delta).
	\end{equation}
\end{lemma}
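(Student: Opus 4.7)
The plan is a straightforward pointwise comparison followed by summation. The key observation is that $\min\{|\delta_i|, \delta_i^2\}$ equals $\delta_i^2$ when $|\delta_i| \leq 1$ and equals $|\delta_i|$ when $|\delta_i| > 1$. I will show the pointwise bound
\[
\delta_i^2 \;\leq\; \max\{\|\delta\|_\infty, 1\} \cdot \min\{|\delta_i|, \delta_i^2\},
\]
for every index $i$, and then sum over $i$ to conclude.

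To verify the pointwise inequality, I will split into two cases. If $|\delta_i| \leq 1$, then $\min\{|\delta_i|, \delta_i^2\} = \delta_i^2$, and since $\max\{\|\delta\|_\infty, 1\} \geq 1$ the inequality becomes trivial. If $|\delta_i| > 1$, then $\min\{|\delta_i|, \delta_i^2\} = |\delta_i|$, and I write $\delta_i^2 = |\delta_i| \cdot |\delta_i| \leq \|\delta\|_\infty \cdot |\delta_i| \leq \max\{\|\delta\|_\infty,1\} \cdot \min\{|\delta_i|, \delta_i^2\}$. Summing the pointwise bound over $i \in [n]$ gives $\|\delta\|^2 \leq \max\{\|\delta\|_\infty,1\}\,\Delta^2(\delta)$, which is exactly~\eqref{eqn:ine}.

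There is no real obstacle here; the lemma is a simple algebraic inequality whose only subtlety is correctly handling the two regimes $|\delta_i|\le 1$ and $|\delta_i|>1$ separately so that the common factor $\max\{\|\delta\|_\infty,1\}$ can be pulled out of the sum uniformly.
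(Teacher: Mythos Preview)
Your proof is correct and essentially identical to the paper's: both split according to whether $|\delta_i|\le 1$ or $|\delta_i|>1$, bound $\delta_i^2$ by $\delta_i^2$ in the first case and by $\|\delta\|_\infty\,|\delta_i|$ in the second, and then pull out the common factor $\max\{\|\delta\|_\infty,1\}$. The only cosmetic difference is that you phrase it as a pointwise inequality summed over $i$, while the paper groups the indices into two sums first.
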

\begin{proof}
	We notice that
	\[
	\begin{array}{lll}
		\|\delta\|^2 &=& \sum_{i \,:\,  \vert  \delta_i \vert \leq 1 } \vert \delta_i \vert^2   +   \sum_{i \,:\,  \vert  \delta_i \vert > 1 } \vert \delta_i \vert^2  \\
		&\leq &  \sum_{i \,:\,  \vert  \delta_i \vert \leq 1 } \vert \delta_i \vert^2   +   \|\delta \|_{\infty}\sum_{i \,:\,  \vert  \delta_i \vert > 1 } \vert \delta_i \vert \\
		&\leq&  \max\{ \|\delta\|_{\infty},1\} \left( \sum_{i \,:\,  \vert  \delta_i \vert \leq 1 } \vert \delta_i \vert^2   +   \sum_{i \,:\,  \vert  \delta_i \vert > 1 } \vert \delta_i \vert\right) \\
		& =& \max\{ \|\delta\|_{\infty},1\}\Delta^2( \delta).
	\end{array}
	\]
	%	This follows  immediately from the definition of $\Delta^2(\cdot)$.
\end{proof}

\begin{lemma}
	\label{lem19}
	Let  $v \in \mathcal{R}^{\perp}$ such that  $\|v\| = 1$. Then
	\[ 
	\|v\|_{\infty} \leq  \frac{b_r}{n^{1/2} },	    
	\]
	for a positive constant  $b_r$ that only depends on $r$.
\end{lemma}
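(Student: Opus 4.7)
\medskip
\noindent\textbf{Proof plan for Lemma~\ref{lem19}.}

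By Lemma~\ref{lem22}, any $v\in\mathcal{R}^{\perp}$ can be written as $v_i=p(i/n)$ for some polynomial $p$ of degree at most $r-1$. The claim therefore reduces to the following purely polynomial statement: there exists $b_r>0$ such that for every $n\geq 1$ and every $p\in P_{r-1}$ (the vector space of polynomials of degree at most $r-1$),
\[
\max_{1\leq i\leq n}|p(i/n)|^2 \;\leq\; \frac{b_r^{2}}{n}\sum_{i=1}^{n}p(i/n)^2.
\]
Once this is established, applying it to the polynomial $p$ representing $v$ (and using $\|v\|=1$) gives $\|v\|_{\infty}\leq b_r/n^{1/2}$, as desired.

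My plan is to obtain the display above by comparing both discrete quantities to their continuous analogues on $[0,1]$, exploiting the finite dimensionality ($\dim P_{r-1}=r$, independent of $n$). First, since all norms on $P_{r-1}$ are equivalent, there is a constant $C_r>0$ depending only on $r$ with
\[
\|p\|_{L^{\infty}[0,1]}^2 \;\leq\; C_r\,\|p\|_{L^{2}[0,1]}^2\qquad\text{for all }p\in P_{r-1};
\]
this is immediate by compactness of the unit sphere in the finite-dimensional space $P_{r-1}$. Since the grid points $i/n$ lie in $[0,1]$, this already bounds $\max_i |p(i/n)|^2$ by $C_r\|p\|_{L^2[0,1]}^2$. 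Second, I need a matching lower bound of the form
\[
\|p\|_{L^{2}[0,1]}^2 \;\leq\; \frac{C_r'}{n}\sum_{i=1}^{n}p(i/n)^2,
\]
uniformly in $n$. For fixed $p\in P_{r-1}$ the Riemann sum $\frac{1}{n}\sum_{i=1}^{n}p(i/n)^2$ converges to $\|p\|_{L^2[0,1]}^2$ as $n\to\infty$; viewing both sides as quadratic forms on the $r$-dimensional space $P_{r-1}$ and invoking compactness of the unit $L^2$-sphere, this convergence is uniform on $P_{r-1}$. Consequently there is some integer $n_0=n_0(r)$ such that for all $n\geq n_0$ the discrete quadratic form is at least $\tfrac12\|p\|_{L^2[0,1]}^2$, giving the desired inequality with $C_r'=2$.

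Combining the two bounds yields $\max_i|p(i/n)|^2\leq (2C_r/n)\sum_i p(i/n)^2$ for $n\geq n_0(r)$. The remaining range $n<n_0(r)$ is handled by the trivial inequality $\|v\|_{\infty}\leq\|v\|$, which implies $\|v\|_{\infty}\leq \sqrt{n_0(r)}/n^{1/2}\cdot\|v\|$ whenever $n<n_0(r)$. Taking $b_r=\max\{\sqrt{2C_r},\sqrt{n_0(r)}\}$ gives a single constant that works for all $n$. The main technical point is the uniform Riemann-sum comparison on $P_{r-1}$; this is where the finite dimensionality of the space is essential, and the choice of $n_0(r)$ (together with the trivial bound for small $n$) is what absorbs the edge case in which the grid evaluation map could be degenerate.
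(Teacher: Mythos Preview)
Your proof is correct and follows essentially the same strategy as the paper's: represent $v$ as evaluations of a degree-$(r-1)$ polynomial, then compare the discrete $\ell_2$ norm on the grid to the continuous $L^2$ norm via a Riemann-sum argument, exploiting that $P_{r-1}$ is finite-dimensional. The only difference is in execution: the paper picks the explicit Legendre basis $\{q_m\}$ on $[-1,1]$, writes $v_i=\sum_j a_j q_j(x_i)$, and bounds the Riemann error $\big|\sum_j a_j^2-2/n\big|$ directly using a derivative bound on $g^2$ to obtain $\sum_j a_j^2\le \tilde c_r/n$ (for $n$ large), whence $\|v\|_\infty\le \|a\|_\infty\max_x\sum_j|q_j(x)|$. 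You replace this explicit computation with a soft compactness argument (norm equivalence on $P_{r-1}$ plus uniform convergence of the discrete quadratic form on the $L^2$-unit sphere). Your route is slightly cleaner and makes the role of finite dimensionality more transparent; the paper's route gives a concrete handle on the constants. Both need a separate ``small $n$'' case, which you handle explicitly and the paper leaves implicit in its ``for large enough $n$'' clause.
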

\begin{proof}
	Let $\{q_m\}_{m=0}^{r-1} $ be  the  normalized Legendre polynomials  of degree at most $r-1$ which have domain in $[-1,1]$ and   satisfy \[
	\displaystyle \int_{-1}^{1} q_m(x) q_{  m^{\prime} }(x) dx  \,=\,    1_{  \{ m =   m^{\prime} \}  },
	\]
	%	where  $q_m$  has 
	%Let  $q_m (x) = p_m(x)\cdot ( m+1/2   )^{1/2}    $. 
	Next notice that, by Lemma \ref{lem22}, $v$  can be written as
	\[
	v_i   =  \sum_{j=0}^{r-1} a_jq_j(x_i),
	\]
	where  $x_i =  -1+2i/n$ for $i=1,\ldots,n$, and where  $a_0,\ldots,a_{r-1} \in R$. Let  $g \,:\, R \rightarrow R$ be defined as
	\[
	g(x)   =  \sum_{j=0}^{r-1} a_jq_j(x).
	\]
	The notice that for  $A_1 = (-1,x_1)$, and $A_i =  (x_{i-1},x_i) $ for all $i >1$, we have that  
	\[
	\begin{array}{lll}
		\displaystyle       \left \vert   \sum_{j=0}^{r-1} a_j^2 - \frac{2}{n}  \right \vert  &  = & \displaystyle\left \vert   \int_{-1}^1  [g(x)]^2dx  - \frac{2}{n} \sum_{i=1}^{n}  [g(x_i) ]^2 \right \vert\\
		& \leq&  \displaystyle   \sum_{i=1}^{n}    \int_{A_i} \left \vert     [g(x)]^2 -   [g(x_i) ]^2 \right \vert dx\\
		& \leq&  \displaystyle   \sum_{i=1}^{n}    \int_{A_i}  \| [ g^2]^{\prime}\|_{\infty}\left \vert    x -   x_i \right \vert dx\\
		& \leq& \displaystyle \frac{4}{n} \| [ g^2]^{\prime}\|_{\infty}.
	\end{array}	
	\]
	However,
	\[
	\begin{array}{lll}
		\{[g(x)]^2\}^{\prime}  &\,=\,& \displaystyle  \left\{  \sum_{j=0}^{r-1}   a_j^2  [q_j(x)]^2    +     \sum_{j \neq  j^{\prime} }   a_j a_{ j^{\prime} }    q_j(x) q_{j^{\prime}   }(x)  \right\}^{\prime}.
	\end{array}
	\]
	Therefore,
	\[
	\left \vert   \sum_{j=0}^{r-1} a_j^2 - \frac{2}{n}  \right \vert   \,\leq \,    \frac{   c_r \|a\|_{\infty}^2  }{n}   \leq       \frac{   c_r  }{n} \sum_{j=0}^{r-1} a_j^2,
	\]
	for some constant $c_r>0$   that only depends on $r$.  Hence, for large enough $n$,
	\[
	\sum_{j=0}^{r-1} a_j^2     \,\leq \,    \frac{\tilde{c}_r   }{n},
	\]
	for a constant  $\tilde{c}_r>0$ that depends on $r$.  As a result
	\[
	%	 \begin{array}{lll}
	\|v\|_{\infty} \,\leq\,  \underset{i=1,\ldots,n}{\max} \,  \sum_{j=0}^{r-1}  \vert q_j(x_i) \vert  \vert a_j\vert  \,\leq \,   \|  a\|_{\infty}    \underset{x \in [-1,1] }{\max} \, \sum_{j=0}^{r-1}  \vert q_j(x) \vert  \,\leq \,      \frac{   \tilde{c}
		_r^{1/2} }{n^{1/2}  }  \underset{x \in [-1,1] }{\max} \, \sum_{j=0}^{r-1}  \vert q_j(x) \vert, 
	%	 \end{array}
	\]
	and the claim follows.
\end{proof}

\begin{lemma}
	\label{lem20}
	
	If   $\delta \in R^n$  and  $\mathrm{TV}^{(r )}(\delta)  \leq   V $, then
	\[
	\|  P_{  \mathcal{R} } \delta \|_{\infty}   \leq    \tilde{C}_r \frac{V}{n^{r-1}},  
	\]
	for a constant  $\tilde{C}_r >1$  that depends on $r$.
\end{lemma}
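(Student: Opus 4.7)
The plan is to reduce the $\ell_\infty$ estimate on $P_{\mathcal{R}}\delta$ to a uniform bound on the entries of a Green's function matrix for the discrete $r$th-difference operator. By Lemma~\ref{lem22}, $\mathcal{R}^\perp=\ker D^{(r)}$ (polynomials of degree $\leq r-1$ on the grid), so $D^{(r)}(P_{\mathcal{R}}\delta)=D^{(r)}\delta$, and the restriction of $D^{(r)}$ to $\mathcal{R}$ is a linear bijection onto $R^{n-r}$. Let $G\in R^{n\times(n-r)}$ denote its inverse; then $P_{\mathcal{R}}\delta=G(D^{(r)}\delta)$, and by $\ell_1$--$\ell_\infty$ duality
\[
\|P_{\mathcal{R}}\delta\|_\infty\;\leq\;\bigl(\max_{i,j}|G_{ij}|\bigr)\,\|D^{(r)}\delta\|_1\;=\;\bigl(\max_{i,j}|G_{ij}|\bigr)\,\frac{V}{n^{r-1}}.
\]
It therefore suffices to exhibit a constant $\tilde{C}_r$, depending only on $r$, with $\max_{i,j}|G_{ij}|\leq\tilde{C}_r$.

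To make $G$ explicit, I would use the discrete Newton forward-difference (Taylor-type) formula: every $\delta\in R^n$ admits the decomposition $\delta=q+Hw$, where $w=D^{(r)}\delta$, $q$ is the unique polynomial of degree $\leq r-1$ interpolating $\delta$ at $i=1,\ldots,r$, and $H$ is the lower-triangular matrix with entries $H_{i,k}=\binom{i-1-k}{r-1}\mathbf{1}\{k\leq i-r\}$. Since $P_{\mathcal{R}}q=0$, one obtains $G=(I-P_{\mathcal{R}^\perp})H$. Combining this with Lemma~\ref{lem19}, which yields $\|P_{\mathcal{R}^\perp}v\|_\infty\leq(b_r/\sqrt{n})\|v\|\leq b_r\|v\|_\infty$ for every $v\in R^n$, reduces the task to controlling the $\ell_\infty$ norm of each column $(I-P_{\mathcal{R}^\perp})He_k$ uniformly in the column index $k$.

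The main obstacle, and where the bulk of the technical work lies, is sharpening this column-wise control by a factor of $n^{r-1}$. A direct bound $|H_{i,k}|\leq\binom{n-2}{r-1}\lesssim n^{r-1}$ only gives $\|P_{\mathcal{R}}\delta\|_\infty\leq\tilde{C}_r V$, losing the factor $n^{r-1}$. The key observation needed for the sharper estimate is that each column $He_k$ coincides for $i\geq k+r$ with the polynomial $p_k(i)=\binom{i-1-k}{r-1}$ of degree $r-1$ in $i$, and vanishes on the short boundary window $i\leq k+r-1$ of length $r$. Consequently the $L^2$-best polynomial fit $P_{\mathcal{R}^\perp}(He_k)$ should very nearly cancel $He_k$ on the interior, leaving only a uniformly bounded boundary residual. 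Making this cancellation quantitative and uniform in $k$---for instance by expanding $He_k$ in the normalized Legendre polynomial basis employed in the proof of Lemma~\ref{lem19} and carefully analysing the defect between $He_k$ and its polynomial extension $p_k$---is the delicate step that constitutes the crux of the proof.
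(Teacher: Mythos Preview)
Your reduction to bounding $\max_{i,j}|G_{ij}|$ with $G=(D^{(r)})^{+}=P_{\mathcal R}H$ is exactly the paper's strategy. The gap is in your cancellation argument. The set on which $He_k$ fails to coincide with the polynomial $p_k(i)=\binom{i-1-k}{r-1}$ is \emph{not} a window of length $r$: while $He_k(i)=p_k(i)$ for $i\geq k+1$ (both equal $0$ on $\{k+1,\ldots,k+r-1\}$ and agree for $i\geq k+r$), for $i\leq k$ we have $He_k(i)=0$ whereas $|p_k(i)|=\bigl|\tbinom{i-1-k}{r-1}\bigr|$, which for $i=1$ is of order $k^{r-1}/(r-1)!$. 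Thus the defect $He_k-p_k$ is supported on $\{1,\ldots,k\}$ with $\ell_\infty$ norm $\asymp k^{r-1}$, and the bound $\|(I-P_{\mathcal R^\perp})(He_k-p_k)\|_\infty\leq (1+b_r)\|He_k-p_k\|_\infty$ from Lemma~\ref{lem19} only yields $\|P_{\mathcal R}He_k\|_\infty\lesssim k^{r-1}$. The $L^2$-best polynomial fit $q_k$ is \emph{not} close to $p_k$ when $k$ is comparable to $n$, so your proposed expansion in the Legendre basis would have to capture a delicate cancellation that is nowhere near as simple as ``boundary residual of length $r$''.

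The paper sidesteps this entirely by choosing a different right inverse with small entries. Using Lemma~13 of \cite{wang2016trend}, one has $M=P_{\mathcal R}H_2/(r-1)!$ where $H_2$ is the (last $n-r$ columns of the) falling factorial basis evaluated at the grid $i/n$; its entries are products of factors $(i-j-l)/n\in[-1,1]$, hence $\|H_2\|_\infty\leq 1$. Then, instead of bounding $\|P_{\mathcal R}H_2 e_j\|_\infty$ columnwise, the paper moves the projection to the row side by symmetry and applies $\ell_1$--$\ell_\infty$ H\"older:
\[
|e_i^{\top}Me_j|=\bigl|(P_{\mathcal R}e_i)^{\top}H_2 e_j\bigr|/(r-1)!\;\leq\;\|P_{\mathcal R}e_i\|_1\,\|H_2\|_\infty/(r-1)!,
\]
and $\|P_{\mathcal R}e_i\|_1\leq 1+\|P_{\mathcal R^\perp}e_i\|_1=O(1)$ by Lemma~\ref{lem19}. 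In effect, the $n^{r-1}$ blow-up that plagues your integer-valued $H$ is absorbed into the normalization of $H_2$, and no cancellation analysis is needed.
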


\begin{proof}
	Let  $M :=  \left\{D^{(r)} \right\}^{ +}   \in  R^{   n \times (n-r) }$ be the Moore–Penrose inverse of  $D^{(r)}$. 
	%	Then \[
	%\|M\|_{\infty} =	\underset{i =1,\ldots, n,\,\,\,j = 1,\ldots,n-r }{\max}\,  \vert  M_{i,j}\vert     \,=\, O(1).
	%\]
	First, we notice  that by  Lemma  13 in \cite{wang2016trend}, we have that $M =  P_{  \mathcal{R} } H_2/(r-1)! $  where  $H_2$  consists of  the last $n-r$ columns of the $(r-1)$th
	order falling factorial basis matrix. Here, as in \cite{wang2014falling},  we have that for  $i \in \{1,\ldots,n\}$ and  $j \in  \{ 1,\ldots,n-r\}$,
	\[
	(H_2)_{i,j}   =  h_j(i/n),
	\]
	where 
	\[
	h_j(x) =   \prod_{l=1}^{r-1}   \left(   x -  \frac{j+l}{n} \right)1_{  \left\{ x\geq  \frac{j+r-1}{n} \right\}   }.
	\]
	%	Next we define $G\in R^{n  \times (n-r)}$ such that  for  $i \in \{1,\ldots,n\}$ and  $j \in \{1,\ldots,n-r\}$, we have 
	%	\[
	%	G_{i,j} = g_j(i/n),
	%	\]
	%	where
	%	\[
	%	g_j(x) \,=\, (x-t_j)^{r-1}1_{ \left\{    x\geq  t_j  \right\}  },
	%	\]
	%	and with
	%	\[
	%	  \left\{   t_1,\ldots,t_{n-r} \right\} \,=\, \begin{cases}
	%	 \left\{     \frac{  2+(r-1)/2  }{n},\ldots,\frac{n-(r-1)/2 }{n}     \right\}  &     \text{if} \,\,r \,\,\text{is  odd},\\
	%	  \left\{     \frac{  2+r/2  }{n},\ldots,\frac{n-r/2 }{n}     \right\}&     \text{if} \,\,r \,\,\text{is  even}.
	%	  \end{cases}
	%	\]
	%	Then,  by Lemma  4 in   \cite{wang2014falling}, we  have that
	%   \begin{equation}
	%	\label{eqn:failling_basis}
	%    \underset{i=,1\ldots,n,\, j= 1,\ldots,n-r  }{\max}\,\,\left\vert  G_{i,j} - (H_2)_{i,j}  \right\vert \leq  \frac{(r-1)^2}{n}.
	% \end{equation}
	Then for  $e_i $ an element of the canonical basis in  $R^{n-r}$  we have that
	\[
	\begin{array}{lll}
		\|  e_i^{\top} M\|_{\infty}  & \leq&     \| P_{    \mathcal{R}  } e_i\|_{1} \|    H_2\|_{\infty}/(r-1)!\\ 
		& \leq&  \left(  \|e_i \|_{1} +     \| P_{    \mathcal{R}^{\perp}  }e_i \|_{1}  \right)    \| H_2\|_{\infty}/(r-1)!  \\
		& \leq& \left[  1  +     \| P_{    \mathcal{R}^{\perp}  }e_i \|_{1}  \right]  /(r-1)!  \\
		%       & \leq&\left[  1  +     \| P_{    \mathcal{R}^{\perp}  }e_i \|_{1}  \right]  /(r-1)! 
	\end{array}
	\]
	where the  first  inequality follows from H\"{o}lder's inequality, the second from the triangle inequality and the last by   the definition of  $H_2$. 
	
	Next let  $v_1,\ldots, v_{r}$ be an  orthonormal basis of  $P_{\mathcal{R}^{\perp} }$. Then
	\[
	\displaystyle 	 \| P_{    \mathcal{R}^{\perp}  }e_i \|_{1}    \,=\,   \left\|  \sum_{j=1}^{r}    (e_i^{\top} v_j) v_j \right\|_1 \,\leq\, \sum_{j=1}^{r}    \vert   (e_i^{\top} v_j)   \vert \|v_j\|_1  \,\leq \,  \sum_{j=1}^{r}    \|  v_j  \|_{\infty} \|v_j\|_1 \,\leq\,  \sum_{j=1}^{r}    \|  v_j  \|_{\infty} n^{1/2}.
	\]
	Hence,  from Lemma  \ref{lem19} we obtain that
	%	Then 
	\begin{equation}
		\label{eqn:pseudo_inv}
		\|M\|_{\infty} =	\underset{i =1,\ldots, n,\,\,\,j = 1,\ldots,n-r }{\max}\,  \vert  M_{i,j}\vert     \,=\, O(1).
	\end{equation}
	
	Finally,  if   $\delta \in R^n$  and  $\mathrm{TV}^{(r )}(\delta)  \leq   V $, then
	\[
	\| P_{  \mathcal{R} }\delta\|_{\infty}\,=\,  \|   \{ D^{(r)}\}^{  +} D^{(r) } \delta   \|_{\infty}\,\leq\,    \|   M\|_{\infty} \|D^{(r) } \delta   \|_{1} =   O\left\{   \|D^{(r) } \delta   \|_{1}    \right\}\,
	\]
	where  the first  inequality  follows from H\"{o}lder's inequality, and the last from  (\ref{eqn:pseudo_inv}). The claim follows.
	%\[
	% M =  P_{R}
	%\]
\end{proof}

\begin{lemma}
	\label{lem23}
	Let  $\delta \in  R^n$  and  $v \in \mathcal{R}^{\perp}$ such that  $\|v\|=1$. Then
	\[
	\vert  \delta^{\top} v  \vert \,\leq \,  \frac{b_r}{n^{ 1/2  } }  \Delta^2(\delta)  +    \Delta(\delta),
	\]
	where  $b_r >0$ is the constant from Lemma \ref{lem19}.
\end{lemma}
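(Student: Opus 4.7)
The plan is to prove Lemma \ref{lem23} by a direct split of the index set into ``small'' and ``large'' coordinates of $\delta$, matching exactly how the loss $\Delta^2$ is defined in Lemma \ref{lem18}. Concretely, I would partition $[n]$ into $S = \{i : |\delta_i| \leq 1\}$ and $L = \{i : |\delta_i| > 1\}$, so that by definition
\[
\Delta^2(\delta) = \sum_{i \in S} \delta_i^2 + \sum_{i \in L} |\delta_i|.
\]
Then I would write $\delta^\top v = \sum_{i \in S} \delta_i v_i + \sum_{i \in L} \delta_i v_i$ and bound the two pieces by different inequalities tailored to how each piece contributes to $\Delta^2$.

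For the small-coordinate piece, Cauchy--Schwarz gives
\[
\Bigl| \sum_{i \in S} \delta_i v_i \Bigr| \leq \Bigl( \sum_{i \in S} \delta_i^2 \Bigr)^{1/2} \Bigl( \sum_{i \in S} v_i^2 \Bigr)^{1/2} \leq \Delta(\delta) \cdot \|v\| = \Delta(\delta),
\]
using that $\sum_{i \in S} \delta_i^2 \leq \Delta^2(\delta)$ and $\|v\| = 1$. For the large-coordinate piece, Hölder's inequality combined with the $\ell_\infty$ bound from Lemma \ref{lem19} gives
\[
\Bigl| \sum_{i \in L} \delta_i v_i \Bigr| \leq \|v\|_\infty \sum_{i \in L} |\delta_i| \leq \frac{b_r}{n^{1/2}} \sum_{i \in L} |\delta_i| \leq \frac{b_r}{n^{1/2}} \Delta^2(\delta),
\]
since $\sum_{i \in L} |\delta_i|$ is the other half of $\Delta^2(\delta)$. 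Adding the two bounds yields the claim.

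There is really no obstacle here: the lemma is essentially a ``definitional'' inequality whose whole point is to show that the $v \in \mathcal{R}^\perp$ direction can be controlled simultaneously via the squared part and the linear part of $\Delta^2$. The only ingredient beyond algebra is Lemma \ref{lem19}, which supplies the $n^{-1/2}$ decay of $\|v\|_\infty$ for unit-norm vectors in the polynomial subspace $\mathcal{R}^\perp$; this is precisely why the large-coordinate term picks up a factor of $n^{-1/2}$ and not a factor of $1$. The resulting bound is exactly what is needed in Step~1 of the proof sketch in Section~\ref{sec:sketch}, where $T_1 = E[\sup \xi^\top P_{\mathcal{R}^\perp} \delta]$ is shown to be a lower-order term compared to $T_2$.
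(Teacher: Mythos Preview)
Your proof is correct and essentially identical to the paper's own argument: both split the indices into $\{|\delta_i|\le 1\}$ and $\{|\delta_i|>1\}$, apply Cauchy--Schwarz on the small-coordinate piece and H\"older with Lemma~\ref{lem19} on the large-coordinate piece, and add the two bounds.
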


\begin{proof}
	Notice that 
	\[
	\begin{array}{lll}
		\vert \delta^{\top} v\vert     & \leq  &   \displaystyle  \sum_{i=1}^{n }  \vert\delta_i \vert   \vert v_{i}\vert \\
		& = &   \displaystyle  \sum_{i=1}^{n }  \vert\delta_i \vert   \vert v_{i}\vert   1_{    \{\vert  \delta_i \vert >1\}   }   \,+\,  \sum_{i=1}^{n }  \vert\delta_i \vert   \vert v_{i}\vert   1_{    \{\vert  \delta_i \vert \leq 1\}   }  \\
		& \leq &  \displaystyle   \|v\|_{\infty}    \sum_{i=1}^{n }  \vert\delta_i \vert     1_{    \{\vert  \delta_i \vert >1\}   }   \,+\,   \|v\|  \left(\sum_{i=1}^{n }  \delta_i^2     1_{    \{\vert  \delta_i \vert \leq 1\}   } \right)^{ 1/2  }\\
		& \leq &\frac{b_r}{n^{ 1/2  } }\Delta^2(\delta)  +    \Delta(\delta),
	\end{array}
	\]
	where  the first inequality follows from  the triangle inequality, the second from  H\"{o}lder and  Cauchy–Schwarz inequalities, and the last  by the definition of $\Delta^2(\cdot)$ and Lemma  \ref{lem19}. The claim follows.%  combining  (\ref{eqn:ineq})  with (\ref{eqn:ineq2}).
\end{proof}

\begin{lemma} 
	\label{lem5}
	Let  $\delta   \in R^n$ with  $\Delta^2(\delta) \leq  t^2$. Then
	\[
	\|  P_{\mathcal{R}^{\perp}     }\delta\|_{\infty}       \,\leq \,    \gamma(t,n) :=     \tilde{b}_r \left(      \frac{t}{  n^{  1/2  } } +   \frac{t^2}{n}  \right),
	\]
	where $\tilde{b}_r>0 $ depends on $r$ only.
\end{lemma}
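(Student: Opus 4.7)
The plan is to combine the three preceding lemmas (\ref{lem22}, \ref{lem19}, \ref{lem23}) in a direct way. First, by Lemma \ref{lem22}, $\mathcal{R}^{\perp}$ is the $r$-dimensional space of discrete polynomials of degree at most $r-1$, so we can fix an orthonormal basis $v_1,\ldots,v_r$ of $\mathcal{R}^{\perp}$ and expand
\[
P_{\mathcal{R}^{\perp}}\delta \,=\, \sum_{j=1}^{r} (\delta^{\top} v_j)\, v_j.
\]

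Next, I take the $\ell_\infty$ norm and use the triangle inequality:
\[
\|P_{\mathcal{R}^{\perp}}\delta\|_{\infty} \,\leq\, \sum_{j=1}^{r} |\delta^{\top} v_j|\, \|v_j\|_{\infty}.
\]
Each $v_j$ is a unit vector in $\mathcal{R}^{\perp}$, so Lemma \ref{lem19} gives $\|v_j\|_{\infty} \leq b_r/n^{1/2}$. Simultaneously, Lemma \ref{lem23} applied to each $v_j$ yields
\[
|\delta^{\top} v_j| \,\leq\, \frac{b_r}{n^{1/2}}\,\Delta^2(\delta) + \Delta(\delta) \,\leq\, \frac{b_r t^2}{n^{1/2}} + t,
\]
using the hypothesis $\Delta^2(\delta)\leq t^2$ (hence $\Delta(\delta)\leq t$). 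Plugging these two bounds into the display above gives
\[
\|P_{\mathcal{R}^{\perp}}\delta\|_{\infty} \,\leq\, r\cdot\frac{b_r}{n^{1/2}}\left(\frac{b_r t^2}{n^{1/2}} + t\right) \,=\, \frac{r b_r^2\, t^2}{n} + \frac{r b_r\, t}{n^{1/2}},
\]
so setting $\tilde{b}_r := r \max\{b_r, b_r^2\}$ (which depends only on $r$) delivers the claimed bound.

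There is essentially no obstacle here: the lemma is a clean consequence of the $r$-dimensionality of $\mathcal{R}^{\perp}$ combined with the uniform $\ell_\infty$ control on its orthonormal basis provided by Lemma \ref{lem19} and the Huber-type inner product bound from Lemma \ref{lem23}. The only modest point to verify is that the constant $\tilde{b}_r$ absorbs both $b_r$ and $b_r^2$ along with the factor $r$ coming from the dimension of $\mathcal{R}^{\perp}$; this is immediate.
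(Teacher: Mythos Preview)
Your proof is correct and follows essentially the same approach as the paper: expand $P_{\mathcal{R}^{\perp}}\delta$ in an orthonormal basis of the $r$-dimensional space $\mathcal{R}^{\perp}$, bound each $\|v_j\|_{\infty}$ via Lemma~\ref{lem19} and each $|\delta^{\top} v_j|$ via Lemma~\ref{lem23}, then combine. The only cosmetic difference is that the paper writes the bound as $r\cdot\max_j\|v_j\|_{\infty}\cdot\max_j|\delta^{\top}v_j|$ rather than your sum form, which is immaterial.
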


\begin{proof}
	Let  $v_1,\ldots,v_{r}$  an  orthonormal  basis     of   $\mathcal{R}^{\perp}$. Then
	\[
	\displaystyle  P_{\mathcal{R}^{\perp}} \delta  =  \sum_{j=1}^{r}  (\delta^{\top} v_j )v_j.
	\]
	Hence,
	\begin{equation}
		\label{eqn:ineq}
		\displaystyle \vert   (P_{\mathcal{R}^{\perp}} \delta)_i   \vert \,\leq \,   r \left(\underset{j =1 ,\ldots,r}{\max}\,\|v_j\|_{\infty}\right) \left( \underset{j =1 ,\ldots,r}{\max}\,  \vert \delta^{\top} v_j\vert \right)\,\leq \,  r \frac{b_r}{n^{1/2} }\left( \underset{j =1 ,\ldots,r}{\max}\,  \vert \delta^{\top} v_j\vert \right),
	\end{equation}
	where  the last  inequality  follows from  Lemma \ref{lem19}. 	Now, for  $j \in \{1,\ldots,r\}$, we have by Lemma \ref{lem23} that 
	\begin{equation}
		\label{eqn:ineq2}
		\begin{array}{lll}
			\vert \delta^{\top} v_j\vert     & \leq  &   %\displaystyle  \sum_{i=1}^{n }  \vert\delta_i \vert   \vert v_{j,i}\vert \\
			%& = &   \displaystyle  \sum_{i=1}^{n }  \vert\delta_i \vert   \vert v_{j,i}\vert   1_{    \{\vert  \delta_i \vert >1\}   }   \,+\,  \sum_{i=1}^{n }  \vert\delta_i \vert   \vert v_{j,i}\vert   1_{    \{\vert  \delta_i \vert \leq 1\}   }  \\
			%& \leq &  \displaystyle   \|v_j\|_{\infty}    \sum_{i=1}^{n }  \vert\delta_i \vert     1_{    \{\vert  \delta_i \vert >1\}   }   \,+\,   \|v_j\|  \left(\sum_{i=1}^{n }  \delta_i^2     1_{    \{\vert  \delta_i \vert \leq 1\}   } \right)^{ 1/2  }\\
			%& \leq &
			\frac{b_r}{n^{ 1/2  } }t^2  +   t.
		\end{array}
	\end{equation}
	The claim follows  combining  (\ref{eqn:ineq})  with (\ref{eqn:ineq2}).
\end{proof}

\begin{proposition}
	\label{lem9}
	Under Assumption A we have that
	\[
	\begin{array}{lll}
		%\displaystyle E\left\{    \underset{   \delta    \,: \,   \|D^{(r)} \delta\|_1\leq  \frac{2  V^*}{n^{r-1}  }\,\,   \Delta^2(\delta )\leq  t^2       }{\sup} \,\,    \sum_{i=1}^{n}   \xi_i \delta_i    \right\} 
		RW\left[\left\{\delta    \,: \,    \mathrm{TV}^{(r)}(\delta)\leq  2  V, \,\,   \Delta^2(\delta )\leq  t^2 \right\}  \right]	&\leq &  C_r  \{m(t,n)\}^{1-1/(2r)}(  n^{1/2} V)^{1/(2r)} + a(t,n)
		%m(t,n)  \left\{  \frac{  n^{ 1/2  }   V }{ m(t,n) } \right\}^{  1/(2r)    }  +  a(t,n)
		%C_r \left\{  \frac{t^2 }{n^{ 1/2   }}  +   t\right\} +   C_r  m(t,n)  \left\{  \frac{  n^{ 1/2  }   V }{ m(t,n) } \right\}^{  1/(2r)    }  \\ %1/(2+2k)
		%& &  + C_r  m(t,n)  \{\log ( en)\}^{ 1/2  },
	\end{array}
	\]
	where $a(t,n)$ is a lower order term defined as 
	\[
	a(t,n): = C_r \left\{  \frac{t^2 }{n^{ 1/2   }}  +   t\right\}   + C_r  m(t,n)  \{\log ( en)\}^{ 1/2  },
	\]
	and 
	\[
	m(t,n)  := 
	\tilde{c}_r\max\left\{  \left(V/n^{r-1} \right)^{  1/2    } ,1  \right\}   \left(\left[1+ \left\{\gamma(t,n)\right\}^{ 1/2   } \right]t   +  \frac{ t^2 }{ n^{  1/2 } } \right),
	\]
	for some positive  constants   $C_r, \tilde{c}_r$.%, and  with $\xi_1,\ldots,\xi_n$ independent   Rademacher  random variables.
\end{proposition}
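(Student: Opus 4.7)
The plan is to first pass from Rademacher width to Gaussian width via Lemma \ref{radamacher_width}, which at most inflates the bound by the constant $\sqrt{\pi/2}$. Writing any $\delta$ in the feasible set as $\delta = P_{\mathcal{R}} \delta + P_{\mathcal{R}^{\perp}} \delta$ (where $\mathcal{R}$ denotes the row space of $D^{(r)}$) and using $\sup(a+b) \leq \sup a + \sup b$, I split the Gaussian width into $T_1+T_2$, where
\[
T_1 = E\left[\sup_{\delta} z^{\top} P_{\mathcal{R}^{\perp}} \delta\right], \qquad T_2 = E\left[\sup_{\delta} z^{\top} P_{\mathcal{R}} \delta\right],
\]
with $z$ standard Gaussian and the suprema taken over the feasible set. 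The goal is to show that $T_1$ contributes only lower order terms, while $T_2$ produces the main term.

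For $T_1$, I would use Lemma \ref{lem22} to pick an orthonormal basis $\{v_1, \ldots, v_r\}$ of $\mathcal{R}^{\perp}$, expand $P_{\mathcal{R}^{\perp}} \delta = \sum_{j=1}^r (\delta^{\top} v_j) v_j$, and use Lemma \ref{lem23} together with $\Delta^2(\delta) \leq t^2$ to bound $|\delta^{\top} v_j| \leq b_r t^2/n^{1/2} + t$ uniformly over the feasible set. Since each $E|z^{\top} v_j| \leq \|v_j\| = 1$ by Jensen's inequality, this yields $T_1 \leq C_r(t + t^2/n^{1/2})$, which accounts for the first piece of $a(t,n)$.

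For $T_2$, the essential step is to upgrade the constraint $\Delta^2(\delta) \leq t^2$ into an $\ell_2$ bound $\|\delta\| \leq m(t,n)/\tilde{c}_r$. Applying Lemma \ref{lem18} gives $\|\delta\|^2 \leq \max\{\|\delta\|_\infty, 1\}\Delta^2(\delta)$, and controlling $\|\delta\|_\infty$ via $\|P_{\mathcal{R}}\delta\|_\infty \leq 2\tilde{C}_r V/n^{r-1}$ (Lemma \ref{lem20} applied with $\mathrm{TV}^{(r)}(\delta) \leq 2V$) plus $\|P_{\mathcal{R}^{\perp}}\delta\|_\infty \leq \gamma(t,n)$ (Lemma \ref{lem5}) leads, after taking a square root and using $\sqrt{a+b+1} \leq \sqrt{a}+\sqrt{b}+1$ to separate the $V/n^{r-1}$, the $\gamma(t,n)$ and universal pieces, to exactly the form of $m(t,n)$ (with the residual $t^2/n^{1/2}$ coming from the $\|P_{\mathcal{R}^\perp}\delta\|$ contribution in $\|\delta\| \leq \|P_{\mathcal{R}}\delta\|+\|P_{\mathcal{R}^\perp}\delta\|$). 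With this, the feasible set is contained in $\{\delta : \mathrm{TV}^{(r)}(\delta) \leq 2V,\ \|\delta\| \leq m(t,n)\}$, and I can invoke Lemma B.1 of \cite{guntuboyina2020adaptive} to bound its Gaussian width by
\[
C_r V^{1/(2r)} m(t,n)^{1 - 1/(2r)} n^{1/(4r)} + C_r m(t,n) \sqrt{\log(en)}.
\]
Since $V^{1/(2r)} n^{1/(4r)} = (n^{1/2} V)^{1/(2r)}$, the first summand matches the leading term in the statement, while the second is exactly the second component of $a(t,n)$.

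The main obstacle is carrying out the conversion from the Huber-type loss constraint $\Delta^2(\delta) \leq t^2$ to the Euclidean bound $\|\delta\| \leq m(t,n)$ without losing polynomial factors of $n$. The decomposition $\|\delta\|^2 \leq \max\{\|\delta\|_\infty, 1\}\Delta^2(\delta)$ from Lemma \ref{lem18} is tight only when $\|\delta\|_\infty$ is simultaneously controlled through the $\ell_\infty$ bound on $P_{\mathcal{R}}\delta$ (Lemma \ref{lem20}, which supplies the $V/n^{r-1}$ factor) and the sharp Legendre-polynomial bound on $P_{\mathcal{R}^{\perp}}\delta$ (Lemma \ref{lem19}, yielding the $\gamma(t,n)$ factor), and then the square root must be distributed carefully so that the characteristic $\max\{(V/n^{r-1})^{1/2},1\}\,(1+\gamma(t,n)^{1/2})$ structure of $m(t,n)$ emerges. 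Once this quantitative localization is in hand, the remainder is a direct citation of the off-the-shelf localized Gaussian width estimate for total variation balls from \cite{guntuboyina2020adaptive}.
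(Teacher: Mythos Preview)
Your proposal is correct and follows essentially the same route as the paper: split along $\mathcal{R}$ and $\mathcal{R}^{\perp}$, bound the $\mathcal{R}^{\perp}$ piece via Lemma~\ref{lem23}, convert the $\Delta^2$ constraint to an $\ell_2$ constraint of size $m(t,n)$, and finish with Lemma~B.1 of \cite{guntuboyina2020adaptive}. The only noteworthy difference is in how the $\ell_2$ bound is obtained: the paper applies Lemma~\ref{lem18} to $\tilde{\delta}=P_{\mathcal{R}}\delta$, which forces it to first prove the intermediate Lemma~\ref{lem6} bounding $\Delta^2(P_{\mathcal{R}}\delta)$ in terms of $t$ and $\gamma(t,n)$ (this is where the $t^2/n^{1/2}$ in $m(t,n)$ originates, via the $n\gamma(t,n)^2$ term); you instead apply Lemma~\ref{lem18} to $\delta$ itself, so you only need $\Delta^2(\delta)\le t^2$ (given) together with $\|\delta\|_\infty \le \|P_{\mathcal{R}}\delta\|_\infty+\|P_{\mathcal{R}^{\perp}}\delta\|_\infty$ from Lemmas~\ref{lem20} and~\ref{lem5}. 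Your route is slightly shorter (it bypasses Lemma~\ref{lem6}) and in fact yields a marginally tighter bound without the $t^2/n^{1/2}$ summand, which of course still implies the stated form of $m(t,n)$. One small narrative wrinkle: once you have $\|\delta\|\le m(t,n)$ on the whole feasible set, applying Lemma~B.1 directly to the containment already bounds the full Gaussian width, so the separate $T_1$ computation becomes redundant; if you want $T_1$ to genuinely contribute the $t+t^2/n^{1/2}$ piece of $a(t,n)$, phrase the $T_2$ step as the paper does, namely as a bound on $\|P_{\mathcal{R}}\delta\|$ (which follows from $\|P_{\mathcal{R}}\delta\|\le\|\delta\|$) followed by a change of variable $\tilde{\delta}=P_{\mathcal{R}}\delta$ before invoking Lemma~B.1.
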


\begin{remark}
	For the choice of $t$ that we make within the proof of Theorem~\ref{thm1}, the term $m(t,n)$ is $\Theta(t)$ and hence the reader can safely think of $m(t,n)$ in the right hand side above as $t.$ 
\end{remark}

\begin{proof}
	First,  we observe that

	\begin{equation}
		\label{eqn:t1_t2}
		\begin{array}{lll}
			\displaystyle E\left\{  \underset{v \in K   \,:\,    \Delta^2(v-\theta^*)    \leq  t^2  }{\sup}\,\,      \sum_{i=1}^{n}   \xi_i  (v_i -\theta_i^*) \right\}  &\leq &   \displaystyle E\left\{   \underset{  \delta    \,: \,   \mathrm{TV}^{(r)}(\delta)\leq  2V,\,\,   \Delta^2(\delta )\leq  t^2        }{\sup} \,\,  \xi^{\top} P_{ \mathcal{R}^{\perp}}\delta    \right\}  \\
			& & + \displaystyle E\left\{    \underset{  \delta    \,: \,  \mathrm{TV}^{(r)}(\delta)\leq  2V,\,\,   \Delta^2(\delta )\leq  t^2        }{\sup} \,\,     \xi^{\top} P_{ \mathcal{R}  }\delta    \right\}\\
			&  =:& T_1 +T_2.
		\end{array}
	\end{equation}
	Hence, we proceed to bound $T_1$ and  $T_2$.  
	
	%An  upper bound for  $T_1$ is given in the following lemma.
	
	\textbf{Bounding $T_1$.}
	
	Let 
	$v_1,\ldots,v_{r}$  an orthonormal basis of $\mathcal{R}^{\perp}$. Then  by Lemma \ref{lem19},  it holds that   $\| v_j\|_{\infty}  \leq   b_r/n^{  1/2 }$, for  $j = 1,\ldots,r$.   Hence,  for any $\delta \in R^{n}$  with  $\Delta^2(\delta) \leq  t^2$,
	\begin{equation}
		\label{eqn:calculation}
		\begin{array}{lll}
			\xi^{\top} P_{  \mathcal{R}^{\perp}}\delta   &  \leq &  \displaystyle    \left\vert \sum_{j=1}^{r}        \delta^{\top }v_j   \cdot   \xi^{\top} v_j          \right\vert \\
			& \leq & \displaystyle    \sum_{j=1}^{r}        \left\vert \delta^{\top }v_j   \right\vert  \cdot    \left\vert\xi^{\top} v_j          \right\vert \\
			& \leq &  \displaystyle     r\left( \underset{j = 1,\ldots, r}{\max}  \vert   \xi^{\top} v_j    \vert   \right)\left(\underset{j = 1,\ldots, r}{\max}  \vert   \delta^{\top} v_j    \vert   \right)\\
			& \leq& \displaystyle   r \left( \underset{j = 1,\ldots, r}{\max}  \vert   \xi^{\top} v_j    \vert   \right)\left(    \frac{b_rt^2 }{n^{  1/2 }} +   t\right)
		\end{array}
	\end{equation}
	where the last inequality   follows from Lemma \ref{lem23}.  Therefore,
	\begin{equation}\label{eqn:t1}
		\displaystyle   T_1 \,\leq \, r \left(    \frac{b_rt^2 }{n^{  1/2 }}  +   t\right) \sum_{j=1}^{r}   \,E\left(  \vert    \xi^{\top} v_j  \vert    \right)  \,\leq  \, C_r \left(\frac{t^2}{n^{  1/2  }}  +   t\right), 
	\end{equation}
	for  some positive  constant  $C_r>0$, and  where the last inequality  follows  since $\xi^{\top} v_j    $  are sub-Gaussian  random variables with variance 1.

	\textbf{Bounding $T_2$.}
	
	We now proceed to bound $T_2$. 	Towards that end  we  first prove a lemma.

	\begin{lemma}
		\label{lem6}
		
		Let  $\delta  \in R^n$  with  $\Delta^2(\delta)\leq  t^2 $. Then,
		\[
		\Delta^2(   P_{   \mathcal{R}  }\delta  ) \leq  h(t,n)  :=  c_r \left\{   t^2 +   t^2 \gamma(t,n)+   n \left(  \frac{t^2}{n} +  \frac{t^4}{n^2} \right)    \right\},
		% 2\left\{   t^2 +   2t^2 \gamma(t,n)+   16n r^2\left(  \frac{t^2}{n} +  \frac{t^4}{n^2} \right)    \right\},
		%2\max\{L,L^2\}\left(1+ 2 \gamma(t,n)   +  16(k+1)^2 \right)t^2   +  \frac{16(k+1)^2  t^4 }{n}.
		\]
		%	where
		%	\[
		%	  m(t,n)  := 
		%	\tilde{c}_r\max\left\{  \left(V/n^{r-1} \right)^{  1/2    } ,1  \right\}   \left(\left[1+ \left\{\gamma(t,n)\right\}^{ 1/2   } \right]t   +  \frac{ t^2 }{ n^{  1/2 } } \right),
		%	\]
		with  $\gamma(t,n)$  as in Lemma \ref{lem5},  and for  some constant  $c_r>0$.
	\end{lemma}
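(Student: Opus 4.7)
The plan is to decompose $\delta = P_{\mathcal{R}}\delta + P_{\mathcal{R}^\perp}\delta =: u + w$ and control $\Delta^2(u)$ by relating each $u_i$ to $\delta_i$ and $w_i$, where $w$ is uniformly small thanks to Lemma~\ref{lem5}. The key input is that Lemma~\ref{lem5} gives $\|w\|_\infty \leq \gamma(t,n)$, which is small provided $t = o(n^{1/2})$. So the natural strategy is to treat $w$ as a perturbation, noting that because $\delta = u + w$ we have $|u_i| \leq |\delta_i| + |w_i|$ and $u_i^2 \leq 2\delta_i^2 + 2w_i^2$.

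To handle the piecewise nature of $\min(|x|,x^2)$, I would partition the indices as $A = \{i : |\delta_i| \leq 1\}$ and $B = \{i : |\delta_i| > 1\}$. From $\Delta^2(\delta) \leq t^2$ one immediately gets two useful bounds: $\sum_{i\in A} \delta_i^2 \leq t^2$ and $|B| \leq \sum_{i \in B} |\delta_i| \leq t^2$ (the latter because each term contributes at least $1$). On $A$, I would use $\min(|u_i|,u_i^2) \leq u_i^2 \leq 2\delta_i^2 + 2 w_i^2$ and sum to get a contribution of at most $2t^2 + 2\|w\|^2 \leq 2t^2 + 2n\gamma(t,n)^2$. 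On $B$, I would use $\min(|u_i|,u_i^2) \leq |u_i| \leq |\delta_i| + |w_i| \leq |\delta_i| + \gamma(t,n)$ and sum to get a contribution of at most $t^2 + |B|\gamma(t,n) \leq t^2 + t^2 \gamma(t,n)$.

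Adding the two contributions yields $\Delta^2(u) \leq 3t^2 + 2n\gamma(t,n)^2 + t^2 \gamma(t,n)$, and since $\gamma(t,n) = \tilde{b}_r(t/n^{1/2} + t^2/n)$, we have $n\gamma(t,n)^2 \leq c \, n\,(t^2/n + t^4/n^2)$ for a constant $c$ depending on $r$, matching exactly the form of $h(t,n)$ stated in the lemma. Absorbing all constants into a single $c_r > 0$ then gives the claim.

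There is no real obstacle here: the proof is a straightforward partition-and-bound argument, and the only subtle point is recognizing that the $\Delta^2$-constraint yields a bound on the \emph{cardinality} $|B|$ (not just on $\sum_{B} |\delta_i|$), which is what allows the $t^2 \gamma(t,n)$ term on the large-coordinate set to stay of the right order rather than scaling with $n\gamma(t,n)$.
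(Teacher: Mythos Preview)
Your proposal is correct and follows essentially the same argument as the paper: partition indices according to whether $|\delta_i|\le 1$ or $|\delta_i|>1$, use Lemma~\ref{lem5} to bound $\|P_{\mathcal{R}^\perp}\delta\|_\infty\le\gamma(t,n)$, apply the quadratic bound on the small-coordinate set and the linear bound on the large-coordinate set, and use $|\{i:|\delta_i|>1\}|\le t^2$. The only cosmetic difference is notation ($u,w$ versus the paper's $\tilde\delta=P_{\mathcal R}\delta$) and trivially different constants before absorption into $c_r$.
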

	
	\begin{proof}
		Set  $\tilde{\delta}   = P_{\mathcal{R} }\delta  $. By Lemma  \ref{lem5}  we have that $\|\tilde{\delta} -\delta\|_{\infty} \leq \gamma(t,n)$. Also,
		\[
		\begin{array}{lll}
			\Delta^2(\tilde{\delta})   &  = &\displaystyle  \sum_{i=1}^{n }  \min\{ \vert  \tilde{ \delta}_i\vert, \delta_i^2 \}\\
			%\vert\tilde{\delta}_i \vert     1_{    \{\vert  \tilde{\delta}_i \vert >1\}   }   \,+\,      \sum_{i=1}^{n }  \tilde{\delta}_i^2     1_{    \{\vert  \tilde{\delta}_i \vert \leq 1\}      }\\
			%	&  \leq  &\displaystyle  \sum_{i=1}^{n }  \vert\tilde{\delta}_i \vert     1_{    \{\vert  \delta_i \vert >1   +2 \gamma(t,n) \}   }   \,+\,      \sum_{i=1}^{n }  \tilde{\delta}_i^2     1_{    \{\vert  \delta_i \vert \leq 1-2 \gamma(t,n) \}      }    \\
			%	& & \displaystyle +   \sum_{i=1}^{n }   \min\left\{ \vert\tilde{\delta}_i \vert,    \tilde{\delta}_i^2\right\}     1_{    \{\vert  \delta_i \vert \in    (1-2 \gamma(t,n),1+2 \gamma(t,n)) \}   }  \\
			& \leq& \displaystyle  \sum_{i=1}^{n } \vert\tilde{\delta}_i \vert     1_{    \{\vert  \delta_i \vert >1\}   }   \,+\,      \sum_{i=1}^{n }  \tilde{\delta}_i^2 1_{    \{\vert  \delta_i \vert \leq 1\}      },\\
		\end{array}
		\]
		and so 
		\[
		\begin{array}{lll}
			\Delta^2(\tilde{\delta})   &  \leq  &\displaystyle  \sum_{i=1}^{n }   \left\{\vert\delta_i \vert  + \gamma(t,n)\right\} 1_{    \{\vert  \delta_i \vert >1\}   }   \,+\,      \sum_{i=1}^{n }  \left[2\delta_i^2   +2\{\gamma(t,n) \}^2  \right] 1_{    \{\vert  \delta_i \vert \leq 1\}      }\\
			& \leq&   \left[   2t^2   +  2t^2 \gamma(t,n)+  2 n \{\gamma(t,n)\}^2   \right]\\
			&\leq& 2\left\{   t^2 +   2t^2 \gamma(t,n)+   4n \tilde{b}_r^2\left(   \frac{t^2}{n} +  \frac{t^4}{n^2} \right)    \right\},\\
			% & =&
		\end{array}
		\]
		where the second inequality follows form the fact that  
		\[
		\vert \left\{  i\,:\,    \vert  \delta_i\vert >1  \right\}\vert   \,\leq \,  t^2.
		\]
	\end{proof}
	%we observe  that
	%	Notice that    
	%	\[
	%	\begin{array}{lll}
	%	\displaystyle E\left\{   \underset{  \delta    \,: \,   \|D^{(r)}\delta \|_1\leq  \frac{2  V^*}{n^k},\,\,   \Delta^2(\delta )\leq  t^2        }{\sup} \,\,    \sum_{i=1}^{n}   \xi_i \delta_i    \right\} &\leq &   \displaystyle E\left\{   \underset{  \delta    \,: \,   \%|D^{(r)}\delta \|_1\leq  \frac{2  V^*}{n^k},\,\,   \Delta^2(\delta )\leq  t^2        }{\sup} \,\,  \xi^{\top} P_{ \mathcal{R}^{\perp}}\delta    \right\}  \\
	%& & + \displaystyle E\left\{    \underset{  \delta    \,: \,   \|D^{(r)}\delta \|_1\leq  \frac{2  V^*}{n^k}\,\,   \Delta^2(\delta )\leq  t^2        }{\sup} \,\,     \xi^{\top} P_{ \mathcal{R}  }\delta    \right\}\\
	%	&  =:& T_1 +T_2.
	%	\end{array}
	%	\]
	%	We now proceed to  bound   $T_1$  and  $T_2$.  
	Next, let  $\delta   \in R^n$,  $\tilde{\delta}=    P_{  \mathcal{R} }\delta $ and suppose that $\Delta^2(\delta)\leq  t^2$, and  $ \mathrm{TV}^{(r)}(\delta) \leq 2V$.  Then from Lemmas  \ref{lem18}, \ref{lem20}  and  \ref{lem6}, we obtain  that 
	\begin{equation}
		\label{eqn:embedding}
		\begin{array}{l}\|\tilde{\delta} \| \leq      m(t,n)  :=  \\
			\tilde{c}_r\max\left\{  \left(V/n^{r-1} \right)^{  1/2    } ,1  \right\}   \left(\left[1+ \left\{\gamma(t,n)\right\}^{ 1/2   } \right]t   +  \frac{ t^2 }{ n^{  1/2 } } \right),
		\end{array}
	\end{equation}
	for  a  positive  constant $\tilde{c}_r$  that depends on $r$. As a result  from (\ref{eqn:t1_t2}), we obtain	
	\begin{equation}
		\label{eqn:t2}
		T_2  \,\leq \,  \displaystyle  E\left\{  \underset{  \delta    \,: \,   \mathrm{TV}(\delta)\leq  2 V,\,\, \|\delta \|\leq  m(t,n)        }{\sup} \,\,     \xi^{\top} \delta    \right\}.\\
	\end{equation}
	Therefore,  by  Lemma  B.1  from \cite{guntuboyina2020adaptive} and    Lemma \ref{radamacher_width}, 
	%(\ref{eqn:Gaussian_width}),
	\begin{equation}
		\label{eqn:t22}
		T_2  \leq    C_r  m(t,n)  \left\{  \frac{n^{1/2}  V }{ m(t,n) } \right\}^{   1/(2r)   }   
		+ C_r  m(t,n)  \{\log ( en)\}^{  1/2 }.
	\end{equation}
	for  a positive constant  $C_r$  that depends on $r$.
	The conclusion follows.
\end{proof}

%\newpage

%Combining Lemmas \ref{lem18}--\ref{lem7}  we arrive to the following corollary which provides the embedding in Equation (\ref{eqn:embedding_tv}).

%\begin{lemma}
%	\label{lem8}
%	Let  $\delta   \in R^n$,  $\tilde{\delta}=    P_{  \mathcal{R} }\delta $ and suppose that $\Delta^2(\delta)\leq  t^2$, and  $\|D^{(r)} \delta\|_1 \leq    \frac{ 2V^* }{n^{r-1  }}  $. Then, with the notation from Lemmas \ref{lem18}--\ref{lem7}, 
%	\[
%	\begin{array}{l}\|\tilde{\delta} \| \leq      m(t,n)  :=  \\
%	\max\left\{  \left(\tilde{C}_r V^* \right)^{  1/2    } ,1  \right\}   2^{ 1/2  }   \left(\left[1+ \left\{2\gamma(t,n)\right\}^{ 1/2   } +  4r \right]t   +  \frac{4r  t^2 }{ n^{  1/2 } } \right).
%	\end{array}
%	\]
%\end{lemma}

%We are now ready to show the embedding in (\ref{eqn:embedding_tv}).
%\newpage

%	\begin{proof}
%	Notice  that
%	\[
%	\begin{array}{lll}
%	\| \tilde{\delta} \|^2   &  = & \displaystyle  \sum_{i=1 }^{n}   \tilde{\delta}_i^2\\
%	&\leq  & \displaystyle  \sum_{i=1 }^{n}   \tilde{\delta}_i^2   1_{   \{  \vert \tilde{\delta_i }\vert  \leq L  \}  }       +   \sum_{i=1 }^{n}   \tilde{\delta}_i^2   1_{   \{  \vert\tilde{ \delta}_i \vert  >L  \}  }   \\
%	& \leq&     \max\{  \|\tilde{\delta}\|_{\infty}   ,1   \} \Delta^2 (\tilde{\delta}),
%	\end{array}
%	\]
%	and the claim follows  from Lemmas \ref{lem6}--\ref{lem7}.
%\end{proof}

\subsubsection{Proof of Theorem \ref{thm5}}
Finally, we present the proof of Theorem \ref{thm5}.
\begin{proof}
	
	This  follows immediately from Proposition \ref{lem9}  and Corollary \ref{cor:basic}   by setting  
	\[
	r_n\asymp  n^{  -  r/(2r+1)}   V^{1/(2r+1)} \max\left\{   1,        \left(  \frac{V }{n^{r-1}}\right)^{(2r-1)/(4r+2)}  \right\}. 
	\]
	%	with $c_1>0$ large enough.
	%We  start  by  defining $\tilde{D}^2(\delta)$, for  $\delta \in R^n$, as
	
\end{proof}

\iffalse

\subsubsection{ Proof of Theorem~\ref{thm2} }
We now present the proof of Theorem \ref{thm2}. Our strategy for proving Theorem \ref{thm2} is similar to that in the proof of Theorem \ref{thm5}. The main difference is that we focus on bounding 
\[
\displaystyle RW( \tilde{K}_t+\theta^*  )
% E\left\{   \underset{v \in \tilde{K}_t }{\sup}\,\,(M -\hat{M})(v+\theta^*)\right\} 
\]
where
%	Let
\begin{equation}
	\label{eqn:k_tilde}
	\tilde{K}_t =  \left\{ \delta   \in R^n \,:\,  \delta =   a(v-\theta^*),  \,a\in [0,1],\,v \in K, \,   \Delta^2(\delta) \leq   t^2   \right\},
\end{equation}
and the  tangent cone of $K$ is  defined as
\[
T_K(\theta^*) =    \text{Closure}\left\{   \delta   \in R^n\,:\, \delta =  a(v-\theta^*),\,\,   v\in K,\,\,  a\geq 0       \right\}.
\]\fi

\subsubsection{Proof of  Theorem ~\ref{thm2}  }
We now present our proof of Theorem ~\ref{thm2}. Throughout  we  write
\[
K   =\left\{   \theta   \in  R^n \,:\,  \|D^{(r)} \theta\|_1 \leq  \frac{V^*}{n^{r-1} }  \right\} .
\]
Notice that to arrive at the conclusion of Theorem \ref{thm2},  by Theorem \ref{thm:basic}, it is enough to bound

$$RW\left( \{    \delta \in   K - \theta^*\,:\, \Delta^2(\delta) \leq t^2     \}          \right), $$
for $t<cn^{1/2}$  where  $c>0$ is a constant.

However,
\[
\begin{array}{lll}
	RW\left( \{    \delta \in   K - \theta^*\,:\, \Delta^2(\delta) \leq t^2     \}          \right)  & \leq &	\displaystyle \,E\left\{   \underset{  \delta \in   K - \theta^*\,:\, \Delta^2(\delta) \leq t^2     }{\sup} \,\,  \xi^{\top} P_{  \mathcal{R}^{\perp}}\delta    \right\}   \\
	& & + \displaystyle \,E\left\{   \underset{ \delta \in   K - \theta^*\,:\, \Delta^2(\delta) \leq t^2      }{\sup} \,\,     \xi^{\top} P_{   \mathcal{R}  }\delta    \right\} \\
	&  =:& T_1 +T_2.
\end{array}
\]
Then, $T_1$  can be bounded with the same argument that $T_1$  was  bounded in the proof of Proposition \ref{lem9}.  To  control $T_2$, 
we define the   tangent cone of $\theta \in K$ as
\begin{equation}
	\label{eqn:cone}
	T_K(\theta) =    \text{Closure}\left\{   \delta   \in R^n\,:\, \delta =  a(v-\theta),\,\,   v\in K,\,\,  a\geq 0       \right\},
\end{equation}
and notice that as in Equation (\ref{eqn:t2}),
\begin{equation}
	\label{eqn:t2_2}
	\begin{array}{lll}
		T_2  &\leq & \displaystyle  E\left\{  \underset{  \delta    \,: \,   K- P_{   \mathcal{R}  }\theta^*,\,\, \|\delta \|\leq  m(t,n)        }{\sup} \,\,     \xi^{\top} \delta    \right\}.\\
		& \,\leq \, & \displaystyle \,E\left\{   \underset{  \delta   \in T_K( P_{   \mathcal{R}  }\theta^*), \,\,   \|\delta\|\leq  \max\{1,(V^*/n^{r-1})^{1/2} \}t  c(r)   }{\sup} \,\,     \xi^{\top} \delta    \right\}\\
		&\, =\,& \max\left\{1 ,\left(\frac{V^*}{n^{r-1} }\right)^{1/2} \right\}t  c(r)\,E\left\{   \underset{  \delta   \in T_K( P_{   \mathcal{R}  }\theta^*), \,\,   \|\delta\|\leq  1   }{\sup} \,\,     \xi^{\top} \delta    \right\}    ,\\
		& \leq&  \max\left\{1,\left(\frac{V^*}{n^{r-1} }\right)^{1/2} \right\}t  c(r)\, c_{r}   \left\{(s+1)     \log\left(  \frac{en}{s+1} \right) \right\}^{1/2}
	\end{array}
\end{equation}
for some positive constant  $c_r$,	where the last   inequality holds by Appendix   B.2 in \cite{guntuboyina2020adaptive}    and Lemma \ref{radamacher_width}.

\section{Proof of Theorem~ \ref{thm4} }

Throughout this section we will use $C$ as a generic positive  constant  that can change from line to line. Furthermore, for  an appropriate  $\lambda$ to be chosen later
we write
\[
\hat{\theta} \,=\,\underset{\theta \in  R^n}{\arg \,\min}\,\,\sum_{i=1}^{n} \rho_{\tau}(y_i-\theta_i)  \,+\, \lambda \|D^{(r)} \theta\|_1,
\]
%for some  $\lambda>0$.

\subsection{Proof outline}

We now provide a high level overview of the proof of Theorem \ref{thm4}.%, which  proceeds using ideas similar to the proof of  Theorem  \ref{thm:basic}. 

\textbf{Step 1.}  We  show in Proposition \ref{lem17} that for any given $\epsilon > 0$ if  $\tilde{\theta}$ is in the line segment between $\hat{\theta}$ and $\theta^*$, then  $\tilde{\delta} = \tilde{\theta} - \theta^{*}$ belongs to  a restricted set  $\mathcal{A}$ (depending on $\epsilon$) with probability  at least  $1 - \epsilon/4$. We call this event $\Omega_1$ and this restricted set is of the form 
\[
\mathcal{A} = \{  \delta \,  :\,   \mathrm{TV}^{(r)}(\delta)  \leq  C V^* + \text{some extra terms}          \}
\]
for some positive constant  $C$, see the precise definition in (\ref{eqn:restricted}).  Then we show, using the convexity of  $\hat{M}$, the optimality of $\hat{\theta}$, and  Lemma \ref{lem2}   that  for any $t>0$ it holds that
\[
\{ \Delta^2(\hat{\delta}) \geq  t^2     \} \subset   \left\{      \underset{ \delta \in \mathcal{A},\,\,\Delta^2(\delta)\leq  t^2  }{\sup} \,\,\left[ M(\theta^* +\delta )- \hat{M}(\theta^* +\delta) +   \lambda \|D^{(r)}\theta^*\|_1 - \lambda\|D^{(r)}(\theta^* +\delta)\| \right]\geq   c_0 t^2   \right\},
\]
where  $c_0>0$  is as in Lemma \ref{lem2}. This step uses ideas very similar to the proof of Theorem  \ref{thm:basic}.

\textbf{Step 2.}  We define another high probability event $\Omega_2$  as in (\ref{eqn:high_prob_events}). Then  based on Proposition \ref{lem17} and Lemma \ref{lem24},  we obtain that $\Omega_1 \cap \Omega_2$  happens with probability at least $1-\epsilon/2$.  Hence, we do our analysis conditioning on  $\Omega_1\cap \Omega_2$. We start with also assuming that 
$\Delta^2(\hat{\delta}) \geq  t^2 $  for  some $t>0$ (whose value is to be specified later).% with the goal of arriving at a contradiction.

\textbf{Step 3.}  We show that if $\delta \in \mathcal{A},\,\,\Delta^2(\delta)\leq  t^2  $ and     $\Omega_1 \cap \Omega_2$ holds   then 
\[%
%\|D^{(r)} \delta \|_1 \leq \frac{  C  }{n^{r-1}}
\mathrm{TV}^{(r)}(\delta )   \leq  C
\]
for some $C>0$.  It then follows from  Steps $1$ and $2$ above that we can
reduce our focus to upper bounding  the probability  of the event 
\[
\left\{      \underset{ \delta \in K  }{\sup} \,\,   \left[M(\theta^* +\delta )- \hat{M}(\theta^* +\delta) +   \lambda \|D^{(r)}\theta^*\|_1 - \lambda\|D^{(r)}(\theta^* +\delta)\| \right]\geq      c_0 t^2   \right\}
\]
where
\[
K \,:=\,    \left\{ \delta \,:\,  \mathrm{TV}^{(r)}(\delta )   \leq  C,\,\,\Delta^2(\delta) \leq  t^2 \right\}.
\]

\textbf{Step 4.}   Next we observe  that
\[
\underset{ \delta \in K  }{\sup} \,\{\,M(\theta^* +\delta )- \hat{M}(\theta^* +\delta) +   \lambda \|D^{(r)}\theta^*\|_1 - \lambda\|D^{(r)}(\theta^* +\delta)\| \}  \,\leq \,       \underset{ \delta \in K  }{\sup} \,\,\{ M(\theta^* +\delta )- \hat{M}(\theta^* +\delta) \}\,+\, \lambda \|D^{(r)}\theta^*\|_1.
\]
Hence,  to show that $\mathrm{pr}( \Delta^2(\hat{\delta}) \geq  t) \leq \epsilon$, from Step 3  and an application of Markov's inequality, it suffices to show that  
\[
\frac{1}{c_0 t^2} E \left[   \underset{ \delta \in K  }{\sup} \,\,\{ M(\theta^* +\delta )- \hat{M}(\theta^* +\delta) \}   \right]  +    \frac{  \lambda  \|D^{(r)}\theta^*\|_1  }{c_0 t^2} \leq \epsilon
\]

\textbf{Step 5.}  Setting \[
t  \,\asymp \, 	n^{  1/(4r+2) }  \left(  \log n \right)^{  1/(4r+2) }
\] 

we now proceed to show that for some positive constant $c$
\[ 
\frac{1}{c_0t^2} E \left[   \underset{ \delta \in K  }{\sup} \,\,\{ M(\theta^* +\delta )- \hat{M}(\theta^* +\delta) \}   \right] \leq c\epsilon 
\]
exactly similarly as in the proof of Theorem \ref{thm5}. Now by setting  $\lambda$ to satisfy
\[
\lambda \, \asymp\, n^{  (2r-1)/(2r+1)} \left( \log n \right)^{ 1/(2r+1) }   \|D^{(r)} \theta^*\|_1^{ - (2r-1)/(2r+1) },
\]
see  (\ref{eqn:lambda}),  we can also verify that 
\[
\frac{\lambda  \|D^{(r)}\theta^*\|_1  }{c_0 t^2} \leq c\epsilon
\]
and conclude the proof.

\iffalse
\newpage

\textbf{Step 2.}  
First we construct to events  $\Omega_1$ and $\Omega_2$, based on Lemmas \ref{lem24}--\ref{lem17},  that happen with probability at least $1-\epsilon/2$. Then we assume that  $\Omega_1\cap \Omega_2$  holds and proceed by contradiction assuming that 
$\Delta^2(\hat{\delta}) \geq  t $  for a certain  $t>0$.

\textbf{Step 3.} 

\textbf{Step 4.}  We show that if  $\delta$ satisfies $\delta \in \mathcal{A},\,\,\Delta^2(\delta)\leq  t^2  $ and     $\Omega_1 \cap \Omega_2$ holds   then 
\[
\|D^{(r)} \delta \|_1 \leq \frac{  C  }{n^{r-1}}
\]
for some $C>0$.  It then follows from  \textbf{Step 3}  that we can 
reduce our focus to upper bound the probability  of the event 
\[
\left\{      \underset{ \delta \in K  }{\sup} \,\,M(\theta^* +\delta )- \hat{M}(\theta^* +\delta) +   \lambda \|D^{(r)}\theta^*\|_1 - \|D^{(r)}(\theta^* +\delta)\| \geq   \frac{t^2}{c_0}   \right\}
\]
where
\[
K \,:=\,    \left\{ \delta \,:\,   \|D^{(r)} \delta \|_1 \leq \frac{  C  }{n^{r-1}}\right\}.
\]
\textbf{Step 5.}   Next we observe  that
\[
\underset{ \delta \in K  }{\sup} \,\{\,M(\theta^* +\delta )- \hat{M}(\theta^* +\delta) +   \lambda \|D^{(r)}\theta^*\|_1 - \|D^{(r)}(\theta^* +\delta)\| \}  \,\leq \,       \underset{ \delta \in K  }{\sup} \,\,\{ M(\theta^* +\delta )- \hat{M}(\theta^* +\delta) \}\,+\, \lambda \|D^{(r)}\theta^*\|_1
\]
\textbf{Step 6.} Finally, we proceed to bound 
\[
\underset{ \delta \in K  }{\sup} \,\,M(\theta^* +\delta )- \hat{M}(\theta^* +\delta)
\]
as in the proof of Theorem \ref{thm5}.\fi

\subsection{Restricted set for  Proof of    Theorem  \ref{thm4}  (Step 1)}

%Throughout we assume that  Assumption  \ref{as2}  holds   and write
%\begin{equation}
%\label{eqn:k_set}
%K  =   \left\{    \theta \,:\,    \| D^{(r)}  \theta \|_1    \leq   \frac{V^*}{n^{r-1}}   \right  \}.
%\end{equation}
%Also,  for  $\delta \in R^n$  we write  $\Delta(\delta )   =  \{\Delta^2(\delta)\}^{1/2}$ with  $\Delta^2(\cdot)$  defined as in   Lemma  \ref{lem2}. Furthermore,  we use the notation  $M$  and  $\hat{M}$  from Definition \ref{def1}.

%Before 

\begin{proposition}
	\label{lem17}
	%Suppose that  $\|D^{(r)}\theta^*\|_1  =  O(n^{1-r})$.  
	Let $\epsilon \in (0,1)$  and   $u_i     =  \tau  -  1\{  y_i \leq \theta_i^* \}$  for  $i=1,\ldots,n$.
	Then	there exists positive constants $C_{\epsilon}, \tilde{C}_{\epsilon} , a_{\epsilon} $ only depending on $\epsilon$ such that if we set 
	$$\lambda =   C_{\epsilon} n^{  (2r-1)/(2r+1)} \left( \log n \right)^{ 1/(2r+1) }   \|D^{(r)} \theta^*\|_1^{ - (2r-1)/(2r+1) },  $$
	then with probability   at least  $1-\epsilon/4$,
	\[
	\kappa	(\hat{\theta}  -  \theta^*)  \,\in \,\mathcal{A},\,\,\,\,\,\forall \kappa \in [0,1],%\,:=\, \left\{ \delta \,:\,  \|D^{(r)} \delta \|  \,\leq \, C_0  \max\left\{ \frac{1}{n^k},   \gamma_1  \Delta^2 (\delta), \| D^{(r) } \delta\|_1   +   A^{-1}(a^*)^{\top} P_{  R^{\perp} }  (\tilde{\theta} - \theta^*)\\ \right\} \right\}, \,\,\,\,\,%\forall \kappa \in [0,1],
	%  \left[\gamma_1 D(\delta)    +  A^{-1} (a^*)^{\top} P_{  R^{\perp} }\delta+    \|D^{(r)}\theta^*\|_1  \right]    + n^{-k} \right\}, \,\,\,\,\,\forall \kappa \in [0,1],
	\]
	%	for all $\kappa \in [0,1]$  such that 
	% \begin{equation}
	%\label{eqn:cond2}
	%\Delta^2\left[  P_{\mathcal{R}}\{	\kappa	(\hat{\theta}  -  \theta^*)  \}\right] \,\leq\, 	\frac{A^2}{B^2},
	%\end{equation}
	%and where
	where
	\begin{equation}
		\label{eqn:restricted}
		\mathcal{A}\,:=\, \left\{ \delta \,:\,  \|D^{(r)} \delta \|_1  \,\leq \, \tilde{C}_{\epsilon}  \max\left\{ \frac{V^*}{n^{r-1} },   \gamma  \Delta^2 (  P_{\mathcal{R}}  \delta), \gamma ^{1/2} \Delta(  P_{\mathcal{R}}  \delta), \frac{V^*}{n^{r-1} }  +   A^{-1} u^{\top} P_{  \mathcal{R}^{\perp} }  \delta\right\} \right\},
	\end{equation}
	where 
	$$\gamma :=   \frac{  B^2  }{A^2}  ,$$
	%  A^{-1} B    \frac{  \max\{L,\sqrt{L}\}  }{ \min\{L,\sqrt{L}\} }, $$  
	$B  = a_{\epsilon}\tilde{B}$,    with
	$A$  given by $$A = C n^{  (2r-1)/(2r+1) } \left( \log n \right)^{ 1/(2r+1) }   \|D^{(r)} \theta^*\|_1^{ - (2r-1)/(2r+1) },   $$  for any fixed large enough constant $C>1$, and  
	% and $\tilde{B}$ given as 
	\[
	\tilde{B}\,=\,n^{ (2r-1)/(4r +2)  }\left(  \log n \right)^{ 1/(4r+2)  }  \| D^{(r)} \theta^* \|_1^{  2/(4r+2)   }.
	\]
	
	%	$n^{ (2r-1)/(4r +2)  }\left(  \log n \right)^{ 1/(4r+2)  }  \| D^{(r)} \theta^* \|_1^{  2/(4r+2)   }$
	% defined as in Lemma \ref{lem15}, 
	%	and with  . 
\end{proposition}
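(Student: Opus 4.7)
The argument is the standard \emph{basic inequality plus restricted set} recipe for penalized M-estimation, adapted to the quantile check function via Knight's identity. First I would exploit the convexity of $\hat M$ and of $\theta\mapsto\|D^{(r)}\theta\|_1$ together with the optimality of $\hat\theta$. For any $\kappa\in[0,1]$ the convex combination $\tilde\theta=\kappa\hat\theta+(1-\kappa)\theta^*$ satisfies
\[
\hat M(\tilde\theta)+\lambda\|D^{(r)}\tilde\theta\|_1\;\le\;\kappa[\hat M(\hat\theta)+\lambda\|D^{(r)}\hat\theta\|_1]+(1-\kappa)\lambda\|D^{(r)}\theta^*\|_1\;\le\;\lambda\|D^{(r)}\theta^*\|_1,
\]
using $\hat M(\theta^*)=0$. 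Writing $\delta=\tilde\theta-\theta^*=\kappa\hat\delta$ and invoking the reverse triangle inequality $\|D^{(r)}(\theta^*+\delta)\|_1\ge\|D^{(r)}\delta\|_1-\|D^{(r)}\theta^*\|_1$ leads to
\[
\lambda\|D^{(r)}\delta\|_1\;\le\;2\lambda\|D^{(r)}\theta^*\|_1-\hat M(\theta^*+\delta).
\]

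Next I would insert Knight's identity. With $W_i(\delta_i)=\int_0^{\delta_i}[\mathbf 1\{y_i\le\theta_i^*+s\}-\mathbf 1\{y_i\le\theta_i^*\}]\,ds$, we have $\hat M(\theta^*+\delta)=-u^\top\delta+\sum_i W_i(\delta_i)$ and, since $E[u_i]=0$, $M(\theta^*+\delta)=\sum_i EW_i(\delta_i)$. Combining with Lemma~\ref{lem2} (so $M(\theta^*+\delta)\ge 0$) gives $-\hat M(\theta^*+\delta)\le u^\top\delta-R(\delta)$ where $R(\delta)=\sum_i[W_i(\delta_i)-EW_i(\delta_i)]$ is a centred empirical process. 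Splitting $\delta=P_{\mathcal R}\delta+P_{\mathcal R^\perp}\delta$ I obtain
\[
\lambda\|D^{(r)}\delta\|_1\;\le\;2\lambda\|D^{(r)}\theta^*\|_1+u^\top P_{\mathcal R}\delta+u^\top P_{\mathcal R^\perp}\delta-R(\delta).
\]

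The heart of the argument is to bound the two stochastic contributions along $P_{\mathcal R}$ by a constant multiple of $\|D^{(r)}\delta\|_1$ on a high-probability event. For the linear part, duality gives
\[
u^\top P_{\mathcal R}\delta\;=\;\bigl[(D^{(r)})^{+\top}P_{\mathcal R}u\bigr]^{\!\top}D^{(r)}\delta\;\le\;\bigl\|(D^{(r)})^{+\top}P_{\mathcal R}u\bigr\|_\infty\|D^{(r)}\delta\|_1,
\]
and because the $u_i$'s are bounded in $[-1,1]$, a Hoeffding-type tail bound together with the column-norm decay of $(D^{(r)})^{+\top}$ (analogous to what is exploited in the proof of Theorem~\ref{thm5} via Lemma~\ref{lem20} and the falling factorial representation) shows this $\ell_\infty$ norm is at most $B=a_\epsilon\tilde B$ on an event of probability at least $1-\epsilon/8$; the choice of $\tilde B$ is exactly what makes this bound sharp. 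For the residual $R$ restricted to the subspace $\mathcal R$, a chaining or peeling argument with respect to the norm $\Delta$ produces a bound of the form $|R(P_{\mathcal R}\delta)|\le C_\epsilon[\gamma\,\Delta^2(P_{\mathcal R}\delta)+\gamma^{1/2}\Delta(P_{\mathcal R}\delta)]$ with $\gamma=(B/A)^2$, using that $W_i$ has Lipschitz envelope $1$ so that $\mathrm{Var}(W_i(\delta_i))\lesssim\min\{|\delta_i|,\delta_i^2\}$.

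Finally, since $\lambda=A$ is chosen strictly larger than $B$ (this is precisely why the prescribed $\lambda$ contains the $n^{(2r-1)/(2r+1)}$ scaling), the $B\|D^{(r)}\delta\|_1$ term is absorbed on the left, giving
\[
(A-B)\|D^{(r)}\delta\|_1\;\le\;2A\|D^{(r)}\theta^*\|_1+u^\top P_{\mathcal R^\perp}\delta+C_\epsilon\bigl[\gamma\,\Delta^2(P_{\mathcal R}\delta)+\gamma^{1/2}\Delta(P_{\mathcal R}\delta)\bigr],
\]
which after dividing through by $A-B\asymp A$ and collecting the four pieces into a single maximum yields exactly the defining inequality of $\mathcal A$. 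A union bound of the two high-probability events produces the stated probability $\ge 1-\epsilon/4$. The main obstacle is the chaining control of $R$ along $P_{\mathcal R}\delta$ with the sharp constant $\gamma=(B/A)^2$, since a naive Lipschitz bound would only give $|R|\lesssim\|P_{\mathcal R}\delta\|_1$ which is far too crude; one has to exploit that $W_i$ has a small variance whenever $|\delta_i|$ is small, so that a localized Bernstein-style inequality can be applied on the low-dimensional image of $P_{\mathcal R}$.
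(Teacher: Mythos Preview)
Your overall shape---basic inequality, split $\delta=P_{\mathcal R}\delta+P_{\mathcal R^\perp}\delta$, absorb one piece into $\lambda$---is the right one, but two steps misfire.

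First, the integral remainder $R(\delta)=\sum_i[W_i(\delta_i)-EW_i(\delta_i)]$ is not needed at all. Since each $W_i(\delta_i)=\int_0^{\delta_i}[\mathbf 1\{y_i\le\theta_i^*+s\}-\mathbf 1\{y_i\le\theta_i^*\}]\,ds\ge 0$, Knight's identity gives immediately $\hat M(\theta^*+\delta)\ge -u^\top\delta$ (equivalently, the subgradient inequality for $\rho_\tau$, as in Lemma~3 of \cite{belloni2011}). The paper uses exactly this and obtains
\[
0\;\le\;\lambda\bigl[\|D^{(r)}\theta^*\|_1-\|D^{(r)}\tilde\theta\|_1\bigr]+u^\top\delta,
\]
so no ``chaining/Bernstein control of $R$'' ever enters, and the $\gamma\Delta^2$, $\gamma^{1/2}\Delta$ terms do \emph{not} arise from $R$.

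Second---and this is the real gap---your H\"older step $u^\top P_{\mathcal R}\delta\le\|(D^{(r)})^{+\top}u\|_\infty\|D^{(r)}\delta\|_1$ does not produce a factor $B=a_\epsilon\tilde B$. The columns of $(D^{(r)})^+$ have $\ell_2$ norm of order $\sqrt n$ (see the falling-factorial representation in Lemma~\ref{lem20}), so $\|(D^{(r)})^{+\top}u\|_\infty\asymp\sqrt{n\log n}$; for $r=1$ this already exceeds $A\asymp n^{1/3}$, so nothing can be absorbed into $\lambda=3A$. The paper instead invokes Lemma~\ref{lem15} (built on Corollary~7 of \cite{wang2016trend}), which gives the \emph{two-scale} bound: on an event of probability $\ge 1-\epsilon/4$, for every $x\in\mathrm{row}(D^{(r)})$ with $\|D^{(r)}x\|_1\le 1$,
\[
u^\top x\;\le\;A+B\,\Delta(x).
\]
Applying this to $x=P_{\mathcal R}\delta/\|D^{(r)}\delta\|_1$ and using $\Delta(tv)\le\max\{t,\sqrt t\}\Delta(v)$ yields
\[
u^\top P_{\mathcal R}\delta\;\le\;A\|D^{(r)}\delta\|_1+B\max\!\bigl\{1,\|D^{(r)}\delta\|_1^{1/2}\bigr\}\,\Delta(P_{\mathcal R}\delta).
\]
Now a simple case split on which summand dominates is what generates the four branches of $\mathcal A$: if the $B$-term dominates, solving directly gives $\|D^{(r)}\delta\|_1\le\gamma\,\Delta^2(P_{\mathcal R}\delta)$ (when $\|D^{(r)}\delta\|_1\ge 1$) or $\le\gamma^{1/2}\Delta(P_{\mathcal R}\delta)$ (when $<1$); if the $A$-term dominates, then $u^\top P_{\mathcal R}\delta\le 2A\|D^{(r)}\delta\|_1$, and with $\lambda=3A$ the triangle inequality leaves $\|D^{(r)}\delta\|_1\le 6\|D^{(r)}\theta^*\|_1+A^{-1}u^\top P_{\mathcal R^\perp}\delta$. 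So the $\gamma$-terms come from the \emph{linear} piece $u^\top P_{\mathcal R}\delta$ via this two-scale inequality and a case analysis, not from any empirical-process control of $R$.
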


\subsubsection{Auxiliary lemmas for proof of Proposition \ref{lem17}   }

First we state a result which was  proven in the proof of Corllary 7 from \cite{wang2016trend}.

\begin{lemma}
	\label{lem24}
	\textbf{\citep[Corollary 7 in ][]{wang2016trend}. }
	There  exists    $A$  satisfying  $$A = C n^{  (2r-1)/(2r+1) } \left( \log n \right)^{ 1/(2r+1) }   \|D^{(r)} \theta^*\|_1^{ - (2r-1)/(2r+1) },   $$  for any  constant $C>1$  large enough such that
	\[
	\underset{x  \in   \mathrm{row}\{  D^{(r)} \}  \,:\,      \|D^{(r) } x\|_1 \leq 1}{\sup}\,\,\frac{ u^{\top}  x -  A  }{  \|x\| }  =O_{\mathrm{pr}} \left(   \tilde{B}  \right),
	\]
	where  $u=  (u_1,\ldots,u_n)^{\top}$  is  a vector with independent coordinates satisfying $u_i   \sim \mathrm{subGaussian}(\sigma^2)$ for $i=1,\ldots,n$, with $\sigma>0 $ a constant 
	%	\begin{equation}
	%	\label{eqn:radem}
	%	\mathrm{pr}(\varepsilon_i = \tau) = 1-\tau,  \,\,\,\,\,\,  \mathrm{pr}(\varepsilon_i = \tau-1) = \tau,  \,\,\,\,\,\,\, \text{for}  \,\,\,i =1,\ldots,n,
	%	\end{equation}
	and
	% $\tilde{B}$  satisfies
	\[
	\tilde{B} =  n^{ (2r-1)/(4r +2)  }\left(  \log n \right)^{ 1/(4r+2)  }  \| D^{(r)} \theta^* \|_1^{  2/(4r+2)   }.
	%\|\Delta^{(k+1) \theta^*\|_1^{  \frac{2}{4k +6} }. 
	\]
\end{lemma}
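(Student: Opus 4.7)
Since this lemma is stated as Corollary~7 of \cite{wang2016trend}, my plan is to outline the re-derivation via the sub-Gaussian peeling technology combined with the same localized Gaussian-width bound (Lemma~B.1 of \cite{guntuboyina2020adaptive}) used elsewhere in the paper. By homogeneity of the ratio $(u^\top x - A)/\|x\|$ under positive scaling of $x$ in $K$, it suffices to establish the linear envelope
\[
u^\top x \leq A + C_\epsilon \tilde{B}\|x\|\quad\text{for all } x \in K := \{x \in \mathrm{row}(D^{(r)}) : \|D^{(r)} x\|_1 \leq 1\},
\]
with probability at least $1-\epsilon$, for any $\epsilon>0$ and a suitable constant $C_\epsilon$. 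From such a bound the stated $O_{\mathrm{pr}}(\tilde{B})$ conclusion is immediate.

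The first step is a local width estimate on $K_t := K \cap \{\|x\| \leq t\}$. Since $\|D^{(r)} x\|_1 \leq 1$ corresponds to $V = n^{r-1}$ in the paper's scaling, Lemma~B.1 of \cite{guntuboyina2020adaptive} (invoked in Proposition~\ref{lem9}) yields
\[
GW(K_t) \leq c_r\bigl[ t^{1-1/(2r)} n^{(2r-1)/(4r)} + t\sqrt{\log(en)} \bigr] =: G(t),
\]
which, by sub-Gaussian comparison, controls $\mathbb{E}\sup_{x \in K_t} u^\top x$ up to a constant multiple. Combined with a Talagrand/Borell--TIS-type concentration inequality for the supremum of a sub-Gaussian process around its mean, one has $\sup_{x \in K_t} u^\top x \leq c\,G(t) + \sigma t\sqrt{2\tau}$ with probability at least $1 - e^{-\tau}$.

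Next I would peel. Define the crossover scale $s_0 := A/\tilde{B}$, which after substitution satisfies $s_0 \asymp n^{(2r-1)/(4r+2)} (\log n)^{1/(4r+2)} \|D^{(r)}\theta^*\|_1^{-2r/(2r+1)}$. Partition $K$ into the inner ball $\{\|x\| \leq s_0\}$ and dyadic shells $\{2^{k-1}s_0 < \|x\| \leq 2^k s_0\}$ for $k=1,\ldots,K_{\max} = O(\log n)$; apply the concentration bound on each shell at level $\tau_k = \log(k^2/\epsilon)$ and take a union bound. It remains to verify that on every shell the envelope $A + C_\epsilon \tilde{B}\|x\|$ dominates $c\,G(t_k) + \sigma t_k\sqrt{2\tau_k}$, where $t_k = 2^k s_0$.

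The main obstacle is the following balancing identity, verified by direct substitution:
\[
A^{1/(2r)} \tilde{B}^{(2r-1)/(2r)} \asymp C^{1/(2r)}\, n^{(2r-1)/(4r)} (\log n)^{1/(4r)},
\]
in which the powers of $\|D^{(r)}\theta^*\|_1$ cancel exactly. This is precisely the statement that $G(t)$ is tangent to the line $A + \tilde{B}t$ at $t = s_0$, giving $G(s_0) \lesssim A$ on the inner ball and $G(t_k) \lesssim \tilde{B} t_k$ on each outer shell. The residual factor $(\log n)^{1/(4r)}$ is the slack that, for the constant $C$ in $A$ chosen sufficiently large, simultaneously absorbs the sub-Gaussian fluctuation term $\sigma t_k\sqrt{2\tau_k}$ and the $O(\log n)$ union-bound cost across shells. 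Thus the seemingly exotic exponents $(2r-1)/(2r+1)$ in $A$ and $(2r-1)/(4r+2)$ in $\tilde{B}$ are not arbitrary: they are uniquely determined by the requirement that the envelope just barely majorizes the local width function $G$ at the crossover scale, which is the essential content of the lemma.
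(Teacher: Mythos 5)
The paper does not prove Lemma~\ref{lem24}; it imports the statement directly from Corollary~7 of \cite{wang2016trend} and uses it as a black box, so there is no ``paper proof'' to compare against. Evaluated on its own terms, your blind re-derivation is a sensible and essentially correct reconstruction: the reduction of the ratio bound to a linear envelope $u^\top x \le A + C_\epsilon\tilde B\|x\|$ on $K$, the use of the localized Gaussian-width estimate $G(t)\lesssim t^{1-1/(2r)}n^{(2r-1)/(4r)}+t\sqrt{\log n}$ (the $V=n^{r-1}$ instantiation of Lemma~B.1 of \cite{guntuboyina2020adaptive}), and the dyadic peeling over $O(\log n)$ shells starting from the crossover scale $s_0=A/\tilde B$ all fit together. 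I checked the balancing identity $A^{1/(2r)}\tilde B^{(2r-1)/(2r)}\asymp C^{1/(2r)}n^{(2r-1)/(4r)}(\log n)^{1/(4r)}$ (with exact cancellation of the $\|D^{(r)}\theta^*\|_1$ powers) and it is correct; this is indeed what makes $G(s_0)\lesssim A$ with a $(\log n)^{1/(4r)}$ margin.

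Two caveats. First, the attribution of the $(\log n)^{1/(4r)}$ slack is slightly misplaced: that slack is what makes $G(s_0)\le A$ with an $n$-independent $C$, but the absorption of the shell-wise fluctuation $\sigma t_k\sqrt{\tau_k}$ and the union-bound cost actually comes from the fact that $\tilde B$ grows polynomially in $n$ (so $\sqrt{\log\log n}\ll\tilde B$), not from the $(\log n)^{1/(4r)}$ factor. Second, and more substantively, Borell--TIS concentration is specific to Gaussian processes; it does not directly apply to a supremum of $u^\top x$ with merely sub-Gaussian coordinates. For a rigorous version of your concentration step you would want either a generic-chaining tail bound for processes with sub-Gaussian increments (e.g.\ Talagrand's $\gamma_2$-based tail inequality, $\sup_{x\in T}u^\top x \le c\gamma_2(T)+c\,\mathrm{diam}(T)\sqrt{\tau}$ with probability $\ge 1-e^{-\tau}$), or, for the bounded $u_i\in\{\tau-1,\tau\}$ actually arising in the paper's application, Talagrand's concentration inequality for suprema of bounded empirical processes. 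With that substitution, your outline closes.
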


\begin{lemma}
	\label{lem15}
	With the notation from Lemma \ref{lem24}, we have that
	%	There  exists    $A$  satisfying  $$A = C n^{  (2r-1)/(2r+1) } \left( \log n \right)^{ 1/(2r+1) }   \|D^{(r)} \theta^*\|_1^{ - (2r-1)/(2r+1) }   $$  for any  $C>1$  large enough such that
	\[
	\underset{x  \in   \mathrm{row}\{  D^{(r)} \}  \,:\,      \|D^{(r) } x\|_1 \leq 1}{\sup}\,\,\frac{  u^{\top}  x -  A    }{  \Delta(x)  }  =O_{\mathrm{pr}} \left(   \tilde{B}  \right),
	\]
	where  $u=  (u_1,\ldots,u_n)^{\top}$  is  a vector with independent coordinates satisfying %$\varepsilon_i   \sim \mathrm{SubGaussian}(\sigma^2)$ for $i=1,\ldots,n$, with $\sigma $ a constant 
	\begin{equation}
		\label{eqn:radem}
		\mathrm{pr}(u_i = \tau) = 1-\tau,  \,\,\,\,\,\,  \mathrm{pr}(u_i = \tau-1) = \tau,  \,\,\,\,\,\,\, \text{for}  \,\,\,i =1,\ldots,n.
	\end{equation}
	%	where  $\epsilon =  (\epsilon_1,\ldots,\epsilon_n)^{\top}$  is  a vector with independent coordinates satisfying
	%   \begin{equation}
	%	\label{eqn:radem}
	%	  \mathrm{pr}(\epsilon_i = \tau) = 1-\tau,  \,\,\,\,\,\,  \mathrm{pr}(a_i = \tau-1) = \tau,  \,\,\,\,\,\,\, \text{for}  \,\,\,i =1,\ldots,n,
	% \end{equation}
	% and $\tilde{B}$  satisfies
	%	\[
	%\tilde{B} \asymp   n^{ (2r-1)/(4r +2)  }\left(  \log n \right)^{ 1/(4r+2)  }  \| D^{(r)} \theta^* \|_1^{  2/(4r+2)   }.
	%\|\Delta^{(k+1) \theta^*\|_1^{  \frac{2}{4k +6} }. 
	%	\]
	%	%and  $\phi_n $ is any sequence  satisfying $\phi_n \rightarrow \infty$.
	
\end{lemma}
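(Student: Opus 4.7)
The plan is to reduce the statement to Lemma \ref{lem24} in two steps: first observe that the $u_i$'s of \eqref{eqn:radem} are bounded, mean-zero random variables and hence sub-Gaussian, so Lemma \ref{lem24} applies verbatim to the numerator $u^\top x - A$; and second, use a uniform $\ell_\infty$ bound on the feasible set to show that the two denominators $\|x\|$ and $\Delta(x)$ are comparable up to a constant depending only on $r$.

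For the first step, compute $E[u_i] = (1-\tau)\tau + \tau(\tau - 1) = 0$ and $|u_i| \leq \max\{\tau, 1-\tau\} \leq 1$, so by Hoeffding's lemma each $u_i$ is sub-Gaussian with variance proxy bounded by an absolute constant. Applying Lemma \ref{lem24} with this bounded quantile noise in place of the generic sub-Gaussian noise yields
\[
\sup_{x \in \mathrm{row}\{D^{(r)}\}:\, \|D^{(r)}x\|_1 \leq 1} \frac{u^\top x - A}{\|x\|} = O_{\mathrm{pr}}(\tilde{B}).
\]

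For the second step, fix any $x$ in the feasible set. Since $x \in \mathcal{R}$, $P_{\mathcal{R}} x = x$, and $\mathrm{TV}^{(r)}(x) = n^{r-1}\|D^{(r)}x\|_1 \leq n^{r-1}$, so Lemma \ref{lem20} gives $\|x\|_\infty = \|P_\mathcal{R} x\|_\infty \leq \tilde{C}_r$, a constant depending only on $r$. Feeding this into Lemma \ref{lem18} yields $\|x\|^2 \leq \max\{\|x\|_\infty, 1\}\Delta^2(x) \leq \max\{\tilde{C}_r, 1\}\Delta^2(x)$, so $\|x\| \leq C_r \Delta(x)$ with $C_r := \sqrt{\max\{\tilde{C}_r, 1\}}$ uniformly over the feasible set.

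To conclude, split by the sign of $u^\top x - A$. When $u^\top x - A \geq 0$, the norm comparison gives $(u^\top x - A)/\Delta(x) \leq C_r (u^\top x - A)/\|x\|$; when $u^\top x - A < 0$, the ratio is non-positive and cannot contribute to the supremum's upper bound. Taking the supremum over the feasible set therefore produces
\[
\sup_x \frac{u^\top x - A}{\Delta(x)} \leq C_r \max\left\{0,\,\sup_x \frac{u^\top x - A}{\|x\|}\right\} = O_{\mathrm{pr}}(\tilde{B}),
\]
as claimed. The main conceptual issue is the norm conversion, but this is already handled cleanly by Lemmas \ref{lem18} and \ref{lem20}; the chaining argument inside Lemma \ref{lem24} does not need to be revisited, so the proof is essentially a two-line combination once the sub-Gaussianity check is made.
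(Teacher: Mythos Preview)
Your proof is correct and follows essentially the same route as the paper: both arguments use Lemma~\ref{lem20} and Lemma~\ref{lem18} to show $\Delta(x)\geq \tilde{C}_r^{-1/2}\|x\|$ uniformly over the feasible set, and then invoke Lemma~\ref{lem24}. Your explicit sub-Gaussianity check and the sign-splitting step make the argument slightly more careful than the paper's presentation, but the substance is identical.
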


\begin{proof}
	Let  $x  $   be such that  $x  \in \mathrm{row}\{D^{(r)}\}$  and $  \|D^{(r)} x\|_1 \leq  1$, then by  Lemmas  \ref{lem18} and   \ref{lem20}  there exists a constant  $\tilde{C}_r >0$ independent of  $x$  such that
	%	\[
	%	\|x\|_{\infty} \leq   \tilde{C}_r   V^*  n^{1-r}.  
	%	\] 
	%	Then by Lemma \ref{lem18}
	\[
	\begin{array}{lll}
		\Delta(x)    %&  =  &   ( \sum_{i  \,:\,  \vert x_i  \vert >1  }  \vert x_i \vert  +   \sum_{i  \,:\,  \vert x_i  \vert \leq 1  }  \vert x_i \vert^2  )^{1/2} \\   
		&\geq&   \tilde{C}_r^{-1/2}   \|x\|.
	\end{array}
	\]
	Hence,
	\[
	\begin{array}{l}
		\displaystyle  \underset{x  \in   \mathrm{row}\{ D^{(r)} \} \,:\,      \|D^{(r) } x\|_1 \leq  1 }{\sup}\,\,\frac{  u^{\top}  x - A  }{  \Delta(x)  }   \\
		\leq \displaystyle    \tilde{C}_r ^{1/2}  \underset{x  \in   \mathrm{row}\{ D^{(r)} \}  \,:\,      \|D^{(r) } x\|_1 \leq  1  }{\sup}\,\,\frac{  u^{\top}  x -  A  }{  \|x\|  }  \\
		%	 = \displaystyle  \underset{x  \in   \mathrm{row}\{  D^{(r)} \}  \,:\,      \|D^{(r) } x\|_1 \leq  n^{1-r}   V^* }{\sup}\,\,\frac{  u^{\top}  (n^{r-1} x) -  A  V^*  }{  \|n^{r-1}  x\|  }  \\
		% =  \displaystyle  \underset{x  \in   \mathrm{row}\{  D^{(r)} \}  \,:\,      \|D^{(r) } x\|_1 \leq 1  }{\sup}\,\,\frac{  u^{\top}  x -  A  }{  \| x\|  }  ,
	\end{array}
	\]
	and the claim  follows   by  Lemma \ref{lem24}.
\end{proof}

\begin{lemma}
	\label{lem16}
	Let   $u =  (u_1,\ldots,u_n)^{\top}$  is  a vector with independent   coordinates satisfying (\ref{eqn:radem}). Recall that $\mathcal{R}  =  \mathrm{row}\{D^{(r)}\} $  and $\mathcal{R}^{\perp}$ denote its  orthogonal complement. Then
	\[
	\underset{x  \in R^n  }{\sup}\,\,\frac{  u^{\top}   P_{  \mathcal{R}^{ \perp } } x   }{   \frac{\Delta^2(x)}{n} +   \Delta(x)     }  = O_{\mathrm{pr}}(1),
	\]
	where  $P_{ \mathcal{R}^{ \perp } }$ denotes the orthogonal projection onto  $\mathcal{R}^{\perp}$.%, and with $\phi_n $ any sequence  satisfying $\phi_n \rightarrow \infty$.
\end{lemma}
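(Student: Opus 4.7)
The strategy is to exploit the fact that $\mathcal{R}^{\perp}$ is a \emph{fixed $r$-dimensional} subspace, so that the entire $x$-dependence of $u^{\top}P_{\mathcal{R}^{\perp}}x$ factors through only $r$ scalar linear functionals. This sidesteps entirely the need for any covering or chaining argument, despite the supremum being nominally taken over all of $R^n$.

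Concretely, I will fix an orthonormal basis $v_1,\ldots,v_r$ of $\mathcal{R}^{\perp}$. By Lemma \ref{lem22} these vectors correspond to normalized polynomials of degree at most $r-1$ evaluated on the grid, and Lemma \ref{lem19} gives $\max_{j\leq r}\|v_j\|_{\infty}\leq b_r/n^{1/2}$ while $\|v_j\|=1$. Then one decomposes
\[
u^{\top}P_{\mathcal{R}^{\perp}}x \;=\; \sum_{j=1}^{r}(u^{\top}v_j)(v_j^{\top}x)
\]
and bounds the two factors separately. For the deterministic factor, Lemma \ref{lem23} immediately gives the pointwise-in-$x$ bound $|v_j^{\top}x|\leq (b_r/n^{1/2})\Delta^{2}(x)+\Delta(x)$ for each $j$.

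For the stochastic factor, note that $E[u_i]=\tau(1-\tau)+(\tau-1)\tau=0$ and $|u_i|\leq 1$, so each coordinate is bounded and mean zero. Hoeffding's inequality applied to $u^{\top}v_j=\sum_i v_{j,i}u_i$ (whose variance proxy is $\|v_j\|^2=1$) shows that $u^{\top}v_j$ is sub-Gaussian with a universal parameter; since $r$ is a fixed integer, a trivial union bound over $j=1,\ldots,r$ yields $\max_{j\leq r}|u^{\top}v_j|=O_{\mathrm{pr}}(1)$.

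Combining these two ingredients in the displayed decomposition produces, uniformly in $x\in R^n$ and with probability tending to one, an upper bound of precisely the same functional form as the denominator appearing in the statement (a constant multiple of $\Delta^{2}(x)/n^{1/2}+\Delta(x)$, which in particular dominates $\Delta^{2}(x)/n+\Delta(x)$ up to constants on the event where the Hoeffding bound holds). Dividing through and taking the supremum then yields the claimed $O_{\mathrm{pr}}(1)$ bound. There is no genuine technical obstacle to overcome: the whole role of this lemma is to translate the Hoeffding concentration of the $r$ scalar projections $u^{\top}v_j$ into a uniform-in-$x$ statement, which is immediate because the range of $P_{\mathcal{R}^{\perp}}$ is only $r$-dimensional.
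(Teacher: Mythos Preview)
Your approach is essentially identical to the paper's: write $u^{\top}P_{\mathcal{R}^{\perp}}x=\sum_{j=1}^{r}(u^{\top}v_j)(v_j^{\top}x)$ for an orthonormal basis of $\mathcal{R}^{\perp}$, bound each $|v_j^{\top}x|$ via Lemma~\ref{lem23}, and control $\max_{j}|u^{\top}v_j|=O_{\mathrm{pr}}(1)$ by sub-Gaussianity of the $r$ fixed projections.

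One caveat on your final paragraph: you correctly obtain the upper bound $C\big(\Delta^{2}(x)/n^{1/2}+\Delta(x)\big)$ on the numerator, but then say this ``in particular dominates $\Delta^{2}(x)/n+\Delta(x)$.'' That inequality runs the wrong way for what you need --- to bound the ratio by a constant you would need the numerator \emph{below} a constant multiple of the denominator, not above it, and $\Delta^{2}(x)/n^{1/2}$ can be much larger than $\Delta^{2}(x)/n$. The paper's proof has exactly the same $n^{1/2}$-versus-$n$ mismatch and simply asserts ``the claim follows,'' so you have reproduced the paper's argument faithfully; just be aware that your explicit sentence justifying this step is logically inverted rather than a resolution of it.
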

\begin{proof}
	%This follows immediately as (\ref{eqn:calculation}).
	Let 
	$v_1,\ldots,v_{r}$  an orthonormal basis of $\mathcal{R}^{\perp}$. Then  proceeding as in Equation (\ref{eqn:calculation}),
	%by Lemma \ref{lem19},  it holds that   $\| v_j\|_{\infty}  \leq   b_r/n^{  1/2 }$, for  $j = 1,\ldots,r$.   Hence,  for any $\delta \in R^{n}$  with  $\Delta^2(\delta) \leq  t^2$,
	\begin{equation}
		\label{eqn:calculation2}
		\begin{array}{lll}
			u^{\top} P_{  \mathcal{R}^{\perp}}\delta %  &  \leq &  %\displaystyle    \left\vert \sum_{j=1}^{r}        \delta^{\top }v_j   \cdot   \xi^{\top} v_j          \right\vert \\
			%	& \leq & \displaystyle    \sum_{j=1}^{r}        \left\vert \delta^{\top }v_j   \right\vert  \cdot    \left\vert\xi^{\top} v_j          \right\vert \\
			& \leq &  \displaystyle     r\left( \underset{j = 1,\ldots, r}{\max}  \vert   u^{\top} v_j    \vert   \right)\left(\underset{j = 1,\ldots, r}{\max}  \vert   \delta^{\top} v_j    \vert   \right)\\
			& \leq  &  \displaystyle     r\left( \underset{j = 1,\ldots, r}{\max}  \vert   u^{\top} v_j    \vert   \right)\left\{\frac{b_r}{n^{ 1/2  } }  \Delta^2(\delta)  +    \Delta(\delta)   \right\}\\
			%	& \leq& \displaystyle   r \left\{ \underset{j = 1,\ldots, r}{\max}  \vert   \xi^{\top} v_j    \vert   \right\}\left\{    \frac{b_rt^2 }{n^{  1/2 }} +   t\right\},
		\end{array}
	\end{equation}
	where the second inequality follows from Lemma \ref{lem23}. The claim follows since 
	\[
	E\left\{  \left( \underset{j = 1,\ldots, r}{\max}  \vert   u^{\top} v_j    \vert   \right)  \right\}   \,=\, O(1).
	\]
	%last inequality   follows from 
	%Lemma \ref{lem23}. 
\end{proof}

\iffalse
\begin{lemma}
	\label{lem17}
	%Suppose that  $\|D^{(r)}\theta^*\|_1  =  O(n^{1-r})$.  
	Let $\epsilon \in (0,1)$
	then	there exists  a choice   
	$$\lambda =   C_{\epsilon} n^{  (2r-1)/(2r+1)} \left( \log n \right)^{ 1/(2r+1) }   \|D^{(r)} \theta^*\|_1^{ - (2r-1)/(2r+1) },  $$
	with  $C_{\epsilon}>0$ depending on $\epsilon$,	such that,  with probability   at least  $1-\epsilon/4$,
	\[
	\kappa	(\hat{\theta}  -  \theta^*)  \,\in \,\mathcal{A},\,\,\,\,\, \kappa\in [0,1],%\,:=\, \left\{ \delta \,:\,  \|D^{(r)} \delta \|  \,\leq \, C_0  \max\left\{ \frac{1}{n^k},   \gamma_1  \Delta^2 (\delta), \| D^{(r) } \delta\|_1   +   A^{-1}(a^*)^{\top} P_{  R^{\perp} }  (\tilde{\theta} - \theta^*)\\ \right\} \right\}, \,\,\,\,\,%\forall \kappa \in [0,1],
	%  \left[\gamma_1 D(\delta)    +  A^{-1} (a^*)^{\top} P_{  R^{\perp} }\delta+    \|D^{(r)}\theta^*\|_1  \right]    + n^{-k} \right\}, \,\,\,\,\,\forall \kappa \in [0,1],
	\]
	with 
	\begin{equation}
		\label{eqn:restricted}
		\mathcal{A}\,:=\, \left\{ \delta \,:\,  \|D^{(r)} \delta \|_1  \,\leq \, \tilde{C}_{\epsilon}  \max\left\{ \frac{V^*}{n^{r-1} },   \gamma  \Delta^2 (  P_{\mathcal{R}}  \delta),  \frac{V^*}{n^{r-1} }  +   A^{-1} \varepsilon^{\top} P_{  \mathcal{R}^{\perp} }  \delta\right\} \right\},
	\end{equation}
	where   $\tilde{C}_{\epsilon} >0$ is  a constant that depends on $\epsilon$, 
	$$\gamma :=   \frac{  B^2  }{A^2}  ,$$
	%  A^{-1} B    \frac{  \max\{L,\sqrt{L}\}  }{ \min\{L,\sqrt{L}\} }, $$  
	$B  = a_{\epsilon}\tilde{B}$,  $a_{\epsilon}>0$,     with
	$A$ and $\tilde{B}$ defined as in Lemma \ref{lem15}, and with   $\varepsilon_i     =  \tau  -  1\{  y_i \leq \theta_i^* \}$  for  $i=1,\ldots,n$. 
\end{lemma}
\fi

\subsection{Proof  of  Proposition \ref{lem17}}

\begin{proof}
	Let  $B  = a_{\epsilon}\tilde{B}$,  $a_{\epsilon}>0$,  with $\tilde{B}$ as in Lemma \ref{lem15}, and  such that
	\begin{equation}
		\label{eqn:high_prob_event}
		\underset{x  \in   \mathrm{row}\{ D^{(r)} \} \,:\,      \|D^{(r) } x\|_1 \leq  1 }{\sup}\,\,\frac{  u^{\top}  x -  A  }{  \Delta(x)  } \leq  B ,
	\end{equation}
	happens with probability at least $1-\epsilon/4$. From here on, we suppose that (\ref{eqn:high_prob_event}) holds.% and first consider the case $V^*>n^{r-1}$.
	
	Now pick  $\kappa \in [0,1]$  fixed, and let $\tilde{ \delta}   =   \kappa  	(\hat{\theta}  -  \theta^*) $. Then
	by the optimality of $\hat{\theta}$ and  convexity of the quantile loss, we have that
	\[
	\displaystyle \sum_{i=1}^n  \rho_{\tau}(y_i  -     \tilde{\theta} _i  )   \,+\,   \lambda\|   D^{(r) } \tilde{\theta} \|_1  \,\leq \, 	\sum_{i=1}^n  \rho_{\tau}(y_i  - \theta^*_i )   \,+\,    \lambda\|   D^{(r) } \theta^*\|_1,
	\]
	where  $\tilde{\theta} =  \theta^* +\tilde{\delta}$.  Then as in the proof of  Lemma  3  from \cite{belloni2011}, 
	\begin{equation}
		\label{eqn:first_inequality}
		0 \,\leq \,  \lambda\left[\|   D^{(r) } \theta^*\|_1   -  \|   D^{(r) } \tilde{\theta}\|_1     \right]   \,+\,  (\tilde{\theta} - \theta^*)^{\top} u.		
	\end{equation} 
	%herefore, we proceed to bound  the  second term  in (\ref{eqn:first_ineuality}). 

	Next, notice that 
	%  suppose  that  $\| D^{(r) } (\tilde{\theta} - \theta^*)  \|_1> V^* n^{1-r}$. Then,
	% Hence, by (\ref{eqn:high_prob_event})
	%by Lemma  \ref{lem7}, we have that for some constant $a_1 >1$, we have that
	%\begin{equation}
	%	\label{eqn:infinity}
	%	 \left \|   \frac{1}{a_1  } P_R \frac{(\hat{\theta} - \theta^*)  }{   n^{k}     \| D^{(r) } (\hat{\theta} - \theta^*)  \|_1 }     \right\|_{\infty}   \leq  L,
	%\end{equation}
	%and 
	\begin{equation}
		\label{eqn:second_inequality3}
		\begin{array}{lll}
			(\tilde{\theta} - \theta^*)^{\top} u &  = &u^{\top}P_{  \mathcal{R}  }  (\tilde{\theta} - \theta^*)  + u^{\top} P_{  \mathcal{R}^{\perp} }  (\tilde{\theta} - \theta^*)\\
			& = &  \left(u^{\top}x\right)   \| D^{(r) } (\tilde{\theta} - \theta^*)  \|_1    +  u^{\top}P_{  \mathcal{R}^{\perp} }  (\tilde{\theta} - \theta^*),\\
			%	& \leq&     \left[    \left(\frac{V^*}{n^{r-1} }\right)^{1/2} B   \Delta\left\{\frac{  V^* P_{\mathcal{R}}(\tilde{\theta} - \theta^*) }{n^{r-1}\| D^{(r) } (\tilde{\theta} - \theta^*)\|_1} \right\} +  A n^{1-r}V^*\right]\frac{ n^{r-1}}{V^*} \| D^{(r) } (\tilde{\theta} - \theta^*)\|_1   +   (a^*)^{\top} P_{  \mathcal{R}^{\perp} }  (\tilde{\theta} - \theta^*)\\
			%	& \leq& B  \{ \| D^{(r) } (\tilde{\theta} - \theta^*)\|_1 \}^{1/2}   \Delta\left\{   P_{\mathcal{R}}(\tilde{\theta} - \theta^*) \right\} +  A \| D^{(r) } (\tilde{\theta} - \theta^*)\|_1  +   \\
			%	 & &(a^*)^{\top} P_{  \mathcal{R}^{\perp} }  (\tilde{\theta} - \theta^*)
		\end{array}%\| D^{(r) } ()\hat{\theta} - \theta^*\|_1
	\end{equation}
	where 
	\[
	x :=\frac{1 }{    \| D^{(r) } (\tilde{\theta} - \theta^*)  \|_1 } P_{  \mathcal{R} }(\tilde{\theta} - \theta^*) .
	\]	
	%with $ \tilde{C}_r $ as in Lemma \ref{lem20}. 	
	Hence,
	\[
	\|   D^{(r)} x \|_1 \,\leq \,1,
	\]
	which  combined with (\ref{eqn:second_inequality3}) and Lemma \ref{lem15} implies
	\begin{equation}
		\label{eqn:second_inequality4}
		\begin{array}{lll}
			(\tilde{\theta} - \theta^*)^{\top} u &  = &   \left\{ B   \Delta\left(x\right) +  A \right\}\| D^{(r) } (\tilde{\theta} - \theta^*)\|_1   +   u^{\top} P_{  \mathcal{R}^{\perp} }  (\tilde{\theta} - \theta^*)\\
			& \leq& B  \max\{ \| D^{(r) } (\tilde{\theta} - \theta^*)\|_1^{1/2}   ,1\} \Delta\left\{   P_{\mathcal{R}}(\tilde{\theta} - \theta^*) \right\} +  A \| D^{(r) } (\tilde{\theta} - \theta^*)\|_1  +   \\
			& &u^{\top} P_{  \mathcal{R}^{\perp} }  (\tilde{\theta} - \theta^*),
		\end{array}%\| D^{(r) } ()\hat{\theta} - \theta^*\|_1
	\end{equation}
	where the inequality follows from the fact that $\Delta( t v ) \leq  \max\{t,\sqrt{t}\} \Delta(v)$  for  $v\in R^n$  and  $t \geq 0$.
	
	%		\begin{equation}
	%	\label{eqn:second_inequality}
	%	\begin{array}{lll}
	%	& \leq&     \left[    \left(\frac{V^*}{n^{r-1} }\right)^{1/2} B   \Delta\left\{\frac{  V^* P_{\mathcal{R}}(\tilde{\theta} - \theta^*) }{n^{r-1}\| D^{(r) } (\tilde{\theta} - \theta^*)\|_1} \right\} +  A n^{1-r}V^*\right]\frac{ n^{r-1}}{V^*} \| D^{(r) } (\tilde{\theta} - \theta^*)\|_1   +   (a^*)^{\top} P_{  \mathcal{R}^{\perp} }  (\tilde{\theta} - \theta^*)\\
	%& \leq& B  \{ \| D^{(r) } (\tilde{\theta} - \theta^*)\|_1 \}^{1/2}   \Delta\left\{   P_{\mathcal{R}}(\tilde{\theta} - \theta^*) \right\} +  A \| D^{(r) } (\tilde{\theta} - \theta^*)\|_1  +   \\
	%& &(a^*)^{\top} P_{  \mathcal{R}^{\perp} }  (\tilde{\theta} - \theta^*)
	%\end{array}
	%\end{equation}
	
	%	where    the second inequality holds by the definition of $\Delta(\cdot)$ and by (\ref{eqn:high_prob_event}).%, and where $B$  satisfies $B \asymp \tilde{B}$.
	Suppose  now that $\| D^{(r) } (\tilde{\theta} - \theta^*)\|_1\geq 1$.	If  
	\[
	A \| D^{(r) } (\tilde{\theta} - \theta^*)\|_1 < B  \{ \| D^{(r) } (\tilde{\theta} - \theta^*)\|_1 \}^{1/2}  \Delta\left\{   P_{\mathcal{R}}(\tilde{\theta} - \theta^*) \right\} ,
	\]
	then
	\begin{equation}
		\label{final_upper_bound_p1}
		\| D^{(r) } (\tilde{\theta} - \theta^*)\|_1     \leq    \frac{  B^2 \Delta^2\left\{   P_{\mathcal{R}}(\tilde{\theta} - \theta^*) \right\}  }{A^2} 
	\end{equation}
	
	If  
	\[
	A \| D^{(r) } (\tilde{\theta} - \theta^*)\|_1 \geq B  \{ \| D^{(r) } (\tilde{\theta} - \theta^*)\|_1 \}^{1/2}   \Delta\left\{   P_{\mathcal{R}}(\tilde{\theta} - \theta^*) \right\},
	\]
	then
	% Then by (\ref{eqn:infinity}) and (\ref{eqn:second_inequality}), we have that for some  constant $a_2 >a_1$
	\begin{equation}
		\label{eqn:second_inequality2}
		\begin{array}{lll}
			(\tilde{\theta} - \theta^*)^{\top} u
			& \leq& 2A\| D^{(r) } (\tilde{\theta} - \theta^*)\|_1   +   u^{\top} P_{ \mathcal{R}^{\perp} }  (\tilde{\theta} - \theta^*).\\
			%+  \phi_n \left[\frac{\Delta^2(\hat{\theta} - \theta^*)}{n} +   D(\hat{\theta} - \theta^*)  \right]
		\end{array}%\| D^{(r) } ()\hat{\theta} - \theta^*\|_1
	\end{equation}
	Hence, choosing   $\lambda =  3 A$, and  combining   (\ref{eqn:first_inequality})  with (\ref{eqn:second_inequality2}),
	\[
	\begin{array}{lll}
		A\| D^{(r) } (\tilde{\theta} - \theta^*)\|_1  & \leq  & \lambda\left\{\|   D^{(r) } \theta^*\|_1   -  \|   D^{(r) } \tilde{\theta}\|_1     \right\}    + \\
		& &3A\| D^{(r) } (\tilde{\theta} - \theta^*)\|_1   + u^{\top} P_{  \mathcal{R}^{\perp} }  (\tilde{\theta} - \theta^*)\\
		%   \phi_n \left[\frac{\Delta^2(\hat{\theta} - \theta^*)}{n} +   D(\hat{\theta} - \theta^*)  \right] \\
		& \leq & 6 A   \|   D^{(r) } \theta^*\|_1    +       u^{\top} P_{  \mathcal{R}^{\perp} }  (\tilde{\theta} - \theta^*),\\,  
	\end{array}
	\]
	with the second inequality  follows by  the triangle inequality.
	Therefore,
	\[
	\| D^{(r) } (\tilde{\theta} - \theta^*)\|_1    \,\leq \max\left\{ \frac{V^*}{n^{r-1} },    \frac{6V^*}{n^{r-1} }  +    A^{-1}  u^{\top} P_{  \mathcal{R}^{\perp} }  (\tilde{\theta} - \theta^*),    \frac{  B^2 \Delta^2\left\{   P_{\mathcal{R}}(\tilde{\theta} - \theta^*) \right\}}{A^2}  \right\}.
	%\,4  \|   D^{(r) } \theta^*\|_1    +    \frac{   B    \frac{  \max\{L,\sqrt{L}\}  }{ \min\{L,\sqrt{L}\} }  }{  A } D(\hat{\theta} - \theta^*) +   \frac{(a^*)^{\top} P_{  R^{\perp} }  (\hat{\theta} - \theta^*)}{A}.
	\]
	
	Next	suppose   that $\| D^{(r) } (\tilde{\theta} - \theta^*)\|_1 < 1$.  	If  
	\[
	A \| D^{(r) } (\tilde{\theta} - \theta^*)\|_1 < B   \Delta\left\{   P_{\mathcal{R}}(\tilde{\theta} - \theta^*) \right\} ,
	\]
	then 
	\[
	\| D^{(r) } (\tilde{\theta} - \theta^*)\|_1 < \frac{B}{A} \Delta\left\{   P_{\mathcal{R}}(\tilde{\theta} - \theta^*) \right\} .
	\]
	If  
	\[
	A \| D^{(r) } (\tilde{\theta} - \theta^*)\|_1 \geq B  \Delta\left\{   P_{\mathcal{R}}(\tilde{\theta} - \theta^*) \right\},
	\]
	we proceed as before. The claim follows.
	
\end{proof}

\subsection{Proof of Theorem \ref{thm4} }
\label{sec:penalized_proof}
\begin{proof}
	\textbf{Steps 1--2 in proof outline.}

	Let   $\epsilon \in (0,1)$.  By   Proposition \ref{lem17} and Lemmas \ref{lem24}--\ref{lem16}
	%Lemmas  \ref{lem16}--    \ref{lem17}  
	we can   suppose that  
	the following  events  
	\begin{equation}
		\label{eqn:high_prob_events}
		\begin{array}{lll}
			\Omega_1  &= &\left\{
			\kappa	(\hat{\theta}  -  \theta^*)    \in \mathcal{A},  \,\,\,\,\forall \kappa\in [0,1] \,\,
			\right\},\\
			\Omega_2  &= &\left\{ 	\underset{x  \in R^n  }{\sup}\,\,\frac{  u^{\top}   P_{ \mathcal{R}^{ \perp } } x   }{   \frac{\Delta^2(x)}{n} +   \Delta(x)     }  \leq  E  \right\},\\
		\end{array}
	\end{equation}
	happen with probability at least  $1-\epsilon/2$ for some  constant $E$, and  with $\mathcal{A}$ as in Lemma \ref{lem17}. Furthermore, we set
	%recall form the proof of Lemma \ref{lem17} that $\theta^* \in K$, and write
	\begin{equation}
		\label{eqn:lambda}
		\lambda =  3 c_{\epsilon} n^{  (2r-1)/(2r+1)} \left( \log n \right)^{ 1/(2r+1) }   \|D^{(r)} \theta^*\|_1^{ - (2r-1)/(2r+1) }
	\end{equation}
	and 	$A =   \lambda/3$   	in   Lemma \ref{lem24}, where  $c_{\epsilon}>1$.
	%we  denote by  $C_1$  a positive constant such that $\|D^{(r)   }   \theta^* \|_1 \leq  C_1 n^{1-r}$. This constant exists since  $\theta^* \in K$.

	Then,  for a choice of   $t>0$ to be specified later, we have
	\[
	\begin{array}{lll}
		\mathrm{pr}\left\{     \Delta^2(\hat{\delta} )    > t^2  \right\}  & \leq&  \mathrm{pr}\left[  \left\{\Delta^2(\hat{\delta} )    > t^2  \right\} \cap  \Omega_1  \cap \Omega_2  \right]     +    \frac{\epsilon}{2}.
	\end{array} 
	\]
	Next suppose that  the event 
	\[
	\left\{\Delta^2(\hat{\delta} )    > t^2  \right\} \cap  \Omega_1  \cap \Omega_2  
	\]
	holds. Then,  proceeding as in the proof of Proposition \ref{prop:basic}  there exists $\tilde{ \delta} = u_{\hat{\delta}} \hat{\delta}$ with  $u_{\hat{\delta}} \in [0,1]$  such that  $\tilde{ \delta} \in \mathcal{A}$ and  $\Delta^2(\tilde{ \delta})  =  t^2$. Hence, by the basic inequality,
	\[
	\hat{M}(\theta^*+\tilde{ \delta} )        +   \lambda \left[     \|D^{(r)} (  \theta^*+\tilde{ \delta} )\|_1  - \|D^{(r)}   \theta^* \|_1    \right] \leq 0.
	\]
	Therefore,
	\[
	\begin{array}{lll}
		\underset{   \delta \in   \mathcal{A},   \Delta^2(  \delta ) \leq  t^2   }{\sup}\,\left[    M(\theta^*+\delta)  - \hat{M}(\theta^*+\delta)  
		+   \lambda\left\{ \|D^{(r)}   \theta^* \|_1  -\|D^{(r)} (  \theta^*+\tilde{ \delta} )\|_1        \right\} \right]  & \geq & M(\theta^* + \tilde{ \delta}) \\
		& \geq &c_0 t^2,
	\end{array}
	\]
	where the second inequality follows from  Lemma \ref{lem1}. Therefore,
	\begin{equation}
		\label{eqn:main}
		\begin{array}{lll}
			\mathrm{pr}\left[	\left\{\Delta^2(\hat{\delta} )    > t^2  \right\} \cap  \Omega_1  \cap \Omega_2 \right] & \leq& \mathrm{pr}\Bigg(  \Bigg\{\underset{   \delta \in   \mathcal{A},   \Delta^2(  \delta ) \leq  t^2   }{\sup}\,\bigg[    M(\theta^*+\delta)  - \hat{M}(\theta^*+\delta)  \\
			& &	+   \lambda\left\{ \|D^{(r)}   \theta^* \|_1  -\|D^{(r)} (  \theta^*+ \delta )\|_1        \right\} \bigg] \geq c_0 t^2\Bigg\} \cap \Omega_1  \cap \Omega_2 \Bigg)\\
			& \leq& \displaystyle 
			\frac{1}{c_0 t^2}	\,E\Bigg( 1_{\Omega_1  \cap \Omega_2} \underset{   \delta \in   \mathcal{A},   \Delta^2(  \delta ) \leq  t^2   }{\sup}\,\bigg[    M(\theta^*+\delta)  - \hat{M}(\theta^*+\delta)  \\
			& &	+   \lambda\left\{ \|D^{(r)}   \theta^* \|_1  -\|D^{(r)} (  \theta^*+\tilde{ \delta} )\|_1        \right\} \bigg] \Bigg)\\
			%	& \leq& \displaystyle 
			%	\frac{1}{c_0 t^2}	\,E\Bigg( 1_{\Omega_1  \cap \Omega_2} \underset{   \delta \in   \mathcal{A},   \Delta^2(  \delta ) \leq  t^2   }{\sup}\,\bigg[    M(\theta^*+\delta)  - \hat{M}(\theta^*+\delta)\bigg] \Bigg) \\
			%	& &\displaystyle 	+       \frac{\lambda }{c_0 \eta^2} E	\left[\,1_{\Omega_1  \cap \Omega_2} \,  \underset{   \delta \in   \mathcal{A},   \Delta^2(  \delta ) \leq  t^2   }{\sup}\, \left\{ \|D^{(r)}   \theta^* \|_1  -\|D^{(r)} (  \theta^*+\tilde{ \delta} )\|_1        \right\}\right] \\
			& \leq& \displaystyle  \frac{1}{c_0 t^2}	\,E\Bigg( 1_{\Omega_1  \cap \Omega_2} \underset{   \delta \in   \mathcal{A},   \Delta^2(  \delta ) \leq  t^2   }{\sup}\,\bigg[    M(\theta^*+\delta)  - \hat{M}(\theta^*+\delta)\bigg] \Bigg) \\
			& &\displaystyle 	+       \frac{\lambda }{c_0 t^2}	    \,  E\left\{1_{\Omega_1  \cap \Omega_2} \underset{   \delta \in   \mathcal{A},   \Delta^2(  \delta ) \leq  t^2   }{\sup}\, \|D^{(r)} \delta\|_1\right\},\\	
		\end{array}
	\end{equation}
	where the second inequality follows  from Markov's inequality, and the last from the triangle inequality.

	\textbf{Step 3 in proof outline.}

	Next, define  
	\begin{equation}
		\label{eqn:t}
		t:= c_{\epsilon} n^{  1/(4r+2) }  \left(  \log n \right)^{  1/(4r+2) }
	\end{equation}
	and notice that for  $ \delta  \in  \mathcal{A}$ with  $\Delta(\delta)  \leq  t$ it holds, by Lemma \ref{lem6}, that 
	%	\[
	%	\mathcal{H}(\eta)   =  \left\{  \delta  \in  \mathcal{A} \,:\,  \Delta(\delta)  \leq  \eta    \right\}%
	%	\]
	%	and  set 
	%	\[
	%	\eta := c_1 n^{  1/(4r+2) }  \left(  \log n \right)^{  1/(4r+2) } . % (V^*)^{\nu},  
	%	\]
	%	%for some  $\nu>0$.
	%	Notice that for   $\delta \in  \mathcal{H}(\eta)$ we have by Lemma \ref{lem6}%
	\[
	\Delta (     P_{\mathcal{R} }  \delta  )    \, \leq \,  \tilde{C}_r t , 
	\]
	for some positive constant $\tilde{C}_r$.  
	%Furthermore, 
	% \begin{equation}
	%\label{eqn:check}
	%\frac{B  \tilde{C}_r t  }{A}  <   C^{\prime} \frac{}{n^{r-1}}
	%%<  C^{\prime} n^{ \frac{4r -2 }{2r+1}   }  \left(  \log n \right)^{  1/(2r+1) }   <    \frac{A^2}{B^2}, 
	%\end{equation}
	%for some constant  $C^{\prime} >0$  which follows from simple algebra. 
	Hence,  	if in addition $\Omega_1 \cap \Omega_2$  holds then for a constant $\tilde{C} $ independent of  $c_{\epsilon}$,
	%. Hence,  for  some constants  $a_1,\tilde{C} >0$ independent  of  $c_1$, if  $\delta \in  \mathcal{H}(\eta)$   and  $\Omega_1 \cap \Omega_2$  holds  then\\
	\begin{equation}
		\label{eqn:bv}
		\begin{array}{lll}
			\|D^{(r)}\delta\|_1 & \leq & C_0    \max\left\{ \frac{V^*}{n^{r-1} },   \gamma  \Delta^2 ( P_{\mathcal{R}} \delta),  \gamma^{1/2}  \Delta ( P_{\mathcal{R}} \delta),\frac{V^*}{n^{r-1}}   +   A^{-1}u^{\top} P_{  R^{\perp} } \delta \right\} \\
			& \leq & C_0    \max\left\{ \frac{V^*}{n^{r-1} },   \gamma  \Delta^2 (   P_{\mathcal{R}} \delta),  \gamma^{1/2}  \Delta( P_{\mathcal{R}} \delta),\frac{V^*}{ n^{r-1}  }  +   A^{-1} \left(  \frac{\Delta^2(\delta)  }{n} +  \Delta(\delta)  \right) \right\} \\
			%\left[\gamma_1 D(\delta)    +  A^{-1} (a^*)^{\top} P_{  R^{\perp} }\delta+    \frac{C_1}{n^k}   \right]   +    \frac{1}{n^k } \\
			& \leq & C_0  \max\left\{ \frac{V^*}{n^{r-1} }, \tilde{C}_r^2  \frac{  B^2   }{A^2}  t^2, \tilde{C}_r  \frac{  B   }{A}  t, \frac{V^*}{ n^{r-1}  } +  A^{-1}\left(  \frac{t^2  }{n} +  t \right) \right\}\\
			%& \leq& a_1\bigg[  \max\bigg\{  \frac{V^*}{n^{r-1} } ,      \frac{ n^{ (2r-1)/(2r +1)  }\left(  \log n \right)^{ 1/(2r+1)  }  \| D^{(r)} \theta^* \|_1^{  2/(2r+1)   }  n^{r-1}  n^{  1/(2r+1) }  \left(  \log n \right)^{  1/(2r+1) } }{n^{  (4r-2)/(2r+1)   } \left( \log n \right)^{ 2/(2r+1) }   \|D^{(r)} \theta^*\|_1^{ - (4r-2)/(2r+1) } }    (V^*)^{2\nu},\\
			%& & \,\,\,\,\,\,\,\,\,  \frac{V^*}{n^{r-1}   }  +    \frac{  c_0  n^{  1/(2r+1) }  \left(  \log n \right)^{  1/(2r+1) }  }{   n^{  1+(2r-1)/(2r+1)   } \left( \log n \right)^{ 1/(2r+1) }   \|D^{(r)} \theta^*\|_1^{ - (2r-1)/(2r+1) } }    +  \\
			%&& \frac{  n^{  1/(4r+2) }  \left(  \log n \right)^{  1/(4r+2) } }{  n^{  (2r-1)/(2r+1)    } \left( \log n \right)^{ 1/(2r+1) }   \|D^{(r)} \theta^*\|_1^{ - (2r-1)/(2r+1) }  } \bigg\}\bigg]\\
			%& \leq& \tilde{C}    n^{1-r}   \max\{   V^*,    (V^*)^{  2\nu +  4r/(2r+1)   } \},
			& \leq& \tilde{C}    n^{1-r}   \max\{   V^*,    (V^*)^{  4r/(2r+1)   } \},
		\end{array}
	\end{equation}
	where the first inequality  follows  from the definition of  $\Omega_1$, the second because we are assuming  that  $\Omega_2$ holds,  the third  since $\Delta(\delta)\leq t$, and the  fourth  by definition of $A$, $B$ and $t$
	as simple algebra shows that
	\[
	\tilde{C}_r^2  \frac{  B^2 t^2   }{A^2}   \,\leq \,     \frac{C   (V^*)^{  4r/(2r+1)   }  }{ c_{\epsilon}^2  n^{r-1} } , 
	\]
	\[
	\tilde{C}_r	\frac{B   t  }{A}  <   C \frac{(V^*)^{4r/(4r+2)} }{n^{r-1}} ,
	\]
	and
	\[
	A^{-1}\left(  \frac{t^2  }{n} +  t \right)       \,\leq \,   \frac{C}{c_{\epsilon}} \left\{\frac{  (V^*)^{  (2r-1)/(2r+1) }   }{n^{r-1}} \right\} \frac{1}{    \left( n \log n\right)^{1/(4r+2)}   }.
	\]
	% and the last  by (\ref{eqn:eta}).
	
	\textbf{Steps 4--5 in proof outline.}

	Let us now define
	\[
	%\mathcal{L}(\eta)      \,=\, \left\{  \delta \,:\, \|D^{(r)}\delta\|_1  \leq  \tilde{C}    n^{1-r}   \max\{   V^*,    (V^*)^{  2\nu +  4r/(2r+1)   } \} ,\,\,\,\,\,  \text{and} \,\,\,\,\,  \Delta(\delta)    \leq \eta \right\}.
	\mathcal{H}(t)      \,=\, \left\{  \delta \,:\, \|D^{(r)}\delta\|_1  \leq  \tilde{C}    n^{1-r}   \max\{   V^*,    (V^*)^{   4r/(4r+2)   } \} ,\,\,\,\,\,  \text{and} \,\,\,\,\,  \Delta(\delta)    \leq t\right\}.
	\]
	%and 
	%\[
	%\eta =   c_1 n^{  1/(4r+2) }  \left(  \log n \right)^{  1/(4r+2) }   (V^*)^{\nu},  
	%\]
	%for some  $\nu >0$.
	Then  for some constant  $a>0$ that depends on $V^*$ but independent  of  $c_{\epsilon}$, (\ref{eqn:main}) and (\ref{eqn:bv}) imply

	\begin{equation}
		\label{eqn:main2}
		\begin{array}{lll}
			\mathrm{pr}\left[	\left\{\Delta^2(\hat{\delta} )    > t^2  \right\} \cap  \Omega_1  \cap \Omega_2 \right]
			& \leq& \displaystyle  \frac{1}{c_0 t^2}	\,E\Bigg[  \underset{   \delta \in  \mathcal{H}(t)        }{\sup}\,\bigg\{    M(\theta^*+\delta)  - \hat{M}(\theta^*+\delta)\bigg\} \Bigg] \\
			& &\displaystyle 	+       \frac{\lambda }{c_0 t^2}	    \, \underset{   \delta \in \mathcal{H}(t)       }{\sup}\, \|D^{(r)} \delta\|_1,\\
			& \leq&  \displaystyle  \frac{2}{c_0 t^2}   E\left\{    \underset{\delta \in \mathcal{H}(t)   }{\sup}\,\, \sum_{i=1}^{n}   \xi_i \delta_i  \right\}\\
			&& \displaystyle 	+       \frac{\lambda }{c_0 t^2}\left[\tilde{C}    n^{1-r}   \max\{   V^*,    (V^*)^{   4r/(4r+2)   } \}\right]\\	  %  \, \underset{   \delta \in \mathcal{H}(t)        }{\sup}\, \|D^{(r)} \delta\|_1,\\
			% & \leq&\displaystyle  \frac{2}{c_0 t^2}\Bigg[ C_r  \left\{   \frac{\eta^2}{n}   +       \eta \right\}  +  C_r    m(\eta,n) \{\log(en)\}^{1/2}  +\\
			% & &\displaystyle C_r m(t,n)\left\{  \frac{  n^{1/2}   \tilde{C}    n^{1-r}   \max\{   V^*,    (V^*)^{     4r/(2r+1)   } \} }{  m(\eta,n) }  \right\}^{  1/(2r) } \Bigg] \\
			%%% & &  \displaystyle  \Bigg] +\\
			% & & 	 +  \displaystyle   \frac{\lambda }{c_0 t^2}	    \,  \tilde{C}    n^{1-r}   \max\{   V^*,    (V^*)^{   4r/(2r+1)   } \}\\
			& \leq&\displaystyle     \bigg[ a  \left\{ c_{\epsilon}   (\log n )^{1/(4r+2)}\right\}^{1 -  1/(2r)   }   n^{     1/(2r+1)  }   \\%\max\left\{     (V^*)^{1/2r},  (V^*)^{ 2/(2r+1)  }\right\} \\
			%   & & \cdot \max\{  (V^*)^{ 1/2-1/(4r)} ,1\}\\
			& &+ 	    \displaystyle   	    \,  \tilde{C}     n^{1-r}   \max\{   V^*,    (V^*)^{   4r/(4r+2)   } \} \cdot \lambda  \bigg]\frac{1}{c_0   t^2 }  \\
			%   C_r \left\{  \frac{t^2 }{n^{ 1/2   }}  +   t\right\} +   C_r  m(t,n)  \left\{  \frac{  n^{ 1/2  }   V }{ m(t,n) } \right\}^{  1/(2r)    }  \\ %1/(2+2k)
			%& &  + C_r  m(t,n)  \{\log ( en)\}^{ 1/2  },
		\end{array}
		% where
		%for some positive  constants   $C_r, \tilde{c}_r$.
		%\end{array}
	\end{equation}
	with  $\xi_1,\ldots,\xi_n$ are   independent  Rademacher variables,  	where the the second inequality follows as in the proof of Lemmas \ref{lem3}--\ref{lem4}, and third  by  Proposition \ref{lem9}. Hence, given our choice of $\lambda$,
	
	\begin{equation}
		\label{eqn:main3}
		\begin{array}{lll}
			\mathrm{pr}\left[	\left\{\Delta^2(\hat{\delta} )    > t^2  \right\} \cap  \Omega_1  \cap \Omega_2 \right]
			& \leq& \displaystyle    \bigg[   a  \left\{ c_{\epsilon}   (\log n )^{1/(4r+2)}\right\}^{1 -  1/(2r)   }   n^{     1/(2r+1)  }   \\
			%\displaystyle  \frac{1}{c_0   \eta^2 }     a_2    \gamma^{-1}  c_1^{1 -  1/(2r)   }   n^{     1/(2r+1)  }   \max\left\{     (V^*)^{1/2r},  (V^*)^{ 2/(2r+1)  }\right\} \\
			%& & \cdot \max\{  (V^*)^{ 1/2-1/(4r)} ,1\}+\\
			& & 	    \displaystyle +  	    \,  \tilde{C}    n^{1-r}   \max\{   V^*,    (V^*)^{   4r/(4r+2)   } \} \cdot\\
			&&\displaystyle   3c_{\epsilon} n^{  (2r-1)/(2r+1)} \left( \log n \right)^{ 1/(2r+1) }   \|D^{(r)} \theta^*\|_1^{ - (2r-1)/(2r+1) }   \bigg]  \frac{1}{c_0   t^2 }\\
			& =& O\left(   \frac{1}{c_{\epsilon}} \right)\\
			& \leq&\displaystyle\frac{ \epsilon}{2}
		\end{array}
		% where
	\end{equation}
	provided that $c_{\epsilon}$ is large enough.
\end{proof}

\section{Theorem \ref{thm6} }

%Throughout this section we will use $C$ as a generic positive  constant  that can change from line to line. Furthermore, we denote by  $\hat{\theta}$  the solution to
%\[
%\underset{\theta \in  R^n}{\min}\,\,\sum_{i=1}^{n} \rho_{\tau}(y_i-\theta_i)  \,+\, \lambda \|D^{(r)} \theta\|_1,
%\]
%for some  $\lambda>0$.

Since we rely on proof machinery developed in~\cite{ortelli2019prediction}, we start by introducing some relevant notation from \cite{ortelli2019prediction}.

\subsection{Notation}
\label{sec:notation}
Throughout this section we will use $C$ as a generic positive  constant  that can change from line to line.  Let  $m = n - r$ be the number of rows of $D^{(r)}$.  For a vector  $b\in R^m$  and a set  $S \subset  \mathcal{D}:= \{1,\ldots,m\}$ we denote by  $b_S$  the vector $b_S = (b_j)_{j \in S}$ and we write  $b_{-S} = (b_j)_{ j \in \{1,\ldots,m\}\backslash S   }$.   

Following \cite{ortelli2019prediction}, we take   $S $ a subset of  $\{1,\ldots,m\}$ with  $s = \vert S\vert $.  We also denote by $t_1,\ldots,t_s$ the elements of $S$ and assume that  $r+1<t_1 < t_2< \ldots <t_s \leq n$, and let  $t_0 =  r$ and  $t_{s+1} =n$. Then we denote   $n_i  =  t_i - t_{i-1}$ for  $i \in \{1,\ldots,s+1\}$.%  and  
%$$n_{\max}  =   \underset{1\leq i\leq  s+1}{\max}  n_i.$$

In our entire proof   we take  $S $ to be the same as in~ \cite{ortelli2019prediction} which satisfies $\{   j \,:\,   (D^{(r)}\theta^*)_j  \neq  0      \} \subset S$, and  $s:=\vert  S \vert  \asymp  \vert \{   j \,:\,   (D^{(r)}\theta^*)_j  \neq  0      \}  \vert$. 
Furthermore,  we write  $\mathcal{N}_{-S} =   \{  \theta\in  R^n\,:\, (D^{(r)}\theta)_{-S} =0   \}$ and  denote  $r_S =  \text{dim}(\mathcal{N}_{-S})$. Also, the matrix  $D^{(r)}_{-S}$ denotes the matrix obtained after removing  from $D^{(r)}$ the   rows indexed by  $S$, and we set  $\Psi^{-S} = (D^{(r)}_{-S})^{\top} (   D^{(r)}_{-S} (D^{(r)}_{-S})^{\top})^{-1}$. The $j$th column of  $\Psi^{-S}$ is denoted as  $\psi^{-S}_j$.   Furthermore, we denote the orthogonal projections onto $\mathcal{N}_{-S}$ and $\mathcal{N}_{-S}^{\perp}$ as
$P_{\mathcal{N}_{-S}}$ and $P_{\mathcal{N}_{-S}^{\perp}}$ respectively.

For a vector  $w_{-S}$  such that $0\leq  w_j \leq  1$  we write  $(1-w_{-S}) ( D^{(r)} \theta )_{ -S  }   \,=\,\{  (1-w_j) ( D^{(r)} \theta )_j  \}_{  \{j  \in \mathcal{D}\backslash  S\}  }$.
We then study the estimator
\[
\hat{\theta}  = \underset{\theta \in  R^n}{\arg \min}\,\,\sum_{i=1}^{n} \rho_{\tau}(y_i-\theta_i)  \,+\, \lambda \|D^{(r)} \theta\|_1,
\]
for some  $\lambda>0$.

We also let  $A$ be such that
\begin{equation}
	\label{eqn:lower0}
	A \,\geq \, \underset{j \in  \mathcal{D}\backslash S }{\max}\,\|  \psi_j^{-S}\| (\log n)^{1/2},
	%\frac{    }{n^{1/2} },
\end{equation}
and by Section 3.1 in \cite{ortelli2019prediction}, we have that (\ref{eqn:lower0})  holds if 
\begin{equation}
	\label{eqn:lower}
	A \,\geq \, A^* \,:=\,  C^* n^{r}\left(\frac{1}{s+1}\right)^{r-1/2}\left(\frac{\log n}{n}\right)^{1/2},
\end{equation}
for some constant $C^* >0$.

With the notation from above, we also borrow the following definition from \cite{ortelli2019prediction}.

\begin{definition}
	\label{def2}
	For any sign vector  $q_S \in \{-1,1\}^s$  its noiseless effective sparsity is
	\[
	\Gamma^2(q_S )\,=\, \left(  \max\left\{   q_S^{\top}  (D^{(r)}\theta)_S   - \|(D^{(r)}\theta)_{-S}\|_1\,:\,  \|\theta\| = n^{1/2}   \right\}\right)^2.
	\]
	Its noisy effective sparsity is defined as
	\[
	\Gamma^2(q_S,w_{-S} )\,=\, \left(  \max\left\{   q_S^{\top}  (D^{(r)}\theta)_S   - \|(1-w_{-S})(D^{(r)}\theta)_{-S}\|_1\,:\,  \|\theta\| = n^{1/2}   \right\}\right)^2,
	\]
	with
	\[
	w_j \,=\,   \frac{\|\psi^{-S}_{j}\| (\log n)^{1/2} }{A^* }
	\]
	for  $j \in \mathcal{D} \backslash S$.
\end{definition}

\subsection{Proof outline}

We now provide a high level overview of the proof of Theorem \ref{thm6}. The first three steps in this proof are very similar to the first three steps in the proof outline of  Theorem \ref{thm4}.

\textbf{Step 1.}  

We  show in Proposition \ref{lcor2} that for any given $\epsilon > 0$ if  $\tilde{\theta}$ is in the line segment between $\hat{\theta}$ and $\theta^*$ then  $\tilde{\delta} = \tilde{\theta} - \theta^{*}$ belongs to  a restricted set  $\mathcal{A}$ (depending on $\epsilon$) with probability  at least  $1 - \epsilon/4$. We call this event $\Omega_1$ and this restricted set is of the form 
\begin{equation}
	\label{eqn:a}
	\mathcal{A} = \{  \delta \,  :\,   \mathrm{TV}^{(r)}(\delta)  \leq  C V^* + \text{some extra terms}          \}
\end{equation}
for some positive constant  $C$, see the precise definition in (\ref{eqn:restricted}). Here, the additional extra terms appearing in  (\ref{eqn:a}) are different to the corresponding ones in Step 1 of the proof of  Theorem \ref{thm4}.
Next we obtain, using the convexity of  $\hat{M}$, the optimality of $\hat{\theta}$, and  Lemma \ref{lem2}   that  for any $t>0$ it holds that
\[
\{ \Delta^2(\hat{\delta}) \geq  t^2    \} \subset   \left\{      \underset{ \delta \in \mathcal{A},\,\,\Delta^2(\delta)\leq  t^2  }{\sup} \,\,\left[ M(\theta^* +\delta )- \hat{M}(\theta^* +\delta) +   \lambda \|D^{(r)}\theta^*\|_1 - \|D^{(r)}(\theta^* +\delta)\| \right]\geq   c_0 t^2   \right\},
\]
where  $c_0>0$  is as in Lemma \ref{lem2}. Again, this step uses ideas very similar to the proof of Theorem  \ref{thm:basic}.

\textbf{Step 2.}  

We define another high probability event $\Omega_2$  as in (\ref{eqn:high_prob_events2}). Then  based on Proposition \ref{lcor2} and Lemma \ref{lem24},  we obtain that $\Omega_1 \cap \Omega_2$  happens with probability at least $1-\epsilon/2$.  Hence, we do our analysis conditioning on  $\Omega_1\cap \Omega_2$. We start with also assuming that 
$\Delta^2(\hat{\delta}) \geq  t^2 $  for a large enough $t>0$  (whose value is to be specified later).% with the goal of arriving at a contradiction.

\textbf{Step 3.}  

We show that if $\delta \in \mathcal{A},\,\,\Delta^2(\delta)\leq  t^2  $ and     $\Omega_1 \cap \Omega_2$ holds   then 
\[%
%\|D^{(r)} \delta \|_1 \leq \frac{  C  }{n^{r-1}}
\mathrm{TV}^{(r)}(\delta )   \leq  C
\]
for some $C>0$.  Here the details of the calculations are different to the corresponding ones in Step 3 of the proof of  Theorem \ref{thm4} but it leads us to obtaining a similar conclusion. It then follows from  Steps $1$ and $2$ above that we can
reduce our focus to upper bounding  the probability  of the event 
\[
\left\{      \underset{ \delta \in K  }{\sup} \,\,   \left[M(\theta^* +\delta )- \hat{M}(\theta^* +\delta) +   \lambda \|D^{(r)}\theta^*\|_1 - \lambda\|D^{(r)}(\theta^* +\delta)\| \right]\geq    c_0 t^2    \right\}
\]
where
\[
K \,:=\,    \left\{ \delta \,:\,  \mathrm{TV}^{(r)}(\delta )   \leq  C ,\,\,\Delta^2(\delta) \leq  t^2 \right\}.
\]

\textbf{Step 4.}

Using  Markov's inequality  and Step 3, it follows that $\mathrm{pr}( \Delta^2(\hat{\delta}) \geq  t) \leq \epsilon$ holds if 
\[
\frac{1}{ c_0 t^2}\,E\left[   \underset{ \delta \in K  }{\sup} \,\,   \left\{M(\theta^* +\delta )- \hat{M}(\theta^* +\delta) +   \lambda \|D^{(r)}\theta^*\|_1 - \lambda\|D^{(r)}(\theta^* +\delta)\| \right\}  \right]   \,\leq \,\epsilon.
\]
Then, using symmetrization and contraction results from Empirical Process Theory; see Lemmas  \ref{lem28} and \ref{lem29},  it reduces our task to show that 
\[
U : =  \frac{4}{c_0 t^2}E\left[  \underset{\delta \in K}{\sup}\,\,\left\{    \sum_{i=1}^{n}\xi_i \delta_i \  +    \frac{\lambda}{2}\|D^{(r)}\theta^*\|_1 -  \frac{\lambda}{2}\|D^{(r)}(\theta^*+\delta)\|_1    \right\}\right] \leq \epsilon,
\]
for  $\xi_1,\ldots,\xi_n$ independent Rademacher variables.

\textbf{Step 5.}

We now write 
\[
U \leq  T_1+T_2+T_3
\]
with
%where 
\[
T_1 \,:=\, \frac{4}{c_0 t^2}E\left\{  \underset{\delta \in K}{\sup}\,\,  \xi^{\top}  P_{ \mathcal{R}}^{\perp}\delta    \right\},
\]
\[
T_2 \,:=\, \frac{4}{c_0 t^2}E\left\{  \underset{\delta \in K}{\sup}\,\,  \xi^{\top}  P_{ \mathcal{N}_{-S}}P_{ \mathcal{R}}\delta    \right\},
\]
and
\[
T_3\,:=\, \frac{4}{c_0 t^2}E\left[  \underset{\delta \in K}{\sup}\,\,  \left\{\xi^{\top}  P_{ \mathcal{N}_{-S}^{\perp} }P_{ \mathcal{R}}\delta  +   \frac{\lambda}{2}\|D^{(r)}\theta^*\|_1 -  \frac{\lambda}{2}\|D^{(r)}(\theta^*+\delta)\|_1  \right\}\right]
\]
where  $  P_{ \mathcal{R}}^{\perp},    P_{ \mathcal{R}},P_{ \mathcal{N}_{-S}^{\perp} }$ and $ P_{ \mathcal{N}_{-S}}$  are defined in Section \ref{sec:notation}.  In the subsequent proof we set  $t$  and  $\lambda$  to satisfy:
\[
t  \asymp  (s+1)^{1/2}(\log^{1/2} n ) \left\{  \log \left( \frac{n}{s+1}   \right)  \right\}^{1/2}  \log ^{1/2} (s+1),
\]
and
\[
\lambda  \asymp  \max\left\{      \frac{n^{r-1} (s+1)  \log n  \log (s+1)  \log \frac{n}{s+1}    }{V^*},    n^{r-1/2}\left(\frac{1}{s+1} \right)^{r-1/2} (\log n )^{1/2}\right\}.
\]
As we see in the proof of Theorem \ref{thm6}, $T_1$ and $T_2$ turn out to be lower order terms as compared to $T_3$.
Therefore, from here our goal is to show that there exists a positive constant  $c$  such that  $\max\{T_1,T_2,T_3\} \leq c \epsilon$.

\textbf{Step 6.} 

Bounding $T_1$. This is done exactly similarly as in bounding the corresponding term $T_1$ inside  the proof of Theorem \ref{thm5}.

\textbf{Step 7.} 

Bounding $T_2$. This is handled using  Lemmas \ref{lem18}, \ref{lem20} and \ref{lem6}.

\textbf{Step 8.} 

We define an event  $\Omega_3 (b)$ for a constant  $b > 5$, see (\ref{eqn:omega3b}), and using a standard concentration inequality for maxima of subaussian random variables
we show that  $\Omega_3 (b)^c$  happens with high probability. Next we write
\[
T_3 \,= \,  T_{3,1}  + T_{3,2},
\]
where
\[
T_{3,1}\,:=\, \frac{4}{c_0 t^2}E\left[  \underset{\delta \in K}{\sup}\,\,  \left\{\xi^{\top}  P_{ \mathcal{N}_{-S}^{\perp} }P_{ \mathcal{R}}\delta  +   \frac{\lambda}{2}\|D^{(r)}\theta^*\|_1 -  \frac{\lambda}{2}\|D^{(r)}(\theta^*+\delta)\|_1  \right\} \bigg |  \Omega_3(b)\right] \mathrm{pr}\{\Omega_3(b)\}
\]
and
\[
T_{3,2} \,:=\,\frac{4}{c_0 t^2}E\left[  \underset{\delta \in K}{\sup}\,\,  \left\{\xi^{\top}  P_{ \mathcal{N}_{-S}^{\perp} }P_{ \mathcal{R}}\delta  +   \frac{\lambda}{2}\|D^{(r)}\theta^*\|_1 -  \frac{\lambda}{2}\|D^{(r)}(\theta^*+\delta)\|_1  \right\} \bigg |  \Omega_3(b)^c\right] \mathrm{pr}\{\Omega_3(b)^c\}.
\]

\textbf{Step 9.}  

Bounding  $T_{3,1}$.  This  a lower order  term that can be upper bounded  exploiting the definition of $\Omega_3(b)$.

\textbf{Step 10.}  

Bounding  $T_{3,2}$.  This is done  following the ideas for proving fast rates for trend filtering as laid out in Section 3.3 of \cite{ortelli2019prediction}.

\subsection{Restricted set for  Proof of    Theorem  \ref{thm6}   (Step 1)}

%Throughout we assume that  Assumption  \ref{as2}  holds   and write
%\begin{equation}
%\label{eqn:k_set}
%K  =   \left\{    \theta \,:\,    \| D^{(r)}  \theta \|_1    \leq   \frac{V^*}{n^{r-1}}   \right  \}.
%\end{equation}
%Also,  for  $\delta \in R^n$  we write  $\Delta(\delta )   =  \{\Delta^2(\delta)\}^{1/2}$ with  $\Delta^2(\cdot)$  defined as in   Lemma  \ref{lem2}. Furthermore,  we use the notation  $M$  and  $\hat{M}$  from Definition \ref{def1}.

%Before 
The following result is  obtained similarly to Proposition \ref{lem17}.

\begin{proposition}
	\label{lcor2}
	%Suppose that  $\|D^{(r)}\theta^*\|_1  =  O(n^{1-r})$.  
	%With the notation of Lemma \ref{lem25}, 
	Let $\epsilon \in (0,1)$
	then	there exists  $A$  satisfying  (\ref{eqn:lower})  such that  for
	%a choice   
	$$\lambda =   3A,  $$
	with probability   at least  $1-\epsilon/8$,
	\[
	\kappa	(\hat{\theta}  -  \theta^*)  \,\in \,\mathcal{A},\,\,\,\,\, \kappa\in [0,1],%\,:=\, \left\{ \delta \,:\,  \|D^{(r)} \delta \|  \,\leq \, C_0  \max\left\{ \frac{1}{n^k},   \gamma_1  \Delta^2 (\delta), \| D^{(r) } \delta\|_1   +   A^{-1}(a^*)^{\top} P_{  R^{\perp} }  (\tilde{\theta} - \theta^*)\\ \right\} \right\}, \,\,\,\,\,%\forall \kappa \in [0,1],
	%  \left[\gamma_1 D(\delta)    +  A^{-1} (a^*)^{\top} P_{  R^{\perp} }\delta+    \|D^{(r)}\theta^*\|_1  \right]    + n^{-k} \right\}, \,\,\,\,\,\forall \kappa \in [0,1],
	\]
	%		for all $\kappa \in [0,1]$  such that 
	%\begin{equation}
	%		\label{eqn:cond3}
	%	\Delta^2\left[  P_{\mathcal{R}}\{	\kappa	(\hat{\theta}  -  \theta^*)  \}\right] \,\leq\, 	\frac{A^2}{B^2},
	%	\end{equation}
	with 
	\[
	\mathcal{A}\,:=\, \left\{ \delta \,:\,  \|D^{(r)} \delta \|_1  \,\leq \, \tilde{C}_{\epsilon}  \max\left\{ \frac{V^*}{n^{r-1} },   \gamma  \Delta^2 (  P_{\mathcal{R}}  \delta),  \gamma^{1/2}  \Delta (  P_{\mathcal{R}}  \delta),  \frac{V^*}{n^{r-1} }  +   A^{-1} u^{\top} P_{  \mathcal{R}^{\perp} }  \delta\right\} \right\},
	\]
	where   $\tilde{C}_{\epsilon} >0$ is  a constant that depends on $\epsilon$, 
	$$\gamma :=   \frac{  B^2  }{A^2}  ,$$
	%  A^{-1} B    \frac{  \max\{L,\sqrt{L}\}  }{ \min\{L,\sqrt{L}\} }, $$  
	$B  = a_{\epsilon}\tilde{B}$,  for some constant $a_{\epsilon}>0$ that depends ,     with
	$u_i     =  \tau  -  1\{  y_i \leq \theta_i^* \}$  for  $i=1,\ldots,n$.  Here,
	\[
	\tilde{B} =    (s+1)^{1/2}  \log^{1/2} (s+1) .
	\]
\end{proposition}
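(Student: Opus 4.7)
The plan is to follow the template of the proof of Proposition~\ref{lem17}, substituting the slow-rate concentration bound of Lemma~\ref{lem24} with a sharper, fast-rate analogue. The key observation is that the algebraic skeleton of the proof of Proposition~\ref{lem17} uses the specific choice of $(A,\tilde B)$ in exactly one place: to control $u^\top P_{\mathcal{R}}(\tilde\theta - \theta^*)$ via the uniform bound
\[
\sup_{x\in\mathrm{row}\{D^{(r)}\},\,\|D^{(r)}x\|_1\leq 1} \frac{u^\top x - A}{\|x\|} \,\leq\, B
\]
on a high-probability event. Once this bound is established with $A$ satisfying (\ref{eqn:lower}) and $B = a_\epsilon (s+1)^{1/2}\log^{1/2}(s+1)$, the rest of the derivation transfers verbatim.

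First, I would prove the new concentration inequality. Write $x = P_{\mathcal{N}_{-S}}x + P_{\mathcal{N}_{-S}^\perp}x$ and observe that on $\mathcal{N}_{-S}^\perp$ one has $x = \Psi^{-S}(D^{(r)}x)_{-S}$ so that $u^\top P_{\mathcal{N}_{-S}^\perp}x = \sum_{j\in\mathcal{D}\setminus S}(D^{(r)}x)_j (u^\top\psi_j^{-S})$. Since the coordinates of $u$ are bounded in $[-1,1]$ and hence universally sub-Gaussian, a standard maximal inequality together with (\ref{eqn:lower0}) yields
\[
\Bigl|\sum_{j\in\mathcal{D}\setminus S}(D^{(r)}x)_j (u^\top\psi_j^{-S})\Bigr| \,\leq\, A\,\|(D^{(r)}x)_{-S}\|_1 \,\leq\, A\|D^{(r)}x\|_1 \leq A
\]
with probability at least $1-\epsilon/16$. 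For the $\mathcal{N}_{-S}$-piece, $P_{\mathcal{N}_{-S}}x$ lives in an $r_S$-dimensional subspace (with $r_S \leq s+r$), so that
\[
u^\top P_{\mathcal{N}_{-S}}x \,\leq\, \|P_{\mathcal{N}_{-S}}u\|\cdot\|P_{\mathcal{N}_{-S}}x\|,
\]
and a $\chi$-type concentration argument gives $\|P_{\mathcal{N}_{-S}}u\| = O_{\mathrm{pr}}((s+1)^{1/2}\log^{1/2}(s+1))$ on a further event of probability $\geq 1-\epsilon/16$. Combining both estimates produces the bound with $B = a_\epsilon\tilde B$.

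Second, once the concentration event is secured, fix $\kappa\in[0,1]$, set $\tilde\theta = \theta^* + \kappa(\hat\theta-\theta^*)$, and apply the basic inequality. Exactly as in \citep{belloni2011} this gives
\[
0 \,\leq\, \lambda\bigl\{\|D^{(r)}\theta^*\|_1 - \|D^{(r)}\tilde\theta\|_1\bigr\} \,+\, (\tilde\theta-\theta^*)^\top u.
\]
Splitting $(\tilde\theta-\theta^*)^\top u = u^\top P_{\mathcal{R}}(\tilde\theta-\theta^*) + u^\top P_{\mathcal{R}^\perp}(\tilde\theta-\theta^*)$ and applying the concentration bound to $x = P_{\mathcal{R}}(\tilde\theta-\theta^*)/\|D^{(r)}(\tilde\theta-\theta^*)\|_1$ gives, via $\Delta(tv)\leq\max(t,\sqrt{t})\Delta(v)$,
\[
(\tilde\theta-\theta^*)^\top u \,\leq\, B\max\{\|D^{(r)}\tilde\delta\|_1^{1/2},1\}\Delta(P_{\mathcal{R}}\tilde\delta) + A\|D^{(r)}\tilde\delta\|_1 + u^\top P_{\mathcal{R}^\perp}\tilde\delta,
\]
with $\tilde\delta = \tilde\theta - \theta^*$. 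Choosing $\lambda = 3A$, the case analysis based on whether $A\|D^{(r)}\tilde\delta\|_1$ dominates the $B\Delta(P_{\mathcal{R}}\tilde\delta)$ term (and whether $\|D^{(r)}\tilde\delta\|_1$ is larger or smaller than $1$) is identical to that in Section A.3 and delivers the membership $\tilde\delta\in\mathcal{A}$.

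The main obstacle is the first step — obtaining the sharpened concentration inequality with $\tilde B = \sqrt{(s+1)\log(s+1)}$ rather than the coarser Wang-type bound of Lemma~\ref{lem24}. This is precisely the role played by the effective-sparsity framework of \cite{ortelli2019prediction}: the decomposition of $x$ along $\mathcal{N}_{-S}$ and $\mathcal{N}_{-S}^\perp$ is what allows the noise to pay only a $\sqrt{s+1}$ price on the $s$-dimensional active subspace while being absorbed into $A$ on the complement. Once this is set up, the remainder of the proof is a straightforward adaptation of the argument for Proposition~\ref{lem17}.
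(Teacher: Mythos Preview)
Your proposal is correct and follows essentially the same route as the paper: the paper also proves Proposition~\ref{lcor2} by repeating the argument of Proposition~\ref{lem17} verbatim, after first establishing the sharper concentration bound (Lemma~\ref{lem25} and its corollary Lemma~\ref{cor1}) via exactly the $\mathcal{N}_{-S}$/$\mathcal{N}_{-S}^\perp$ decomposition you describe. One small slip: you state the concentration inequality with $\|x\|$ in the denominator but then use it to produce a bound involving $\Delta(P_{\mathcal{R}}\tilde\delta)$; the missing link is that on $\{x\in\mathrm{row}\{D^{(r)}\}:\|D^{(r)}x\|_1\le 1\}$ one has $\Delta(x)\ge c\|x\|$ by Lemmas~\ref{lem18} and~\ref{lem20}, which is precisely how the paper passes from Lemma~\ref{lem25} to Lemma~\ref{cor1}.
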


\subsubsection{Auxiliary lemmas  for  proof  of Proposition \ref{lcor2}  }

\begin{lemma}
	\label{lem25}

	\textbf{\cite[Lemma A.2 in][]{ortelli2019prediction}. }
	With $A$ as in (\ref{eqn:lower}), it holds that 
	\[
	\underset{x   \in R^n }{\sup}\,\,\frac{  u^{\top}  x -  A\|w_{-S}(D^{(r) } x)_{-S}\|_1  }{  \|x\| }  = O_{\mathrm{pr}} (   \tilde{B} ),
	\]
	where  $u=  (u_1,\ldots,u_n)^{\top}$  is  a vector with independent coordinates satisfying $u_i   \sim \mathrm{SubGaussian}(\sigma^2)$ for $i=1,\ldots,n$, with $\sigma $ a constant 
	%	\begin{equation}
	%	\label{eqn:radem}
	%	\mathrm{pr}(\varepsilon_i = \tau) = 1-\tau,  \,\,\,\,\,\,  \mathrm{pr}(\varepsilon_i = \tau-1) = \tau,  \,\,\,\,\,\,\, \text{for}  \,\,\,i =1,\ldots,n,
	%	\end{equation}
	and 		$\tilde{B} =    (s+1)^{1/2}  \log^{1/2} (s+1)  $.
	% $\tilde{B}$  satisfies
	%		\[
	%
	%\|\Delta^{(k+1) \theta^*\|_1^{  \frac{2}{4k +6} }. 
	%	\]

\end{lemma}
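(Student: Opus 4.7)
The plan is to exploit the orthogonal decomposition $x = P_{\mathcal{N}_{-S}} x + P_{\mathcal{N}_{-S}^\perp} x$ and to use the weighted $\ell_1$ penalty to cancel out the part of $u^\top x$ living in $\mathcal{N}_{-S}^\perp$. Writing $y = P_{\mathcal{N}_{-S}} x$ and $z = P_{\mathcal{N}_{-S}^\perp} x$, the definition of $\mathcal{N}_{-S}$ gives $(D^{(r)} y)_{-S} = 0$, hence $(D^{(r)} x)_{-S} = (D^{(r)} z)_{-S}$, and consequently
\[
u^\top x - A \|w_{-S}(D^{(r)}x)_{-S}\|_1 \,=\, u^\top y \,+\, \bigl(u^\top z - A \|w_{-S}(D^{(r)}z)_{-S}\|_1\bigr).
\]
The hope is that the parenthesized term is nonpositive with high probability, reducing the claim to controlling $\sup_x u^\top y /\|x\| \leq \|P_{\mathcal{N}_{-S}} u\|$, since $\|y\| \leq \|x\|$.

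For the $z$-piece, since $z \in \mathcal{N}_{-S}^\perp$ and $\Psi^{-S}$ is a right inverse of $D^{(r)}_{-S}$, we can write $z = \Psi^{-S} b$ with $b = (D^{(r)}z)_{-S}$. Expanding componentwise and applying H\"{o}lder's inequality with the identity $A^* w_j = \|\psi_j^{-S}\|(\log n)^{1/2}$ yields
\[
u^\top z \,=\, \sum_{j \in \mathcal{D}\setminus S}\!\bigl(u^\top \psi_j^{-S}\bigr) b_j \,\leq\, \Bigl(\max_{j \in \mathcal{D}\setminus S}\tfrac{|u^\top \psi_j^{-S}|}{\|\psi_j^{-S}\|(\log n)^{1/2}}\Bigr)\, A^*\, \|w_{-S} b\|_1.
\]
Each ratio $u^\top \psi_j^{-S}/\|\psi_j^{-S}\|$ is $\sigma$-subgaussian, so a standard maximal inequality bounds the maximum over at most $m \leq n$ values by a constant multiple of $\sqrt{\log n}$ with high probability; dividing by the $(\log n)^{1/2}$ in the denominator leaves an $O_{\mathrm{pr}}(1)$ factor. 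Choosing $A \geq A^*$ with enough constant slack, as enforced by \eqref{eqn:lower}, therefore absorbs $u^\top z$ entirely into $A\|w_{-S}(D^{(r)}z)_{-S}\|_1$ and kills the parenthesized term.

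It remains to control $\|P_{\mathcal{N}_{-S}} u\|$. Since $\dim \mathcal{N}_{-S} = r_S = r + s$ is of order $s+1$ and $u$ has independent $\sigma$-subgaussian coordinates, a chi-square type tail bound gives $\|P_{\mathcal{N}_{-S}} u\| = O_{\mathrm{pr}}(\sqrt{s+1})$. The main obstacle is to pick up the extra $\log^{1/2}(s+1)$ factor appearing in $\tilde{B}$: the decomposition above only produces $\sqrt{s+1}$. I expect this logarithmic factor to arise from a sharper treatment tied to the effective sparsity machinery of Definition~\ref{def2}, most naturally through a union bound over the $2^s$ sign vectors $q_S \in \{-1,+1\}^s$ (or over a discretization of $\mathcal{N}_{-S}$ adapted to $\Gamma(q_S)$), and this is the step where I would lift the technical argument from \cite{ortelli2019prediction} verbatim rather than reinvent it.
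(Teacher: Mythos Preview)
Your approach is essentially identical to the paper's: the same orthogonal decomposition into $\mathcal{N}_{-S}$ and $\mathcal{N}_{-S}^\perp$, the same H\"{o}lder-plus-subgaussian-maximal argument on the $z$-piece (which is exactly the part the paper cites from \cite{ortelli2019prediction}), and the same reduction of the $y$-piece to $\|P_{\mathcal{N}_{-S}}u\|$.

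Your confusion about the $\log^{1/2}(s+1)$ factor is unwarranted, and your speculation about sign-vector union bounds or effective sparsity is off track. The paper does \emph{not} use any such argument. It simply bounds
\[
\|P_{\mathcal{N}_{-S}}u\|^2 \;=\; \sum_{j=1}^{r_S}(u^\top v_j)^2 \;\le\; r_S\,\max_{j=1,\dots,r_S}|u^\top v_j|^2,
\]
and then applies the subgaussian maximal inequality to the $r_S$ terms $u^\top v_j$ to obtain $\max_j|u^\top v_j| = O_{\mathrm{pr}}(\log^{1/2}(s+1))$. That is the entire source of the logarithmic factor: it is a crude step, not a subtle one. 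Your chi-square bound $\|P_{\mathcal{N}_{-S}}u\| = O_{\mathrm{pr}}(\sqrt{s+1})$ is in fact \emph{sharper} than what the paper proves, and a fortiori implies the stated $O_{\mathrm{pr}}(\tilde{B})$ conclusion. So there is no gap in your argument; you have simply done a little better than the lemma asks.
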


\begin{proof}
	The proof is almost identical to that of  Lemma A.2 in \cite{ortelli2019prediction}. We start by noticing that for any $x \in R^n$, we have that 
	\[
	u^{\top} x\,=\, u^{\top }P_{ \mathcal{N}_{-S}  }x  + u^{\top }P_{ \mathcal{N}_{-S}^{\perp}  }x.
	\]
	Next let  $v_1,\ldots,v_{r_S}$ be an orthonormal basis of  $\mathcal{N}_{-S} $ and notice that
	\begin{equation}
		\label{eqn:proj}
		\begin{array}{lll}
			\vert u^{\top }P_{ \mathcal{N}_{-S}  }x   \vert^2 & \leq &    \|x\|^2  \,   \|  P_{ \mathcal{N}_{-S}  } u\|^2  \\
			&= & \displaystyle     \|x\|^2  \,\left\|\sum_{j=1}^{r_S}      (u^{\top}     v_j ) v_j\right\|^2\\
			&= &\displaystyle     \|x\|^2  \, \left\{ \sum_{j=1}^{r_S}  \vert u^{\top}     v_j \vert^2     \right\}\\
			& \leq&   \displaystyle   \|x\|^2   r_S\,   \underset{ j=1,\ldots,r_S }{\max }\vert   u^{T} v_j  \vert^2\\
		\end{array}
	\end{equation}
	where  $P_{ \mathcal{N}_{-S}  }$ and  $P_{ \mathcal{N}_{-S}^{\top}  }$ are the orthogonal projections onto $\mathcal{N}_{-S}$ and  $\mathcal{N}_{-S}^{\perp}$ respectively. Since by the subGaussian tail inequality,
	\[
	\underset{ j=1,\ldots,r_S }{\max }\vert   u^{T} v_j  \vert\,=\,	O_{\mathrm{pr}}\left(      \log^{1/2} (s+1)       \right),
	\]
	we obtain that
	\[
	\underset{ x  \in R^n }{\sup} \, \frac{ u^{\top }P_{ \mathcal{N}_{-S}  }x    }{\|x\|}\,=\,O_{\mathrm{pr}}\left[     (s+1)^{1/2}  \log^{1/2} (s+1)      \right].
	\]
	The rest of the proof concludes  by  proceeding as in the proof of Lemma A.2 in \cite{ortelli2019prediction}.
\end{proof}

As  Lemma \ref{lem15}  we obtain the following result.

\begin{lemma}
	\label{cor1}
	With the notation from Lemma \ref{lem25}, we have that
	%	There  exists    $A$  satisfying  $$A = C n^{  (2r-1)/(2r+1) } \left( \log n \right)^{ 1/(2r+1) }   \|D^{(r)} \theta^*\|_1^{ - (2r-1)/(2r+1) }   $$  for any  $C>1$  large enough such that
	\[
	\underset{x  \in   \mathrm{row}\{  D^{(r)} \}  \,:\,      \|D^{(r) } x\|_1 \leq 1 }{\sup}\,\,\frac{  u^{\top}  x -  A  }{  \Delta(x)  }  =O_{\mathrm{pr}} \left(  \tilde{B} \right),
	\]
	where  $u=  (u_1,\ldots,u_n)^{\top}$  is  a vector with independent coordinates satisfying %$\varepsilon_i   \sim \mathrm{SubGaussian}(\sigma^2)$ for $i=1,\ldots,n$, with $\sigma $ a constant 
	\begin{equation}
		\label{eqn:radem2}
		\mathrm{pr}(u_i = \tau) = 1-\tau,  \,\,\,\,\,\,  \mathrm{pr}(u_i = \tau-1) = \tau,  \,\,\,\,\,\,\, \text{for}  \,\,\,i =1,\ldots,n.
	\end{equation}
\end{lemma}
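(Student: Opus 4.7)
The plan is to mimic the strategy used for Lemma \ref{lem15}, converting the supremum with denominator $\Delta(x)$ into one with denominator $\|x\|$ via Lemmas \ref{lem18} and \ref{lem20}, and then to invoke Lemma \ref{lem25} instead of Lemma \ref{lem24}. First I will observe that the variables $u_i$ defined by (\ref{eqn:radem2}) are mean zero (since $\tau(1-\tau)+(\tau-1)\tau=0$) and uniformly bounded in $[\tau-1,\tau]$, hence sub-Gaussian with variance proxy depending only on $\tau$. Therefore Lemma \ref{lem25} is applicable to the random vector $u=(u_1,\dots,u_n)^{\top}$.

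Next, I would restrict to $x\in\mathrm{row}\{D^{(r)}\}$ with $\|D^{(r)}x\|_1\leq 1$. For such $x$ we have $P_{\mathcal{R}}x=x$ and $\mathrm{TV}^{(r)}(x)=n^{r-1}\|D^{(r)}x\|_1\leq n^{r-1}$, so Lemma \ref{lem20} gives $\|x\|_{\infty}\leq\tilde C_r$ for a constant depending only on $r$. Lemma \ref{lem18} then yields $\|x\|^2\leq\max\{\|x\|_{\infty},1\}\,\Delta^2(x)\leq \tilde C_r\,\Delta^2(x)$, and hence $\Delta(x)\geq \tilde C_r^{-1/2}\|x\|$.

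The key linking step is that, since $w_j\in[0,1]$ for every $j\in\mathcal{D}\setminus S$, one has
\[
\|w_{-S}(D^{(r)}x)_{-S}\|_1\ \leq\ \|(D^{(r)}x)_{-S}\|_1\ \leq\ \|D^{(r)}x\|_1\ \leq\ 1,
\]
so $u^{\top}x-A\leq u^{\top}x-A\|w_{-S}(D^{(r)}x)_{-S}\|_1$. Combining this with the lower bound on $\Delta(x)$ gives
\[
\sup_{\substack{x\in\mathrm{row}\{D^{(r)}\}\\ \|D^{(r)}x\|_1\leq 1}}\frac{u^{\top}x-A}{\Delta(x)}\ \leq\ \tilde C_r^{1/2}\sup_{x\in R^n}\frac{u^{\top}x-A\|w_{-S}(D^{(r)}x)_{-S}\|_1}{\|x\|},
\]
and the right-hand side is $O_{\mathrm{pr}}(\tilde B)$ by Lemma \ref{lem25}, which proves the claim.

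There is essentially no serious obstacle here because all the ingredients are in place; the only point that requires care is verifying that the Rademacher-type variables $u_i$ in (\ref{eqn:radem2}) genuinely fit the sub-Gaussian hypothesis of Lemma \ref{lem25} (with a constant depending on $\tau$), and that the $w_{-S}$ re-weighting in the numerator of Lemma \ref{lem25} can be discarded for free on our restricted set because $\|w_{-S}(D^{(r)}x)_{-S}\|_1\leq 1$. Once these two routine observations are made, the derivation is a direct transposition of the proof of Lemma \ref{lem15}.
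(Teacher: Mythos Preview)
Your proposal is correct and follows exactly the route the paper intends: the paper simply writes ``As Lemma \ref{lem15} we obtain the following result'' without further details, and your argument is the natural fleshing-out of that sentence, using Lemmas \ref{lem18} and \ref{lem20} to convert $\Delta(x)$ to $\|x\|$ and then invoking Lemma \ref{lem25}. The one small bridge the paper leaves implicit---that on the restricted set $\|w_{-S}(D^{(r)}x)_{-S}\|_1\leq \|D^{(r)}x\|_1\leq 1$, so the numerator $u^\top x - A$ is dominated by the numerator $u^\top x - A\|w_{-S}(D^{(r)}x)_{-S}\|_1$ appearing in Lemma \ref{lem25}---you make explicit, and this is exactly right.
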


%\newpage
%\newpage
\subsection{Symmetrization and Contraction Lemmas for proof of  Theorem \ref{thm6} }

\begin{lemma}
	\label{lem28}
	%\label{eqn:symmetrization}
	(Symmetrization). For any set  $K$	and any $\lambda >0$ it holds that
	\[
	\begin{array}{l}
		\displaystyle E\left[  \underset{v \in K}{\sup}\,\,\left\{M(v) - \hat{M}(v)  +    \lambda \|D^{(r)}\theta^*\|_1 - \lambda\|D^{(r)}v\|_1    \right\}\right]\\
		\leq   	\displaystyle2 \,E\left\{   \underset{v \in K}{\sup}\,\,  \sum_{i=1}^{n}  \xi_i \hat{M}_{i}(v_i)    +    \frac{\lambda }{2}\|D^{(r)}\theta^*\|_1 -  \frac{\lambda }{2}\|D^{(r)}v\|_1    \right\}, 
	\end{array}
	\]
	where  $\xi_1,\ldots,\xi_n$ are   independent  Rademacher variables  independent  of  $\{y_i\}_{i=1}^n$.
\end{lemma}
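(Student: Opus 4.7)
My plan is to adapt the standard symmetrization argument used in Lemma~\ref{lem3} to the penalized setting, the key modification being that the penalty term must be split in half between the two copies of the process introduced by symmetrization. The first step is to introduce a ghost sample $\tilde{y}_1,\ldots,\tilde{y}_n$ independent of $y_1,\ldots,y_n$ and identically distributed to it, with corresponding check-loss increments $\tilde{M}_i$. Since $E\tilde{M}_i(v_i)=M_i(v_i)$ and the penalty $\mathrm{pen}(v):=\lambda\|D^{(r)}\theta^*\|_1-\lambda\|D^{(r)}v\|_1$ is a deterministic function of $v$, the integrand can be rewritten as a conditional expectation of the process built from $\tilde{M}-\hat{M}$ plus the unchanged penalty. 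Pushing the supremum inside the conditional expectation via Jensen's inequality and then taking the outer expectation yields
\[
E\sup_{v\in K}\{M(v)-\hat{M}(v)+\mathrm{pen}(v)\} \;\leq\; E\sup_{v\in K}\{\tilde{M}(v)-\hat{M}(v)+\mathrm{pen}(v)\}.
\]

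Second, I will insert independent Rademacher variables $\xi_1,\ldots,\xi_n$, independent of the data. Swapping $y_i\leftrightarrow\tilde{y}_i$ preserves the joint distribution, so the process $(\tilde{M}_i(v_i)-\hat{M}_i(v_i))_{i=1}^n$ is distributionally equal (as a process indexed by $v\in K$) to $(\xi_i(\tilde{M}_i(v_i)-\hat{M}_i(v_i)))_{i=1}^n$. The deterministic penalty is unaffected by this sign flip, so the previous bound becomes $E\sup_{v\in K}\{\sum_i\xi_i\tilde{M}_i(v_i)-\sum_i\xi_i\hat{M}_i(v_i)+\mathrm{pen}(v)\}$. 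Up to this point the argument is essentially a direct copy of Lemma~\ref{lem3}.

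The decisive step is the third one: split the penalty into two equal halves and attach one half to each empirical process, then apply subadditivity of the supremum,
\[
\sup_{v\in K}\{A(v)+B(v)+\mathrm{pen}(v)\}\leq\sup_{v\in K}\{A(v)+\tfrac{1}{2}\mathrm{pen}(v)\}+\sup_{v\in K}\{B(v)+\tfrac{1}{2}\mathrm{pen}(v)\},
\]
with $A(v)=\sum_i\xi_i\tilde{M}_i(v_i)$ and $B(v)=-\sum_i\xi_i\hat{M}_i(v_i)$. Taking expectations, both resulting suprema equal $E\sup_{v\in K}\{\sum_i\xi_i\hat{M}_i(v_i)+\tfrac{\lambda}{2}\|D^{(r)}\theta^*\|_1-\tfrac{\lambda}{2}\|D^{(r)}v\|_1\}$: for the first I use $\tilde{M}_i\stackrel{d}{=}\hat{M}_i$, and for the second I use the sign symmetry $-\xi_i\stackrel{d}{=}\xi_i$ (independent of the data) to replace $-\sum_i\xi_i\hat{M}_i(v_i)$ by $\sum_i\xi_i\hat{M}_i(v_i)$ in distribution. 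Summing the two identical expectations produces the factor of $2$ claimed in the lemma.

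The main obstacle, and the only non-routine aspect, is identifying the correct way to distribute the penalty between the two copies. Attaching the whole of $\mathrm{pen}(v)$ to a single copy would leave the other supremum effectively unregularized and therefore too large for the downstream contraction argument (Lemma~\ref{lem29}) and the Rademacher-width bounds used later in the proof of Theorem~\ref{thm6}. The $\lambda/2$ split is exactly what is needed so that each of the two resulting suprema retains enough regularization for the subsequent steps of the proof to close; once this device is identified, the rest is a mechanical repetition of the classical ghost-sample plus Rademacher-symmetrization argument.
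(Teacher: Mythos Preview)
Your proposal is correct and follows essentially the same route as the paper's proof: ghost-sample Jensen step, Rademacher insertion by exchangeability, then the $\lambda/2$ split with subadditivity of the supremum, and finally the two distributional identifications. In fact your write-up is slightly cleaner than the paper's, which contains a minor typo (both split terms are displayed with $\tilde M_i$ where the second should carry $-\xi_i\hat M_i$), but the argument is the same.
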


\begin{remark}
	The above lemma (and its proof) is almost the same as the statement of Lemma~\ref{lem3} except that the term inside the supremum has an additional term involving the $r$th order total variation. 
\end{remark}

\begin{proof}
	We proceed using the notation argument from  the proof of Lemma \ref{lem3}. Then  	for  $\xi_1,\ldots,\xi_n$ independent Rademacher variables, independent of $y$ and $\tilde{y}$ we have that
	
	\[
	\begin{array}{l}
		\displaystyle   \underset{v \in K}{\sup}\,\,\left\{M(v) - \hat{M}(v)  +    \lambda \|D^{(r)}\theta^*\|_1 - \lambda\|D^{(r)}v\|_1    \right\}\\  \leq  E\left[\underset{v \in K}{\sup}\,\,\left\{\tilde{M}(v) - \hat{M}(v)  +    \lambda \|D^{(r)}\theta^*\|_1 - \lambda\|D^{(r)}v\|_1    \right\}    | y\right].\\
	\end{array}
	\]
	Hence,
	\[
	\begin{array}{l}
		\displaystyle E\left[  \underset{v \in K}{\sup}\,\,\left\{M(v) - \hat{M}(v)  +    \lambda \|D^{(r)}\theta^*\|_1 - \lambda\|D^{(r)}v\|_1    \right\}\right]\\
		\displaystyle  	\leq   	\,E\left[  \underset{v \in K}{\sup}\,\,\left\{\tilde{M}(v) - \hat{M}(v)  +    \lambda \|D^{(r)}\theta^*\|_1 - \lambda\|D^{(r)}v\|_1    \right\}\right]\\
		\displaystyle    =  \,E\left[  \underset{v \in K}{\sup}\,\,\left\{  \sum_{i=1}^{n} \xi_i\{\tilde{M}_i(v) - \hat{M}_i(v)\}\  +    \lambda \|D^{(r)}\theta^*\|_1 - \lambda\|D^{(r)}v\|_1    \right\}\right]\\
		\displaystyle \leq  \,E\left[  \underset{v \in K}{\sup}\,\,\left\{  \sum_{i=1}^{n}\xi_i \{\tilde{M}_i(v) \}\  +    \frac{\lambda}{2}\|D^{(r)}\theta^*\|_1 -  \frac{\lambda}{2}\|D^{(r)}v\|_1    \right\}\right]\\
		\displaystyle  \,\,\,\,\,\,\,+ \,E\left[  \underset{v \in K}{\sup}\,\,\left\{  \sum_{i=1}^{n}\xi_i \{\tilde{M}_i(v) \}\  +    \frac{\lambda}{2}\|D^{(r)}\theta^*\|_1 -  \frac{\lambda}{2}\|D^{(r)}v\|_1    \right\}\right]\\
		\displaystyle =\,  2\, E\left[  \underset{v \in K}{\sup}\,\,\left\{  \sum_{i=1}^{n}\xi_i \{\hat{M}_i(v) \}\  +    \frac{\lambda}{2}\|D^{(r)}\theta^*\|_1 -  \frac{\lambda}{2}\|D^{(r)}v\|_1    \right\}\right].\\
	\end{array}
	\]
\end{proof}

\begin{lemma}
	\label{lem29}
	(Contraction principle). Let  $h_1,\ldots,h_n \,:\,  R \rightarrow R$   $\eta$-Lipschitz functions  for some $\eta>0$.  Then for any compact set $K$ and  for  $\xi_1,\ldots,\xi_n$ independent Rademacher variables we have that 
	\[
	\begin{array}{l}
		\displaystyle   E\left[  \underset{v \in K}{\sup}\,\,\left\{  \sum_{i=1}^{n}\xi_i h_i(v_i) \  +    \frac{\lambda}{2}\|D^{(r)}\theta^*\|_1 -  \frac{\lambda}{2}\|D^{(r)}v\|_1    \right\}\right]\\
		\displaystyle \leq \, 	  E\left[  \underset{v \in K}{\sup}\,\,\left\{    \eta \sum_{i=1}^{n}\xi_i v_i \  +    \frac{\lambda}{2}\|D^{(r)}\theta^*\|_1 -  \frac{\lambda}{2}\|D^{(r)}v\|_1    \right\}\right]\\
	\end{array}
	\]
	for any $\lambda>0$.
\end{lemma}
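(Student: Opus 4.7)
My plan is to prove this by the standard iterative peeling argument used in the Ledoux--Talagrand contraction principle (Theorem~4.12 in \cite{ledoux2013probability}), adapted so that the deterministic penalty term $F(v) := \frac{\lambda}{2}\|D^{(r)}\theta^*\|_1 - \frac{\lambda}{2}\|D^{(r)}v\|_1$ is carried through the argument. The key structural observation is that $F$ does not depend on any $\xi_i$ and is applied in a way that is symmetric under the natural $(v,v')$ swap used in the contraction step. The rest is essentially a one-variable contraction applied $n$ times.

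The first step would be to condition on $\xi_2,\ldots,\xi_n$ and reduce to a single-Rademacher inequality of the form
\[
E_{\xi_1}\!\left[\sup_{v \in K}\bigl\{\xi_1 h_1(v_1) + G(v)\bigr\}\right] \,\leq\, E_{\xi_1}\!\left[\sup_{v \in K}\bigl\{\eta \xi_1 v_1 + G(v)\bigr\}\right],
\]
where $G(v) := \sum_{j\geq 2}\xi_j h_j(v_j) + F(v)$ is treated as a fixed function of $v$. Using symmetry of $\xi_1$, the left-hand side equals
\[
\frac{1}{2}\sup_{v,v' \in K}\bigl\{ h_1(v_1) - h_1(v'_1) + G(v) + G(v')\bigr\},
\]
and similarly for the right-hand side. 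Here it becomes essential that $G(v) + G(v')$ is invariant under the swap $(v,v') \mapsto (v',v)$, which follows since each term in $G$ involves only $v$ (and separately only $v'$) in a symmetric sum.

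Given any pair $(v,v')$, the $\eta$-Lipschitz property yields $h_1(v_1) - h_1(v'_1) \leq \eta\,|v_1 - v'_1|$. If $v_1 \geq v'_1$, I can bound the left summand by $\eta v_1 - \eta v'_1$ directly. If $v_1 < v'_1$, I swap the roles of $v$ and $v'$ on the right-hand sup (which is legal since the expression $G(v) + G(v')$ is unaffected) and then bound it by $\eta v'_1 - \eta v_1$. In both cases the pair contribution on the left is dominated by a pair contribution on the right, so the inequality follows. Iterating this argument, peeling off one Rademacher variable at a time and absorbing the already-contracted terms $\eta\xi_j v_j$ into the (still swap-symmetric) function $G$ at the next stage, delivers the full statement after $n$ steps.

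The main potential obstacle is technical rather than conceptual: one has to verify that at each stage of the induction the ``fixed'' part $G(v) + G(v')$ remains symmetric under the pair swap. This is immediate here because (i) $F$ is a deterministic function of $v$ alone, and (ii) each already-contracted term $\eta\xi_j v_j$ contributes $\eta\xi_j (v_j + v'_j)$ to $G(v)+G(v')$, which is trivially swap-symmetric. Hence the classical Ledoux--Talagrand contraction argument goes through verbatim and there is no factor of $2$ loss because we preserve the exact pair-wise symmetrization at every stage.
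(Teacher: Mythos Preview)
Your proposal is correct and is essentially the same argument as the paper's. Both proofs peel off one Rademacher variable at a time conditional on the others, write the conditional expectation as the average of two suprema, and use the $\eta$-Lipschitz bound together with the symmetry of the remaining (deterministic) part to replace $h_i(v_i)$ by $\eta v_i$; the paper encodes your ``swap $(v,v')\mapsto(v',v)$'' step by taking explicit maximizers $v^{\pm}$ and introducing $a=\mathrm{sign}(v_n^+-v_n^-)$, which is exactly the same device.
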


\begin{remark}
	The above lemma (and its proof) is a version of the standard contraction result  \citep[Theorem  4.12  in][]{ledoux2013probability} except that the term inside the supremum has an additional term involving the $r$th order total variation. This lemma can be proved by following the standard proof argument of the original result. We provide this proof here for the sake of completeness.
\end{remark}

\begin{proof}
	We begin  by defining the function 
	\[
	g_{n-1}(v)   \,=\,  \sum_{i=1}^{n-1}\xi_i h_i(v_i)   \,+\,\frac{\lambda}{2}\|D^{(r)}\theta^*\|_1 -  \frac{\lambda}{2}\|D^{(r)}v\|_1.   
	\]
	Let  $v^+, v^- \in K$  such that 
	\[
	v^{+} \,\in \, \underset{  v \in K }{\arg \max}\,\,  \left\{  g_{n-1}(v) + h_n(v_n) \right\},
	\]
	and
	\[
	v^{-} \,\in \, \underset{  v \in K }{\arg \max}\,\,   \left\{ g_{n-1}(v) - h_n(v_n)\right\}.
	\]
	Next, letting $a =  \text{sign}(v_n^+ -  v_n^{-} )$, we notice that 
	\[
	\begin{array}{lll}
		\displaystyle 		E\left[    \underset{v \in K}{\sup}\,    \{g_{n-1}(v) +  \xi_nh_n(v_n)\}    \bigg|  \xi_1,\ldots,\xi_{n-1}\right] & = &  \displaystyle  \frac{1}{2}\underset{v \in K}{\sup}\,    \{g_{n-1}(v) + h_n(v_n)\}  \\
		& &  \displaystyle \,+\, \frac{1}{2}\underset{v \in K}{\sup}\,    \{g_{n-1}(v) -h_n(v_n)\} \\
		& = & \displaystyle  \frac{1}{2} \{g_{n-1}(v^+) + h_n(v_n^+)\}  \,+\,  \frac{1}{2}\{ g_{n-1}(v^-) -h_n(v_n^-)\}\\
		&\leq&   \displaystyle  \frac{1}{2}\{g_{n-1}(v^+)   +  g_{n-1}(v^-) \}  +   \frac{1}{2}\eta a \left\{ v_n^+   -v_n^-  \right\}\\
		& \leq&\displaystyle   \frac{1}{2} \underset{v\in K}{\sup}\{     g_{n-1}(v)  +  a \eta  v_n \}  \,+\,\frac{1}{2} \underset{v\in K}{\sup}\{     g_{n-1}(v)  -  a \eta  v_n  \}\\
		&= &E\left[    \underset{v \in K}{\sup}\,    \{g_{n-1}(v) +   \eta   \xi_n v_n\}    \bigg|  \xi_1,\ldots,\xi_{n-1}\right]
	\end{array}
	\]
	and the proof concludes by proceeding with a similar argument for the other $i$'s, $i\neq n$.
\end{proof}

\subsection{Proof of Theorem \ref{thm6} }

\begin{proof}
	
	\textbf{Steps 1--2 in proof outline.}
	
	Let   $\epsilon \in (0,1)$.  By  Lemmas  \ref{lem16}  and  Proposition \ref{lcor2}  we can   suppose that  
	the following  events  
	\begin{equation}
		\label{eqn:high_prob_events2}
		\begin{array}{lll}
			\Omega_1  &= &\left\{
			\kappa	(\hat{\theta}  -  \theta^*)    \in \mathcal{A},  \,\,\,\,\forall \kappa\in [0,1] \,\,\,\,
			\right\},\\
			\Omega_2  &= &\left\{ 	\underset{x  \in R^n  }{\sup}\,\,\frac{  u^{\top}   P_{ \mathcal{R}^{ \perp } } x   }{   \frac{\Delta^2(x)}{n} +   \Delta(x)     }  \leq  E_1  \right\},\\
			%	\Omega_3 & = &\left\{   \underset{x \in \mathbb{R}^n}{\sup}\,    \frac{\varepsilon^{\top}(x-\theta^*)   +     \lambda( \| D^{(r)} \theta^*\|_1 -  \| D^{(r)}  x  \|_1 )   }{\|x\| } \,\leq\,  E_{2}\lambda\Gamma(q_S,  w_{-S})  \right\},
		\end{array}
	\end{equation}
	happen with probability at least  $1-\epsilon/2$ for some  constant $E_1>0$, and  with $\mathcal{A}$   as in Lemma \ref{lcor2}.

	Following \cite{ortelli2019prediction}, we take   $S $  to be such that $\{   j \,:\,   (D^{(r)}\theta^*)_j  \neq  0      \} \subset S$, and  $s:=\vert  S \vert  \asymp  \vert \{   j \,:\,   (D^{(r)}\theta^*)_j  \neq  0      \}  \vert$.
	Then for $\epsilon>0$ we set
	\begin{equation}
		\label{eqn:A}
		A  \,=\,  c_{\epsilon} \max\left\{      \frac{n^{r-1} (s+1)  \log n  \log (s+1)  \log \frac{n}{s+1}    }{V^*},    n^{r-1/2}\left(\frac{1}{s+1} \right)^{r-1/2} (\log n )^{1/2}\right\}
	\end{equation}
	for a large enough constant  $c_{\epsilon}$  such that  the events in (\ref{eqn:high_prob_events2}) happen with probability at least  $1-\epsilon/2$.  We also set  $\lambda =  3A$.% and choose  $s$  such that   $\lambda \asymp  s  n^{r-1}$.

	\textbf{Step 3 in proof outline.}

	Next, 	 define  
	\[
	t:= c_{\epsilon} (s+1)^{1/2}(\log^{1/2} n ) \left\{  \log \left( \frac{n}{s+1}   \right)  \right\}^{1/2}  \log ^{1/2} (s+1),
	\]
	and notice that for  $ \delta  \in  \mathcal{A}$ with  $\Delta(\delta)  \leq  t$ it holds, by Lemma \ref{lem6}, that 
	\[
	\Delta^2 (     P_{\mathcal{R} }  \delta  )    \, \leq \,  \tilde{C}_r \left\{  t^2 + t^2 \gamma(t,n) + \frac{t^4}{n}    \right\} \leq  C t^2 , 
	\]
	for some positive constants $\tilde{C}_r$ and $C$, where we have used the fact that $t / n^{1/2}= O(1)$.   
	%Furthermore, by a simple calculation,
	%\[
	% Ct^2  \,\leq  \frac{A^2}{B^2}.
	%\]
	
	If in addition $\Omega_1 \cap \Omega_2$  holds then
	% for a constant $\tilde{C} $ independent of  $c_{\epsilon}$,
	\begin{equation}
		\label{eqn:bv3} 
		\begin{array}{lll}
			\|D^{(r)}\delta\|_1 & \leq &  \displaystyle C_0  \max\left\{ \frac{V^*}{n^{r-1} },  \frac{  B^2 }{A^2}  \Delta^2 (     P_{\mathcal{R} }  \delta  ),  \frac{  B}{A}  \Delta (     P_{\mathcal{R} }  \delta  ),\frac{V^*}{ n^{r-1}  } +  A^{-1}\left(  \frac{t^2  }{n} +  t \right) \right\}.\\
			%& \leq& \tilde{C}    n^{1-r}   \max\{   V^*,    (V^*)^{  4r/(2r+1)   } \},
		\end{array}
	\end{equation}
	
	Next, notice that
	\[
	\frac{B^2 t^2}{A^2} \,\leq\,    \frac{V^*}{n^{r-1}}   \frac{a_{\epsilon}^2      }{\log n   \, \log( \frac{n}{s+1} )}\frac{V^*}{n^{r-1}} \,< \, C\frac{V^*}{n^{r-1}} ,
	\]
	for large enough $n$.
	\iffalse 
	Furthermore, 
	\[
	\frac{B t}{A} \,\leq\, \frac{V^*}{n^{r-1} (\log n)^{1/2}},
	\]
	and 
	\[
	\frac{t^4B^2}{n A^2}\,\leq\, \left\{\frac{V^*}{n^{r-1}} \frac{a_{\epsilon}^2 c_{\epsilon}^2 (s+1)\log(s+1)}{n}  \right\}\frac{V^*}{n^{r-1}} <   \frac{V^*}{n^{r-1}},
	\]
	for large enough $n$. 
	Also,  by  its definition in Lemma \ref{lem5}, $\gamma(t,n)$ satisfies
	\[
	\gamma(t,n) \,\leq\,O\left(  \max\left\{ \frac{t}{n^{1/2} },\frac{t^2}{n} \right\} \right),
	\]
	and 
	\[
	\frac{ t^3 B^2}{A^2  n^{1/2}}  \,=\,  \frac{V^*}{n^{r-1}} \left\{  \frac{   c_{\epsilon} a_{\epsilon}^2  (s+1)^{1/2} \log^{1/2}(s+1) }{ n^{1/2} \,  \log^{1/2} n \,\log^{1/2} \frac{n}{s+1}   }  \right\}\frac{V^*}{n^{r-1}} <     \frac{V^*}{n^{r-1}}.
	\]
	\fi
	Furthermore, 
	\[
	\frac{B t}{A} \,\leq\, \frac{V^*}{n^{r-1} (\log n)^{1/2}},
	\]
	and
	\[
	\frac{t^2}{An} +  \frac{t}{A} \,<\, \frac{c_{\epsilon}}{n}\cdot\frac{V^*}{n^{r-1}} \cdot \frac{V^*}{n^{r-1}}+   \frac{V^*}{n^{r-1}}  \,< \,\frac{2V^*}{n^{r-1}}.  
	\]
	Hence, for a constant $\tilde{C}>0$ we have that
	\begin{equation}
		\label{eqn:bv4} 
		\begin{array}{lll}
			\|D^{(r)}\delta\|_1 & \leq & \displaystyle \tilde{C}\frac{V^*}{n^{r-1}}.
			%& \leq& \tilde{C}    n^{1-r}   \max\{   V^*,    (V^*)^{  4r/(2r+1)   } \},
		\end{array}
	\end{equation}

	\textbf{Steps 4--5 in proof outline.}
	
	Furthermore,  denoting  $\hat{\delta} =  \hat{\theta}-\theta^*$, we notice that 
	\begin{equation}
		\label{eqn:upper}
		\begin{array}{lll}
			\mathrm{pr}\left\{     \Delta^2(\hat{\delta} )    > t^2  \right\}  & \leq&  \mathrm{pr}\left[  \left\{\Delta^2(\hat{\delta} )    > t^2  \right\} \cap  \Omega_1  \cap \Omega_2  \right]     +    \frac{\epsilon}{2}.
		\end{array} 
	\end{equation}
	Then proceding as in the proof of Theorem \ref{thm4},
	
	\begin{equation}
		\label{eqn:first}
		\begin{array}{lll}
			\mathrm{pr}\left[	\left\{\Delta^2(\hat{\delta} )    > t^2  \right\} \cap  \Omega_1  \cap \Omega_2  \right] & \leq& \mathrm{pr}\Bigg(  \Bigg\{\underset{  \delta \in \mathcal{A},\,\,\Delta^2(\delta) \leq t^2  }{\sup}\,\bigg[    M(\theta^*+\delta)  - \hat{M}(\theta^*+\delta)  \\
			& &	\,\,\,\,\,+   \lambda \|D^{(r)}   \theta^* \|_1  -\lambda\|D^{(r)} (  \theta^*+ \delta )\|_1    \bigg] >c_0 t^2\Bigg\} \\
			& &\,\,\,\,\,\,\,\,\cap \Omega_1  \cap \Omega_2 \Bigg).\\
		\end{array}
	\end{equation}
	And so,    from (\ref{eqn:bv4}) and by Markov's inequality 
	\[
	\begin{array}{lll}
		\mathrm{pr}\left[	\left\{\Delta^2(\hat{\delta} )    > t^2  \right\} \cap  \Omega_1  \cap \Omega_2  \right] & \leq&\displaystyle \mathrm{pr}\Bigg(  \Bigg\{\underset{   \delta \,:\, \|D^{(r)} \delta\|_1\leq \tilde{C}V^*/n^{r-1},   \,\,  \Delta^2(  \delta ) \leq  t^2  }{\sup}\,\bigg[    M(\theta^*+\delta)  - \hat{M}(\theta^*+\delta)  \\
		& &	\displaystyle\,\,\,\,\,+   \lambda \|D^{(r)}   \theta^* \|_1  -\lambda\|D^{(r)} (  \theta^*+ \delta )\|_1    \bigg] >c_0 t^2\Bigg\} \\
		& &\displaystyle\,\,\,\,\,\,\,\,\cap \Omega_1  \cap \Omega_2 \Bigg)\\
		&\leq&\displaystyle\frac{1}{c_0 t^2}\,E\bigg\{ \underset{   \delta \in  K  }{\sup}\,\bigg[    M(\theta^*+\delta)  - \hat{M}(\theta^*+\delta)  \\
		& &\displaystyle	\,\,\,\,\,\,\,\,\,\,\,\,+   \lambda \|D^{(r)}   \theta^* \|_1  -\lambda\|D^{(r)} (  \theta^*+ \delta )\|_1    \bigg] \bigg\}
	\end{array}
	\]
	where 
	\begin{equation}
		\label{eqn:k}
		K\,:=\,\left\{ \delta \in R^n\,:\, \|D^{(r)} \delta\|_1\leq \tilde{C}V^*/n^{r-1},   \,\,  \Delta^2(  \delta ) \leq  t^2 \right\}.
	\end{equation}
	
	Therefore, by Lemmas \ref{lem28}--\ref{lem29}, we obtain that for $\xi_1,\ldots,\xi_n$ independent Rademacher variables independent of  $y$, it holds that
	\begin{equation}
		\label{eqn:first2}
		\begin{array}{lll}
			\mathrm{pr}\left[	\left\{\Delta^2(\hat{\delta} )    > t^2  \right\} \cap  \Omega_1  \cap \Omega_2  \right]   & \leq&\displaystyle  \frac{4}{c_0 t^2}E\left[  \underset{\delta \in K}{\sup}\,\,\left\{    \sum_{i=1}^{n}\xi_i \delta_i \  +    \frac{\lambda}{2}\|D^{(r)}\theta^*\|_1 -  \frac{\lambda}{2}\|D^{(r)}(\theta^*+\delta)\|_1    \right\}\right],\\
			% & \leq  &\displaystyle  \frac{4}{c_0 t^2}E\left[  \underset{v \in K}{\sup}\,\,\left\{    \xi^{\top}(v -\theta^*) \  +    \frac{\lambda}{2}\|D^{(r)}\theta^*\|_1 -  \frac{\lambda}{2}\|D^{(r)}(\theta^*+\delta)\|_1    \right\}\right]\\
			%	& \leq &\displaystyle T_1 +T_2+T_3, \\
		\end{array}
	\end{equation}
	which combined with (\ref{eqn:upper})   leads to 
	\begin{equation}
		\label{eqn:final}
		\begin{array}{lll}
			\mathrm{pr}\left\{     \Delta^2(\hat{\delta} )    > t^2  \right\}  & \leq&\displaystyle  \frac{4}{c_0 t^2}E\left[  \underset{\delta \in K}{\sup}\,\,\left\{    \sum_{i=1}^{n}\xi_i \delta_i \  +    \frac{\lambda}{2}\|D^{(r)}\theta^*\|_1 -  \frac{\lambda}{2}\|D^{(r)}(\theta^*+\delta)\|_1    \right\}\right]  \,+\,  \frac{\epsilon}{2}. \\
		\end{array}
	\end{equation}
	Then,   we must give an upper bound to 
	\begin{equation}
		\label{eqn:first3}
		\begin{array}{lll}
			%\mathrm{pr}\left[	\left\{\Delta^2(\hat{\delta} )    > t^2  \right\} \cap  \Omega_1  \cap \Omega_2  \right]   & \leq&
			\displaystyle      U \,:=\,  \frac{4}{c_0 t^2}E\left[  \underset{\delta \in K}{\sup}\,\,\left\{    \sum_{i=1}^{n}\xi_i \delta_i \  +    \frac{\lambda}{2}\|D^{(r)}\theta^*\|_1 -  \frac{\lambda}{2}\|D^{(r)}(\theta^*+\delta)\|_1    \right\}\right],\\
			% & \leq  &\displaystyle  \frac{4}{c_0 t^2}E\left[  \underset{v \in K}{\sup}\,\,\left\{    \xi^{\top}(v -\theta^*) \  +    \frac{\lambda}{2}\|D^{(r)}\theta^*\|_1 -  \frac{\lambda}{2}\|D^{(r)}(\theta^*+\delta)\|_1    \right\}\right]\\
		\end{array}
	\end{equation}
	%\ref{lem25}
	where $\xi_1,\ldots,\xi_n$ are independent Rademacher variables independent of  $y$ with  $K$ as in (\ref{eqn:k}).  Towards that end, notice that
	\[
	U \leq  T_1+T_2+T_3
	\]
	where
	%where 
	\[
	T_1 \,:=\, \frac{4}{c_0 t^2}E\left\{  \underset{\delta \in K}{\sup}\,\,  \xi^{\top}  P_{ \mathcal{R}}^{\perp}\delta    \right\},
	\]
	\[
	T_2 \,:=\, \frac{4}{c_0 t^2}E\left\{  \underset{\delta \in K}{\sup}\,\,  \xi^{\top}  P_{ \mathcal{N}_{-S}}P_{ \mathcal{R}}\delta    \right\},
	\]
	and
	\[
	T_3\,:=\, \frac{4}{c_0 t^2}E\left[  \underset{\delta \in K}{\sup}\,\,  \left\{\xi^{\top}  P_{ \mathcal{N}_{-S}^{\perp} }P_{ \mathcal{R}}\delta  +   \frac{\lambda}{2}\|D^{(r)}\theta^*\|_1 -  \frac{\lambda}{2}\|D^{(r)}(\theta^*+\delta)\|_1  \right\}\right]
	\]
	with  $\mathcal{R}$ as defined in Section \ref{sec:sketch}.

	\textbf{Step 6  in proof outline.}

	Next we proceed to bound  $T_1$,  $T_2$ and $T_3$.  First, we notice that as in (\ref{eqn:t1}) it follows that  $T_1 =  O(\frac{1}{t})$.   %\rightarrow 0$ as $n \rightarrow \infty$. %Furthermore,

	\textbf{Step 7 in proof outline.}
	
	To bound $T_2$ notice that  
	for a positive constant  $C>0$,
	
	% by the argument in (\ref{eqn:proj}),
	\[
	\begin{array}{lll}
		\displaystyle 	T_2 &\leq &\displaystyle \frac{4}{c_0 t^2}E\left\{  \underset{\delta  \in K\,\,:\,     \|P_R \delta \|^2\leq  h(t,n)\max\{  \tilde{C}_r \tilde{C} \frac{V^*}{n^{r-1}},1\}     }{\sup}\,\,  \xi^{\top}  P_{ \mathcal{N}_{-S} } P_{\mathcal{R}}\delta    \right\}\\
		&\leq& \displaystyle \frac{4}{c_0 t^2}E\bigg(   \bigg[ h(t,n)\max\bigg\{  \tilde{C}_r \tilde{C} \frac{V^*}{n^{r-1}},1\bigg\}      r_S\,   \underset{ j=1,\ldots,r_S }{\max }\vert   \varepsilon^{T} v_j  \vert^2\bigg] ^{1/2} \bigg)\\
		&\leq &\displaystyle \frac{C}{c_0 t^2}\{h(t,n)\}^{1/2}(s+1)^{1/2}  \,E\left(\underset{ j=1,\ldots,r_S }{\max }\vert   \varepsilon^{T} v_j  \vert\right)\\
		&\leq &\displaystyle \frac{2C}{c_0 t^2}\{h(t,n)\}^{1/2}(s+1)^{1/2}(\log n)^{1/2}\\
		%    & \leq & \displaystyle \frac{C}{c_{\epsilon}}\\
		%  & \leq&  \frac{\epsilon}{3},
		%\,+\,   \\
		% & &\displaystyle \frac{4}{c_0 t^2}E\left\{  \underset{\delta \in K}{\sup}\,\,  \xi^{\top}  P_{ \mathcal{N}_{-S} } P_{\mathcal{R}^{\perp}}\delta    \right\}\\
		%&\leq&\displaystyle \frac{4}{c_0 t^2}   \,  E\bigg\{     t(\tilde{s}+1)^{1/2}\left(\frac{V}{n^{r-1}}\right)^{1/2}   \bigg(\underset{j =1, \ldots,  \tilde{s}  }{\max}   \vert   \xi^{\top} v_j \vert  \bigg)^{1/2}    \bigg\}
	\end{array}
	\] 
	where the first inequality follows from Lemmas  \ref{lem18}, \ref{lem20} and \ref{lem6}, the second as  (\ref{eqn:proj}), and the fourth  by the expected value of maxima of subGaussian random variables inequality.
	
	Therefore   for a universal constant $C$ independent of  $c_{\epsilon}$ we obtain,%, and  by choosing  $c_{\epsilon}$ large enough we arrive at
	\begin{equation}
		\label{eqn:upper_t2}
		T_2 \,\leq \, \frac{C}{c_{\epsilon}}  \,\leq \,   \frac{\epsilon}{3},
	\end{equation}
	where the first inequality  follows from our choice  of  $t$ and the second inequality in (\ref{eqn:upper_t2}) follows by choosing  $c_{\epsilon}$ large enough.

	\textbf{Step 8 in proof outline.}

	Next we proceed to bound  $T_3$. Based on (\ref{eqn:A}), suppose that 
	\[
	\lambda \,=\, 3c_{\epsilon}  n^{r-1/2}\left(\frac{1}{s+1} \right)^{r-1/2} (\log n)^{1/2}.  
	\]
	Then, for $\delta\in K$
	we have by Hölder's inequality and choosing  $c_{\epsilon} $ large enough that
	\begin{equation}
		\label{eqn:t3.4}
		\begin{array}{lll}
			\displaystyle 	\xi^{\top}  P_{ \mathcal{N}_{-S}^{\perp} }P_{ \mathcal{R}}\delta  +   \frac{\lambda}{2}\|D^{(r)}\theta^*\|_1 -  \frac{\lambda}{2}\|D^{(r)}(\theta^*+\delta)\|_1
			& \leq &\displaystyle \| w_{-S}(D^{(r)} \delta)_{-S } \|_1 
			\frac{A}{5(\log n)^{1/2}}\,    \underset{j\in   \mathcal{D}\backslash S  }{\max} \,\frac{\vert  \xi^{\top} \psi^{-S }_j\vert }{\|\psi^{-S }_j\| }  \\
			& &\displaystyle   +  \frac{\lambda}{2}\|D^{(r)}\theta^*\|_1 -  \frac{\lambda}{2}\|D^{(r)}(\theta^*+\delta)\|_1.
		\end{array}
	\end{equation}
	Next let 
	\begin{equation}
		\label{eqn:omega3b}
		\Omega_3 (b)\,=\, \left\{    \underset{j\in   \mathcal{D}\backslash S  }{\max} \,\frac{\vert  \xi^{\top} \psi^{-S }_j\vert }{\|\psi^{-S }_j\| }     \,\geq \,  b\sqrt{\log n}  \right \},
	\end{equation}
	for  $b >0$. Then
	\begin{equation}
		\label{eqn:t3.5}
		%\[
		\mathrm{pr}\{  \Omega_3(b) \} \,\leq \, \exp\{ -b^2 (\log n)/2 +   \log n \}  \,\leq \,  n^{1-b^2/2}.
		%\]
	\end{equation}
	by the tail inequality for Rademacher variables and by union bound. 
	Furtheremore,  from  Subsections 3.3.2, 3.3.3, 3.3.4 and 3.3.5 in \cite{ortelli2019prediction}
	\begin{equation}
		\label{eqn:t3.6}
		\underset{j\in   \mathcal{D}\backslash S  }{\max} w_{j} \,\leq \,1.
	\end{equation}
	%for a positve constant $C_1>0$.
	Hence, combining (\ref{eqn:t3.4})--(\ref{eqn:t3.6}) we obtain that 
	\begin{equation}
		\label{eqn:t3_part1}
		\begin{array}{l}
			\displaystyle \frac{4}{c_0 t^2}E\left[  \underset{\delta \in K}{\sup}\,\,  \left\{\xi^{\top}  P_{ \mathcal{N}_{-S}^{\perp} }P_{ \mathcal{R}}\delta  +   \frac{\lambda}{2}\|D^{(r)}\theta^*\|_1 -  \frac{\lambda}{2}\|D^{(r)}(\theta^*+\delta)\|_1  \right\} \bigg |  \Omega_3(b)\right] \mathrm{pr}\{\Omega_3(b)\}\\
			\displaystyle \leq 	\frac{4}{c_0 t^2}\left\{   n^{1/2} \{h(t,n) \}^{1/2}+    3c_{\epsilon}  n^{1/2}\left(\frac{1}{s+1} \right)^{r-1/2} (\log n)^{1/2}   V^*  \right\} n^{1-b^2/2}\\
			\underset{n\rightarrow \infty }{ \rightarrow }  0,
		\end{array}
	\end{equation}
	where  the inequality holds by Lemma \ref{lem6} and Cauchy–Schwarz inequality, and the limit by the definition of $t$ and choosing $b=5$.

	\textbf{Step 9  in proof outline.}
	
	Additionally,  given that $\Omega_3(b)^c$ holds then (\ref{eqn:t3.4}) and (\ref{eqn:t3.6})  imply that  for large enough $c_{\epsilon}$, defining  $q_S \,:=\, \mathrm{sign}\{(D^{(r)} \theta^*)_S\}$, it holds that
	\[
	\begin{array}{lll}
		\displaystyle 	  \underset{\delta \in K}{\sup}\,\,  \left\{\xi^{\top}  P_{ \mathcal{N}_{-S}^{\perp} }P_{ \mathcal{R}}\delta  +   \frac{\lambda}{2}\|D^{(r)}\theta^*\|_1 -  \frac{\lambda}{2}\|D^{(r)}(\theta^*+\delta)\|_1  \right\}  \\
		\leq   	\displaystyle \frac{\lambda}{2}  \,\underset{\delta \in K}{\sup}\,\,  \left[ \| w_{-S}\{D^{(r)} \delta\}_{-S } \|_1  +   \|D^{(r)}\theta^*\|_1 -  \|D^{(r)}(\theta^*+\delta)\|_1  \right]  \\
		= \displaystyle \frac{\lambda}{2}  \,\underset{\delta \in K}{\sup}\,\,  \left[ 	\|w_{-S}\{D^{(r) } \delta\}_{-S}\|_1 + \| \{D^{(r)} \theta^*\}_S\|_1 -  \| D^{(r)}  (\theta^*+\delta)\|_1  \right]  \\
		= \displaystyle \frac{\lambda}{2}  \,\underset{\delta \in K}{\sup}\,\,  \left[\| \{D^{(r)} \theta^*\}_S\|_1 -  \| \{D^{(r)} (\theta^*+\delta)\}_S\|_1 -   \|  (1-w_{-S}) \{D^{(r)}(\theta^*+\delta)\}_{-S}     \|_1 	 \right]  \\
		\leq  \displaystyle \frac{\lambda}{2}  \,\underset{\delta \in K}{\sup}\,\,  \left\{	\frac{\Gamma(q_S,  w_{-S})}{n^{1/2} } \| P_{ \mathcal{R}}\delta\|, \right\} \\
		\leq    \displaystyle  \frac{ \lambda  \Gamma(q_S,  w_{-S})  \{h(t,n)  \}^{1/2}  }{2 n^{1/2} }   \bigg( \max\bigg\{    \tilde{C}_r \tilde{C} \frac{V^*}{n^{r-1}}  ,1 \bigg\}  \bigg)^{1/2},
	\end{array}
	\]
	where the second to last inequality follows from Lemma A.3 in \cite{ortelli2019prediction}, and the last from Lemmas  \ref{lem18}, \ref{lem20} and \ref{lem6}.

	However, by  Section 3.3  in  \cite{ortelli2019prediction}, it holds that  for some  $C_2>0$,
	\[
	\Gamma(q_S,  w_{-S})    \,\leq \,  C_2[\log  \{n/(s+1)\} ]^{1/2} \{ n(s+1) \}^{1/2} \left(  \frac{  s+1  }{n}\right)^{r-1/2}.
	%\log^{1/2}(s+1)\,\left(\frac{s+1}{n}\right)^{r-1}.
	\]
	Hence, 
	%   by Lemma \ref{lem6}
	\[
	\begin{array}{lll}
		\displaystyle 	  \underset{\delta \in K}{\sup}\,\,  \left\{\xi^{\top}  P_{ \mathcal{N}_{-S}^{\perp} }P_{ \mathcal{R}}\delta  +   \frac{\lambda}{2}\|D^{(r)}\theta^*\|_1 -  \frac{\lambda}{2}\|D^{(r)}(\theta^*+\delta)\|_1  \right\}  \\
		\leq   	\displaystyle  C \left[  (s+1)\log n  \log \{n/(s+1)\}   \right]^{1/2}\{h(t,n)\}^{1/2}.
	\end{array}
	\]
	Therefore, for large enough $c_{\epsilon}$,
	\begin{equation}
		\label{eqn:t3_second}
		\begin{array}{lll}
			\displaystyle \frac{4}{c_0 t^2}E\left[  \underset{\delta \in K}{\sup}\,\,  \left\{\xi^{\top}  P_{ \mathcal{N}_{-S}^{\perp} }P_{ \mathcal{R}}\delta  +   \frac{\lambda}{2}\|D^{(r)}\theta^*\|_1 -  \frac{\lambda}{2}\|D^{(r)}(\theta^*+\delta)\|_1  \right\} \bigg |  \Omega_3(b)^c\right] \mathrm{pr}\{\Omega_3(b)^c\}\\
			\displaystyle 	\leq \frac{4C  \{h(t,n) \}^{1/2}    \left[(s+1)\log n  \log \left\{   n/(s+1)  \right\}\right]^{1/2}}{c_0t^2}\\
			\displaystyle 	\leq \frac{\epsilon}{6},
		\end{array}
	\end{equation}
	which together with (\ref{eqn:t3_part1}) implies that $T_3 < \epsilon/3$.
	
	Next assume that 
	\[
	\lambda \,=\, 3c_{\epsilon}   \frac{n^{r-1} (s+1)  \log n  \log (s+1)   \log \frac{n}{s+1}  }{V^*}.  
	\]
	Then,   letting
	\[
	\tilde{\lambda }  = 3c_{\epsilon}  n^{r-1/2}\left(\frac{1}{s+1} \right)^{r-1/2} (\log n)^{1/2},
	\]
	we have that $\lambda \geq \tilde{\lambda}$ by the definition of $\lambda$,  which implies that
	\[
	\begin{array}{l}
		\displaystyle \underset{\delta \in K}{\sup}\left\{	\xi^{\top}  P_{ \mathcal{N}_{-S}^{\perp} }P_{ \mathcal{R}}\delta  +   \frac{\lambda}{2}\|D^{(r)}\theta^*\|_1 -  \frac{\lambda}{2}\|D^{(r)}(\theta^*+\delta)\|_1  \right\} \\ 
		\displaystyle 	\leq \underset{\delta \in K}{\sup}\left\{	\xi^{\top}  P_{ \mathcal{N}_{-S}^{\perp} }P_{ \mathcal{R}}\delta  +   \frac{\tilde{\lambda }}{2}\|D^{(r)}\theta^*\|_1 -  \frac{\tilde{\lambda }}{2}\|D^{(r)}(\theta^*+\delta)\|_1  \right\} \,+\,
		\\
		\displaystyle \frac{1}{2}	\underset{\delta \in K}{\sup} \left\{  (\lambda-\tilde{\lambda}) \|D^{(r)}\theta^*\|_1 -  (\lambda-\tilde{\lambda}) \|D^{(r)}(\theta^*+\delta)\|_1\right\}\\
		\displaystyle 	\leq \underset{\delta \in K}{\sup}\left\{	\xi^{\top}  P_{ \mathcal{N}_{-S}^{\perp} }P_{ \mathcal{R}}\delta  +   \frac{\tilde{\lambda }}{2}\|D^{(r)}\theta^*\|_1 -  \frac{\tilde{\lambda }}{2}\|D^{(r)}(\theta^*+\delta)\|_1  \right\} \,+\, \frac{\lambda}{2}\|D^{(r) }\theta^*\|_1\\
	\end{array}
	\]
	and we notice that
	\[
	\lambda\|D^{(r) }\theta^*\|_1 \,\,= \,  3c_{\epsilon}   (s+1)  \log n \, \log  (s+1)   \,  \log \frac{n}{s+1},
	\]
	and in this case we also obtain that $T_3 \leq \epsilon/3$ by proceeding as before. The proof follows.
	
\end{proof}

\section{Proof Theorem   \ref{thm:2dtv} }

\subsubsection{Controlling the Rademacher width }

%We  first need an  auxiliary lemma.

%\begin{lemma}
%	\label{lem21}
%	Let  $\delta \in R^n$  such that $\Delta^2(\delta)\leq  t^2$ for  $t>0$. Let  $\Pi$  be the orthogonal projection onto $\text{span}\{(1,\ldots,1)^{\top} \} \subset R^n$. Then
%	\[
%	\|  \Pi \delta \|_{\infty} \leq      \frac{t^2}{  n }   +   \frac{t}{n^{1/2}}.
%	\]
%\end{lemma}

%\begin{proof}

%	Notice that\[5
%	 \begin{array}{lll}
%	 \| \Pi \delta\|_{\infty}  & = &   \displaystyle  \frac{1}{n^{1/2}  }  \left\vert \sum_{i=1}^{n }      \frac{\delta_i}{  n^{1/2} }   \right\vert\\
%	  & \leq&  \displaystyle  \frac{1}{n}  \sum_{i=1}^{n}   \vert  \delta_i \vert 1_{  \{     \vert \delta_i \vert  >1 \} }   \,+\, \frac{1}{n^{1/2}  }  \left(\sum_{i=1}^{n}   \vert  \delta_i \vert^2 1_{  \{     \vert \delta_i \vert  \leq 1 \} } \right)^{1/2}
%	 \end{array}
%	\]
%	by H\"{o}lder and  Cauchy–Schwarz inequalities. 
%\end{proof}

\begin{proposition}
	\label{prop1}
	Let  $t>0$  and  
	\[
	K =    \left\{  \theta \,:\,  \mathrm{TV}(\theta)\leq   V\right\}.
	\]	
	Then 
	\[
	RW[  \{  \delta  \in   K -\theta^*\,:\,   \Delta^2(\delta ) \leq  t^2        \} ] \,\leq \,  C\left\{  V  m^{d-1}  c(d,n)    +      \frac{t^2}{n^{1/2}}  +   t       \right\},
	\]
	where  
	\[
	c(d,n)    \leq    \begin{cases}
		\tilde{C}  \log n  & \text{if} \,\, d=2,\\
		\tilde{C} (\log n)^{1/2} & \text{if} \,\, d>2,
	\end{cases}
	\]
	for some positive constant  $\tilde{C}$ that depends on $d$.
\end{proposition}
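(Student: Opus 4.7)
The plan is to mirror the strategy used in the proof of Proposition~\ref{lem9}, replacing the 1D trend filtering structure with the $d$-dimensional lattice TV structure. The null space of the TV operator on the lattice is one-dimensional (spanned by the constant vector $\mathbf{1}$). Setting $\mathcal{R}^\perp = \mathrm{span}\{\mathbf{1}\}$ and $\mathcal{R} = (\mathcal{R}^\perp)^\perp$, I will decompose
\[
RW\big[\{\delta \in K-\theta^* : \Delta^2(\delta) \leq t^2\}\big] \leq T_1 + T_2,
\]
where $T_1$ and $T_2$ are the Rademacher widths with $\delta$ replaced by $P_{\mathcal{R}^\perp}\delta$ and $P_\mathcal{R}\delta$ respectively. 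By Lemma~\ref{radamacher_width} it suffices to work with the Gaussian width analog of these terms.

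For $T_1$, since $\mathcal{R}^\perp$ is one-dimensional with unit basis $v = n^{-1/2}\mathbf{1}$, an argument analogous to Lemma~\ref{lem23} yields $|\delta^\top v| \leq n^{-1/2}\Delta^2(\delta) + \Delta(\delta) \lesssim t^2/n^{1/2} + t$, while $E|\xi^\top v| = O(1)$. This gives $T_1 \lesssim t^2/n^{1/2} + t$, contributing exactly the lower-order term in the proposition's bound.

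For $T_2$, the main step is to convert the $\Delta^2$-localization into an $\ell_2$-localization, after which the problem reduces to a standard local Gaussian width computation over a TV ball. Using Lemma~\ref{lem18} together with lattice analogs of Lemma~\ref{lem20} (to bound $\|P_\mathcal{R}\delta\|_\infty$ in terms of $V$) and Lemma~\ref{lem6} (to bound $\Delta^2(P_\mathcal{R}\delta)$), and exploiting the hypothesis $V^* = O(1)$, I expect to establish $\|P_\mathcal{R}\delta\| \leq C t$ for some constant $C$. This reduces bounding $T_2$ to bounding
\[
E\left[\sup_{\mathrm{TV}(\delta) \leq 2V,\; \|\delta\| \leq Ct} \xi^\top \delta\right],
\]
which is the Rademacher width of the $\ell_2$-localized TV ball in $d$ dimensions. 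At this point I will invoke the Gaussian width bounds for this set established in~\cite{hutter2016optimal} (also used in~\cite{chatterjee2019new}). These bounds yield $O(V m^{d-1} \log n)$ when $d = 2$ and $O(V m^{d-1} (\log n)^{1/2})$ when $d > 2$, matching the claimed $c(d,n)$ factor.

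The main obstacle is the lattice analog of Lemma~\ref{lem20}, namely controlling $\|P_\mathcal{R}\delta\|_\infty$ (or at least $\|P_\mathcal{R}\delta\|^2$) under the TV constraint. Unlike the 1D case, where the falling factorial basis gives a clean infinity bound, on the $d$-dimensional lattice this requires either a discrete Poincaré/Sobolev-type inequality or a truncation argument that splits indices according to whether $|\delta_i| > 1$, using that the ``large'' coordinate set has cardinality at most $\Delta^2(\delta) \leq t^2$. Once this technical conversion is in place, the rest of the argument is a direct application of existing results and mirrors the mean-regression counterpart in~\cite{chatterjee2019new}.
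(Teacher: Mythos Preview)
Your decomposition and treatment of $T_1$ match the paper exactly. The difference is in $T_2$: the paper does \emph{not} convert the $\Delta^2$-localization to an $\ell_2$-localization at all. Writing $P_{\mathcal{R}}\delta = \nabla^{+}\nabla\delta$ (with $\nabla$ the edge-incidence matrix of $L_{d,n}$), the paper applies H\"older directly:
\[
\xi^\top P_{\mathcal{R}}\delta \;\leq\; \|(\nabla^{+})^\top\xi\|_\infty\,\|\nabla\delta\|_1 \;\leq\; 2Vm^{d-1}\,\|(\nabla^{+})^\top\xi\|_\infty,
\]
then bounds $E\|(\nabla^{+})^\top\xi\|_\infty \leq C\max_j\|\nabla^{+}_{\cdot,j}\|\,(\log n)^{1/2}$ by the sub-Gaussian maximal inequality, and finally invokes the column-norm bounds from \cite{hutter2016optimal} (Propositions~4 and~6) to get $c(d,n)$. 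Only the TV constraint is used for $T_2$; the $\Delta^2$ constraint enters solely through $T_1$.

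Your detour through $\ell_2$-localization is not needed and, as written, has a gap: the lattice analog of Lemma~\ref{lem20} gives $\|P_{\mathcal{R}}\delta\|_\infty \leq \|\nabla\delta\|_1 \leq 2Vm^{d-1}$ (via the path argument on a connected graph), which for $d\geq 2$ is \emph{not} $O(1)$ even under $V^*=O(1)$. Plugging this into Lemma~\ref{lem18} yields $\|P_{\mathcal{R}}\delta\| \lesssim m^{(d-1)/2}\,t$, not $Ct$ as you claimed. The argument can still be salvaged with this weaker bound, but it is strictly more work than the one-line H\"older step above. The ``main obstacle'' you flag is therefore a red herring: the paper's route sidesteps it entirely.
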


\begin{proof}
	Let  $\nabla$ be an incidence matrix  of $L_{d,n}$ ,  	$N$ the number of rows of $\nabla$,  and  $\Pi$ the orthogonal projection onto the span of $(1,\ldots,1)^{\top} \in \mathbb{R}^n$. Notice that  for  $\xi_1,\ldots,\xi_n$ independent Rademacher  variables we have that 
	\begin{equation}
		\label{eqn:e1}
		\begin{array}{lll}
			RW(  \{  \delta  \in   K -\theta^*\,:\,   \Delta^2(\delta ) \leq  t^2        \} )   & \leq&  	E\left\{   \underset{\delta \,:\,    \Delta^2(\delta)    \leq  t^2, \, \|\nabla\delta\|_1\leq 2V m^{d-1}   \,  }{\sup}\,\, \xi^{\top}   \nabla^{+} \nabla  \delta \right\}   \,+\,\\   
			& &\displaystyle 	E\left\{   \underset{\delta \,:\,    \Delta^2(\delta)    \leq  t^2 \,  }{\sup}\,\,  \xi^{\top} \Pi \delta \right\}\\
			& \leq&\displaystyle   2V m^{d-1} E\left\{    \|  \left(\nabla^{+}\right)^{\top}\xi\|_{\infty} \right\} +    \left(   \frac{b_1 t^2}{n} +   \frac{t}{n^{1/2} }  \right)\,E\left(  \left\vert  \sum_{i=1}^{n}  \xi_i \right\vert   \right)\\
			& \leq&C\left\{  V  m^{d-1}  \underset{j =1,\ldots,N}{\max} \|\nabla_{\cdot,j}^{+}  \|  (\log   n )^{1/2} +      \frac{t^2}{n^{1/2}}  +   t       \right\},
		\end{array}
	\end{equation}
	for some positive  constant $C$, where the  second inequality follows by H\"{o}lder's  inequality and  Lemma \ref{lem23}, and the  last  by the  Sub-Gaussian maximal   inequality.
	
	Next,  we  recall from Propositions  4 and 6 from \cite{hutter2016optimal}, that 
	\begin{equation}
		\label{eqn:e2}
		\underset{j =1,\ldots,N}{\max} \|\nabla_{\cdot,j}^{+}  \|    \leq    \begin{cases}
			\tilde{C}  (\log n)^{1/2}  & \text{if} \,\, d=2,\\
			\tilde{C}  & \text{if} \,\, d>2,
	\end{cases}\end{equation}
	for some positive constant  $\tilde{C}$ that depends on $d$.  Hence, the claim follows combining  (\ref{eqn:e1})--(\ref{eqn:e2}).
\end{proof}

\subsubsection{Proof  of Theorem \ref{thm:2dtv} }

Theorem \ref{thm:2dtv}  follows  immediately from Theorem \ref{thm:basic} and Proposition \ref{prop1},  by choosing 
\[
t \asymp    \begin{cases}
	(V  m^{d-1} \log^{2} n  )^{1/2}& \text{if}   \,\,\,   d=2,\\
	\{	V m^{d-1}\log n \}^{1/2} &\text{if }\,\,\,d>2.   
\end{cases}
\]

\subsection{Theorem   \ref{thm:lasso} }

\begin{proof}
	Let
	\[
	K_V\,:=\,\left\{\beta \in R^p: \|\beta\|_1 \leq V\right\}.
	\]
	Also,  let  $\{\xi_i\}_{i=1}^n$  be independent  Rademacher  random variables independent of  $\{(y_i)\}_{i=1}^n$. 	By Corollary \ref{cor:risk}, there exists a constant $C>0$ such that 
	%Lemma \ref{lem13} and  with the  same  argument  that was used to prove Corollary  \ref{cor:risk}, we obtain that
	\[
	\begin{array}{lll}
		\displaystyle \frac{1}{n}E\left\{\Delta^2( \hat{\theta} -\theta^*  ) \right\} %&\leq &\displaystyle  \frac{C}{n}\,E\left\{\underset{v \in K_V}{\sup}\,\,  \sum_{i=1}^{n} \xi_i x_i^{\top} ( v -\beta^* )   \right\}\\ 
		&\leq  &\displaystyle  \frac{CV }{n} E\left( \underset{v \in K_1}{\sup}\,\,  \sum_{i=1}^{n} \xi_i x_i^{\top} v   \right)\\ 
		&= &\displaystyle  \frac{CV }{n} E\left( \underset{v \in X K_1}{\sup}\,\,  \xi^{\top} v   \right),\\ 
	\end{array}
	\]
	where 
	\[
	K_1  = \left\{  v\,:\,  \|v\|_1\leq   1 \right\}.
	\]
	By the proof  of Theorem  2.4 in \cite{rigollet2015high},  there exists a constant  $C_2>0$  such that 
	\[
	E\left( \underset{v \in X K_1}{\sup}\,\,  \xi^{\top} v   \right)\leq       \frac{ C_2 (\log p )^{1/2}   \,  \underset{j\in [p]}{\max}    \|X_{\cdot,j} \| }{n}.
	\]
	The claim of the theorem then follows.
\end{proof}

\newpage

\section{Additional experiments}
\label{sec:additonal}

%$10\, \Delta_n^2(\theta^* -\hat{\theta})$
\begin{table}[h!]
	\centering
	\caption{\label{tab1}  Average $\Delta_n^2$ distance  times  10,   $10\cdot\Delta_n^2(\theta^* -\hat{\theta}) $,  averaging over 100 Monte carlo simulations for the different methods considered. Captions are described in the main paper.  }
	\medskip
	\setlength{\tabcolsep}{14pt}
	\begin{small}
		\begin{tabular}{ rrrrrrrr}
			\hline
			$n$ & Scenario            &$\tau$                     & PQTF1           & PQTF2          &QS                  &TF1               & TF2     \\  
			\hline	   
			10000 &1                     &    0.5                   &       0.023    &   0.08   &    0.21     &  0.016 & 0.4\\			
			5000 &1                     &    0.5                      &  0.046       &     0.12    &  0.23       &  0.034&0.65  \\	
			1000 &1                     &    0.5                     &     0.18  &    0.29      &  0.32        &    0.12& 0.94 \\			
			\hline				
			10000 &2                    &    0.5                     &    0.037 &   0.11 &   0.13 & 5.67  & 6.33\\		
			5000 &2                    &    0.5                      & 0.066 &  0.15    &  0.17 &    2.45      & 2.80     \\
			1000 &2                     &    0.5                      & 0.29    & 0.43   & 0.45      &     8.08   &  9.41      \\	
			
			\hline	
			10000 &3                    &    0.5                      & 0.015&  0.063    &   0.17   &   0.18  &    0.54  \\			
			5000 &3                   &    0.5                      &   0.029 &   0.092              &     0.18       &    0.13      &    0.65             \\				
			1000 &3                     &    0.5                      &     0.13 &  0.24      & 0.26     &      0.38         &     1.04            \\							
			\hline		         
			10000 &4                    &    0.5                      &  0.045   & 0.009 &     0.015   &      0.063      &     0.016      \\			 			
			5000 &4                   &    0.5                      &   0.075     &   0.019     &     0.027          &   0.10             &      0.031            \\						
			1000 &4                     &    0.5                      &0.30       &   0.082      &   0.098       &       0.28   &           0.31         \\						
			\hline	
			10000 &5                   &    0.5                      &    0.13      &  0.056    &   0.041  &   1.55        &      1.91   \\									
			5000 &5                   &    0.5                      &    0.24  &     0.099    &   0.085     &     3.24          &   3.8   \\	
			1000 &5                   &    0.5                      & 1.91    &      0.35    &  0.35     &        5.38       &    6.00   \\					
			\hline			
			%10000 &6                     &    0.9                      &                &        \textbf{}          &             &         *        &            *            \\	               
			10000 &6                   &    0.9                     &      0.18     &0.070  &    0.075         &         *       &                  *      \\	 		  
			5000&6                   &   0.9                      &    0.29        &  0.13  &   0.14          &             *    &                 *       \\			         
			1000&6                   &   0.9                      &      1.19      &  0.39  &  0.40           &             *    &                 *       \\					
			10000 &6                   &   0.1                      &  0.16    & 0.065 &  0.070        &          *       &      *                  \\								
			5000 &6                   &    0.1                     & 0.31   &0.13&   0.14&            *     &         *               \\				
			1000 &6                     &    0.1                      & 1.27    &  0.46    & 0.47    &           *      &              *          \\            
			\hline  
		\end{tabular}
	\end{small}
\end{table}

\clearpage

\bibliographystyle{plainnat}
\bibliography{references}

\begin{thebibliography}{46}
\providecommand{\natexlab}[1]{#1}
\providecommand{\url}[1]{\texttt{#1}}
\expandafter\ifx\csname urlstyle\endcsname\relax
  \providecommand{\doi}[1]{doi: #1}\else
  \providecommand{\doi}{doi: \begingroup \urlstyle{rm}\Url}\fi

\bibitem[Belloni and Chernozhukov(2011)]{belloni2011}
Alexandre Belloni and Victor Chernozhukov.
\newblock $\ell_1$-penalized quantile regression in high-dimensional sparse
  models.
\newblock \emph{The Annals of Statistics}, 39\penalty0 (1):\penalty0 82--130,
  2011.

\bibitem[Brantley et~al.(2020)Brantley, Guinness, and
  Chi]{brantley2019baseline}
Halley~L Brantley, Joseph Guinness, and Eric~C Chi.
\newblock Baseline drift estimation for air quality data using quantile trend
  filtering.
\newblock \emph{Annals of Applied Statistics}, 14\penalty0 (2):\penalty0
  585--604, 2020.

\bibitem[Brown et~al.(2008)Brown, Cai, and Zhou]{brown2008robust}
Lawrence~D Brown, T~Tony Cai, and Harrison~H Zhou.
\newblock Robust nonparametric estimation via wavelet median regression.
\newblock \emph{The Annals of Statistics}, 36\penalty0 (5):\penalty0
  2055--2084, 2008.

\bibitem[Chatterjee and Goswami(2019{\natexlab{a}})]{chatterjee2019adaptive}
Sabyasachi Chatterjee and Subhajit Goswami.
\newblock Adaptive estimation of multivariate piecewise polynomials and bounded
  variation functions by optimal decision trees.
\newblock \emph{To appear in the Annals of Statistics}, 2019{\natexlab{a}}.

\bibitem[Chatterjee and Goswami(2019{\natexlab{b}})]{chatterjee2019new}
Sabyasachi Chatterjee and Subhajit Goswami.
\newblock New risk bounds for 2d total variation denoising.
\newblock \emph{To appear in IEEE Transctions of Information Theory},
  2019{\natexlab{b}}.

\bibitem[Chatterjee et~al.(2015)Chatterjee, Guntuboyina, and
  Sen]{chatterjee2015risk}
Sabyasachi Chatterjee, Adityanand Guntuboyina, and Bodhisattva Sen.
\newblock On risk bounds in isotonic and other shape restricted regression
  problems.
\newblock \emph{The Annals of Statistics}, 43\penalty0 (4):\penalty0
  1774--1800, 2015.

\bibitem[Chatterjee(2013)]{chatterjee2013assumptionless}
Sourav Chatterjee.
\newblock Assumptionless consistency of the lasso.
\newblock \emph{arXiv preprint arXiv:1303.5817}, 2013.

\bibitem[Cox(1983)]{cox1983asymptotics}
Dennis~D Cox.
\newblock Asymptotics for m-type smoothing splines.
\newblock \emph{The Annals of Statistics}, pages 530--551, 1983.

\bibitem[Donoho(1997)]{donoho1997cart}
David~L Donoho.
\newblock Cart and best-ortho-basis: a connection.
\newblock \emph{The Annals of statistics}, 25\penalty0 (5):\penalty0
  1870--1911, 1997.

\bibitem[Donoho and Johnstone(1994)]{donoho1994ideal}
David~L Donoho and Jain~M Johnstone.
\newblock Ideal spatial adaptation by wavelet shrinkage.
\newblock \emph{Biometrika}, 81\penalty0 (3):\penalty0 425--455, 1994.

\bibitem[Eubank(1988)]{eubank1988spline}
Randall~L Eubank.
\newblock \emph{Spline smoothing and nonparametric regression}, volume~90.
\newblock M. Dekker New York, 1988.

\bibitem[Fan et~al.(2014)Fan, Fan, and Barut]{fan2014adaptive}
Jianqing Fan, Yingying Fan, and Emre Barut.
\newblock Adaptive robust variable selection.
\newblock \emph{The Annals of statistics}, 42\penalty0 (1):\penalty0 324, 2014.

\bibitem[Guntuboyina and Sen(2015)]{guntuboyina2015global}
Adityanand Guntuboyina and Bodhisattva Sen.
\newblock Global risk bounds and adaptation in univariate convex regression.
\newblock \emph{Probability Theory and Related Fields}, 163\penalty0
  (1-2):\penalty0 379--411, 2015.

\bibitem[Guntuboyina et~al.(2020)Guntuboyina, Lieu, Chatterjee, and
  Sen]{guntuboyina2020adaptive}
Adityanand Guntuboyina, Donovan Lieu, Sabyasachi Chatterjee, and Bodhisattva
  Sen.
\newblock Adaptive risk bounds in univariate total variation denoising and
  trend filtering.
\newblock \emph{The Annals of Statistics}, 48\penalty0 (1):\penalty0 205--229,
  2020.

\bibitem[He and Shi(1994)]{he1994convergence}
Xuming He and Peide Shi.
\newblock Convergence rate of b-spline estimators of nonparametric conditional
  quantile functions.
\newblock \emph{Journaltitle of Nonparametric Statistics}, 3\penalty0
  (3-4):\penalty0 299--308, 1994.

\bibitem[Hjort and Pollard(2011)]{hjort2011asymptotics}
Nils~Lid Hjort and David Pollard.
\newblock Asymptotics for minimisers of convex processes.
\newblock \emph{arXiv preprint arXiv:1107.3806}, 2011.

\bibitem[Hochbaum and Lu(2017)]{hochbaum2017faster}
Dorit~S Hochbaum and Cheng Lu.
\newblock A faster algorithm solving a generalization of isotonic median
  regression and a class of fused lasso problems.
\newblock \emph{SIAM Journal on Optimization}, 27\penalty0 (4):\penalty0
  2563--2596, 2017.

\bibitem[Huber(1964)]{huber1992robust}
Peter~J Huber.
\newblock Robust estimation of a location parameter.
\newblock \emph{The Annals of Statistics.}, page 73–101, 1964.

\bibitem[Hutter and Rigollet(2016)]{hutter2016optimal}
Jan-Christian Hutter and Philippe Rigollet.
\newblock Optimal rates for total variation denoising.
\newblock \emph{Annual Conference on Learning Theory}, 29:\penalty0 1115--1146,
  2016.

\bibitem[Johnson(2013)]{johnson2013dynamic}
Nicholas Johnson.
\newblock A dynamic programming algorithm for the fused lasso and
  $l_0$-segmentation.
\newblock \emph{Journal of Computational and Graphical Statistics}, 22\penalty0
  (2):\penalty0 246--260, 2013.

\bibitem[Kim et~al.(2009)Kim, Koh, Boyd, and Gorinevsky]{kim2009ell_1}
Seung-Jean Kim, Kwangmoo Koh, Stephen Boyd, and Dimitry Gorinevsky.
\newblock $\ell_1$ trend filtering.
\newblock \emph{SIAM Review}, 51\penalty0 (2):\penalty0 339--360, 2009.

\bibitem[Knight and Fu(2000)]{knight2000asymptotics}
Keith Knight and Wenjiang Fu.
\newblock Asymptotics for lasso-type estimators.
\newblock \emph{The Annals of Statistics}, pages 1356--1378, 2000.

\bibitem[Koenker and Bassett~Jr(1978)]{koenker1978regression}
Roger Koenker and Gilbert Bassett~Jr.
\newblock Regression quantiles.
\newblock \emph{Econometrica: Journal of the Econometric Society}, pages
  33--50, 1978.

\bibitem[Koenker et~al.(1994)Koenker, Ng, and Portnoy]{koenker1994quantile}
Roger Koenker, Pin Ng, and Stephen Portnoy.
\newblock Quantile smoothing splines.
\newblock \emph{Biometrika}, 81\penalty0 (4):\penalty0 673--680, 1994.

\bibitem[Ledoux and Talagrand(2013)]{ledoux2013probability}
Michel Ledoux and Michel Talagrand.
\newblock \emph{Probability in Banach Spaces: isoperimetry and processes}.
\newblock Springer Science \& Business Media, 2013.

\bibitem[Li and Zhu(2007)]{li2007analysis}
Youjuan Li and Ji~Zhu.
\newblock Analysis of array cgh data for cancer studies using fused quantile
  regression.
\newblock \emph{Bioinformatics}, 23\penalty0 (18):\penalty0 2470--2476, 2007.

\bibitem[Mammen and van~de Geer(1997)]{mammen1997locally}
Enno Mammen and Sara van~de Geer.
\newblock Locally apadtive regression splines.
\newblock \emph{The Annals of Statistics}, 25\penalty0 (1):\penalty0 387--413,
  1997.

\bibitem[Mangasarian and Schumaker(1971)]{mangasarian1971discrete}
Olvi~L Mangasarian and Larry~L Schumaker.
\newblock Discrete splines via mathematical programming.
\newblock \emph{SIAM Journal on Control}, 9\penalty0 (2):\penalty0 174--183,
  1971.

\bibitem[Nussbaum(1985)]{nussbaum1985spline}
Michael Nussbaum.
\newblock Spline smoothing in regression models and asymptotic efficiency in $
  \ell_2$.
\newblock \emph{The Annals of Statistics}, 13\penalty0 (3):\penalty0 984--997,
  1985.

\bibitem[Ortelli and van~de Geer(2019{\natexlab{a}})]{ortelli2019prediction}
Francesco Ortelli and Sara van~de Geer.
\newblock Prediction bounds for (higher order) total variation regularized
  least squares.
\newblock \emph{arXiv preprint arXiv:1904.10871}, 2019{\natexlab{a}}.

\bibitem[Ortelli and van~de Geer(2019{\natexlab{b}})]{ortelli2019synthesis}
Francesco Ortelli and Sara van~de Geer.
\newblock Synthesis and analysis in total variation regularization.
\newblock \emph{arXiv preprint arXiv:1901.06418}, 2019{\natexlab{b}}.

\bibitem[Padilla et~al.(2018)Padilla, Sharpnack, and Scott]{padilla2017dfs}
Oscar Hernan~Madrid Padilla, James Sharpnack, and James~G Scott.
\newblock The {D}{F}{S} fused lasso: Linear-time denoising over general graphs.
\newblock \emph{The Journal of Machine Learning Research}, 18\penalty0
  (1):\penalty0 6410--6445, 2018.

\bibitem[Padilla et~al.(2020)Padilla, Sharpnack, Chen, and
  Witten]{madrid2020adaptive}
Oscar Hernan~Madrid Padilla, James Sharpnack, Yanzhen Chen, and Daniela~M
  Witten.
\newblock Adaptive nonparametric regression with the k-nearest neighbour fused
  lasso.
\newblock \emph{Biometrika}, 107\penalty0 (2):\penalty0 293--310, 2020.

\bibitem[Rigollet and H{\"u}tter(2015)]{rigollet2015high}
Phillippe Rigollet and Jan-Christian H{\"u}tter.
\newblock High dimensional statistics.
\newblock \emph{Lecture notes for course 18S997}, 2015.

\bibitem[Rudin et~al.(1992)Rudin, Osher, and Fatemii]{rudin1992nonlinear}
Leonid Rudin, Stanley Osher, and Emad Fatemii.
\newblock Nonlinear total variation based noise removal algorithms.
\newblock \emph{Physica {D}: Nonlinear Phenomena}, 60\penalty0 (1):\penalty0
  259--268, 1992.

\bibitem[Sadhanala et~al.(2016)Sadhanala, Wang, and
  Tibshirani]{sadhanala2016total}
Veeranjaneyulu Sadhanala, Yu-Xiang Wang, and Ryan~J. Tibshirani.
\newblock Total variation classes beyond 1d: {Minimax} rates, and the
  limitations of linear smoothers.
\newblock \emph{In Advances in Neural Information Processing Systems}, pages
  3513--3521, 2016.

\bibitem[Steidl et~al.(2006)Steidl, Didas, and Neumann]{steidl2006splines}
Gabriele Steidl, Stephan Didas, and Julia Neumann.
\newblock Splines in higher order tv regularization.
\newblock \emph{International journal of computer vision}, 70\penalty0
  (3):\penalty0 241--255, 2006.

\bibitem[Sun et~al.(2019)Sun, Zhou, and Fan]{sun2019adaptive}
Qiang Sun, Wen-Xin Zhou, and Jianqing Fan.
\newblock Adaptive huber regression.
\newblock \emph{Journal of the American Statistical Association}, pages 1--24,
  2019.

\bibitem[Tibshirani(1996)]{tibshirani1996regression}
Robert Tibshirani.
\newblock Regression shrinkage and selection via the lasso.
\newblock \emph{Journal of the Royal Statistical Society: Series B
  (Methodological)}, 58\penalty0 (1):\penalty0 267--288, 1996.

\bibitem[Tibshirani(2014)]{tibshirani2014adaptive}
Ryan~J. Tibshirani.
\newblock Adaptive piecewise polynomial estimation via trend filtering.
\newblock \emph{The Annals of Statistics}, 42\penalty0 (1):\penalty0 285--323,
  2014.

\bibitem[Utreras(1981)]{utreras1981computing}
Florencio~I Utreras.
\newblock On computing robust splines and applications.
\newblock \emph{SIAM Journal on Scientific and Statistical Computing},
  2\penalty0 (2):\penalty0 153--163, 1981.

\bibitem[Van~de Geer(1990)]{van1990estimating}
Sara Van~de Geer.
\newblock Estimating a regression function.
\newblock \emph{The Annals of Statistics}, pages 907--924, 1990.

\bibitem[Van Der~Vaart and Wellner(1996)]{van1996weak}
AW~Van Der~Vaart and JA~Wellner.
\newblock Weak convergence and empirical processes: With applications to
  statistics springer series in statistics.
\newblock \emph{Springer}, 58:\penalty0 59, 1996.

\bibitem[Wainwright(2019)]{wainwright2019high}
Martin~J Wainwright.
\newblock \emph{High-dimensional statistics: A non-asymptotic viewpoint},
  volume~48.
\newblock Cambridge University Press, 2019.

\bibitem[Wang et~al.(2014)Wang, Smola, and Tibshirani]{wang2014falling}
Yu-Xiang Wang, Alex Smola, and Ryan Tibshirani.
\newblock The falling factorial basis and its statistical applications.
\newblock In \emph{International Conference on Machine Learning}, pages
  730--738, 2014.

\bibitem[Wang et~al.(2016)Wang, Sharpnack, Smola, and
  Tibshirani]{wang2016trend}
Yu-Xiang Wang, James Sharpnack, Alex Smola, and Ryan~J Tibshirani.
\newblock Trend filtering on graphs.
\newblock \emph{Journal of Machine Learning Research}, 17\penalty0
  (105):\penalty0 1--41, 2016.

\end{thebibliography}

%	\bibliographystyle{plain}
%\bibliographystyle{plainnat}
%\bibliography{quantile_trend_filtering}	
%\bibliography{references}	
	
\end{document}